\documentclass[12pt,a4paper]{article}
\usepackage[english]{babel}
\usepackage[whole]{bxcjkjatype}
\usepackage{parskip}

\usepackage[T1]{fontenc}
\usepackage[utf8]{inputenc}
\usepackage{lmodern}
\usepackage{parskip}

\usepackage{geometry}
\usepackage{verbatim}
\usepackage{float}
\usepackage{bm}
\usepackage{amsmath}
\usepackage{amssymb}
\usepackage{amsfonts}
\usepackage{amsthm}
\usepackage{mathtools}
\usepackage{graphicx}
\usepackage{dsfont}
\usepackage{array}
\usepackage{mathrsfs}
\usepackage{color}
\usepackage{enumitem}
\usepackage{authblk}
\usepackage{booktabs}

\mathtoolsset{showonlyrefs=true}

\usepackage{multirow}
\usepackage{makecell}
\usepackage{arydshln}
\usepackage[font=small]{caption}
\usepackage{subcaption}

\usepackage{algorithm}
\usepackage{algpseudocode}

\RequirePackage[authoryear]{natbib}

\RequirePackage[colorlinks,citecolor=blue,urlcolor=blue]{hyperref}
\usepackage[dvipsnames]{xcolor}
\usepackage{cleveref}

\usepackage{pgfplots}
\pgfplotsset{compat=1.18}
\usepackage{appendix}

\allowdisplaybreaks

\theoremstyle{plain}
\newtheorem{theorem}{Theorem}
\newtheorem{lemma}{Lemma}
\newtheorem{proposition}{Proposition}
\newtheorem{corollary}{Corollary}
\newtheorem{assumption}{Assumption}
\newtheorem{definition}{Definition}
\newtheorem{example}{Example}

\theoremstyle{definition}
\newtheorem{remark}{Remark}

\crefname{appendix}{appendix}{appendices}

\newcommand{\C}{\mathbb{C}}
\newcommand{\R}{\mathbb{R}}
\newcommand{\E}{\mathbb{E}}
\newcommand{\Smat}{\mathbb{S}}

\newcommand{\CS}{\mathcal{C}}

\newcommand{\Sym}{\operatorname{Sym}}
\newcommand{\tr}{\operatorname{tr}}
\newcommand{\adjm}{\operatorname{adj}}
\newcommand{\rank}{\operatorname{rank}}
\newcommand{\spec}{\operatorname{spec}}
\newcommand{\vech}{\operatorname{vech}}
\newcommand{\sgn}{\operatorname{sgn}}
\newcommand{\Span}{\operatorname{span}}
\newcommand{\Gr}{\operatorname{Gr}}
\newcommand{\Hess}{\operatorname{Hess}}
\newcommand{\Range}{\operatorname{range}}
\newcommand{\rowspace}{\operatorname{rowspace}}
\newcommand{\diag}{\operatorname{diag}}
\newcommand{\var}{\operatorname{var}}
\newcommand{\cov}{\operatorname{cov}}
\newcommand{\corr}{\operatorname{corr}}
\newcommand{\cum}{\operatorname{cum}}

\newcommand{\knu}{\boldsymbol{\nu}}
\newcommand{\pto}{\longrightarrow_{\rm p}}
\newcommand{\dto}{\rightsquigarrow}
\newcommand{\eps}{\varepsilon}
\newcommand{\pr}{\mathrm{pr}}
\newcommand{\Sighat}{\hat{\Sigma}}
\newcommand{\FD}{\widehat{F}_D}
\newcommand{\ind}{\mathrm{1}}

\title{Self-Normalizing Denominators\\in Rational Causal Estimation}
\author[1,2,$\ast$]{Shu Tamano}
\affil[1]{\small
Department of Multidisciplinary Sciences, Graduate School of Arts and Sciences, The University of Tokyo, 3-8-1 Komaba, Meguro-ku, Tokyo 153-8902, Japan
}
\affil[2]{
\small
Department of Epidemiology, National Institute of Infectious Diseases, Japan Institute for Health Security, 1-23-1 Toyama, Shinjuku-ku, Tokyo 162-0052, Japan
}
\affil[$\ast$]{\small
Email:
\href{mailto:tamano-shu212@g.ecc.u-tokyo.ac.jp}
{\texttt{tamano-shu212@g.ecc.u-tokyo.ac.jp}}
}
\date{}

\begin{document}
\maketitle

\begin{abstract}
    Rational causal estimators in linear structural equation models take the form of one covariance polynomial divided by another, and a small denominator is commonly interpreted as weak identification.
    We show that, under Gaussian sampling, some denominators cannot enter this regime at first order.
    Their sampling variation is exactly proportional to their magnitude, so the standardized denominator is constant in every sample.
    Products of powers of nested covariance minors have this property in every dimension and admit an exact Wishart pivot.
    The converse is complete in dimension two.
    In dimension three, one mixed family remains open, while a factor-and-rank criterion classifies all denominators with linear or quadratic determinant-free factors and covers instrumental-variable, front-door and proximal formulas.
    For linear front-door adjustment, Wald inference remains asymptotically valid even as the mediator residual variance vanishes at an arbitrary rate, provided the treatment--mediator coefficient is nonzero.
    In simulations, proximal Wald coverage fell as a naive treatment--proxy diagnostic strengthened, while front-door coverage stayed nominal, and right-heart-catheterization data distinguished naive from denominator-relevant diagnostics.
\end{abstract}
\noindent
\textit{
    Keywords:
    Bartlett decomposition;
    Fieller's theorem;
    Fisher--Rao geometry;
    Proximal causal inference;
    Symmetric cone;
    Weak instrument.
}

\section{Introduction}
\label{sec:intro}

\subsection{Motivation and scope}

Graphical identification in linear structural equation models often yields a causal coefficient of the form $N(\Sigma)/D(\Sigma)$.
Here $\Sigma\in\Smat^d_{++}$ is the observational covariance matrix of $d$ observed coordinates, and $N$ and $D$ are polynomial functions on the space $\Sym_d$ of symmetric matrices of order $d$.
Instrumental variables, conditional instruments, front-door adjustment, half-trek identification and proximal formulas all produce such expressions \citep{pearl1995,britopearl2002,fdd2012,kurokipearl2014,vanderzander2015,miao2018,tchetgen2024,henckel2024}.
At regular distributions, substituting the sample covariance matrix into such a formula is routine.
However, when $D$ is small, weak-instrument theory suggests a ratio-of-normals limit, distorted Wald inference and confidence sets that must sometimes be unbounded \citep{gleserhwang1987,dufour1997,staiger1997,stockwright2000,andrewscheng2012,andrewsstocksun2019}.
The standard safeguard compares the estimated denominator with its estimated standard error, as in first-stage relevance screening.

Two of these strategies display the contrast that motivates this paper.
Throughout this preview, the observed vector is centred Gaussian with covariance matrix $\Sigma\in\Smat^d_{++}$.
The scalar $\sigma_{UV}=\cov(U,V)$ denotes the covariance of two observed variables $U$ and $V$, and $\Sighat$ denotes the sample covariance matrix from $n$ independent observations.
Section~\ref{sec:prelim} gives the remaining conventions.
For the front-door strategy, the observed vector is $(X,M,Y)^\top$ and
\begin{equation*}
    D_{\mathrm{fd}}(\Sigma)
    =
    \sigma_{XX}(\sigma_{XX}\sigma_{MM}-\sigma_{XM}^2)
    .
\end{equation*}
At every such $\Sigma$, the asymptotic variance of $n^{1/2}\{D_{\mathrm{fd}}(\Sighat)-D_{\mathrm{fd}}(\Sigma)\}$ is $10D_{\mathrm{fd}}(\Sigma)^2$.
Consequently, the statistic that divides $nD_{\mathrm{fd}}(\Sighat)^2$ by the plug-in value of this variance equals $n/10$ for every positive-definite sample covariance.
Screening this denominator reveals nothing about the underlying distribution, although the precision of the plug-in estimator may deteriorate without bound.
By contrast, consider a linear proximal strategy with observed coordinates $(A,Z,W)^\top$, comprising a treatment, a treatment proxy and an outcome proxy.
Its denominator is
\begin{equation*}
    D_{\mathrm{prox}}(\Sigma)
    =
    \sigma_{AA}\sigma_{ZW}-\sigma_{AZ}\sigma_{AW}
    ,
\end{equation*}
which equals $\sigma_{AA}$ times the partial covariance of $Z$ and $W$ given $A$.
This denominator vanishes at interior covariance matrices at which its asymptotic variance does not.
There are therefore sequences of covariance matrices along which the plug-in ratio can have the ratio-of-normals limit of weak-instrument theory.
The relevant screening direction is the partial covariance, not the marginal treatment--proxy association.

These examples motivate the question of which denominators make the first-order ratio regime impossible along every sequence of underlying covariance matrices.
Under centred Gaussian sampling, the first-order variance of a covariance polynomial is a classical quadratic form in its symmetric gradient.
We show that, for a special class of polynomials, this quadratic form is exactly proportional to the square of the polynomial.
Denominator noise then contracts at the same rate as the denominator itself, and the standardized denominator is a samplewise constant rather than a relevance statistic.
We call such denominators exactly self-normalizing.

Two neighbouring phenomena are excluded from this notion.
A polynomial whose first-order variance merely vanishes on its zero set can still generate a nondegenerate second-order local limit when that variance is not proportional to the square of the polynomial.
Section~\ref{sec:weak} gives an example.
Numerator singularities are also distinct.
The mediation null remains nonregular even though the front-door denominator is exactly self-normalizing \citep{drtonxiao2016}.
Therefore, the classification below concerns denominators, not universal regularity of the associated estimators.

\subsection{Contributions}

The first contribution is an all-dimensional sufficient class.
Products of powers of covariance determinants along a nested flag of subspaces are exactly self-normalizing.
Their relative gradients have a fixed spectrum, and their sample-to-population ratios factor into independent chi-squared variables.
In recursive coordinates, these polynomials are monomials in successive conditional variances, which links the algebra directly to variance information.

The second contribution is a low-dimensional converse.
The converse is complete in dimension two.
In dimension three, determinant stripping reduces every solution to a rank-one branch, a pure plane-minor branch or a mixed branch with constant relative spectrum and a polynomial kernel line.
The first two branches are flag powers, while rigidity of the mixed kernel line remains open.
For the practically important class whose determinant-free irreducible factors have degree at most two, only one rank-one variance factor and one nested plane-minor factor can occur.
This gives a coefficient-level factor-and-rank diagnostic and classifies every reducible self-normalizing cubic.

The statistical results connect this algebra to weak-denominator inference.
A local result recovers the classical ratio-of-normals limit under a drift condition that no exactly self-normalizing denominator can satisfy.
The standardized denominator is the noncentrality diagnostic of this limit and also determines whether Fieller inversion is bounded.
For marginal and partial covariance denominators, it is an explicit increasing function of the corresponding first-stage statistic.
For linear front-door adjustment, the Gaussian delta Wald statistic remains asymptotically standard normal when the mediator residual variance tends to zero at any rate, provided the treatment--mediator coefficient is nonzero.

Together, the results give a diagnostic pathway for graph-derived rational formulas.
One factors the symbolic denominator, checks whether its factors form a nested variance flag, and then chooses between conventional studentization and weak-identification-robust inversion.
Simulations and a diagnostic audit of right-heart-catheterization data from the SUPPORT study illustrate the two branches of this pathway.

\subsection{Related work}

Graphical criteria for instrumental sets, front-door adjustment and proximal identification determine whether a causal effect can be expressed from the observational law.
The half-trek criterion and computer-algebra methods provide rational certificates for linear structural equation models \citep{pearl1995,britopearl2002,gpss2010,fdd2012,kurokipearl2014,vanderzander2015,miao2018,henckel2024,tchetgen2024}.
Generic and rational identifiability are distinct algebraic notions, and a parameter may be generically unique without being represented by the particular rational formula under study \citep{drtonweihs2016}.
Recent semiparametric work derives regular influence functions for half-trek estimators at fixed interior distributions \citep{mareis2026}.
Our analysis starts after a rational certificate has been selected.
It classifies the denominator as a polynomial on the ambient covariance space and determines which first-order boundary regime that certificate can generate.

Fieller and Anderson--Rubin inversion provide the classical ratio-based confidence constructions \citep{andersonrubin1949,fieller1954}.
The impossibility of uniformly bounded confidence sets and the local theory of weak instruments and weak moments explain why a noisy denominator near zero produces non-Gaussian ratio limits \citep{gleserhwang1987,dufour1997,staiger1997,stockwright2000,andrewscheng2012}.
Modern work develops first-stage-dependent corrections and robust procedures, including settings with many weak moments and weakly identified nuisance functions \citep{andrewsstocksun2019,lee2022,wang2025,bennett2026}.
The local result in Section~\ref{sec:weak} is a covariance-polynomial specialization of this literature.
The new point is structural and precedes the limit calculation.
The self-normalization identity decides when the first-order ratio regime is algebraically impossible.

Condition-number analyses quantify sensitivity of an identified functional or structural parameter to perturbations of the observational law \citep{schulman2016,gordon2021,sankararaman2022}.
Exact self-normalization instead compares the gradient noise of one denominator with the denominator itself through a global polynomial identity.
The two notions need not agree.
In the front-door boundary sequence, the standard error diverges and the formula becomes badly conditioned, yet the denominator cannot enter the first-order Fieller regime.
Conversely, a well-scaled slope denominator may have an interior zero with nondegenerate first-order noise.

Products of nested covariance minors are generalized power functions of a symmetric cone \citep{farautkoranyi1994}, and their exact sampling factorization follows from the Bartlett decomposition of a Wishart matrix \citep{muirhead1982}.
The affine-invariant metric on the positive-definite cone, which agrees up to scale with the Fisher--Rao metric of the centred Gaussian family, gives the geometric form of the logarithmic gradient flow used in the converse proofs \citep{bhatia2007}.
The separation between Gaussian and elliptical covariance operators is familiar in covariance-structure analysis \citep{shapirobrowne1987,iwashitasiotani1994}.
Here it identifies which part of the classification is distributional and which part is algebraic.

The dimension-two converse uses the Gordan--Noether theorem for forms with vanishing Hessian \citep{gordannoether1876,lossen2004}.
That route fails for the six variables of a symmetric matrix of order three because noncone forms with vanishing Hessian exist in this dimension \citep{perazzo1900,ciliberto2008,gondimrusso2015}.
Supplementary Section~\ref{supp:hessian-obstruction} explains why Hessian degeneracy does not force a cone on the six-dimensional space $\Sym_3$, while Supplementary Section~\ref{supp:representation-obstruction} identifies the information lost when the relevant tensor relation is passed through polynomial multiplication.
These two obstructions explain why neither route presently closes the mixed branch \citep{lichtenstein1982}.
This distinction explains the complete dimension-two result and the mixed-kernel problem left in dimension three.

\section{Preliminaries}
\label{sec:prelim}

\subsection{Basic notation}
\label{sec:notation}

We use $\R$ and $\C$ for the real and complex fields.
Vectors are columns, $A^\top$ denotes matrix transpose, $I_m$ is the identity matrix of order $m$, and $e_j$ is the $j$th standard basis vector.
The subscript on $I_m$ is omitted when the order is clear.
The symbols $\tr(A)$, $\det(A)$, $\rank(A)$ and $\spec(A)$ denote trace, determinant, rank and the multiset of eigenvalues of $A$.
Expectations, variances, covariances and probabilities under a probability measure $Q$ are denoted by $\E_Q$, $\var_Q$, $\cov_Q$ and $\pr_Q$.
The subscript is suppressed once the probability measure is fixed.
Convergence in probability and in distribution are denoted by $\pto$ and $\dto$, and limits are as $n\to\infty$ unless stated otherwise.
The indicator of a statement $A$ is $\ind(A)$, and $\sgn(x)$ is the sign of $x\in\R$.

For a positive integer $d$, let $[d]=\{1,\ldots,d\}$, $p_d=d(d+1)/2$ and $\mathcal{I}_d=\{(i,j):1\le i\le j\le d\}$.
We write $\Sym_d=\{A\in\R^{d\times d}:A=A^\top\}$ and denote its positive-definite and positive-semidefinite cones by $\Smat^d_{++}$ and $\Smat^d_+$.
The notation $A\succ0$ and $A\succeq0$ means $A\in\Smat^d_{++}$ and $A\in\Smat^d_+$, respectively.
For $\Sigma\in\Sym_d$, its entries are $\sigma_{ij}=\sigma_{ji}$.
For $I,J\subseteq[d]$, $A_{I,J}$ is the submatrix with rows in $I$ and columns in $J$, $A_I=A_{I,I}$, and $A_{1:r}=A_{\{1,\ldots,r\},\{1,\ldots,r\}}$.

We order $\mathcal{I}_d$ lexicographically, first by $i$ and then by $j$, and set
\begin{equation*}
    \vech(A)
    =
    (a_{11},a_{12},\ldots,a_{1d},a_{22},a_{23},\ldots,a_{dd})^\top
    \in
    \R^{p_d}
    .
\end{equation*}
Vectors and matrices indexed by $\mathcal{I}_d$ follow this ordering.
We identify $\R[\Sym_d]=\R[\sigma_{ij}:(i,j)\in\mathcal{I}_d]$ with the ring of polynomial maps from $\Sym_d$ to $\R$.
The notation $P\equiv0$ means that $P$ is the zero polynomial in this ring.
A polynomial $P$ is homogeneous of degree $K$ when $P(t\Sigma)=t^KP(\Sigma)$ for every $t\in\R$.
Its total degree is $\deg(P)$, and two polynomials are coprime when their only common divisors are nonzero constants.

For $P\in\R[\Sym_d]$, the coordinate gradient $\nabla P(\Sigma)\in\R^{p_d}$ consists of the partial derivatives in the preceding $\vech$ order.
The Fr\'echet differential of $P$ at $\Sigma$ in the direction $H$ is
\begin{equation*}
    \mathrm{d}P_\Sigma(H)
    =
    \left.\frac{\mathrm{d}}{\mathrm{d}t}P(\Sigma+tH)\right|_{t=0}
    ,
    \quad
    H\in\Sym_d
    .
\end{equation*}
The symmetric gradient $G_P(\Sigma)\in\Sym_d$ is the unique matrix satisfying $\mathrm{d}P_\Sigma(H)=\tr\{G_P(\Sigma)H\}$ for every $H\in\Sym_d$.
Thus, $(G_P)_{ii}=\partial P/\partial\sigma_{ii}$ and $(G_P)_{ij}=2^{-1}\partial P/\partial\sigma_{ij}$ for $i<j$, with all quantities evaluated at $\Sigma$.
For $P(\Sigma)\ne0$, the relative gradient is $R_P(\Sigma)=G_P(\Sigma)\Sigma/P(\Sigma)$.

\subsection{Problem set-up}

We formulate the statistical problem in the ambient covariance space.
Fix $d\ge1$ and $\Sigma\in\Smat^d_{++}$.
The notation $N_m(\mu,\Omega)$ denotes the Gaussian law on $\R^m$ with mean $\mu\in\R^m$ and covariance matrix $\Omega\in\Smat^m_+$.
Write $\mathcal{P}_\Sigma=N_d(0,\Sigma)$ and $\mathcal{P}_\Sigma^{(n)}=\mathcal{P}_\Sigma^{\otimes n}$.
Let $X_1,\ldots,X_n$ be independent observations with joint law $\mathcal{P}_{\Sigma}^{(n)}$.
We use the known-mean sample covariance matrix
\begin{equation}
\label{eq:sample-covariance}
    \Sighat
    =
    n^{-1}\sum_{i=1}^nX_iX_i^\top
    ,
    \quad
    n>d
    .
\end{equation}
For an integer $\nu\ge1$ and $\Omega\in\Smat^m_+$, $\mathcal{W}_m(\nu,\Omega)$ denotes the law of $\sum_{\ell=1}^{\nu}Z_\ell Z_\ell^\top$ for independent $Z_\ell\sim N_m(0,\Omega)$.
Its scalar specialization with $\Omega=1$ is $\chi^2_\nu$.
Consequently, $n\Sighat\sim\mathcal{W}_d(n,\Sigma)$ and $\Sighat\succ0$ almost surely.

The identification equality is required only on a covariance model, whereas the polynomial pair representing it governs off-model sampling behaviour.
The following definition separates these two objects.
\begin{definition}[Identification strategy]
\label{def:strategy}
    Let $\mathcal{M}\subseteq\Smat^d_{++}$ be a nonempty covariance model and let $\tau:\mathcal{M}\to\R$ be a scalar target.
    An identification strategy for $\tau$ is an ordered pair $(N,D)$ of coprime elements of $\R[\Sym_d]$ such that $D$ is not identically zero on $\mathcal{M}$ and
    \begin{equation*}
        N(\Sigma)
        =
        \tau(\Sigma)D(\Sigma)
        \quad
        (\Sigma\in\mathcal{M})
        .
    \end{equation*}
    Thus, $\tau=N/D$ at every point of $\mathcal{M}$ at which $D$ is nonzero.
    The covariance plug-in estimator is $\hat{\tau}=N(\Sighat)/D(\Sighat)$ on the event $\{D(\Sighat)\ne0\}$.
    The target is scale-invariant when $\tau(t\Sigma)=\tau(\Sigma)$ whenever $\Sigma,t\Sigma\in\mathcal{M}$ and $t>0$.
\end{definition}

This distinction is necessary for two reasons.
First, $\Sighat$ need not satisfy the model equations, so the sampling behaviour of $\hat{\tau}$ depends on $N$ and $D$ as polynomials on the whole of $\Sym_d$.
Second, multiplying both $N$ and $D$ by a nonconstant common factor leaves the identified ratio unchanged wherever both representations are defined, yet alters the off-model sampling behaviour of the denominator.
Coprimality removes this ambiguity.
Two coprime pairs representing the same rational function coincide up to a common nonzero scalar, while distinct rational certificates for the same target remain distinct strategies.

\subsection{Exact self-normalization}

We now introduce the central property of the paper.
Define the polynomial matrix map $\Gamma:\Sym_d\to\Sym_{p_d}$ by
\begin{equation}
\label{eq:Gamma-gaussian}
    \Gamma_{(ij),(kl)}(\Sigma)
    =
    \sigma_{ik}\sigma_{jl}+\sigma_{il}\sigma_{jk}
    ,
    \quad
    (i,j),(k,l)\in\mathcal{I}_d
    .
\end{equation}
Under $\mathcal{P}_\Sigma^{(n)}$, the matrix $\Gamma(\Sigma)$ is the covariance matrix of $n^{1/2}\vech(\Sighat-\Sigma)$.
For $D\in\R[\Sym_d]$, define its Gaussian first-order variance functional by
\begin{equation}
\label{eq:sD}
    s_D^2(\Sigma)
    =
    \nabla D(\Sigma)^\top\Gamma(\Sigma)\nabla D(\Sigma)
    =
    2\tr\{(G_D(\Sigma)\Sigma)^2\}
    .
\end{equation}
The second equality is the Gaussian quadratic-form variance identity, verified in Supplementary Lemma~\ref{lem:trace-main}.
Both sides are polynomial in $\Sigma$, so the equality extends from $\Smat^d_{++}$ to all of $\Sym_d$.
For $\Sigma\in\Smat^{d}_+$ the matrix $\Gamma(\Sigma)$ is positive semidefinite, and we write $s_D(\Sigma)=\{s_D^2(\Sigma)\}^{1/2}$ there.
The standardized denominator is $\FD=nD(\Sighat)^2/s_D^2(\Sighat)$, defined on the event $\{s_D^2(\Sighat)>0\}$.

\begin{definition}[Exact self-normalization]
\label{def:self-normalization}
    A nonconstant homogeneous polynomial $D\in\R[\Sym_d]$ is exactly self-normalizing when there is a constant $c\in(0,\infty)$ such that
    \begin{equation}
    \label{eq:SN}
        2\tr\{(G_D(\Sigma)\Sigma)^2\}
        =
        cD(\Sigma)^2
        \quad
        (\Sigma\in\Sym_d)
        .
    \end{equation}
    For a strategy $(N,D)$, exact self-normalization refers only to the denominator $D$.
\end{definition}

The constant $c$ is unique because $D$ is not the zero polynomial.
Below, self-normalizing always means exactly self-normalizing.
Equation~\eqref{eq:SN} states that the Gaussian first-order noise of $D$ contracts in exact proportion to $|D|$ throughout the ambient covariance space.
In particular, evaluating \eqref{eq:SN} at $\Sighat$ gives $\FD=n/c$ whenever $s_D^2(\Sighat)>0$.
Thus, the standardized denominator of a self-normalizing polynomial $D$ is constant over samples rather than a relevance statistic.
Exact self-normalization is invariant under congruence transformations.
Every rational strategy for a scale-invariant coefficient in a linear structural equation model has a homogeneous representative, and a nonzero self-normalizing polynomial has no zero in $\Smat^d_{++}$.
Supplementary Lemmas~\ref{lem:trace-main}--\ref{lem:nozero} prove these facts.
If $D$ depends on $\Sigma$ only through its compression to an $r$-dimensional subspace, its self-normalization status and constant are unchanged in every ambient dimension $d\ge r$ by Supplementary Lemma~\ref{lem:ambient}.
The dimension-three classification below therefore applies to any denominator supported on at most three directions, whatever the number of observed coordinates.

\begin{remark}[Scope of exactness]
    \label{rem:scope-exactness}
    Equation~\eqref{eq:SN} is an identity in the ambient polynomial ring for the coprime denominator fixed by Definition~\ref{def:strategy}.
    It is not merely an equality on a structural model.
    The word ``exact'' also refers to the Gaussian covariance operator in \eqref{eq:sD}.
    A sandwich studentizer under a general law targets a different quadratic form and need not be constant sample by sample.
    Under elliptical sampling, the same algebraic class persists after a kurtosis adjustment, although the product-of-chi-squares pivot is generally lost.
    With an unknown Gaussian mean, the corresponding centred covariance formulas replace $n$ by $n-1$.
    Supplementary Proposition~\ref{prop:elliptical} and Remark~\ref{rem:elliptical-unknown-mean} give the precise statements.
\end{remark}

\section{Flag powers and an exact pivot}
\label{sec:flag}

\subsection{Nested covariance minors}

We construct self-normalizing denominators in every dimension.
The building blocks are covariance volumes of subspaces.
For an $r$-dimensional subspace $V\subseteq\R^d$, let $M_V$ be any full-row-rank matrix whose row space is $V$, and write $\Delta_V(\Sigma)=\det(M_V\Sigma M_V^\top)$.
If a random vector $X$ has covariance matrix $\Sigma$, then $\Delta_V(\Sigma)$ is the generalized variance of the reduced vector $M_VX$.
A different choice of $M_V$ multiplies $\Delta_V$ by a positive constant and affects no statement below.

A flag is a strictly increasing chain $V_1\subset\cdots\subset V_k$ of nonzero subspaces of $\R^d$.
Let $a_j=\dim(V_j)$, so that $1\le a_1<\cdots<a_k\le d$, and let $e_1,\ldots,e_k$ be positive integers.
The associated flag power and its multiplicities are
\begin{equation}
\label{eq:flag-power}
    D(\Sigma)
    =
    \prod_{j=1}^k\Delta_{V_j}(\Sigma)^{e_j}
    ,\quad
    m_i
    =
    \sum_{j=1}^ke_j\ind(a_j\ge i)
    .
\end{equation}
Thus, $m_i$ is the total exponent carried by factors whose subspaces have dimension at least $i$, and $m_1\ge\cdots\ge m_{a_k}\ge1$.
The following theorem shows that every flag power is exactly self-normalizing and gives its exact sampling law.

\begin{theorem}[Flag powers]
\label{thm:flag}
    Let $D$ and $m_1,\ldots,m_{a_k}$ be given by \eqref{eq:flag-power}.
    The following statements hold.
    \begin{enumerate}
        \item[(i)] For every $\Sigma\succ0$, the spectrum of the relative gradient $R_D(\Sigma)$ is $(m_1,\ldots,m_{a_k},0,\ldots,0)$, where the eigenvalue zero has multiplicity $d-a_k$.
        \item[(ii)] Equation~\eqref{eq:SN} holds with $c=2\sum_{i=1}^{a_k}m_i^2$.
        \item[(iii)] If $n\ge a_k$, then $D(\Sighat)/D(\Sigma)$ is distributed as
        \begin{equation}
        \label{eq:wishart-pivot}
            \prod_{i=1}^{a_k}\Biggl(
                \frac{\chi^2_{n-i+1}}{n}
            \Biggr)^{m_i}
            ,
        \end{equation}
        where the chi-squared variables are independent.
        \item[(iv)] In particular, the identity $\FD=n/c$ holds almost surely, and no sequence $\Sigma_n\in\Smat^d_{++}$ can satisfy both $n^{1/2}D(\Sigma_n)=O(1)$ and $s_D(\Sigma_n)\to s>0$.
    \end{enumerate}
\end{theorem}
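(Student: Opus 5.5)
Reduce first to a coordinate flag. By the congruence invariance of exact self-normalization (Supplementary Lemmas), pick an invertible $A$ whose first $a_j$ rows span $V_j$ for each $j$, and replace $\Sigma$ by $A\Sigma A^\top$. The law of $\Sighat$ transforms compatibly, since $nA\Sighat A^\top\sim\mathcal{W}_d(n,A\Sigma A^\top)$. After this step, up to a positive constant, $D(\Sigma)=\prod_j\det(\Sigma_{1:a_j})^{e_j}$. Using the Cholesky/LDL factorization $\Sigma=LTL^\top$ with $L$ unit lower triangular and $T=\diag(t_1,\ldots,t_d)$, each leading minor is $\det(\Sigma_{1:r})=\prod_{i\le r}t_i$. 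Hence $D=\prod_{i=1}^{a_k}t_i^{m_i}$, where $t_i=\det(\Sigma_{1:i})/\det(\Sigma_{1:i-1})$ is the $i$th successive conditional variance. This is the monomial representation mentioned in the contributions.

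For (i), compute the symmetric gradient of a single leading minor. Jacobi's formula gives $G_{\Delta_r}(\Sigma)=\Delta_r(\Sigma)\,E_r$, where $E_r$ is $(\Sigma_{1:r})^{-1}$ padded by zeros to order $d$. The logarithmic derivative is additive over factors, so
\begin{equation*}
    R_D(\Sigma)=\sum_j e_j E_{a_j}\Sigma .
\end{equation*}
Each $P_r:=E_r\Sigma$ is an idempotent of rank $r$: it is the oblique projection $\Sigma$-orthogonally onto the coordinate span of the first $r$ variables. Moreover $P_rP_s=P_{\min(r,s)}$ for nested indices, since $E_r\Sigma E_s=E_{\min(r,s)}$ by a block computation. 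So the $P_{a_j}$ form a commuting chain of projections. Writing $R_D=\sum_j e_jP_{a_j}$ in terms of the differences $Q_i=P_i-P_{i-1}$ (rank-one commuting idempotents) gives $R_D=\sum_{i\le a_k}m_iQ_i$. The spectrum is therefore $(m_1,\ldots,m_{a_k},0,\ldots,0)$. This commutation/idempotence bookkeeping is the main step; the cleanest check is to conjugate by $L$, after which $L^{\top}E_rL^{-\top}$-type expressions become diagonal coordinate projections.

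For (ii), observe that $G_D\Sigma=D\,R_D$, so $2\tr\{(G_D\Sigma)^2\}=2D^2\tr(R_D^2)=2D^2\sum_i m_i^2$ on $\Smat^d_{++}$. Both sides are polynomials and $\Smat^d_{++}$ is Zariski dense in $\Sym_d$, so the identity extends to all of $\Sym_d$. This gives \eqref{eq:SN} with $c=2\sum_i m_i^2$.

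For (iii), use the Bartlett decomposition. After the reduction, write $\Sigma=LTL^\top$ and $n\Sighat=\Sigma^{1/2}$-free as $n\Sighat=(LT^{1/2})W(LT^{1/2})^\top$ with $W\sim\mathcal{W}_d(n,I)$. Leading minors transform multiplicatively under lower-triangular congruence, giving $\det(\Sighat_{1:r})=\det(\Sigma_{1:r})\,n^{-r}\det(W_{1:r})$. Next apply the Bartlett factorization $W=BB^\top$, with $B$ lower triangular, $B_{ii}^2\sim\chi^2_{n-i+1}$ independent; this requires $n\ge a_k$ for the relevant entries. It yields $\det(W_{1:r})=\prod_{i\le r}B_{ii}^2$. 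Multiplying over the factors and regrouping exponents by $m_i$ produces \eqref{eq:wishart-pivot}.

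For (iv), evaluate \eqref{eq:SN} at $\Sighat$. Then $s_D^2(\Sighat)=cD(\Sighat)^2>0$ almost surely, because $\Sighat\succ0$ and flag minors are positive there; hence $\FD=n/c$. For the second claim, $s_D(\Sigma_n)=c^{1/2}|D(\Sigma_n)|$, so $n^{1/2}D(\Sigma_n)=O(1)$ forces $s_D(\Sigma_n)=O(n^{-1/2})\to0$, which contradicts $s_D(\Sigma_n)\to s>0$.
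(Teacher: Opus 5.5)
Your proposal is correct and follows essentially the same route as the paper. The steps match: reduction to the coordinate flag by congruence, the Jacobi formula giving $G_{\Delta_r}(\Sigma)\Sigma/\Delta_r(\Sigma)$ as the block matrix with identity leading block and zero lower rows, the trace of the square with polynomial extension, and the Bartlett decomposition through a lower-triangular factor of $\Sigma$. The only variation is in (i): the paper observes that these block matrices, and hence $R_D(\Sigma)$, are upper triangular and reads the spectrum off the diagonal. Your commuting-idempotent chain $P_rP_s=P_{\min(r,s)}$ reaches the same spectrum with a little more bookkeeping, and additionally shows that $R_D(\Sigma)$ is diagonalizable.
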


\begin{remark}[Algebraic and sampling content]
\label{rem:flag-scope}
    Parts (i) and (ii) are polynomial identities and do not depend on a sampling law.
    The samplewise identity in part (iv) therefore holds at every sample covariance matrix at which $D$ is nonzero, whatever the data-generating law.
    Only the factorization in part (iii) uses Gaussian sampling.
    With an unknown Gaussian mean, parts (iii) and (iv) hold for the centred sample covariance with $n$ replaced by $n-1$.
    Under elliptical sampling, flag powers remain proportionally self-normalizing after a kurtosis adjustment, but the product-of-chi-squares law is generally lost.
    Supplementary Proposition~\ref{prop:elliptical} and Remark~\ref{rem:elliptical-unknown-mean} give these variants.
\end{remark}

The nesting hypothesis cannot be weakened.
If neither of two subspaces $V$ and $W$ contains the other, then no product $\Delta_V^p\Delta_W^q$ with positive integer exponents $p$ and $q$ is exactly self-normalizing.
Supplementary Section~\ref{supp:flag-proofs} proves both this two-subspace criterion and Theorem~\ref{thm:flag}.

\subsection{Recursive variance interpretation}

Flag powers aggregate conditional-variance information.
Let $X=(X^{(1)},\ldots,X^{(d)})^\top$ have the sampling law $\mathcal{P}_\Sigma$, and write $v_i(\Sigma)=\var_\Sigma\{X^{(i)}\mid X^{(1)},\ldots,X^{(i-1)}\}$ for the successive conditional variances, with $v_1(\Sigma)=\sigma_{11}$.
For every $r\in[d]$,
\begin{equation}
\label{eq:schur-variance}
    \det(\Sigma_{1:r})
    =
    \prod_{i=1}^r
    v_i(\Sigma)
    .
\end{equation}
Consider the coordinate flag, whose $j$th subspace is spanned by the first $a_j$ coordinate directions.
By \eqref{eq:schur-variance}, the flag power \eqref{eq:flag-power} is then the monomial $\prod_{i=1}^{a_k}v_i(\Sigma)^{m_i}$, so the multiplicity $m_i$ is the exponent attached to the $i$th conditional variance.
A general flag reduces to this case, up to a positive constant, after one fixed nonsingular linear transformation of the observation vector.
For the front-door strategy of Section~\ref{sec:intro}, the first two conditional variances are the treatment variance and the mediator residual variance.
Its denominator is the monomial $v_1(\Sigma)^2v_2(\Sigma)$, and part (ii) of Theorem~\ref{thm:flag} returns the constant $c=2(2^2+1^2)=10$ behind the value $n/10$.

The same representation explains the exact sampling law.
Under $\mathcal{P}_\Sigma^{(n)}$, the sample conditional variances computed from $\Sighat$ along the coordinate flag are mutually independent.
The $i$th is distributed as $v_i(\Sigma)$ times a $\chi^2_{n-i+1}/n$ variable.
This is the Bartlett decomposition of the Wishart matrix $n\Sighat$ \citep[Ch.~3]{muirhead1982}, and evaluating the monomial at these independent factors gives the law in \eqref{eq:wishart-pivot}.
To first order, the relative variance of every sample conditional variance is $2/n$, whatever the value of $\Sigma$.
By independence, the relative first-order variance of the sample flag power is $2\sum_{i=1}^{a_k}m_i^2/n$, which equals $c/n$.
This constancy in $\Sigma$ is the sampling mechanism behind exact self-normalization.
The sample flag power depends on $\Sighat$ only through these conditional variances.
The regression coefficients of each coordinate on its predecessors, which complete the recursive decomposition of $\Sighat$, do not enter.
This contrast between conditional variances and regression coefficients is the sample form of the distinction between variance information and slope information that organizes the converse results of Section~\ref{sec:converse}.

\section{Converse results and diagnosis}
\label{sec:converse}

\subsection{Dimension two}

We first obtain a complete converse for covariance matrices of order two.
Through face restriction, this result also drives the three-variable analysis.

\begin{theorem}[Complete converse in dimension two]
\label{thm:d2}
    Let $D\in\R[\Sym_2]$ be homogeneous, nonconstant and exactly self-normalizing.
    Then
    \begin{equation}
    \label{eq:d2-form}
        D(\Sigma)
        =
        \gamma(v^\top\Sigma v)^a(\det\Sigma)^b
        ,
    \end{equation}
    for a nonzero constant $\gamma$, a nonzero vector $v\in\R^2$, and nonnegative integers $a,b$ with $(a,b)\ne(0,0)$.
    The self-normalization constant is $c=2\{(a+b)^2+b^2\}$.
\end{theorem}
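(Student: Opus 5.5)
The plan is to strip off powers of $\det\Sigma$, reduce the remaining factor to a degenerate-gradient condition on the rank-one boundary, and then use the Gordan--Noether theorem to force a single linear form.

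\textbf{Step 1: stripping $\det\Sigma$.} Since $G_{\det}(\Sigma)\Sigma=(\det\Sigma)I_2$, the relative gradient of a product is additive in the factors:
\[
R_{D(\det)^b}=R_D+bI_2 .
\]
Euler's identity gives $\tr\{G_D(\Sigma)\Sigma\}=KD(\Sigma)$, where $K=\deg D$. Hence
\[
\tr\{(R_D+bI)^2\}=\tr(R_D^2)+2bK+2b^2 .
\]
Both sides of this identity are polynomial after clearing denominators, so $D$ is self-normalizing if and only if $D(\det)^b$ is.

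We therefore write $D=(\det\Sigma)^bD_0$ with $D_0$ not divisible by $\det\Sigma$. If $D_0$ is constant we are done, with $a=0$. Otherwise $D_0$ is homogeneous of degree $a=K-2b\ge1$ and exactly self-normalizing with some constant $c_0$.

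\textbf{Step 2: the constant and a degenerate gradient.} The polynomial $\det\Sigma$ is irreducible and its real zero set $\{\pm xx^\top\}$ is Zariski dense in its complex zero set. Since $\det\nmid D_0$, there are rank-one $\Sigma$ with $D_0(\Sigma)\ne0$.

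At such $\Sigma$ the matrix $G\Sigma$ has rank at most one, so $\tr\{(G\Sigma)^2\}=(\tr G\Sigma)^2=a^2D_0^2$. This forces $c_0=2a^2$.

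Back in the interior, write $\lambda_1,\lambda_2$ for the eigenvalues of $R_{D_0}$. Then $\lambda_1^2+\lambda_2^2=2a^2/2\cdot$, i.e.\ $\lambda_1^2+\lambda_2^2=a^2=(\lambda_1+\lambda_2)^2$, so $\lambda_1\lambda_2=0$. Hence
\[
\det R_{D_0}=\frac{\det G_{D_0}\,\det\Sigma}{D_0^2}=0,
\]
which gives the polynomial identity $\det G_{D_0}\equiv0$. Thus the symmetric gradient $\Sigma\mapsto G_{D_0}(\Sigma)$ takes values in the quadric cone $\mathcal{Q}=\{G\in\Sym_2:\det G=0\}$.

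\textbf{Step 3: Gordan--Noether and a single direction.} The gradient map sends $\R^3\cong\Sym_2$ into the surface $\mathcal{Q}$, so its Jacobian, the Hessian of $D_0$, has rank at most two everywhere. The Hessian determinant therefore vanishes identically.

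By the Gordan--Noether theorem in three variables \citep{gordannoether1876,lossen2004}, $D_0$ is a cone. That is, $D_0(\Sigma)=F(\tr(L_1\Sigma),\tr(L_2\Sigma))$ for fixed $L_1,L_2\in\Sym_2$, and so $G_{D_0}$ lies in the plane $\Pi=\Span(L_1,L_2)$.

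A plane through the origin meets the real quadric cone $\mathcal{Q}$ in the origin alone, in one line, or in two lines. The first case is excluded because $\nabla D_0\not\equiv0$. So $G_{D_0}$ takes values in a union of at most two lines through the origin. Two polynomial maps into distinct lines whose union covers an irreducible variety's image force one of them to be identically zero. Hence $G_{D_0}(\Sigma)=\phi(\Sigma)vv^\top$ for a fixed $v\ne0$ and a polynomial $\phi$.

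It follows that $D_0$ has zero derivative in every direction $H$ with $v^\top Hv=0$. So $D_0$ is a polynomial in $v^\top\Sigma v$ alone, and by homogeneity $D_0=\gamma(v^\top\Sigma v)^a$.

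\textbf{Step 4: the constant.} For the full product, the constant follows from Step 1 with $R_{D_0}$ having spectrum $(a,0)$:
\[
c=2\{(a+b)^2+b^2\},
\]
in agreement with Theorem~\ref{thm:flag}(ii).

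\textbf{Main obstacle.} The delicate point is Step 3. One must justify that the vanishing Hessian determinant follows from the gradient landing in a surface. One must also handle the real-versus-complex subtleties in applying Gordan--Noether, and exclude a gradient that switches between the two lines of $\Pi\cap\mathcal{Q}$. The density argument on the rank-one boundary in Step 2 also needs care, in particular the real Zariski density of $\{\det\Sigma=0\}$.
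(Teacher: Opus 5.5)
Your proposal is correct and follows the paper's route. You strip powers of $\det\Sigma$, force $\det G_{D_0}\equiv0$, deduce a vanishing Hessian, apply Gordan--Noether, and use the binary quadratic $\det(xL_1+yL_2)$ to pin the gradient to one rank-one line $vv^\top$. The one variation is how you get $c_0=2a^2$. You remove all determinant powers at once and evaluate \eqref{eq:SN} at a rank-one point where $D_0\neq0$, whereas the paper derives $\det G_D\det\Sigma=qD^2$ and strips by primality of $\det\Sigma$ whenever $q\neq0$. The real reduction after Gordan--Noether that you flag is routine, since a complex linear relation among real partial derivatives yields a real one.
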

Thus, a two-variable denominator can self-normalize only by combining one variance direction with the full covariance volume.

\begin{remark}[What the converse excludes]
\label{rem:d2-excludes}
    The permitted irreducible factors are one rank-one variance $v^\top\Sigma v$ and the determinant.
    Every linear form $\tr(B\Sigma)$ whose coefficient matrix $B$ has rank two, such as the off-diagonal covariance $\sigma_{12}$, is excluded even though it has the same degree as the permitted variance forms.
    Thus, the theorem distinguishes variance information from slope information within the same polynomial degree.
\end{remark}

We summarize the proof mechanism, which explains the rigidity.
Euler's identity and \eqref{eq:SN} fix the first two power sums of the eigenvalues of $R_D(\Sigma)$.
The elementary symmetric identity converts them into a determinant identity with an explicit constant coefficient.
When that coefficient is nonzero, the determinant divides $D$, the factor can be stripped, and induction on the degree applies.
When it vanishes, the gradient map takes values in the rank-one quadric, and the Hessian determinant of $D$ vanishes identically.
The Gordan--Noether theorem reduces $D$ to a power of one linear form, which symmetry and reality identify with a variance direction \citep{gordannoether1876,lossen2004}.
Supplementary Section~\ref{supp:d2-converse} gives the full proof.

\subsection{Dimension three}

We now reduce the three-variable converse to a single mixed branch.
Put $\delta=\det\Sigma$ and call $D\in\R[\Sym_3]$ determinant-free when $\delta\nmid D$.
For homogeneous $D$ of degree $K$, define the rank-one restriction $p_D:\R^3\to\R$ by $p_D(u)=D(uu^\top)$.
For a plane $V\subseteq\R^3$ with basis matrix $P\in\R^{3\times2}$, define the face restriction $D_V:\Sym_2\to\R$ by $D_V(S)=D(PSP^\top)$.
The plane $V$ is called $D$-good when $D_V\not\equiv0$, and simply good when the polynomial is clear from context.
A polynomial vector is primitive when its entries have no nonconstant common divisor.

A nonzero plane restriction of a self-normalizing polynomial is again self-normalizing with the same constant.
Supplementary Lemma~\ref{lem:face-supp} provides this restriction principle, and Theorem~\ref{thm:d2} then determines the form of every such restriction.
\begin{definition}[Face type]
\label{def:face-type}
    Let $D$ be a determinant-free self-normalizing polynomial on $\Sym_3$, of degree $K$ and with self-normalization constant $c$.
    For $a,b\in\mathbb{Z}_{\ge0}$, we say that $D$ has face type $(a,b)$ if, for every $D$-good plane $V$, there exist $\gamma_V\in\R\setminus\{0\}$ and $w_V\in\R^2\setminus\{0\}$ such that, as a polynomial identity on $\Sym_2$,
    \begin{equation}
    \label{eq:facetype}
        D_V(S)
        =
        \gamma_V(w_V^\top S w_V)^a(\det S)^b
        ,
        \quad
        a+2b
        =
        K
        ,
        \quad
        (a+b)^2+b^2
        =
        c/2
        .
    \end{equation}
\end{definition}
A determinant-free solution has at least one good plane, and its degree and constant admit at most one pair satisfying \eqref{eq:facetype}, so the face type exists and is common to all good planes.
Removing a factor of $\delta$ preserves exact self-normalization with a shifted constant, so the residue in the next theorem is itself self-normalizing.
Supplementary Lemmas~\ref{lem:strip-supp} and~\ref{lem:facetype-supp} provide both facts.

\begin{theorem}[Three-variable reduction]
\label{thm:d3-reduction}
    Let $D\in\R[\Sym_3]$ be homogeneous, nonconstant and exactly self-normalizing, and let $E$ be the determinant-free residue obtained by removing from $D$ the largest power of $\delta$.
    Exactly one of the following cases occurs.
    \begin{enumerate}
        \item[(i)] If $p_E\not\equiv0$, then $E=\gamma(v^\top\Sigma v)^{\deg(E)}$ for a nonzero constant $\gamma$ and a nonzero vector $v\in\R^3$.
        \item[(ii)] If $p_E\equiv0$ and the face type of $E$ is $(0,b)$, then $E=\gamma\Delta_W^b$ for a nonzero constant $\gamma$ and a plane $W\subseteq\R^3$.
        \item[(iii)] If $p_E\equiv0$ and the face type of $E$ is $(a,b)$ with $a,b\ge1$, then
        \begin{equation}
        \label{eq:mixed-spectrum}
            \spec\{R_E(\Sigma)\}
            =
            \{a+b,b,0\}
            \quad
            (\Sigma\succ0)
            ,
        \end{equation}
        $\det G_E\equiv0$, and a primitive polynomial vector $\knu$ satisfies $G_E\knu\equiv0$, so that $\knu(\Sigma)$ spans the kernel of $G_E(\Sigma)$ at every point where $G_E(\Sigma)$ has rank two and $\knu(\Sigma)\ne 0$.
    \end{enumerate}
\end{theorem}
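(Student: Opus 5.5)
We first reduce to $E$. By Supplementary Lemma~\ref{lem:strip-supp}, stripping $\delta^e$ from $D$ leaves a homogeneous polynomial $E$ that is again exactly self-normalizing, with a shifted constant $c_E$; we may assume $E$ nonconstant, since otherwise case (i) holds trivially with $\deg(E)=0$. By Supplementary Lemma~\ref{lem:facetype-supp}, $E$ has a well-defined face type $(a,b)$ with $a+2b=K=\deg(E)$. The three cases are mutually exclusive once we check that case (i) forces $p_E\not\equiv0$ and that the face type separates (ii) from (iii). Throughout, the basic tools are Euler's identity $\tr R_E=K$ and the self-normalization identity $\tr R_E^2=c_E/2$. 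These pin down the first two power sums of the spectrum of $R_E(\Sigma)$ on $\Smat^3_{++}$, which is a $3\times3$ matrix similar to $\Sigma^{1/2}G_E\Sigma^{1/2}$ and therefore has real eigenvalues.

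\emph{Case (i).} If $p_E\not\equiv0$, we choose $u$ with $p_E(u)\ne0$ and note that every plane containing $u$ is good. On such a plane $V$, the restriction $D_V(S)=\gamma_V(w_V^\top Sw_V)^a(\det S)^b$ evaluated at rank-one $S=xx^\top$ is nonzero only if $b=0$. So the face type is $(K,0)$, and every good face restriction is a power of a single rank-one variance. Next we compare $E(\Sigma)$ with its restrictions. The rank-one restriction $p_E$ is a form of degree $2K$ on $\R^3$ whose restriction to every good plane is $\gamma_V(w_V^\top x)^{2K}$, which is a perfect $2K$-th power of a linear form. A ternary form whose restrictions to a Zariski-dense set of planes are powers of linear forms is itself $\gamma(v^\top x)^{2K}$, because the lines $\{w_V^\top x=0\}$ must all pass through one point. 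Finally, the face restrictions already determine $E$: $E-\gamma(v^\top\Sigma v)^K$ vanishes on every $PSP^\top$ with $P$ spanning a good plane, and these matrices, being all rank-$\le2$ matrices from a dense set of planes, form the dense hypersurface $\{\delta=0\}$. Hence $\delta$ divides the difference, and we must show it is zero. Since $E$ is determinant-free we cannot conclude directly, so we apply the spectral argument instead. Here $R_E$ has trace $K$ and $\tr R_E^2=K^2$, because the constant is $c_E=2K^2$ from the face type. Together with $\det G_E=0$, which comes from restricting to the boundary, this forces the spectrum $\{K,0,0\}$. Then $G_E$ has rank one, $G_E=\lambda\,\ell\ell^\top$, and the rank-one quadric Hessian argument used in Theorem~\ref{thm:d2} (Gordan--Noether in its cone form) gives $E=\gamma(v^\top\Sigma v)^K$.

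\emph{Cases (ii) and (iii).} If $p_E\equiv0$, then $b\ge1$. When the face type is $(0,b)$, every good face restriction is a power of the determinant. We then show that the rank-two locus where $E$ is nonzero is controlled by a single plane $W$. The zero set of $E$ restricted to rank-two matrices $\{\delta=0\}$ consists of those $PSP^\top$ with $V$ bad, and since $E$ vanishes on all of $\{\delta=0\}$ except over good planes, the good planes form an open dense set. We then compare $E$ with $\Delta_W^b$ using the spectral identities $\tr R_E=2b$ and $\tr R_E^2=2b^2$, which give spectrum $\{b,b,0\}$. The kernel direction of $G_E\Sigma$ defines a line field, and integrability together with polynomiality should force it to be constant, which yields $E=\gamma\Delta_W^b$. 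For the mixed type with $a,b\ge1$, we use $\tr R_E=a+2b$ and $\tr R_E^2=(a+b)^2+b^2$. We then show $\det G_E\equiv0$ by using $p_E\equiv0$ (the boundary behaviour prevents full rank near a determinant-free zero), and the spectrum follows as $\{a+b,b,0\}$. The kernel vector $\knu$ comes from the adjugate: take a nonzero column of $\adjm G_E$ and divide it by the gcd of its entries. This produces a primitive polynomial vector with $G_E\knu\equiv0$ that spans the kernel wherever the rank is two.

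\emph{Main obstacle.} I expect the hardest step to be proving $\det G_E\equiv0$ in the determinant-free setting, since the power sums alone leave one free eigenvalue. I would try the elementary symmetric identity $e_3(R_E)=\det G_E\,\delta/E^3$ together with determinant-freeness. If $\det G_E\ne0$, then $\delta$ would have to divide $E$ through the relation obtained from $e_2$, which is what happens in the proof of Theorem~\ref{thm:d2}. The rigidity of the kernel line in case (ii) is the next delicate point.
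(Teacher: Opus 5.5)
There is a genuine gap, and it is the one you flag yourself: nothing in your proposal proves $\det G_E\equiv0$, yet all three of your cases rely on it. Euler's identity and \eqref{eq:SN} fix only $e_1(R_E)=K$ and $e_2(R_E)=(K^2-c_E/2)/2$. The third invariant $e_3(R_E)=\det G_E\,\delta/E^3$ is a priori a nonconstant rational function, since the spectrum is only constrained to lie on a circle. ``Restricting to the boundary'' supplies no argument for this. Your fallback through $e_2$ also fails, because the $e_2$ identity reads $\tr\{\adjm(\Sigma)\adjm(G_E)\}=e_2E^2$. It carries no factor of $\delta$, so, unlike \eqref{eq:d2det}, it yields no divisibility.

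The missing idea is the spectral constancy of Proposition~\ref{prop:spectrum-supp}:
\begin{itemize}
\item Because $\lVert\operatorname{grad}_g\log|E|\rVert_g^2=c_E/2$ is constant for the affine-invariant metric, integral curves of the gradient are geodesics. After a congruence normalization this gives $\tilde E(e^{tB})=\tilde E(I)e^{(c_E/2)t}$.
\item Expanding the left side in exponentials forces an integer resonance $n\cdot\lambda=c_E/2$ with $|n|=K$. This resonance meets the spectral circle in finitely many points.
\item Connectedness of the cone then makes the spectrum constant.
\end{itemize}
Only then does $\det G_E\,\delta=\lambda_1\lambda_2\lambda_3E^3$ have a constant coefficient, so that primality of $\delta$ and $\delta\nmid E$ force a zero eigenvalue (Corollary~\ref{cor:detG-supp}). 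Granting that step, your derivation of $\{a+b,b,0\}$ in (iii) is correct. So is your construction of $\knu$ as a gcd-reduced nonzero column of $\adjm G_E$, which is simpler than the paper's factorization $\adjm G_E=\psi\knu\knu^\top$ (the paper wants that form for the cofactor syzygy).

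Two further steps fail independently of this.

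\emph{Case (ii).} Even with the spectrum $\{b,b,0\}$ in hand, your claim that integrability and polynomiality ``should force'' a constant kernel line is unproved. It is exactly the rigidity the paper cannot establish in the mixed branch (Remark~\ref{rem:mixed-kernel}). The paper sidesteps it by adjugate duality:
\begin{itemize}
\item $p_E\equiv0$ and Lemma~\ref{lem:strata-supp} make $E^\dagger$ vanish on the rank-two stratum, so $E^\dagger=\delta^sF$ with $s\ge1$ and $\delta\nmid F$.
\item Stripping moves the spectrum $(b,b,2b)$ to $(b-s,b-s,2b-s)$. The forced zero eigenvalue and the involution bound $2s\le K$ give $s=b$.
\item $F$ then has type $(b,0)$ and falls into case (i). One more adjugate returns $\gamma\Delta_{u^\perp}^b$.
\end{itemize}

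\emph{Case (i).} Your evaluation at rank-one matrices to obtain type $(K,0)$ is correct and more direct than the paper's dichotomy lemma. You also correctly reach $\delta\mid E-E_0$ with $E_0=\gamma(v^\top\Sigma v)^K$. The closing appeal to Gordan--Noether, however, is invalid: $\Sym_3$ has six coordinates, and noncone forms with vanishing Hessian exist there (Supplementary Section~\ref{supp:hessian-obstruction}). The paper finishes this case without any spectral input. It writes $E=E_0+\delta^jR$ with $\delta\nmid R$, expands \eqref{eq:SN}, and passes to Schur coordinates $(q,u,C)$ adapted to $v=e_1$. This gives $q^2\partial_q\bar R=(K-j)q\bar R$ on $\det C=0$. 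The bound $\deg_q\bar R\le K-3j<K-j$ then forces $\bar R=0$ there, contradicting $\delta\nmid R$.
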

The polynomials in cases (i) and (ii) are flag powers built on a single subspace.
A mixed type has degree $a+2b\ge 3$, so every solution of degree at most two falls under the first two cases and is a flag power.
Therefore, the unresolved solutions lie in case (iii) and carry one possibly moving null direction together with the fixed spectrum in \eqref{eq:mixed-spectrum}.

\begin{remark}[The mixed-kernel problem]
\label{rem:mixed-kernel}
    The unrestricted three-variable converse is equivalent to the constancy of the kernel line in Theorem~\ref{thm:d3-reduction} (iii).
    For a nonzero vector $w$, write $[w]$ for the line that it spans.
    If $[\knu(\Sigma)]$ is constant on one nonempty open set, then a fixed vector $u_0$ satisfies $G_E(\Sigma)u_0\equiv0$, and the two-variable converse yields
    \begin{equation*}
        E(\Sigma)
        =
        \gamma(v^\top\Sigma v)^a
        \det(M\Sigma M^\top)^b
        ,
        \quad
        v\in u_0^\perp
        ,
    \end{equation*}
    where $u_0^\perp=\{x\in\R^3:u_0^\top x=0\}$ and the rows of $M$ span $u_0^\perp$.
    The equivalence between constancy of the kernel line and the displayed mixed flag form is established in Supplementary Lemma~\ref{lem:constant-kernel}.
    Supplementary Proposition~\ref{prop:cofactor} gives the cofactor syzygy that couples a moving kernel line to the determinant boundary.
    Vanishing of the six-variable Hessian alone is insufficient because noncone forms with vanishing Hessian exist in this dimension \citep{perazzo1900,ciliberto2008,gondimrusso2015}.
\end{remark}

Supplementary Section~\ref{supp:converse-proofs} also gives an independent closure of the mixed branch that does not presuppose a constant kernel line.
If the restrictions of the determinant-free mixed residue to rank-two faces agree with those of one fixed flag power built from a nested line and plane, the residue is that flag power whenever its mixed type has degree at most eight.
For higher-degree types, the same conclusion holds under this boundary agreement condition outside an explicit arithmetic resonance set.
Therefore, the unresolved difficulty is the geometric gluing of the face directions rather than an uncontrolled interior perturbation.

\subsection{Higher-dimensional scope}
The constancy of the relative spectrum that drives the three-variable reduction is special to dimension three.
Euler's identity fixes the trace of $R_D(\Sigma)$ at the degree of $D$, and \eqref{eq:SN} fixes the trace of its square, so in dimension three the eigenvalues are confined to a circle.
The polynomial character of $D$ forces at least one further linear relation with integer coefficients among the eigenvalues.
A line meets a circle in at most two points, so only finitely many spectra are possible, and continuity on the connected cone makes the spectrum constant.
This finiteness underlies the fixed spectrum displayed in \eqref{eq:mixed-spectrum}.
In dimension $d\ge 4$ the same two traces leave a sphere of dimension $d-2$, a linear relation cuts out a set of dimension $d-3$, and no finiteness follows.

A conditional converse nevertheless holds in every dimension.
If the relative gradients $R_D(\Sigma)$ preserve one fixed complete flag for every $\Sigma\succ0$ and induce constant weights on the successive one-dimensional quotients, then $D$ is the corresponding flag power.
This is an intrinsic condition on the gradient rather than an assumed factorization, and the precise statement is Supplementary Proposition~\ref{prop:recursive-flag}.
Exact self-normalization alone is not claimed to create such an invariant flag when $d\ge 4$.
The obstruction in higher dimensions is thus the emergence of one common invariant recursive ordering, not the integration step once that ordering is present, and dimension three is a sharp low-dimensional target rather than an arbitrary truncation.

\subsection{A factor-and-rank diagnostic}
Although the mixed branch obstructs an unconditional converse, the denominators produced by graphical identification typically have a simple factor structure.
The two denominators of Section~\ref{sec:intro} factor into linear and quadratic pieces after determinant stripping, and the same holds for the formulas revisited in Section~\ref{sec:causal}.
We now show that this class is classified completely and that membership can be decided by exact symbolic computation.
\begin{definition}[Low-degree-factor condition]
\label{def:low-degree-factor}
    A homogeneous polynomial on $\Sym_3$ satisfies the low-degree-factor condition if, after removing the largest power of $\det\Sigma$, every real irreducible factor has degree at most two.
\end{definition}

Under this condition the mixed branch disappears and the classification closes.
\begin{theorem}[Low-degree-factor converse]
\label{thm:low-degree-factor}
    Let $D\in\R[\Sym_3]$ be homogeneous, nonconstant and exactly self-normalizing, and suppose that $D$ satisfies the low-degree-factor condition.
    Then there exist a nested line and plane $V_1\subset V_2$, a nonzero constant $\gamma$, and nonnegative integers $a,b,e$ such that
    \begin{equation}
    \label{eq:low-degree-flag}
        D(\Sigma)
        =
        \gamma\Delta_{V_1}(\Sigma)^a
        \Delta_{V_2}(\Sigma)^b(\det\Sigma)^e
        .
    \end{equation}
\end{theorem}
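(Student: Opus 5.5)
We strip the largest power of $\delta=\det\Sigma$ and work with the determinant-free residue $E$. By the stripping lemma (Supplementary Lemma~\ref{lem:strip-supp}), $E$ is again exactly self-normalizing, and by hypothesis every real irreducible factor of $E$ is linear or quadratic. Theorem~\ref{thm:d3-reduction} leaves three cases. In case (i), $E=\gamma(v^\top\Sigma v)^{\deg E}$, which is \eqref{eq:low-degree-flag} with $V_1=[v]$ and $b=0$. In case (ii), $E=\gamma\Delta_W^b$ with $\Delta_W$ a quadratic plane minor, which gives \eqref{eq:low-degree-flag} with $a=0$. Everything therefore reduces to case (iii), the mixed face type $(a,b)$ with $a,b\ge1$. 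Here we must show that the low-degree-factor condition forces $E=\gamma(v^\top\Sigma v)^a\Delta_W^b$ with $v\in W$.

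For the mixed case we factor $E=\prod_k F_k^{f_k}$ into distinct irreducible linear forms $F_k=\tr(B_k\Sigma)$ and irreducible quadrics, and compare face restrictions. By Definition~\ref{def:face-type}, on every good plane $V$ we have $E_V=\gamma_V(w_V^\top Sw_V)^a(\det S)^b$. Each factor restricts to a product of restrictions, so by unique factorization in $\R[\Sym_2]$ each $(F_k)_V$ is, up to a constant, a product of powers of $w_V^\top Sw_V$ and $\det S$.

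We treat a linear factor $\tr(B\Sigma)$ first. Its face restriction is $\tr(P^\top BPS)$, which must be a multiple of a rank-one variance form, or vanish. So $P^\top BP$ has rank at most one for generic planes, which forces $\rank B\le1$, and hence $B=\pm vv^\top$. This is the three-dimensional analogue of Remark~\ref{rem:d2-excludes}.

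Next we treat an irreducible quadric $Q$. Its restriction to a generic plane is either $\lambda\det S$ or $\lambda(w^\top Sw)^2$; irreducibility excludes a square globally, but not on faces, so both options must be handled. We would use the dimension count for quadrics on $\Sym_3$ whose restrictions to all planes are multiples of $\det S$. The natural candidates are the $2\times2$ minors $\Delta_W$ and, more generally, quadrics in the cofactor (adjugate) entries composed with linear maps. We would check directly that restriction to planes forces $Q$ to be a rank-one form in $\adjm\Sigma$, that is $Q=u^\top\adjm(\Sigma)u=\Delta_{u^\perp}$. The alternative, that $Q_V$ is a square of a rank-one form on generic faces, would be ruled out by irreducibility together with a degree/Zariski-density argument.

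It follows that $E=\gamma\prod_i(v_i^\top\Sigma v_i)^{\alpha_i}\prod_j\Delta_{W_j}^{\beta_j}$. Restricting to a generic plane $V$ produces several distinct rank-one variance directions $[v_i]$ projected into $V$, and several plane minors. A plane minor $\Delta_W$ restricts to $\det S$ times a constant only when $V=W$; otherwise it restricts to a rank-one square $(w^\top Sw)^2$ up to scale, with $w$ determined by the line $V\cap W$. Hence all the rank-one directions appearing on the face must coincide.

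The main obstacle is showing that there is exactly one $v$ and one $W$, nested. For this we would choose generic planes and exploit that distinct directions $v_i$, or distinct lines $W_j\cap V$, give distinct projected forms on generic $V$, contradicting the single $w_V$. Two plane minors with $W_1\neq W_2$ then cannot both contribute, and the same holds for two rank-one factors. Nesting $v\in W$ would follow from the two-subspace criterion stated after Theorem~\ref{thm:flag}: $\Delta_{[v]}^a\Delta_W^b$ is self-normalizing only if $[v]\subset W$. The residual care is ruling out a $\det S$-type contribution on faces from factors other than plane minors, using the exponent relations $a+2b=K$ and $(a+b)^2+b^2=c/2$. Reinserting $\delta^e$ then gives \eqref{eq:low-degree-flag}.
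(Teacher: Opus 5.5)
\textbf{There is a genuine gap in your quadratic-factor step.} Your treatment of linear factors is sound and matches the paper, but your argument for a single plane $W$ rests on a false computation. By $\adjm(PSP^\top)=(\det S)\,m(P)m(P)^\top$ (Supplementary Lemma~\ref{lem:strata-supp}), every quadric $\tr\{C\adjm(\Sigma)\}$ restricts on every plane to $\{m(P)^\top Cm(P)\}\det S$, whatever the rank of $C$.

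So face restrictions only tell you that an irreducible quadratic factor lies in the six-dimensional family $\tr\{C\adjm(\Sigma)\}$, $C\in\Sym_3$. They cannot force $\rank C=1$. This family contains, for instance, the proximal denominator of Example~\ref{ex:factor-workflow}, which has $\rank C=2$.

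In particular, $\Delta_W=u^\top\adjm(\Sigma)u$ with $u$ normal to $W$ restricts to $(u^\top m(P))^2\det S$ on every plane. It does not restrict to a rank-one square $(w^\top Sw)^2$ when $V\ne W$, as you claim. Consequently, both $(v^\top\Sigma v)^a\Delta_{W_1}^{b_1}\Delta_{W_2}^{b_2}$ with $W_1\ne W_2$ and $(v^\top\Sigma v)^a\tr\{C\adjm(\Sigma)\}^b$ with $\rank C\ge2$ have the same face restrictions as a flag power, up to plane-dependent constants. Your single-$w_V$ argument cannot exclude them, and neither can the exponent relations $a+2b=K$ and $(a+b)^2+b^2=c/2$. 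The two-subspace criterion does not help either, because it only treats products of two volume factors.

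\textbf{The missing idea is adjugate duality.} It moves the hidden matrices $C_j$ into linear factors, where face restrictions can see them. Write $E=\gamma_1(v^\top\Sigma v)^p\prod_j\tr\{C_j\adjm(\Sigma)\}^{\beta_j}$ and use $\adjm(\adjm\Sigma)=(\det\Sigma)\Sigma$. This gives $E^\dagger=\gamma_1\delta^q\{v^\top\adjm(\Sigma)v\}^p\prod_j\{\tr(C_j\Sigma)\}^{\beta_j}$ with $q=\sum_j\beta_j$, and the displayed power of $\delta$ is exact. By Supplementary Lemmas~\ref{lem:adj-supp} and~\ref{lem:strip-supp}, the residue after removing $\delta^q$ is determinant-free and exactly self-normalizing.

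Now apply your own linear-factor argument to this residue: rank at most one on generic compressions, and proportional projected directions on a dense set of planes. It forces $C_j=\eta_j uu^\top$ for one common $u$. Hence $E=\gamma_3(v^\top\Sigma v)^p\{u^\top\adjm(\Sigma)u\}^q$, and only at this point does Lemma~\ref{lem:two-subspace} yield $v\in u^\perp$.

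Two smaller points. Your exclusion of the generically-square alternative should be made precise along the paper's lines: $p_Q=\gamma(v^\top u)^4$, hence $Q=\gamma(v^\top\Sigma v)^2+\tr\{C\adjm(\Sigma)\}$, and comparing quadratic-form ranks on generic faces forces $C=0$. Your preliminary split via Theorem~\ref{thm:d3-reduction} is harmless but not needed.
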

Therefore, linear and quadratic factors cannot assemble into any self-normalizing configuration other than a nested flag.

For a square matrix $A$, $\adjm(A)$ denotes its classical adjugate, characterized by $A\adjm(A)=\adjm(A)A=\det(A)I$.
Writing the plane minor through the adjugate turns Theorem~\ref{thm:low-degree-factor} into a normal form whose ingredients can be tested one by one.
\begin{corollary}[Factor-and-rank diagnostic]
\label{cor:factor-diagnostic}
    Let $D\in\R[\Sym_3]$ be homogeneous and nonconstant, and suppose that $D$ satisfies the low-degree-factor condition.
    Then $D$ is exactly self-normalizing if and only if, up to a nonzero constant,
    \begin{equation}
    \label{eq:factor-diagnostic}
        D(\Sigma)
        =
        (v^\top\Sigma v)^a
        \{u^\top\adjm(\Sigma)u\}^b(\det\Sigma)^e
        ,
        \quad
        u^\top v=0
        ,
    \end{equation}
    for nonzero vectors $u,v\in\R^3$ and nonnegative integers $a,b,e$.
    In that case the self-normalization constant is
    \begin{equation}
    \label{eq:diagnostic-c}
        c
        =
        2\{(a+b+e)^2+(b+e)^2+e^2\}
        .
    \end{equation}
    Consequently, every reducible exactly self-normalizing cubic is one of $\gamma\Delta_{V_1}^3$, $\gamma\Delta_{V_1}\Delta_{V_2}$ with $V_1\subset V_2$, or $\gamma\det\Sigma$.
\end{corollary}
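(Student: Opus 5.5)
The proof is a translation between Theorem~\ref{thm:low-degree-factor} and the normal form \eqref{eq:factor-diagnostic}, followed by a constant computation through Theorem~\ref{thm:flag}.

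\textbf{Step 1: the plane minor through the adjugate.} Let $V_2\subseteq\R^3$ be a plane with basis matrix $P\in\R^{3\times2}$, and let $u$ be a nonzero normal vector, $u\in V_2^\perp$. I would verify the identity
\begin{equation*}
    \det(P^\top\Sigma P)=\kappa\,u^\top\adjm(\Sigma)u
\end{equation*}
for a positive constant $\kappa$. Congruence covariance $\adjm(A\Sigma A^\top)=\adjm(A)^\top\adjm(\Sigma)\adjm(A)$ reduces this to $V_2=\Span(e_1,e_2)$ and $u=e_3$. There the $(3,3)$ cofactor is $\det\Sigma_{1:2}$, so the identity holds. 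Both sides are polynomials, so it holds on all of $\Sym_3$. In the same way, $\Delta_{V_1}(\Sigma)$ is a positive multiple of $v^\top\Sigma v$ for a spanning vector $v$ of $V_1$. The nesting $V_1\subset V_2$ is then exactly the condition $v\in u^\perp$, that is, $u^\top v=0$.

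\textbf{Step 2: the two directions of the equivalence.}
\begin{enumerate}
    \item[(a)] For the forward direction, Theorem~\ref{thm:low-degree-factor} gives the form \eqref{eq:low-degree-flag}, and Step~1 rewrites it as \eqref{eq:factor-diagnostic}. If $a=0$ or $b=0$, the orthogonality constraint can be met by choosing the unused vector freely.
    \item[(b)] For the converse, a product of the form \eqref{eq:factor-diagnostic} with $u^\top v=0$ is, up to a constant, the flag power on $[v]\subset u^\perp\subset\R^3$ with exponents $a,b,e$. Zero exponents simply drop out of the flag. Part~(ii) of Theorem~\ref{thm:flag} then gives exact self-normalization.
\end{enumerate}

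\textbf{Step 3: the constant.} The multiplicities in \eqref{eq:flag-power} are $m_1=a+b+e$, $m_2=b+e$ and $m_3=e$. Theorem~\ref{thm:flag}(ii) gives $c=2\sum_i m_i^2$, which is \eqref{eq:diagnostic-c}. A constant factor $\gamma$ does not change $c$, because both sides of \eqref{eq:SN} scale by $\gamma^2$.

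\textbf{Step 4: the cubic classification.} A cubic requires $a+2b+3e=3$. The solutions are $(3,0,0)$, $(1,1,0)$ and $(0,0,1)$, giving $\gamma\Delta_{V_1}^3$, $\gamma\Delta_{V_1}\Delta_{V_2}$ and $\gamma\det\Sigma$. Every reducible cubic has factors of degree at most two and so satisfies the low-degree-factor condition. The equivalence therefore applies, although $\det\Sigma$ itself is irreducible.

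\textbf{Main obstacle.} The only delicate points are bookkeeping: checking the adjugate identity under congruence, and checking that degenerate exponents still satisfy the orthogonality constraint. Neither is a genuine obstacle. The real work sits in Theorem~\ref{thm:low-degree-factor}, which is assumed here.
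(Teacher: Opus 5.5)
Your proposal is correct and follows essentially the same route as the paper: necessity from Theorem~\ref{thm:low-degree-factor}, sufficiency and the constant from Theorem~\ref{thm:flag} with multiplicities $(a+b+e,b+e,e)$, and the cubic list by enumerating $a+2b+3e=3$, using that every reducible cubic satisfies the low-degree-factor condition. Your explicit check that $u^\top\adjm(\Sigma)u$ is a positive multiple of $\Delta_{u^\perp}$ and your treatment of zero exponents only spell out details the paper leaves implicit.
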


This corollary reduces the audit of a candidate denominator to a finite procedure.
One factors the polynomial symbolically, tests the ranks of the coefficient matrices of the linear and adjugate-linear factors, and checks orthogonality and nesting of the resulting directions.
The globalization of the generic face factors and the resulting coefficient-level characterization are proved in Supplementary Section~\ref{supp:low-degree-diagnostic}.

The two formulas of Section~\ref{sec:intro} illustrate the procedure.
\begin{example}[From a graph formula to a rank check]
\label{ex:factor-workflow}
    For the coordinate order $(X,M,Y)$, let $e_X,e_M,e_Y$ denote the corresponding standard basis vectors.
    The front-door denominator factors as
    \begin{equation}
    \label{eq:frontdoor-factor}
        D_{\mathrm{fd}}
        =
        \sigma_{XX}(\sigma_{XX}\sigma_{MM}-\sigma_{XM}^2)
        =
        \tr(e_Xe_X^\top\Sigma)\,e_Y^\top\adjm(\Sigma)e_Y
        .
    \end{equation}
    Both coefficient matrices have rank one and $e_Y^\top e_X=0$, so Corollary~\ref{cor:factor-diagnostic} applies with $(a,b,e)=(1,1,0)$ and certifies exact self-normalization with $c=10$, in agreement with Section~\ref{sec:flag}.
    For the coordinate order $(A,Z,W)$, define $e_A,e_Z,e_W$ analogously.
    The proximal denominator is
    \begin{equation}
    \label{eq:proximal-factor}
        D_{\mathrm{prox}}
        =
        \sigma_{AA}\sigma_{ZW}-\sigma_{AZ}\sigma_{AW}
        =
        \tr\{C\adjm(\Sigma)\}
        ,\quad
        C
        =
        -\{e_Ze_W^\top+e_We_Z^\top\}/2
        ,
    \end{equation}
    where $\rank(C)=2$, so the quadratic factor fails the rank-one test.
    Indeed, $D_{\mathrm{prox}}$ vanishes at the identity matrix, so exact self-normalization already fails by the interior nonvanishing property recorded in Section~\ref{sec:prelim}.
\end{example}
Thus, the audit separates the two formulas before any data are collected.

\begin{remark}[How the diagnostic should be used]
\label{rem:diagnostic-use}
    For a denominator with rational or algebraic coefficients, the factorization and the rank checks are exact symbolic operations, and the diagnostic is complete on the stated class.
    A near-factorization obtained numerically does not certify self-normalization.
    An irreducible factor of degree at least three renders the criterion inconclusive rather than negative.
    One may then test \eqref{eq:SN} directly by comparing coefficients or apply the kernel and boundary criteria of Supplementary Section~\ref{supp:converse-proofs}.
\end{remark}

\section{Weak-denominator inference}
\label{sec:weak}

\subsection{Local ratio experiment}

We formulate the first-order regime of a weak denominator whose sampling noise does not degenerate with it.
Throughout this section, $(N,D)$ is a fixed identification strategy in the sense of Definition~\ref{def:strategy}, and $\hat{\tau}=N(\Sighat)/D(\Sighat)$ is its plug-in estimator.
For a deterministic sequence $\Sigma_n\in\Smat^d_{++}$ with $D(\Sigma_n)\ne0$, define
\begin{equation*}
    \tau_n
    =
    \frac{N(\Sigma_n)}{D(\Sigma_n)}
    ,\quad
    g_n
    =
    \nabla N(\Sigma_n)-\tau_n\nabla D(\Sigma_n)
    ,
\end{equation*}
and put $s_{D,n} = s_D(\Sigma_n)$ and $s_{g,n} = \{g_n^\top\Gamma(\Sigma_n)g_n\}^{1/2}$.
Whenever $s_{D,n}s_{g,n}>0$, define
\begin{equation}
\label{eq:invariants}
    \mu_n
    =
    \frac{n^{1/2}D(\Sigma_n)}{s_{D,n}}
    ,\quad
    r_n
    =
    \frac{g_n^\top\Gamma(\Sigma_n)\nabla D(\Sigma_n)}{s_{g,n}s_{D,n}}
    ,\quad
    \omega_n
    =
    \frac{s_{g,n}}{s_{D,n}}
    .
\end{equation}
These three quantities are the standardized drift of the denominator, the correlation between the moment direction and the denominator gradient, and their scale ratio.

\begin{assumption}[First-order weak denominator]
\label{ass:drift}
    There are $\tau_\ast\in\R$, $\Sigma_\ast\in\Smat^d_+$ and $\delta_0\in\R$ such that
    \begin{equation*}
        \tau_n\to\tau_\ast
        ,\quad
        \Sigma_n\to\Sigma_\ast
        ,\quad
        n^{1/2}D(\Sigma_n)\to\delta_0
        .
    \end{equation*}
    Define
    \begin{equation*}
        g_\ast
        =
        \nabla N(\Sigma_\ast)-\tau_\ast\nabla D(\Sigma_\ast)
        ,\quad
        s_{D,\ast}
        =
        s_D(\Sigma_\ast)
        ,\quad
        s_{g,\ast}
        =
        \{g_\ast^\top\Gamma(\Sigma_\ast)g_\ast\}^{1/2}
        ,
    \end{equation*}
    and assume $s_{D,\ast}>0$ and $s_{g,\ast}>0$.
\end{assumption}

By continuity of the polynomial gradients and of $\Gamma$, Assumption~\ref{ass:drift} implies $s_{D,n}\to s_{D,\ast}$, $s_{g,n}\to s_{g,\ast}$ and
\begin{equation*}
    \mu_n\to \mu_{\ast}
    =
    \frac{\delta_0}{s_{D,\ast}}
    ,\quad
    r_n\to r_\ast
    =
    \frac{g_\ast^\top\Gamma(\Sigma_\ast)\nabla D(\Sigma_\ast)}{s_{g,\ast}s_{D,\ast}}
    ,\quad
    \omega_n\to\omega_\ast=\frac{s_{g,\ast}}{s_{D,\ast}}
    .
\end{equation*}
Assumption~\ref{ass:drift} describes a denominator that drifts to zero at exactly the rate of its nondegenerate first-order noise.
Continuity gives $D(\Sigma_n)\to D(\Sigma_\ast)$, and the finite limit of $n^{1/2}D(\Sigma_n)$ then forces $D(\Sigma_\ast)=0$, so the sequence approaches a zero of the denominator, which may lie in the interior of the cone or on its boundary.
The condition $s_{D,\ast}>0$ makes this zero first-order nondegenerate, and the condition $s_{g,\ast}>0$ excludes a simultaneous first-order degeneracy of the moment function itself.
The limit $\mu_\ast$ records how many first-order standard errors separate the drifting denominator from zero, and it acts as the noncentrality parameter of the limit experiment.

The associated Wald procedure studentizes the plug-in estimator along the estimated moment direction.
On the event where $D(\Sighat)\ne 0$ and the quadratic form below is positive, define
\begin{equation}
\label{eq:wald-studentizer}
    \hat{g}
    =
    \nabla N(\Sighat)-\hat{\tau}\nabla D(\Sighat)
    ,\quad
    \widehat{\mathrm{se}}(\hat\tau)
    =
    \frac{\{\hat{g}^\top\Gamma(\Sighat)\hat{g}\}^{1/2}}
    {n^{1/2}|D(\Sighat)|}
    ,\quad
    T_n
    =
    \frac{\hat{\tau}-\tau_n}{\widehat{\mathrm{se}}(\hat{\tau})}
    .
\end{equation}
The next proposition identifies the joint limits.

\begin{proposition}[Local ratio experiment]
\label{prop:transfer}
    Suppose that Assumption~\ref{ass:drift} holds.
    Let $Z_0$ and $Z_2$ be independent standard normal variables and put $\tilde{Z}=r_\ast Z_2+(1-r_\ast^2)^{1/2}Z_0$.
    Then
    \begin{equation}
    \label{eq:ratio-limit}
        \hat{\tau}-\tau_n
        \dto
        \frac{\omega_\ast\tilde{Z}}{\mu_\ast+Z_2}
        ,\quad
        \FD
        \dto
        (\mu_\ast+Z_2)^2
        .
    \end{equation}
    If in addition $|r_\ast|<1$, then
    \begin{equation}
    \label{eq:wald-limit}
        T_n
        \dto
        \frac{\tilde{Z}\,\sgn(\mu_\ast+Z_2)}
        {\{1-2r_\ast q+q^2\}^{1/2}}
        ,\quad
        q
        =
        \frac{\tilde{Z}}{\mu_\ast+Z_2}
        .
    \end{equation}
\end{proposition}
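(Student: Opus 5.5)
The plan is to derive a triangular-array central limit theorem for $\Sighat$ under $\Sigma_n$, apply the delta method to $N$ and $D$ jointly, and then finish by the continuous mapping theorem.

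\emph{Step 1 (array CLT).} Write $n^{1/2}\vech(\Sighat-\Sigma_n)=n^{-1/2}\sum_i\vech(X_iX_i^\top-\Sigma_n)$. Represent $X_i=\Sigma_n^{1/2}\xi_i$ with $\xi_i\sim N_d(0,I)$ independent. The summands then have covariance $\Gamma(\Sigma_n)\to\Gamma(\Sigma_\ast)$, and their fourth moments are uniformly bounded because $\Sigma_n$ is bounded. Lyapunov's condition therefore holds, and
\[
n^{1/2}\vech(\Sighat-\Sigma_n)\dto H\sim N(0,\Gamma(\Sigma_\ast)).
\]
In addition, $\Sighat-\Sigma_n=O_p(n^{-1/2})$.

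\emph{Step 2 (delta method).} $N$ and $D$ are polynomials, so Taylor expansion with bounded second derivatives on a compact neighbourhood of $\Sigma_\ast$ gives
\[
n^{1/2}\{D(\Sighat)-D(\Sigma_n)\}=\nabla D(\Sigma_n)^\top n^{1/2}\vech(\Sighat-\Sigma_n)+O_p(n^{-1/2}),
\]
and the analogous expansion for $N$. Set
\[
N_n^\dagger(\Sigma)=N(\Sigma)-\tau_nD(\Sigma).
\]
It satisfies $N_n^\dagger(\Sigma_n)=0$ and has gradient $g_n\to g_\ast$; note that $\tau_n$ is bounded. Hence
\[
\bigl(n^{1/2}N_n^\dagger(\Sighat),\;n^{1/2}D(\Sighat)\bigr)\dto\bigl(g_\ast^\top H,\;\delta_0+\nabla D(\Sigma_\ast)^\top H\bigr).
\]
Standardize the two components by $s_{g,\ast}$ and $s_{D,\ast}$. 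Because their correlation is $r_\ast$, the limit can be written as $(s_{g,\ast}\tilde Z,\;s_{D,\ast}(\mu_\ast+Z_2))$ with $\tilde Z$ as in the statement.

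Now use the identity
\[
\hat\tau-\tau_n=\frac{N_n^\dagger(\Sighat)}{D(\Sighat)}.
\]
The limiting denominator $\mu_\ast+Z_2$ is a.s.\ nonzero, so continuous mapping yields the first limit in \eqref{eq:ratio-limit}. For $\FD$, continuity gives $s_D^2(\Sighat)\pto s_{D,\ast}^2>0$, and therefore $\FD\dto(\mu_\ast+Z_2)^2$.

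\emph{Step 3 (Wald statistic).} This is the main obstacle, because $\hat g$ involves $\hat\tau$, which is not consistent for $\tau_n$. I will expand
\[
\hat g=\nabla N(\Sighat)-\tau_n\nabla D(\Sighat)-(\hat\tau-\tau_n)\nabla D(\Sighat).
\]
The first two terms converge in probability to $g_\ast$. The last term is $O_p(1)$ with $\hat\tau-\tau_n\dto\omega_\ast q$, jointly with the previous limits.

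Put $\Gamma_\ast=\Gamma(\Sigma_\ast)$. Then
\[
\hat g^\top\Gamma(\Sighat)\hat g\dto s_{g,\ast}^2-2\omega_\ast q\,r_\ast s_{g,\ast}s_{D,\ast}+\omega_\ast^2q^2s_{D,\ast}^2=s_{g,\ast}^2(1-2r_\ast q+q^2).
\]
For the expansion I use $g_\ast^\top\Gamma_\ast\nabla D(\Sigma_\ast)=r_\ast s_{g,\ast}s_{D,\ast}$ and $\omega_\ast s_{D,\ast}=s_{g,\ast}$.

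The quantity $1-2r_\ast q+q^2\ge 1-r_\ast^2$ is bounded away from zero exactly when $|r_\ast|<1$. In that case the square root is continuous on the relevant range, and the event on which $T_n$ is defined has probability tending to one.

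Finally write
\[
T_n=\frac{n^{1/2}N_n^\dagger(\Sighat)\,\sgn\{D(\Sighat)\}}{\{\hat g^\top\Gamma(\Sighat)\hat g\}^{1/2}}.
\]
The factors $|D(\Sighat)|$ in the ratio and in the standard error cancel to give this form. The map $\sgn$ is continuous off the null set $\{\mu_\ast+Z_2=0\}$. The joint continuous mapping theorem then gives \eqref{eq:wald-limit}, after the common factor $s_{g,\ast}$ cancels.
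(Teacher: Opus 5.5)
Your proposal is correct and follows essentially the same route as the paper. Both arguments use a triangular-array Lyapunov CLT for $n^{1/2}\vech(\Sighat-\Sigma_n)$ and a joint delta expansion of $N-\tau_nD$ and $D$ at $\Sigma_n$, followed by division for the ratio and $\FD$ limits, and then the expansion $\hat g=g_n-(\hat\tau-\tau_n)\nabla D(\Sigma_n)+o_p(1)$, which yields the limiting studentizer $s_{g,\ast}^2(1-2r_\ast q+q^2)$ combined with $n^{1/2}|D(\Sighat)|(\hat\tau-\tau_n)\,\dto\, s_{g,\ast}\tilde Z\,\sgn(\mu_\ast+Z_2)$.
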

Thus, a first-order nondegenerate small denominator produces the classical Fieller ratio limit.
The limiting standardized denominator $(\mu_\ast+Z_2)^2$ is a noncentral chi-squared variable with one degree of freedom and noncentrality $\mu_\ast^2$, so $\FD$ measures the local strength of identification.
The Wald statistic converges to a non-Gaussian law governed by $\mu_\ast$ and $r_\ast$.
Supplementary Section~\ref{supp:weak-proofs} proves Proposition~\ref{prop:transfer}.

\begin{remark}[Role of the local experiment]
\label{rem:transfer-role}
    Proposition~\ref{prop:transfer} is not offered as a new general theory of weak identification.
    Its role is to place every covariance-polynomial strategy in a common three-parameter local experiment and to make the algebraic classification operational.
    Once $D$ is known not to self-normalize, the limits $\mu_\ast$, $r_\ast$ and $\omega_\ast$ determine the leading ratio geometry.
    When $D$ is a flag power, the assumptions of this experiment cannot hold on the denominator side.
\end{remark}

\subsection{Robust inversion and the boundary of the classification}

We connect the ratio experiment to robust inference.
For $\alpha\in(0,1)$, inverting the single moment $N-tD$ gives the confidence set
\begin{equation}
\label{eq:fieller-set}
    \CS_{1-\alpha}
    =
    \Bigl\{
        t:n\{N(\Sighat)-tD(\Sighat)\}^2
        \le
        q_{1-\alpha}h_t^\top\Gamma(\Sighat)h_t
    \Bigr\}
    ,
    \quad
    h_t
    =
    \nabla N(\Sighat)-t\nabla D(\Sighat)
    ,
\end{equation}
where
\begin{equation*}
    q_{1-\alpha}
    =
    \inf\{x\in\R:\pr(U\le x)\ge1-\alpha\}
    ,
    \quad
    U\sim\chi_1^2
    ,
\end{equation*}
is the $(1-\alpha)$ quantile of the $\chi_1^2$ law.

\begin{proposition}[Fieller confidence set]
\label{prop:fieller}
    Suppose that Assumption~\ref{ass:drift} holds.
    Then $\pr\{\tau_n\in\CS_{1-\alpha}\}\to1-\alpha$.
    Moreover, for every fixed $n$, with probability one the set $\CS_{1-\alpha}$ is nonempty and is an interval, the complement of a bounded open interval, or the whole real line, and it is bounded exactly when $\FD>q_{1-\alpha}$.
\end{proposition}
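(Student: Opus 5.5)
The argument has two parts: an asymptotic coverage statement obtained from the joint Gaussian limit that also drives Proposition~\ref{prop:transfer}, and a finite-sample shape statement that comes from the fact that the defining inequality of $\CS_{1-\alpha}$ is quadratic in $t$.

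\textbf{Coverage.} Take $t=\tau_n$ and put $\hat m_n=N(\Sighat)-\tau_nD(\Sighat)$. Since $N(\Sigma_n)-\tau_nD(\Sigma_n)=0$, the delta method along $\Sigma_n\to\Sigma_\ast$ gives
\[
n^{1/2}\hat m_n=g_n^\top n^{1/2}\vech(\Sighat-\Sigma_n)+o_p(1).
\]
The $o_p(1)$ bound holds because $N$ and $D$ are polynomials, the Hessians are bounded near $\Sigma_\ast$, and $n^{1/2}(\Sighat-\Sigma_n)=O_p(1)$ uniformly. The triangular-array central limit theorem for Wishart matrices, already used for Proposition~\ref{prop:transfer}, then gives $n^{1/2}\hat m_n\dto N(0,s_{g,\ast}^2)$. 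For the studentizer, $h_{\tau_n}=\nabla N(\Sighat)-\tau_n\nabla D(\Sighat)\pto g_\ast$, and continuity of $\Gamma$ gives $h_{\tau_n}^\top\Gamma(\Sighat)h_{\tau_n}\pto s_{g,\ast}^2>0$. Slutsky's lemma then yields
\[
n\hat m_n^2\big/h_{\tau_n}^\top\Gamma(\Sighat)h_{\tau_n}\dto\chi^2_1.
\]
Because the $\chi^2_1$ law is continuous at $q_{1-\alpha}$, the coverage probability converges to $1-\alpha$. The studentizer uses the hypothesized value $t$ rather than $\hat\tau$, so the ratio instability never enters this step.

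\textbf{Finite-sample shape.} Write $\hat N=N(\Sighat)$, $\hat D=D(\Sighat)$, $\hat a=\nabla N(\Sighat)$ and $\hat b=\nabla D(\Sighat)$. Membership in $\CS_{1-\alpha}$ is the condition $Q(t)\le0$, where
\begin{align*}
Q(t)&=A t^2-2Bt+C,\\
A&=n\hat D^2-q\,\hat b^\top\Gamma\hat b,\\
B&=n\hat N\hat D-q\,\hat a^\top\Gamma\hat b,\\
C&=n\hat N^2-q\,\hat a^\top\Gamma\hat a,
\end{align*}
with $q=q_{1-\alpha}$ and $\Gamma=\Gamma(\Sighat)$. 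Note that $\hat b^\top\Gamma\hat b=s_D^2(\Sighat)$, so $A>0$ exactly when $\FD>q$.

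The sign of $A$ fixes the shape, provided $Q$ is not constant:
\begin{enumerate}
\item[(a)] If $A>0$, the set is a bounded closed interval, possibly a single point.
\item[(b)] If $A<0$, the set is either the complement of a bounded open interval or all of $\R$.
\item[(c)] If $A=0$, the set is a half-line or all of $\R$.
\end{enumerate}
Case (c) is a null event: $\Sighat$ has a density on $\Smat^d_{++}$, and $A(\Sighat)$ is a nonzero polynomial in $\Sighat$. If $A$ were identically zero, then $nD^2\equiv q s_D^2$ for every $n$, which forces $D\equiv0$. The same argument shows that $A\ne0$ almost surely for all $n$ simultaneously. Hence boundedness occurs exactly when $A>0$, that is, when $\FD>q$.

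\textbf{Nonemptiness.} In case (b) the set is never empty. In case (a) I must show $\min_t Q(t)\le0$, which amounts to the discriminant bound $B^2\ge AC$. Write $\Gamma=L^\top L$ and set $x=L\hat a$, $y=L\hat b$, with scalars $\hat N,\hat D$. The quadratic $Q(t)$ equals $\phi(\hat N-t\hat D,\,x-ty)$, where $\phi(u,w)=nu^2-q\|w\|^2$ is a form of Lorentzian signature. In this notation
\[
B^2-AC=\text{the }2\times2\text{ Gram determinant of }(\hat N,x),(\hat D,y)\text{ under }\phi,
\]
taken with a minus sign. Since $A=\phi(\hat D,y)>0$, the vector $(\hat D,y)$ is timelike, and the reverse Cauchy--Schwarz inequality for Lorentzian forms gives $B^2\ge AC$. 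Therefore the minimum value $(AC-B^2)/A$ of $Q$ is at most zero, and the set is nonempty.

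The main obstacle is the probability-one claim about $A$, which requires $A(\Sighat)$ not to be the zero polynomial. I will handle it through the homogeneity of $D$: the term $nD^2$ has degree $2K$ and $s_D^2$ also has degree $2K$, so a cancellation would require $s_D^2=(n/q)D^2$ identically. Exact self-normalization would then fix the constant as $c=n/q$. This can hold for at most one value of $n$, and in that case $A\equiv0$ and the set is a half-line or all of $\R$. For that exceptional $n$ I will either note that the shape claim still holds in this degenerate form, or show that the assumption $s_{D,\ast}>0$ rules it out, since a self-normalizing $D$ has no zero in the cone under Supplementary Lemma~\ref{lem:nozero}.
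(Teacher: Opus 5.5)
Your proof follows the paper's in the coverage step. It also follows the paper in reducing the shape claim to the sign of the leading coefficient $A(\Sighat)=s_D^2(\Sighat)\{\FD-q_{1-\alpha}\}$, with absolute continuity of the Wishart law making $A(\Sighat)=0$ a null event.

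\textbf{Nonemptiness.} This is where you genuinely differ. The paper observes that $t=\hat\tau$ kills the first term, so $P_{\Sighat}(\hat\tau)=-q_{1-\alpha}\,h_{\hat\tau}^\top\Gamma(\Sighat)h_{\hat\tau}\le0$; that is, the point estimate always lies in $\CS_{1-\alpha}$. Your reverse Cauchy--Schwarz argument for the Lorentzian form $\phi(u,w)=nu^2-q\lVert w\rVert^2$ is correct: when $A=\phi(V,V)>0$, $\phi$ is negative semidefinite on the $\phi$-orthogonal complement of $V$, and this gives $B^2\ge AC$. It is heavier, however, and it does not yield the useful byproduct $\hat\tau\in\CS_{1-\alpha}$.

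\textbf{Two small repairs.}
\begin{enumerate}
\item[(i)] Your first claim, that $A\equiv0$ ``forces $D\equiv0$'', is wrong for a fixed $n$. Appealing to Lemma~\ref{lem:nozero} does not settle the exceptional $n$ either, because that lemma excludes only interior zeros, and $\Sigma_\ast$ may lie on the boundary of the cone. The clean argument, which is the paper's, runs as follows. $A\equiv0$ means $s_D^2=(n/q_{1-\alpha})D^2$ on all of $\Sym_d$. Evaluate this at $\Sigma_\ast$, where the drift forces $D(\Sigma_\ast)=0$; the result contradicts $s_{D,\ast}>0$. No homogeneity is needed, which matters because Definition~\ref{def:strategy} does not assume $D$ homogeneous.
\item[(ii)] Your equivalence between $A>0$ and $\FD>q_{1-\alpha}$ presupposes $s_D^2(\Sighat)>0$, which is also what makes $\FD$ defined. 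Add that $s_D^2$ is not the zero polynomial, since $s_D^2(\Sigma_\ast)>0$. The event $\{s_D^2(\Sighat)=0\}$ is therefore null by the same absolute-continuity argument.
\end{enumerate}
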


Supplementary Section~\ref{supp:weak-proofs} proves Proposition~\ref{prop:fieller}.
This is the classical Fieller and Anderson--Rubin analysis specialized to covariance polynomials \citep{andersonrubin1949,fieller1954}.
It gives the standardized denominator a second role.
Proposition~\ref{prop:transfer} identifies $\FD$ as the noncentrality diagnostic of the local experiment, and Proposition~\ref{prop:fieller} makes the same statistic decide whether the inverted set is bounded.
Combining the two results, the probability that $\CS_{1-\alpha}$ is bounded converges to $\pr\{(\mu_\ast+Z_2)^2>q_{1-\alpha}\}$, so unbounded sets retain a positive limiting frequency under weak drift, in accordance with the impossibility results for uniformly bounded confidence sets \citep{gleserhwang1987,dufour1997}.

We now delimit what exact self-normalization removes.
As noted after Assumption~\ref{ass:drift}, the drift forces $D(\Sigma_\ast)=0$ while requiring $s_D(\Sigma_\ast)=s_{D,\ast}>0$.
For a self-normalizing denominator, evaluating \eqref{eq:SN} at $\Sigma_\ast$ gives $s_D(\Sigma_\ast) = c^{1/2}|D(\Sigma_\ast)|=0$, so the two requirements are incompatible and no drifting sequence satisfies Assumption~\ref{ass:drift}.
Therefore, exact self-normalization excludes the first-order ratio experiment, not every nonregular limit.

The exclusion concerns the first order only.
On $\Sym_2$, the polynomial $D=\sigma_{12}^2$ has $s_D^2=4\sigma_{12}^2(\sigma_{11}\sigma_{22}+\sigma_{12}^2)$, so its first-order variance vanishes on the entire zero set and Assumption~\ref{ass:drift} cannot hold, although the ratio $s_D^2/D^2$ is not constant.
Under the drift $\sigma_{12,n}=\zeta n^{-1/2}$ with $\sigma_{11,n}=\sigma_{22,n}=1$, the standardized denominator converges in distribution to $(\zeta+Z)^2/4$ for a standard normal variable $Z$, a nondegenerate limit generated at second order.
Supplementary Section~\ref{supp:weak-proofs} verifies this example.
Hence, the self-normalizing and the first-order nondegenerate regimes are two extremes of a broader classification rather than an exhaustive dichotomy.

\section{Causal applications}
\label{sec:causal}

\subsection{Diagnostics for common formulas}

We translate the algebra into familiar covariance diagnostics.
For distinct indices $a,b\in[d]$, put $\rho_{ab}=\sigma_{ab}/(\sigma_{aa}\sigma_{bb})^{1/2}$ and let $\hat{\rho}_{ab}$ be the same function evaluated at $\Sighat$.
For the marginal covariance $D=\sigma_{ab}$, the Gaussian first-order variance is $s_D^2=\sigma_{aa}\sigma_{bb}+D^2$, so every zero of $D$ in the open cone is first-order nondegenerate and the denominator conditions of Assumption~\ref{ass:drift} can hold along suitable drifting sequences.
The standardized denominator is $\FD=n\hat{\rho}_{ab}^2/(1+\hat{\rho}_{ab}^2)$, an increasing function of the familiar first-stage statistic $n\hat{\rho}_{ab}^2/(1-\hat{\rho}_{ab}^2)$.

For indices $e,a,b\in[d]$ with $e\ne a$ and $e\ne b$, allowing $a=b$, define $\rho_{ab\cdot e}$ as the correlation of the residuals from the population linear projections of coordinates $a$ and $b$ on coordinate $e$, and let $\hat{\rho}_{ab\cdot e}$ be its sample-covariance analogue.
For the partial minor $D=\sigma_{ee}\sigma_{ab}-\sigma_{ea}\sigma_{eb}$,
\begin{equation}
\label{eq:partial-minor}
    s_D^2
    =
    \det\Sigma_{\{e,a\}}\det\Sigma_{\{e,b\}}+3D^2
    .
\end{equation}
When $a=b$ the minor is a principal $2\times 2$ minor, the identity reduces to $s_D^2=4D^2$, and $\FD=n/4$ in agreement with Theorem~\ref{thm:flag}.
When $a\ne b$, define
\begin{equation*}
    F_{\mathrm{par}}
    =
    \frac{n\hat{\rho}_{ab\cdot e}^{2}}
    {1-\hat{\rho}_{ab\cdot e}^{2}}
    ,\quad
    \FD
    =
    \frac{nF_{\mathrm{par}}}{n+4F_{\mathrm{par}}}
    ,
\end{equation*}
so the standardized denominator is again an increasing transform of the familiar partial first-stage index $F_{\mathrm{par}}$.
In both cases the standardized denominator reproduces the relevant marginal or partial screening direction without any modelling input.
Supplementary Proposition~\ref{prop:minors} gives the calculation and the exact partial-correlation formula.

We compare four representative strategies in Table~\ref{tab:strategies}.
The table separates variance-type flag powers from slope-type marginal or nonprincipal minors.
The structural parameterizations and substitutions yielding the third column are given in Supplementary Section~\ref{supp:structural-pullbacks}, so that each displayed pullback can be checked directly from the stated linear reduced form.
The three slope-type rows can realize the denominator conditions of Assumption~\ref{ass:drift}, whereas the front-door row is a flag power.
The next subsection develops the front-door strategy in detail.

\begin{table}[tb]
\centering
\caption{
Denominator structure of four linear causal strategies.
Each structural pullback evaluates the denominator in a linear reduced form for that row, detailed in Supplementary Section~\ref{supp:structural-pullbacks}, and symbols are defined separately within each row.
In the two instrument rows, $\pi$ is the coefficient of the instrument $Z$ in the equation for the treatment $X$, $\nu_Z$ and $\nu_W$ are the variances of $Z$ and of the conditioning covariate $W$, and $v_Z$ is the residual variance of $Z$ given $W$.
In the front-door row, $\nu_X$ is the treatment variance and $v_M$ is the mediator residual variance.
In the proximal row, $a$ and $c$ are the coefficients of the latent variable $U$ in the equations for the proxies $Z$ and $W$, $\nu_U$ is the variance of $U$, and $v_A$ is the treatment residual variance.
A flag power is a denominator of the form \eqref{eq:flag-power}.
Marginal slope and partial slope denote a marginal covariance and a nonprincipal two-by-two minor, and together they constitute the slope type.
}
\label{tab:strategies}
\setlength{\tabcolsep}{6pt}
\scalebox{0.8}{
\begin{tabular}{llll}
Strategy & Denominator & Structural pullback & Classification\\
\addlinespace[2pt]
Instrumental variable
 & $\sigma_{ZX}$ & $\pi\nu_Z$ & Marginal slope\\
Conditional instrument
 & $\sigma_{WW}\sigma_{ZX}-\sigma_{ZW}\sigma_{WX}$
 & $\pi v_Z\nu_W$ & Partial slope\\
Front-door
 & $\sigma_{XX}(\sigma_{XX}\sigma_{MM}-\sigma_{XM}^2)$
 & $\nu_X^2v_M$ & Flag power\\
Linear proximal
 & $\sigma_{ZW}\sigma_{AA}-\sigma_{ZA}\sigma_{AW}$
 & $ac\nu_Uv_A$ & Partial slope\\
\end{tabular}
}
\end{table}

\subsection{Front-door adjustment and boundary-robust inference}
We now show that the front-door studentized statistic survives arbitrary collinearity drift when the treatment--mediator coefficient is nonzero.
The exact regression decomposition and the proof of the boundary result are given in Supplementary Section~\ref{supp:frontdoor-boundary}.
Consider the Gaussian reduced form
\begin{equation}
\label{eq:frontdoor-model}
    M
    =
    aX+\eps_M
    ,\quad
    Y
    =
    bM+\gamma X+\eps
    ,
\end{equation}
in which $X$, $\eps_M$ and $\eps$ are independent centred Gaussian variables with $\var(X)=\nu_X>0$ and $\var(\eps)>0$, and the observed vector is $(X,M,Y)^\top$.
The coefficients $a$, $b$ and $\gamma$ are fixed, while the mediator residual variance $v_{M,n}=\var(\eps_M)$ may change with $n$ and takes values in a fixed interval $(0,\bar{v}]$.
Observations are $n$ independent copies of the observed vector, and $\Sighat$ is the known-mean sample covariance \eqref{eq:sample-covariance}.
Substituting the reduced form gives $\sigma_{XX}=\nu_X$ and $\det\Sigma_{\{X,M\}}=\nu_Xv_{M,n}$, so the population denominator equals $\nu_X^2v_{M,n}$, which is the pullback recorded in Table~\ref{tab:strategies} and which tends to zero whenever $v_{M,n}$ does.

The front-door covariance functional is
\begin{equation}
\label{eq:frontdoor-functional}
    \tau_{\mathrm{fd}}(\Sigma)
    =
    \frac{
    \sigma_{XM}(\sigma_{XX}\sigma_{MY}-\sigma_{XM}\sigma_{XY})
    }{
    \sigma_{XX}(\sigma_{XX}\sigma_{MM}-\sigma_{XM}^2)
    }
    ,
\end{equation}
on the set where its denominator is nonzero.
Direct substitution shows that $\tau_{\mathrm{fd}}(\Sigma_n)=ab$ for every value of $\gamma$, so the functional isolates the product $ab$ and removes the association carried by $\gamma$.

\begin{example}[Exact front-door denominator]
\label{ex:frontdoor-pivot}
    In the front-door causal model, the coefficient $\gamma$ is generated by an unobserved confounder of $X$ and $Y$ rather than by a direct effect, so the total effect of $X$ on $Y$ equals $ab$ and is identified by $\tau_{\mathrm{fd}}$.
    The denominator is the flag power identified in Section~\ref{sec:flag}, and Theorem~\ref{thm:flag} gives
    \begin{equation}
    \label{eq:frontdoor-pivot}
        D_{\mathrm{fd}}
        =
        \sigma_{XX}\det\Sigma_{\{X,M\}}
        =
        \sigma_{XX}^2\var(M\mid X)
        ,\quad
        s_D^2
        =
        10D_{\mathrm{fd}}^2
        ,\quad
        \FD
        =
        n/10
        .
    \end{equation}
    The last equality holds for every positive-definite sample covariance when the Gaussian studentizer is used, whatever the data-generating law.
\end{example}

Thus a relevance test based on the front-door denominator is exactly uninformative even though the precision of $\hat{\tau}$ can deteriorate without bound.

Because $D_{\mathrm{fd}}$ is exactly self-normalizing, no drifting sequence satisfies Assumption~\ref{ass:drift} for this strategy, and the ratio experiment of Section~\ref{sec:weak} is unavailable as a description of the boundary $v_{M,n}\to0$.
That exclusion is negative information only.
The following result provides the positive counterpart for the studentized estimator.
The plug-in estimator is $\hat{\tau}=\tau_{\mathrm{fd}}(\Sighat)$, and its Gaussian delta standard error is
\begin{equation}
\label{eq:frontdoor-delta-se}
    \widehat{\mathrm{se}}^{\,2}
    =
    \frac{1}{n}
    \nabla\tau_{\mathrm{fd}}(\Sighat)^\top
    \Gamma(\Sighat)
    \nabla\tau_{\mathrm{fd}}(\Sighat)
    .
\end{equation}
Both quantities are defined almost surely because $\Sighat\succ0$ implies $D_{\mathrm{fd}}(\Sighat)>0$.

\begin{proposition}[Boundary-robust front-door Wald inference]
\label{prop:frontdoor-boundary}
    Under the reduced form \eqref{eq:frontdoor-model} with $a\ne0$,
    \begin{equation}
    \label{eq:frontdoor-wald}
        \frac{\hat{\tau}-ab}{\widehat{\mathrm{se}}}
        \dto
        N_1(0,1)
        ,
    \end{equation}
    along every sequence $v_{M,n}\in(0,\bar{v}]$, including sequences with $nv_{M,n}\to C$ for any $C\in[0,\infty]$.
\end{proposition}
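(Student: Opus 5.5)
Our plan is to exploit the recursive (regression) structure of the front-door functional. We will rewrite $\tau_{\mathrm{fd}}$ in terms of the recursive decomposition of $\Sigma$ along the coordinate order $(X,M,Y)$.

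The numerator is $\sigma_{XM}(\sigma_{XX}\sigma_{MY}-\sigma_{XM}\sigma_{XY})$. Dividing it by the flag power $\sigma_{XX}\det\Sigma_{\{X,M\}}$ gives
\begin{equation*}
    \tau_{\mathrm{fd}}(\Sigma)=\beta_{MX}(\Sigma)\,\beta_{YM\cdot X}(\Sigma),
\end{equation*}
where $\beta_{MX}=\sigma_{XM}/\sigma_{XX}$ is the slope of $M$ on $X$. The second factor $\beta_{YM\cdot X}$ is the coefficient of $M$ in the regression of $Y$ on $(X,M)$. This identity holds for $\Sighat$ as well, so $\hat\tau=\hat\beta_{MX}\hat\beta_{YM\cdot X}$ is a product of two sample regression coefficients.

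Under Gaussian sampling, the Bartlett decomposition used in Section~\ref{sec:flag} gives the exact conditional structure. Conditionally on the $X$ sample, $\hat\beta_{MX}-a$ is normal with variance $v_{M,n}/(n\hat\sigma_{XX})$. Conditionally on $(X,M)$, $\hat\beta_{YM\cdot X}-b$ is normal with variance $\var(\eps)/(n\hat v_M)$, where $\hat v_M$ is the sample residual variance of $M$ given $X$, distributed as $v_{M,n}\chi^2_{n-1}/n$.

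Next we expand
\begin{equation*}
    \hat\tau-ab=\hat\beta_{MX}(\hat\beta_{YM\cdot X}-b)+b(\hat\beta_{MX}-a).
\end{equation*}
The two terms have very different scales. The first is of order $(nv_{M,n})^{-1/2}$, which may blow up. The second is of order $(v_{M,n}/n)^{1/2}$, which is always $O(n^{-1/2})$. Conditionally on the $(X,M)$ sample, the first term is exactly Gaussian. The two estimation errors are independent in this recursive decomposition: the $Y$-regression error is independent of $(X,M)$, and $\hat\beta_{MX}$ is a function of $(X,M)$. Hence $\hat\tau-ab$ is, to first order, a sum of two independent centred Gaussian pieces with variance
\begin{equation*}
    V_n=\frac{a^2\var(\eps)}{n\,\nu_X v_{M,n}}+\frac{b^2v_{M,n}}{n\,\nu_X}.
\end{equation*}
This first-order statement uses $\hat\beta_{MX}\pto a\neq0$ and $\hat\sigma_{XX}\pto\nu_X$, which hold uniformly because $v_{M,n}\le\bar v$. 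The condition $a\ne0$ enters here: it makes the first term's variance nondegenerate relative to its scale. Normalising by $V_n^{1/2}$, the statistic $(\hat\tau-ab)/V_n^{1/2}$ is a convex combination of standard normals with possibly varying weights. We will handle the weights with a subsequence argument, or by conditioning on $(X,M)$, where the first piece is exactly normal. In either case the limit is $N_1(0,1)$ along every sequence $v_{M,n}$, including $nv_{M,n}\to C$. In the regime $nv_{M,n}\to0$, $\hat v_M$ is not consistent for anything of fixed size. We will therefore work with the pivot $\hat v_M/v_{M,n}\sim\chi^2_{n-1}/n\pto1$, which is free of $v_{M,n}$.

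The main step, and the expected obstacle, is showing $\widehat{\mathrm{se}}^{\,2}/V_n\pto1$ uniformly. We plan to compute the Gaussian delta variance $\nabla\tau_{\mathrm{fd}}^\top\Gamma\nabla\tau_{\mathrm{fd}}$ in closed form as a function of $\Sigma$. Using the recursive parametrisation and the congruence invariance noted in Section~\ref{sec:prelim}, it should equal exactly
\begin{equation*}
    \frac{\beta_{MX}^2\,v_Y}{v_M}+\frac{\beta_{YM\cdot X}^2\,v_M}{\sigma_{XX}},
\end{equation*}
where $v_Y$ is the conditional variance of $Y$ given $(X,M)$. This is a polynomial-identity calculation. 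Evaluating it at $\Sighat$ expresses $n\,\widehat{\mathrm{se}}^{\,2}$ through $\hat\beta_{MX}$, $\hat v_Y$, $\hat v_M$, $\hat\beta_{YM\cdot X}$ and $\hat\sigma_{XX}$.

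Each ratio is then controlled separately. We have $\hat v_M/v_{M,n}\pto1$ and $\hat v_Y/\var(\eps)\pto1$ by their chi-squared laws, and $\hat\beta_{MX}\pto a$. For the second term, $\hat\beta_{YM\cdot X}^2\hat v_M$ compared with $b^2v_{M,n}$ needs care when $b=0$ or when $\hat\beta_{YM\cdot X}$ is noisy. However,
\begin{equation*}
    (\hat\beta_{YM\cdot X}-b)^2\hat v_M=O_p(n^{-1}),
\end{equation*}
which is negligible relative to the first term of $nV_n$, since that term is bounded below by $a^2\var(\eps)/(\nu_X\bar v)>0$. Hence the ratio converges to one, and Slutsky's lemma gives \eqref{eq:frontdoor-wald}.
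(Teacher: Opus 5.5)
Your argument is correct in substance but organized differently from the paper's. The paper splits along subsequences. Where $v_{M,n}$ stays away from zero, it invokes the ordinary joint central limit theorem and Slutsky. Where $v_{M,n}\to0$, it uses the three-term decomposition $a(\hat b-b)+b(\hat a-a)+(\hat a-a)(\hat b-b)$ and divides by $|\hat a|\widehat{\mathrm{se}}_b$. It shows the last two terms are $o_p(1)$, reducing the statistic to the exact pivot $T_b=(\hat b-b)/\widehat{\mathrm{se}}_b\sim t_{n-2}$, which is independent of the design. It then shows that the $\hat b^{\,2}\widehat{\mathrm{se}}_a^2$ part of the delta studentizer is negligible relative to $\hat a^{\,2}\widehat{\mathrm{se}}_b^2$.

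You instead keep both terms through an oracle variance $V_n$ and use the exact conditional Gaussianity of both pieces: the first given $(X,M)$, the second given the $X$-sample. One argument then covers every regime at once, with no case split and no interior central limit theorem. The price is that you must match $\widehat{\mathrm{se}}^{\,2}$ to $V_n$ term by term, including the $b$-term, whereas the paper only needs the dominant term in the boundary regime. Both routes rest on the same two ingredients. The first is the exact delta-variance decomposition with zero cross term. The paper verifies it cleanly via the influence functions $XR_M/\sigma_{XX}$ and $R_MR_Y/\sigma_{MM\cdot X}$ and Gaussian independence, which is more direct than the congruence route you gesture at. The second is the set of chi-squared pivots that do not depend on $v_{M,n}$.

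Fix one slip. The first term of $V_n$ should be $a^2\var(\eps)/(nv_{M,n})$, without the factor $\nu_X$. Your own conditional variance $\var(\eps)/(n\hat v_M)$ and your closed-form delta variance both give this. With the extra $\nu_X$, both $(\hat\tau-ab)/V_n^{1/2}$ and $\widehat{\mathrm{se}}^{\,2}/V_n$ have limiting variance or ratio $\nu_X$ rather than one when $v_{M,n}\to0$. The lower bound should likewise read $a^2\var(\eps)/\bar v$.

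Three smaller corrections. First, $\hat b-b$ is not itself independent of $(X,M)$. What is exactly $N_1(0,1)$ and independent of $(X,M)$ is $(\hat b-b)(n\hat v_M)^{1/2}/\var(\eps)^{1/2}$, and this is the variable your conditioning argument should use. Second, the weights are not convex; their squares sum to one. Third, when you compare $\hat b^{\,2}\hat v_M$ with $b^2v_{M,n}$, you must also handle the cross term $2b(\hat b-b)\hat v_M=O_p\{(v_{M,n}/n)^{1/2}\}$, although the same bound disposes of it.
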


Thus, arbitrarily poor mediator residual variation inflates uncertainty but does not invalidate the studentized Gaussian limit on the stated parameter region.

\begin{remark}[Why studentization survives]
\label{rem:frontdoor-mechanism}
    The covariance estimator factors exactly as the product of the slope in the regression of $M$ on $X$ and the coefficient of $M$ in the regression of $Y$ on $(X,M)$.
    Conditional on the second regression design, the latter coefficient has an exact Student statistic.
    As $v_{M,n}$ decreases, the first slope is estimated more accurately while the second standard error grows at the reciprocal rate.
    The Gaussian delta studentizer reproduces this balance exactly, so the exploding variance affects precision but not the limiting studentized law.
\end{remark}

The restriction $a\ne0$ in Proposition~\ref{prop:frontdoor-boundary} is deliberate.
The proposition covers $b=0$ when $a\ne0$, but it makes no claim on the stratum $a=0$, whether or not $b$ vanishes.
At the doubly singular point $a=b=0$ the numerator gradient vanishes, the estimator has a product-of-normals limit at rate $n$, and the nonregularity is a singular-hypothesis phenomenon of the numerator rather than a weak-denominator phenomenon \citep{drtonxiao2016}.
The exact denominator identity in \eqref{eq:frontdoor-pivot} remains true throughout these parameter regions.

\section{Numerical experiments}
\label{sec:numerics}

\subsection{Design}

Two Monte Carlo experiments isolate the two denominator mechanisms of Table~\ref{tab:strategies}, one flag power and one partial slope.
The front-door design sets $X=\eps_X$, $M=0.8X+\eps_M$ and $Y=M+\eps_Y$, where $(\eps_X,\eps_Y)$ is centred Gaussian with unit marginal variances and covariance $0.5$, and $\eps_M$ is an independent Gaussian variable with variance $v_M$;
the target is the total effect $0.8$.
The proximal design sets $Z=0.8U+\eps_Z$, $A=0.8U+\eps_A$, $W=0.8U+\eps_W$ and $Y=A+0.8U+\eps_Y$, with mutually independent centred Gaussian shocks of unit variance except $\var(\eps_A)=v_A$;
the target is the coefficient of $A$, equal to one.
By the structural pullbacks in Table~\ref{tab:strategies}, the population denominators are $v_M$ and $0.64v_A$, so the grids $v_M\in\{1,0.1,0.01,0.001\}$ and $v_A\in\{1,0.3,0.1,0.03,0.01\}$ trace the approach to the two denominator boundaries.
The population value of $nD(\Sigma)^2/s_D^2(\Sigma)$ equals $n/10$ identically in the front-door design and falls from $63.7$ to $0.1$ across the proximal grid (Supplementary Table~\ref{tab:simulation-diagnostics-supp}).

For each cell, we draw the known-mean sample covariance directly from $n\Sighat\sim\mathcal{W}_d(n,\Sigma)$, with $d=3$ and $d=4$ respectively, which is distributionally equivalent to simulating $n$ centred Gaussian observations.
The converse classification of Section~\ref{sec:converse} is not invoked at $d=4$:
Theorem~\ref{thm:flag}, Proposition~\ref{prop:transfer} and the calibrations of Section~\ref{sec:causal} hold in every dimension, and the proximal denominator depends only on the $(A,Z,W)$ block, so its slope-type status is unchanged by the ambient dimension (Supplementary Lemma~\ref{lem:ambient}).
We use $n=1000$ and $100000$ replications per cell.
Each replication yields the plug-in estimator $\hat{\tau}$, the standardized denominator $\FD$, and, in the proximal design, the partial statistic $F_{\mathrm{par}}$ of Section~\ref{sec:causal} with $(e,a,b)=(A,Z,W)$ together with the deliberately naive marginal statistic $n\hat{\rho}_{AZ}^2/(1-\hat{\rho}_{AZ}^2)$, which measures only the treatment--proxy association.
Cells are summarized by medians and by the interquartile range of $\hat{\tau}$ divided by $1.349$, because the ratio estimator need not possess moments in the weak cells, and by the empirical coverage of nominal 95\% Wald intervals based on the delta standard error in \eqref{eq:wald-studentizer} and of the inversion \eqref{eq:fieller-set}.
The estimator, both intervals and all diagnostics were well defined in every replication.
The Python scripts reproducing both the numerical experiments and the real data experiments in Section~\ref{sec:realdata} are available at \url{https://github.com/shutech2001/self-normalizing-denominators-experiments}.

The theory yields three predictions.
First, Theorem~\ref{thm:flag} (iv) and Example~\ref{ex:frontdoor-pivot} imply that $\FD=n/10=100$ in every front-door replication, and Proposition~\ref{prop:frontdoor-boundary} implies nominal Wald coverage along the whole grid, whose smallest cell has $nv_M=1$ and so lies inside the boundary regime $nv_{M,n}\to C\in[0,\infty]$.
Secondly, the weak proximal cells have population values of $nD(\Sigma)^2/s_D^2(\Sigma)$ below one, matching the drift regime of Assumption~\ref{ass:drift}, so Proposition~\ref{prop:transfer} predicts the noncentral limit for $\FD$ in \eqref{eq:ratio-limit} and distorted Wald inference.
Thirdly, Proposition~\ref{prop:fieller} predicts near-nominal coverage for the inversion \eqref{eq:fieller-set} in every cell, with sets that are bounded exactly when $\FD$ exceeds the $0.95$ quantile $3.84$ of the $\chi^2_1$ law.

\subsection{Results}

Table~\ref{tab:simulation} reports the results.
In the front-door design the median standardized denominator equals the theoretical constant $100.0$ in every cell;
by Theorem~\ref{thm:flag} (iv) the statistic equals $n/10$ in every replication, so it carries no information about $v_M$.
Precision, by contrast, deteriorates by a factor of eighteen:
the robust standard deviation of $\hat\tau$ rises from $0.038$ to $0.696$ as $v_M$ falls from $1$ to $0.001$, and the median delta standard error tracks it closely, rising from $0.038$ to $0.693$.
Wald coverage lies between $0.949$ and $0.950$ in all four cells, including the boundary cell with $nv_M=1$, and inversion coverage lies between $0.953$ and $0.955$, slightly above the nominal level.
This confirms the first prediction:
the studentized statistic remains accurate while the denominator diagnostic is exactly uninformative.

In the proximal design the naive marginal statistic rises from $179.7$ to $624.6$ as $v_A$ decreases, while the partial statistic falls from $85.6$ to $0.5$ and the median $\FD$ falls from $63.8$ to $0.5$;
the latter two columns accord with the identity between $\FD$ and $F_{\mathrm{par}}$ stated after \eqref{eq:partial-minor}.
At $v_A=0.03$ and $0.01$ the median bias reaches $0.44$ and $0.88$ and Wald coverage falls to $0.930$ and $0.931$, although the naive diagnostic is largest exactly there.
In those two cells the median standard error exceeds the robust standard deviation, so the Wald failure is one of centring and distributional shape rather than of scale, as the ratio limit in \eqref{eq:ratio-limit} predicts.
Inversion coverage remains between $0.951$ and $0.953$ throughout, but the protection has a price:
in the two weakest cells the median $\FD$ lies below $3.84$, so more than half of the inverted sets are unbounded.

\begin{table}[tb]
\centering
\caption{
Gaussian simulation results based on $100000$ replications per cell.
Med., median across replications;
s.e., Gaussian delta standard error;
robust s.d., interquartile range of $\hat\tau$ divided by $1.349$;
cov., empirical coverage, the two entries giving the nominal 95\% Wald interval and the inversion \eqref{eq:fieller-set}.
Dashes indicate diagnostics defined only for the proximal design.
Bias entries of $0.00$ are zero to the precision shown.
The largest Monte Carlo standard error for a coverage entry is $0.0008$.
}
\label{tab:simulation}
\setlength{\tabcolsep}{6pt}
\scalebox{0.8}{
\begin{tabular}{rrrrrrrr}
Parameter & Med. $\FD$ & Med. partial $F$ & Med. naive $F$ & Robust s.d. & Med. s.e. & Med. bias & Wald/inv. cov.\\
\addlinespace[2pt]
\multicolumn{8}{l}{Front-door: parameter $v_M$}\\
$1.00$  & $100.0$ & -- & -- & $0.038$ & $0.038$ & $0.00$ & $0.949$/$0.954$\\
$0.10$  & $100.0$ & -- & -- & $0.070$ & $0.070$ & $0.00$ & $0.950$/$0.955$\\
$0.01$  & $100.0$ & -- & -- & $0.217$ & $0.219$ & $0.00$ & $0.949$/$0.954$\\
$0.001$ & $100.0$ & -- & -- & $0.696$ & $0.693$ & $0.00$ & $0.950$/$0.953$\\
\addlinespace
\multicolumn{8}{l}{Proximal: parameter $v_A$}\\
$1.00$ & $63.8$ & $85.6$ & $179.7$ & $0.064$ & $0.063$ & $0.00$ & $0.952$/$0.953$\\
$0.30$ & $26.5$ & $29.6$ & $362.0$ & $0.173$ & $0.170$ & $0.00$ & $0.952$/$0.952$\\
$0.10$ &  $6.2$ &  $6.4$ & $510.2$ & $0.491$ & $0.468$ & $0.01$ & $0.935$/$0.953$\\
$0.03$ &  $0.9$ &  $0.9$ & $595.2$ & $1.187$ & $1.433$ & $0.44$ & $0.930$/$0.952$\\
$0.01$ &  $0.5$ &  $0.5$ & $624.6$ & $1.460$ & $2.037$ & $0.88$ & $0.931$/$0.951$\\
\end{tabular}
}
\end{table}

The two designs estimate different targets in different models, so the experiment compares diagnostics rather than estimators.
Its message is the dissociation predicted by the classification:
front-door adjustment loses precision without entering the first-order ratio regime, whereas the proximal estimator fails in the partial-covariance direction that the marginal treatment--proxy statistic does not measure.
Supplementary Section~\ref{supp:numerics} gives complete cell summaries.

\section{Real data experiments}
\label{sec:realdata}

\subsection{Data and diagnostic design}

We audit the public SUPPORT right-heart-catheterization data \citep{connors1996}, which contain $5735$ critically ill patients, an indicator of right-heart catheterization on the first study day, survival time in days truncated at $30$, and baseline physiological measurements.
Following proximal analyses of these data \citep{liu2025,tchetgen2024}, we consider treatment proxies \texttt{pafi1} and \texttt{paco21} and outcome proxies \texttt{ph1} and \texttt{hema1}.
The analysis is the diagnostic pathway described in the introduction rather than a new clinical causal analysis:
the symbolic factor-and-rank step has already classified the proximal denominator as a nonprincipal slope minor (Example~\ref{ex:factor-workflow}), and the data-level step measures the implied partial direction for each candidate proxy pair.

Treatment, outcome and the four proxies are residualized by least squares on age, sex, primary and secondary disease categories, do-not-resuscitate status, the SUPPORT two-month survival estimate and the APACHE score;
missing continuous covariates are median-imputed, and missing categorical covariates are mode-imputed and coded by indicator variables with one level omitted.
The resulting design has rank $r=19$, and all diagnostics use the residual degrees of freedom $\nu=n-r=5716$ in place of $n$.
For residualized treatment $A$, treatment proxy $Z$ and outcome proxy $W$, we report the naive marginal statistic $\nu\hat{\rho}_{AZ}^2/(1-\hat{\rho}_{AZ}^2)$, the partial statistic $\nu\hat{\rho}_{ZW\mathbin{\cdot} A}^2/(1-\hat{\rho}_{ZW\mathbin{\cdot} A}^2)$ and two versions of the standardized denominator:
the Gaussian version of Section~\ref{sec:causal}, and a sandwich version in which $\Gamma(\Sighat)$ in \eqref{eq:sD} is replaced by the empirical covariance matrix of the six second-moment scores.
Because the original treatment is binary, the residualized treatment is non-Gaussian.
The Gaussian fourth-moment formula therefore serves only as a reference value. 
The sandwich version is the appropriate studentization under the empirical law (Remark~\ref{rem:scope-exactness}).
Sampling variability of the proximal estimates and of the sandwich statistic is assessed by $2000$ nonparametric percentile bootstrap replications over patients, each of which repeats the imputation, coding and residualization.
The audit is descriptive:
proxy strength in the denominator direction is measurable, but proxy validity is not testable from these diagnostics.

Full variable definitions and preprocessing details are given in Supplementary Section~\ref{supp:support-preprocessing}, and the second-moment score construction and bootstrap implementation are given in Supplementary Section~\ref{supp:support-diagnostics}.

\subsection{Results}

Table~\ref{tab:support} reports the audit, and the two pairs involving \texttt{ph1} reverse their ordering across diagnostics.
The pair \texttt{pafi1}/\texttt{ph1} appears strong in the treatment--proxy direction, with naive statistic $173.4$, but is weak in the direction that enters the denominator.
Its partial statistic is $3.2$ and its sandwich statistic is $2.3$, with a bootstrap interval reaching essentially zero.
The pair \texttt{paco21}/\texttt{ph1} shows the reverse pattern, with naive statistic $16.4$ but partial statistic $1980.0$ and sandwich statistic $315.2$, bounded well away from zero.
The Gaussian version is the monotone transform of the partial statistic given after \eqref{eq:partial-minor} with $\nu$ in place of $n$, which explains why those two columns nearly coincide when the partial statistic is small relative to $\nu$.
The sandwich column carries the additional distributional information.
It is smaller than the Gaussian value for every pair, by factors of $0.72$ to $0.81$ for three pairs and $0.38$ for \texttt{paco21}/\texttt{ph1}.
Thus, the Gaussian formula overstates denominator strength under the empirical fourth moments, most severely for the strongest pair.

\begin{table}[tb]
\centering
\caption{
Denominator audit for the SUPPORT data.
CI, percentile bootstrap confidence interval based on $2000$ replications.
All statistics are scaled by the residual degrees of freedom $\nu=5716$.
The naive and partial statistics are the indices $\nu\hat\rho^2/(1-\hat\rho^2)$ defined in the text, not finite-sample $F$ tests, and the naive statistic depends only on the treatment proxy.
The sandwich statistic uses the empirical covariance matrix of the six second-moment scores.
}
\label{tab:support}
\setlength{\tabcolsep}{6pt}
\scalebox{0.8}{
\begin{tabular}{llrrrc}
Treatment proxy & Outcome proxy & Naive $F$ & Partial $F$ & Gaussian $\FD$ & Sandwich $\FD$ (95\% CI)\\
\addlinespace[2pt]
\texttt{pafi1} & \texttt{ph1}   & $173.4$ & $3.2$    & $3.2$   & $2.3$ ($0.0$, $12.7$)\\
\texttt{pafi1} & \texttt{hema1} & $173.4$ & $32.3$   & $31.6$  & $25.7$ ($9.2$, $51.3$)\\
\texttt{paco21}& \texttt{ph1}   & $16.4$  & $1980.0$ & $830.0$ & $315.2$ ($218.2$, $461.9$)\\
\texttt{paco21}& \texttt{hema1} & $16.4$  & $69.3$   & $66.1$  & $51.3$ ($30.3$, $78.4$)\\
\end{tabular}
}
\end{table}

The plug-in proximal estimates for the four pairs, in the order of Table~\ref{tab:support}, are $-1.25$, $-1.41$, $-1.33$ and $-1.14$ days, with percentile intervals $(-2.25, 0.14)$, $(-2.04, -0.81)$, $(-1.84, -0.81)$ and $(-1.69, -0.56)$.
Only the interval for the weak pair \texttt{pafi1}/\texttt{ph1} covers zero.
For that pair, however, the bootstrap interval inherits the nonstandard ratio behaviour described in Section~\ref{sec:weak} and is reported for completeness only;
the inversion \eqref{eq:fieller-set} is the appropriate construction in that regime.
None of these intervals validates the proximal bridge assumptions, which the audit cannot test.

The audit illustrates the practical content of the classification.
A strong association between treatment and a proposed treatment proxy neither implies nor precludes strength of the nonprincipal minor that identifies the proximal bridge;
the symbolic step locates the relevant covariance direction before any estimation, and the partial and sandwich statistics then measure it for each candidate allocation.

\section{Discussion}
\label{sec:discussion}
The classification turns the choice among rational identification formulas into a design decision.
When several certificates identify the same target, their denominators can belong to different algebraic classes even though the ratios agree on the model.
The classes are complementary rather than ordered, because a flag denominator removes first-order denominator nonregularity while a slope-type formula may be more precise at strongly identified distributions.
Therefore, we recommend reporting the symbolic denominator class alongside any covariance plug-in estimate.
When a denominator is not self-normalizing and its relevant standardized diagnostic is weak, inference should also include a robust inversion.

The main open algebraic problem is the rigidity of the mixed kernel line.
A proof that the kernel line in Theorem~\ref{thm:d3-reduction} (iii) is constant would complete the unconditional three-variable converse, while a counterexample would produce a self-normalizing denominator outside the flag class.
Either resolution must exploit the integrability of the relative gradient beyond its vanishing Hessian.
In dimension at least four, a converse would additionally require invariants beyond the first two power sums of the relative gradient.

The principal distributional question is self-normalization beyond the Gaussian covariance operator.
Ellipticity only rescales the constant, so the first open case is a semiparametric family with unrestricted fourth cumulants.
A characterization of polynomials whose sandwich variance is proportional to their square over such a family would determine when the standardized denominator remains a samplewise constant under a matched studentizer.
This, in turn, would establish how far the exact screening interpretation extends.

The principal methodological question concerns selection among many candidate strategies.
Choosing a certificate because its observed denominator diagnostic is largest reintroduces a data-adaptive weak-direction problem.
Therefore, applications with many proxies or graph-derived formulas call for selection-adjusted denominator diagnostics and simultaneous robust inversions \citep{wang2025,bennett2026}.
Because the factor-and-rank audit is symbolic, it could also be attached to computer-algebra identification pipelines, so that every certificate is generated together with its denominator class.

\section*{Declaration of the use of generative AI and AI-assisted technologies}
During the preparation of this work the author used ChatGPT and Claude in order to assist with writing and refactoring the simulation code.
After using these tools the author reviewed and edited the content as necessary and takes full responsibility for the content of the publication.

\section*{Acknowledgement}
Shu Tamano was supported by JSPS KAKENHI Grant Numbers 25K24203.

\section*{Supplementary material}
\label{SM}
The Supplementary Material includes a notation table, all omitted proofs, the elliptical and unknown-mean extensions, the recursive-flag and boundary-rigidity converses, the Fieller and minor-calibration results, detailed simulation and SUPPORT analyses, and the algebraic obstructions to an unrestricted three-variable converse.

\bibliographystyle{apalike}
\bibliography{bibliography}

\clearpage
\setcounter{section}{0}
\renewcommand{\thesection}{\Alph{section}}
\makeatletter
\@addtoreset{equation}{section}
\@addtoreset{table}{section}
\@addtoreset{figure}{section}
\setcounter{equation}{0}
\setcounter{table}{0}
\setcounter{figure}{0}
\renewcommand{\theequation}{\thesection.\arabic{equation}}
\renewcommand{\thetable}{\thesection.\arabic{table}}
\renewcommand{\thefigure}{\thesection.\arabic{figure}}
\def\supp@numbercounter#1{%
    \@ifundefined{c@#1}{}{%
        \@addtoreset{#1}{section}%
        \setcounter{#1}{0}%
        \expandafter\gdef\csname the#1\endcsname{\thesection.\arabic{#1}}%
    }%
}
\supp@numbercounter{theorem}
\supp@numbercounter{lemma}
\supp@numbercounter{proposition}
\supp@numbercounter{corollary}
\supp@numbercounter{definition}
\supp@numbercounter{remark}
\supp@numbercounter{example}
\supp@numbercounter{assumption}
\let\supp@numbercounter\relax
\makeatother

\begin{center}
{\Large\bfseries Supplementary Material for}\par
\medskip
{\Large\bfseries ``Self-normalizing denominators in rational causal estimation''}
\end{center}

\section{Notation}
\label{supp:notation}

\subsection{Linear-algebraic, algebraic and geometric conventions}

We collect the conventions needed to read the Supplementary Material independently.
Fix $d\ge1$, write $[d]=\{1,\ldots,d\}$, put $p_d=d(d+1)/2$ and $\mathcal{I}_d=\{(i,j):1\le i\le j\le d\}$, and use the lexicographic order $(1,1),(1,2),\ldots,(1,d),(2,2),\ldots,(d,d)$.
Vectors are columns, $A^\top$ is transpose, $A^{-\top}=(A^{-1})^\top$ for nonsingular $A$, $I_m$ is the identity matrix and $e_j$ is the $j$th standard basis vector.
The symbols $\tr$, $\det$, $\rank$ and $\spec$ denote trace, determinant, rank and the eigenvalue multiset.
For $I,J\subseteq[d]$, $A_{I,J}$ is the corresponding submatrix, $A_I=A_{I,I}$ and $A_{1:r}=A_{\{1,\ldots,r\},\{1,\ldots,r\}}$.

The space $\Sym_d=\{A\in\R^{d\times d}:A=A^\top\}$ has positive-definite and positive-semidefinite cones $\Smat^d_{++}$ and $\Smat^d_+$.
For $A\in\Sym_d$, $\vech(A)\in\R^{p_d}$ stacks the upper-triangular entries in the stated order.
The coordinate ring $\R[\Sym_d]=\R[\sigma_{ij}:(i,j)\in\mathcal{I}_d]$ is identified with the polynomial maps $\Sym_d\to\R$;
$P\equiv0$ means equality in this ring, $\deg(P)$ is total degree, and homogeneity, divisibility and coprimality are understood in the indicated coordinate ring.
For $P\in\R[\Sym_d]$, $\nabla P$ is the coordinate gradient in the $\vech$ order, and
\begin{equation*}
    \mathrm{d}P_\Sigma(H)
    =
    \left.\frac{\mathrm{d}}{\mathrm{d}t}P(\Sigma+tH)\right|_{t=0},
    \quad
    \mathrm{d}P_\Sigma(H)
    =
    \tr\{G_P(\Sigma)H\}
    ,
\end{equation*}
define the Fr\'echet differential and symmetric gradient.
The relative gradient is $R_P(\Sigma)=G_P(\Sigma)\Sigma/P(\Sigma)$ on $\{P\ne0\}$.

Throughout the remaining algebraic conventions, $k$ denotes either $\R$ or $\C$.
For a real vector space $V$, its complexification is $V_\C=V\otimes_\R\C$.
The notation $\operatorname{GL}_m(k)$ denotes the group of nonsingular $m\times m$ matrices over $k$, $\Span_{k}(S)$ is the linear span of $S$, and $\ker(A)$, $\Range(A)$ and $\rowspace(A)$ are the kernel, range and row space of a matrix.
When the field is clear, the subscript on $\Span$ is omitted.
For a real subspace $V\subseteq\R^d$, $V^\perp=\{x\in\R^d:x^\top v=0\text{ for every }v\in V\}$, and $\diag(a_1,\ldots,a_m)$ is the diagonal matrix with the displayed entries.
The matrix unit $E_{ij}\in k^{m\times m}$ has a one in position $(i,j)$ and zeros elsewhere.

For an integral domain $R$, $\operatorname{Frac}(R)$ is its field of fractions.
If $R$ is a unique factorization domain, abbreviated UFD, then $\gcd(f_1,\ldots,f_s)$ denotes a greatest common divisor, defined up to multiplication by a unit.
A vector with entries in $R$ is primitive when the greatest common divisor of its entries is a unit.
For a finite-dimensional $k$-vector space $V$ and $K\ge0$, $\operatorname{Sym}^K(V)$ is the $K$th symmetric tensor power,
\begin{equation*}
    \operatorname{Sym}^K(V)
    =
    V^{\otimes K}/\langle v_1\otimes\cdots\otimes v_K
    -
    v_{\pi(1)}\otimes\cdots\otimes v_{\pi(K)}
    :
    v_1,\ldots,v_K\in V,\ \pi\in\mathcal{S}_K
    \rangle
    ,
\end{equation*}
where the angle brackets denote linear span, $\mathcal{S}_K$ is the symmetric group on $\{1,\ldots,K\}$, and $\operatorname{Sym}^0(V)=k$.
Its dual is naturally the space of homogeneous polynomial functions of degree $K$ on $V$.
After choosing the standard basis, $\operatorname{Sym}^2(k^m)$ is identified with the space of symmetric $m\times m$ matrices over $k$.

For a finite-dimensional $k$-vector space $V$, $\mathbb{P}(V)=(V\setminus\{0\})/k^\times$ is its projective space, and $[v]$ is the point represented by $v\ne0$.
We write $\mathbb{P}^m_\C=\mathbb{P}(\C^{m+1})$ and $\Gr(r,V)$ for the Grassmannian of $r$-dimensional linear subspaces of $V$;
thus $\Gr(r,d)_\C=\Gr(r,\C^d)$.
More generally, a subscript $\C$ on a real algebraic variety or construction denotes extension of scalars from $\R$ to $\C$, equivalently the complex variety defined by the same real polynomial equations.
A dashed arrow $f:X\dashrightarrow Y$ denotes a rational map:
it is represented by a morphism on a Zariski-dense open subset of $X$, and two representatives are identified when they agree on a Zariski-dense open subset.
When $X$ is irreducible, every nonempty Zariski-open subset is dense.
For a homogeneous polynomial $p$, the notation $\{p=0\}_{\mathrm{red}}$ denotes the reduced projective hypersurface defined by the radical ideal $\sqrt{(p)}$;
it has the same underlying zero set as $\{p=0\}$ but carries no multiplicities.
Terms such as Zariski open, closed, dense and irreducible refer to this algebraic topology over the field stated in the argument.

For a differentiable map $F:\Sym_d\to W$ into a finite-dimensional vector space and $H\in\Sym_d$, its directional derivative is
\begin{equation*}
    \partial_H F(\Sigma)
    =
    \left.\frac{\mathrm{d}}{\mathrm{d}t}F(\Sigma+tH)\right|_{t=0}
    .
\end{equation*}
For scalar $F$, this equals $\mathrm{d}F_\Sigma(H)$; for matrix-valued $F$ it is taken entrywise.
Ordinary symbols such as $\partial_q$ and $\partial_{r_i}$ denote coordinate derivatives.
The symbol $\Hess F$ means the coordinate Hessian of a scalar function on the affine space $\Sym_d$.
When a Riemannian metric $g$ is explicitly fixed, $\operatorname{grad}_g f$, $\nabla^g$ and $\operatorname{Hess}_g f$ denote its gradient, Levi--Civita connection and Riemannian Hessian, and $\|\cdot\|_g$ is the induced norm.

For a square matrix $A$, $\adjm(A)$ is its classical adjugate, so $A\adjm(A)=\adjm(A)A=\det(A)I$, where $I$ has the same order as $A$.
For a polynomial $F$ on $\Sym_3$, the adjugate transform is $F^\dagger=F\circ\adjm$.
If $V\subseteq\R^d$ has dimension $r$, $M_V$ is any full-row-rank matrix with row space $V$ and $\Delta_V(\Sigma)=\det(M_V\Sigma M_V^\top)$.
If $P_V\in\R^{d\times r}$ has range $V$, then $F_V(S)=F(P_VSP_V^\top)$ is the face restriction and $\Smat_+(V)=\{A\in\Smat^d_+:\Range(A)\subseteq V\}$.
For a polynomial $F$ on $\Sym_3$, a plane $V$ is called $F$-good when $F_V\not\equiv0$;
when $F$ is clear, such a plane is called good.
If finitely many polynomials are under discussion, a generic good plane means a plane in a common nonempty Zariski-open set on which all indicated restrictions are nonzero and retain their generic factorization type.
On $\Sym_3$ we use $\delta(\Sigma)=\det(\Sigma)$ and $p_F(u)=F(uu^\top)$.

For later representation-theoretic notation, let $\mathrm{gl}_m(k)$ be the vector space of all $m\times m$ matrices.
Its derived congruence action on polynomial functions $F$ on $\Sym_m$ is
\begin{equation*}
    (\rho(A)F)(\Sigma)
    =
    \left.\frac{\mathrm{d}}{\mathrm{d}t}
    F(e^{tA}\Sigma e^{tA^\top})\right|_{t=0}
    ,
    \quad
    A\in\mathrm{gl}_m(k)
    .
\end{equation*}
With the preceding matrix units, $\rho(E_{ij})F=2\{G_F(\Sigma)\Sigma\}_{ij}$.

\subsection{Probability and asymptotic conventions}

For a probability law $Q$, $\E_Q$, $\var_Q$, $\cov_Q$, $\corr_Q$ and $\pr_Q$ denote expectation, variance, covariance, correlation and probability whenever the relevant moments exist.
Conditional versions are under the same governing law.
For centred $Y_1,\ldots,Y_4$, the fourth joint cumulant is
\begin{equation*}
    \begin{split}
        \cum_Q(Y_1,Y_2,Y_3,Y_4)
        &=
        \E_Q(Y_1Y_2Y_3Y_4)
        -\cov_Q(Y_1,Y_2)\cov_Q(Y_3,Y_4)
        \\
        &\quad-\cov_Q(Y_1,Y_3)\cov_Q(Y_2,Y_4)
        -\cov_Q(Y_1,Y_4)\cov_Q(Y_2,Y_3)
        .
    \end{split}
\end{equation*}
For random elements $Y_1,\ldots,Y_m$, $\sigma(Y_1,\ldots,Y_m)$ is the smallest $\sigma$-algebra with respect to which all $Y_j$ are measurable.
It is not automatically completed; as usual, conditional expectations are defined up to $Q$-null sets.

The law $N_m(\mu,\Omega)$ is Gaussian with mean $\mu\in\R^m$ and covariance $\Omega\in\Smat^m_+$.
For integer $\nu\ge1$, $\mathcal{W}_m(\nu,\Omega)$ is the law of $\sum_{\ell=1}^{\nu}Z_\ell Z_\ell^\top$ for independent $Z_\ell\sim N_m(0,\Omega)$; $\chi^2_\nu=\mathcal{W}_1(\nu,1)$, and $t_\nu$ is Student's $t$ law with $\nu$ degrees of freedom.
For Gaussian sampling, $\mathcal{P}_\Sigma=N_d(0,\Sigma)$ and $\mathcal{P}_\Sigma^{(n)}=\mathcal{P}_\Sigma^{\otimes n}$, with
\begin{equation*}
    \Sighat
    =
    n^{-1}\sum_{i=1}^nX_iX_i^\top
    ,
    \quad
    \hat{\sigma}_{ij}
    =
    e_i^\top\Sighat e_j
    .
\end{equation*}
The Gaussian covariance map and the associated first-order variance are
\begin{equation*}
    \Gamma_{(ij),(kl)}(\Sigma)
    =
    \sigma_{ik}\sigma_{jl}+\sigma_{il}\sigma_{jk}
    ,
    \quad
    s_F^2(\Sigma)
    =
    \nabla F(\Sigma)^\top\Gamma(\Sigma)\nabla F(\Sigma)
    ,
\end{equation*}
and $s_F$ is the nonnegative square root.
Under a general law $Q$ for a centred vector $X$, $\Gamma_Q(\Sigma)=\cov_Q\{\vech(XX^\top)\}$ whenever the fourth moments exist.
In a triangular array with row covariance $\Sigma_n$, the row law is $\mathcal{P}_n=\mathcal{P}_{\Sigma_n}^{(n)}$, and we abbreviate $\E_n=\E_{\mathcal{P}_n}$ and $\pr_n=\pr_{\mathcal{P}_n}$.

For an identification strategy $(N,D)$, the plug-in estimator is $\hat{\tau}=N(\Sighat)/D(\Sighat)$ whenever $D(\Sighat)\ne0$.
Along a deterministic sequence $\Sigma_n$ with $D(\Sigma_n)\ne0$, put
\begin{equation*}
    \tau_n
    =
    \frac{N(\Sigma_n)}{D(\Sigma_n)}
    ,\quad
    g_n
    =
    \nabla N(\Sigma_n)-\tau_n\nabla D(\Sigma_n)
    ,
\end{equation*}
$s_{D,n}=s_D(\Sigma_n)$ and $s_{g,n}=\{g_n^\top\Gamma(\Sigma_n)g_n\}^{1/2}$.
Whenever $s_{D,n}s_{g,n}>0$,
\begin{equation*}
    \mu_n
    =
    \frac{n^{1/2}D(\Sigma_n)}{s_{D,n}}
    ,\quad
    r_n
    =
    \frac{g_n^\top\Gamma(\Sigma_n)\nabla D(\Sigma_n)}{s_{g,n}s_{D,n}}
    ,\quad
    \omega_n
    =
    \frac{s_{g,n}}{s_{D,n}}
    .
\end{equation*}

Equality in distribution is $\stackrel d=$, convergence in probability is $\pto$, and convergence in distribution is $\dto$ or $\Rightarrow$.
The function $\sgn(x)$ is the sign of $x\in\R$.
For positive deterministic $a_n$, $Y_n=O_p(a_n)$ means that $Y_n/a_n$ is bounded in probability, and $Y_n=o_p(a_n)$ means $Y_n/a_n\pto0$.
Unless stated otherwise, all asymptotic statements use the current sequence of laws and let $n\to\infty$.

\subsection{Paper-specific symbols}

The remaining notation is summarized in Table~\ref{tab:notation}.
An omitted matrix argument means evaluation at the current base point $\Sigma$.

\begin{table}[tb]
\centering
\caption{
Paper-specific notation used in the Supplementary Material.
}
\label{tab:notation}
\setlength{\tabcolsep}{6pt}
\scalebox{0.8}{
\begin{tabular}{>{\raggedright\arraybackslash}p{0.25\textwidth}>{\raggedright\arraybackslash}p{0.66\textwidth}}
Symbol & Meaning\\
\addlinespace[2pt]
$\Gamma$, $\Gamma_Q$
 & Gaussian covariance map and the corresponding covariance map under a
   general law $Q$\\
$s_F^2$, $s_F$
 & $\nabla F^\top\Gamma\nabla F$ and its nonnegative square root\\
$R_F$
 & Relative gradient $G_F(\Sigma)\Sigma/F(\Sigma)$ on $\{F\ne0\}$\\
$\Sighat$, $\hat{\sigma}_{ij}$
 & Known-mean sample covariance and its entries\\
$\FD$
 & $nD(\Sighat)^2/s_D^2(\Sighat)$ on $\{s_D^2(\Sighat)>0\}$\\
$(a,b)$, $\knu$
 & Three-variable face type and primitive polynomial kernel vector\\
$\mu_n$, $r_n$, $\omega_n$
 & Local denominator mean, numerator--denominator correlation and scale ratio\\
Hat, subscript $n$, subscript $*$
 & Sample or plug-in quantity, row of a sequence, and limiting value\\
\end{tabular}
}
\end{table}

\section{Proofs of Section 2: Preliminaries}
\label{supp:prelim-proofs}
This section proves the facts about exact self-normalization that the main text invokes without proof.
Lemma~\ref{lem:trace-main} identifies the Gaussian first-order variance with the trace form in \eqref{eq:sD} and shows that the property is preserved under congruence transformations.
Proposition~\ref{prop:elliptical} and Remark~\ref{rem:elliptical-unknown-mean} then delimit its distributional scope under elliptical sampling and unknown means.
Lemma~\ref{lem:homog-main} reduces every converse question to one homogeneous degree, Lemma~\ref{lem:nozero} establishes interior nonvanishing, and Lemma~\ref{lem:ambient} shows that neither the property nor its constant depends on the ambient dimension.

The first lemma converts the statistical definition of $s_D^2$ into the intrinsic trace form used throughout the paper.

\begin{lemma}[Trace form and congruence invariance]
\label{lem:trace-main}
    For every $D\in\R[\Sym_d]$ and every $\Sigma\in\Smat^d_{++}$,
    \begin{equation*}
        \nabla D(\Sigma)^\top\Gamma(\Sigma)\nabla D(\Sigma)
        =
        2\tr\{(G_D(\Sigma)\Sigma)^2\}
        .
    \end{equation*}
    If $D$ is exactly self-normalizing with constant $c$ and $A$ is nonsingular, then $D_A(\Sigma)=D(A\Sigma A^\top)$ is exactly self-normalizing with the same constant.
\end{lemma}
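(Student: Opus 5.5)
The plan has two parts: the trace identity and congruence invariance.

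\emph{Trace identity.} I would show that both sides equal the same quadratic form in the symmetric gradient, by comparing their bilinear forms on $\Sym_d$. Write $G=G_D(\Sigma)$ and note that $\mathrm{d}D_\Sigma(H)=\nabla D^\top\vech(H)=\tr(GH)$. So $\nabla D^\top\vech(\cdot)$ is the linear functional $H\mapsto\tr(GH)$.

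The key point is that $\Gamma(\Sigma)$ is the covariance of $\vech(XX^\top)$ for $X\sim N_d(0,\Sigma)$. This is Isserlis' theorem: $\cov(X_iX_j,X_kX_l)=\sigma_{ik}\sigma_{jl}+\sigma_{il}\sigma_{jk}$. Hence
\begin{equation*}
\nabla D^\top\Gamma\nabla D=\var\{\nabla D^\top\vech(XX^\top)\}=\var\{\tr(GXX^\top)\}=\var(X^\top GX).
\end{equation*}
Next, the classical Gaussian quadratic-form variance gives $\var(X^\top GX)=2\tr(G\Sigma G\Sigma)=2\tr\{(G\Sigma)^2\}$ for symmetric $G$. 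To see this, write $X=\Sigma^{1/2}Z$, diagonalize $\Sigma^{1/2}G\Sigma^{1/2}$, and use $\var(Z_i^2)=2$.

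The only bookkeeping to check is that the $\vech$ coordinates, which count off-diagonal entries once, combine correctly with $\partial D/\partial\sigma_{ij}=2G_{ij}$. The identity $\nabla D^\top\vech(H)=\tr(GH)$ handles this automatically, so I expect no real obstacle. Alternatively, the equality could be checked directly by expanding the index sums.

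\emph{Congruence invariance.} Let $D_A(\Sigma)=D(A\Sigma A^\top)$. By the chain rule, $\mathrm{d}(D_A)_\Sigma(H)=\tr\{G_D(A\Sigma A^\top)AHA^\top\}$. Therefore $G_{D_A}(\Sigma)=A^\top G_D(A\Sigma A^\top)A$, and
\begin{equation*}
G_{D_A}(\Sigma)\Sigma=A^\top\{G_D(A\Sigma A^\top)A\Sigma A^\top\}A^{-\top}.
\end{equation*}
This is similar to $G_D(S)S$ with $S=A\Sigma A^\top$. Traces of squares are similarity invariant, so
\begin{equation*}
2\tr\{(G_{D_A}\Sigma)^2\}=2\tr\{(G_D(S)S)^2\}=cD(S)^2=cD_A(\Sigma)^2
\end{equation*}
for all $\Sigma\in\Sym_d$, and the same constant $c$ works.

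It remains to confirm the side conditions. $D_A$ is nonconstant and homogeneous of the same degree, because $A$ is invertible and the map is linear. The mildest care point is that the identity holds on all of $\Sym_d$ rather than only on the cone. This is immediate, since the computation uses only linear algebra valid for every symmetric $\Sigma$ and $S=A\Sigma A^\top$ ranges over $\Sym_d$.
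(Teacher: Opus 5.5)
Your proposal is correct and follows essentially the same route as the paper. For the trace identity, both arguments write $\nabla D^\top\Gamma\nabla D=\var(X^\top GX)$ and then apply the Gaussian quadratic-form variance formula. For congruence, both use the chain rule $G_{D_A}(\Sigma)=A^\top G_D(A\Sigma A^\top)A$ together with similarity invariance of traces. Your explicit check that $D_A$ remains nonconstant and homogeneous is a small addition that the paper leaves implicit.
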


Both sides of the display are polynomial in $\Sigma$, which yields the extension to all of $\Sym_d$ recorded after \eqref{eq:sD}.
The congruence part makes the property coordinate-free, and it is used silently whenever a flag or a kernel direction is normalized by a linear change of variables.

\begin{proof}[Proof of Lemma~\ref{lem:trace-main}]
    Let $X\sim N_d(0,\Sigma)$ and write $G=G_D(\Sigma)$.
    Under the symmetric perturbation convention, the coordinate gradient of $D$ has entries $G_{ii}$ on diagonal coordinates and $2G_{ij}$ on off-diagonal coordinates.
    Therefore,
    \begin{equation*}
        \nabla D(\Sigma)^\top\Gamma(\Sigma)\nabla D(\Sigma)
        =
        \var(X^\top GX)
        =
        2\tr\{(G\Sigma)^2\}
        ,
    \end{equation*}
    by the Gaussian quadratic-form variance formula.
    If $D_A(\Sigma)=D(A\Sigma A^\top)$, the chain rule gives
    \begin{equation*}
        G_{D_A}(\Sigma)
        =
        A^\top G_D(A\Sigma A^\top)A
        .
    \end{equation*}
    Consequently, $G_{D_A}(\Sigma)\Sigma$ is similar to $G_D(A\Sigma A^\top)A\Sigma A^\top$, so traces of powers agree.
    Since $\Sigma\mapsto A\Sigma A^\top$ is a bijection of $\Sym_d$, the identity \eqref{eq:SN} for $D$ transfers to $D_A$ with the same constant.
\end{proof}

We next determine how the sampling law enters.
Throughout the following discussion, $\mathcal{P}$ is a probability law on $\R^d$ under which the vector $X=(X^{(1)}, \ldots, X^{(d)})^\top$ is centred with covariance matrix $\Sigma$ and finite fourth moments, and $\Gamma_{\mathcal{P}}(\Sigma)=\cov_{\mathcal{P}}\{\vech(XX^\top)\}$ is the associated covariance operator.
Directly from the definition of the fourth joint cumulant in Section~\ref{supp:notation},
\begin{equation}
\label{eq:general-fourth}
    \Gamma_{\mathcal{P},(ij),(kl)}
    =
    \sigma_{ik}\sigma_{jl}+\sigma_{il}\sigma_{jk}+\cum_{\mathcal{P}}(X^{(i)},X^{(j)},X^{(k)},X^{(l)})
    ,
\end{equation}
so the departure of $\Gamma_{\mathcal{P}}$ from the Gaussian operator \eqref{eq:Gamma-gaussian} consists exactly of the fourth cumulants.
Suppose now that $\Sigma\succ0$ and that $\mathcal{P}$ is elliptical, meaning that $X$ has the stochastic representation $X=RAU$, where $AA^\top=\Sigma$, the vector $U$ is uniform on the unit sphere in $\R^d$, and the radius $R\ge0$ is independent of $U$.
The covariance normalization forces $\E_{\mathcal{P}}(R^2)=d$.
Define the kurtosis parameter $\kappa$ of $\mathcal{P}$ by
\begin{equation}
\label{eq:elliptical-kappa}
    \E_{\mathcal{P}}\{(X^\top\Sigma^{-1}X)^2\}
    =
    d(d+2)(1+\kappa)
    ,
\end{equation}
so that $\kappa=0$ under Gaussian sampling, and Jensen's inequality gives $1+\kappa\ge d/(d+2)>0$.
Write $\Gamma_\kappa=\Gamma_{\mathcal{P}}$ and $s_{D,\kappa}^2(\Sigma)=\nabla D(\Sigma)^\top\Gamma_\kappa(\Sigma)\nabla D(\Sigma)$.

\begin{proposition}[Elliptical fourth moments]
\label{prop:elliptical}
    Let $\mathcal{P}$ be elliptical with covariance matrix $\Sigma\succ0$ and kurtosis parameter $\kappa$, and let $D\in\R[\Sym_d]$.
    Then
    \begin{align}
        \Gamma_{\kappa,(ij),(kl)}
        &=
        (1+\kappa)(\sigma_{ik}\sigma_{jl}+\sigma_{il}\sigma_{jk})
        +\kappa\sigma_{ij}\sigma_{kl}
        ,
        \label{eq:elliptical-Gamma}\\
        s_{D,\kappa}^2(\Sigma)
        &=
        2(1+\kappa)\tr\{(G_D\Sigma)^2\}
        +\kappa\{\tr(G_D\Sigma)\}^2
        .
        \label{eq:elliptical-sD}
    \end{align}
    If $D$ is homogeneous of degree $K$, then $D$ satisfies \eqref{eq:SN} with constant $c$ if and only if $s_{D,\kappa}^2=c_\kappa D^2$ identically, where $c_\kappa=(1+\kappa)c + \kappa K^2$, and in that case
    \begin{equation}
    \label{eq:elliptical-samplewise}
        \frac{nD(\Sighat)^2}{s_D^2(\Sighat)}
        =
        \frac{n}{c}
        ,\quad
        \frac{nD(\Sighat)^2}{s_{D,\kappa}^2(\Sighat)}
        =
        \frac{n}{c_\kappa}
    \end{equation}
    at every sample covariance matrix at which the displayed denominators are positive.
\end{proposition}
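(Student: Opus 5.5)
We prove \eqref{eq:elliptical-Gamma} from the cumulant decomposition \eqref{eq:general-fourth} by computing the fourth cumulant of an elliptical vector. Write $X=RAU$ and note that $Y=A^{-1}X=RU$ is spherically symmetric with identity covariance. By orthogonal invariance, the fourth moment tensor of $Y$ has the form $\E(Y_iY_jY_kY_l)=\beta(\delta_{ij}\delta_{kl}+\delta_{ik}\delta_{jl}+\delta_{il}\delta_{jk})$ for a scalar $\beta$. Contracting against $\delta_{ij}\delta_{kl}$ gives $\E\{(Y^\top Y)^2\}=\beta d(d+2)$, so \eqref{eq:elliptical-kappa} yields $\beta=1+\kappa$. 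The cumulant of $Y$ is therefore $\kappa(\delta_{ij}\delta_{kl}+\delta_{ik}\delta_{jl}+\delta_{il}\delta_{jk})$. Cumulants are multilinear, so transporting by $A$ gives $\cum(X^{(i)},X^{(j)},X^{(k)},X^{(l)})=\kappa(\sigma_{ij}\sigma_{kl}+\sigma_{ik}\sigma_{jl}+\sigma_{il}\sigma_{jk})$. Substituting this into \eqref{eq:general-fourth} gives \eqref{eq:elliptical-Gamma}.

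For \eqref{eq:elliptical-sD} we follow the proof of Lemma~\ref{lem:trace-main}. With $G=G_D(\Sigma)$, the quadratic form $\nabla D^\top\Gamma_\kappa\nabla D$ equals $\sum G_{ij}G_{kl}\Gamma_{\kappa,(ij),(kl)}$ summed over all ordered index quadruples, which is how the off-diagonal factor $2$ is absorbed. Equivalently, it equals $\var_{\mathcal P}(X^\top GX)$. The Gaussian part contributes $(1+\kappa)\cdot2\tr\{(G\Sigma)^2\}$. The extra term $\kappa\sigma_{ij}\sigma_{kl}$ contributes $\kappa\{\tr(G\Sigma)\}^2$. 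The only care needed is this index bookkeeping, and it is the step most prone to factor-of-two slips. As a check, the case $D=\sigma_{11}$ with $d=1$ gives $\var(X^2)=(2+3\kappa)\sigma^4$, which agrees with $\E X^4=3(1+\kappa)\sigma^4$.

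For the equivalence, Euler's identity for homogeneous $D$ of degree $K$ gives $\tr(G_D\Sigma)=KD$. Hence $s_{D,\kappa}^2=(1+\kappa)s_D^2+\kappa K^2D^2$ as a polynomial identity, valid on all of $\Sym_d$ once both sides are read as polynomials. If $s_D^2=cD^2$, then $s_{D,\kappa}^2=c_\kappa D^2$. Conversely, if $s_{D,\kappa}^2=c_\kappa D^2$, then $s_D^2=\{(c_\kappa-\kappa K^2)/(1+\kappa)\}D^2$, using $1+\kappa>0$. This recovers $c$, and $c>0$ holds because $s_D^2\ge0$ on the cone and $D\not\equiv0$ there. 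The two samplewise identities in \eqref{eq:elliptical-samplewise} follow by evaluating the polynomial identities at $\Sighat$ wherever the displayed denominators are positive. This uses that $s_{D,\kappa}^2$ is defined as the polynomial expression \eqref{eq:elliptical-sD} evaluated at $\Sighat$.
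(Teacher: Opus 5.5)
Your proof is correct. It reaches the paper's two identities in the reverse order.

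The paper never computes the fourth cumulant. With $H=A^\top G_DA$, it combines the spherical identity $\E\{(U^\top HU)^2\}=\{2\tr(H^2)+(\tr H)^2\}/\{d(d+2)\}$, independence of $R$ and $U$, and $\E(R^4)=d(d+2)(1+\kappa)$ to obtain $\E\{(X^\top G_DX)^2\}$. It then subtracts $\{\tr(G_D\Sigma)\}^2$ to get \eqref{eq:elliptical-sD}. Only after that does it recover \eqref{eq:elliptical-Gamma}, by comparing coefficients in $G_D$. That is a polarization step, valid because $G_D$ runs over all of $\Sym_d$ as $D$ runs over linear forms.

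You instead derive the entrywise formula first, from the isotropic fourth-moment tensor of $Y=RU$ and the cumulant decomposition \eqref{eq:general-fourth}, and then contract with $G\otimes G$. The underlying fact is the same in both routes: the paper's spherical identity is exactly the contraction of the isotropic tensor of $U$ with $H\otimes H$. Your ordering has two advantages. It makes the structure of the elliptical fourth cumulant explicit ($\kappa$ times the symmetrized Gaussian tensor), and it calibrates $\beta$ directly from \eqref{eq:elliptical-kappa}. The paper's ordering needs only one scalar moment identity and is shorter. The Euler step and the samplewise evaluation coincide with the paper's.

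One small correction to a side remark. The facts that $s_D^2\ge0$ on the cone and $D\not\equiv0$ give only $c\ge0$. Strict positivity uses that $D$ is nonconstant, for instance through $2\tr\{(G_D\Sigma)^2\}\ge(2/d)\{\tr(G_D\Sigma)\}^2=(2K^2/d)D^2$ with $K\ge1$. The remark is unnecessary anyway, since $c$ is the given constant of the statement.
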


Both sides of \eqref{eq:elliptical-sD} are polynomial in $\Sigma$, so the proportionality in the proposition is an identity in the polynomial ring, exactly as in the Gaussian case.
The proposition separates two conclusions.
Ellipticity rescales the proportionality constant of a homogeneous polynomial but does not change the algebraic class, so the classification developed in the main text is unaffected.
At the same time, only the second ratio in \eqref{eq:elliptical-samplewise} carries the correct fourth-moment scaling when $\kappa$ governs the sampling law, so samplewise algebraic constancy is distinct from correct variance calibration, which is the distinction drawn in the main text.

\begin{proof}[Proof of Proposition~\ref{prop:elliptical}]
    All expectations are under $\mathcal{P}$.
    Since $\Sigma\succ0$, the matrix $A$ is nonsingular, so $X^\top\Sigma^{-1}X=R^2$ and \eqref{eq:elliptical-kappa} states that $\E_{\mathcal{P}}(R^4)=d(d+2)(1+\kappa)$.
    Put $H=A^\top G_DA$, so that $\tr(H)=\tr(G_D\Sigma)$ and $\tr(H^2)=\tr\{(G_D\Sigma)^2\}$.
    The spherical fourth-moment identity is
    \begin{equation*}
        \E_{\mathcal{P}}\{(U^\top HU)^2\}
        =
        \frac{2\tr(H^2)+(\tr H)^2}{d(d+2)}
        ,
    \end{equation*}
    and independence of $R$ and $U$ gives
    \begin{equation*}
        \E_{\mathcal{P}}\{(X^\top G_DX)^2\}
        =
        (1+\kappa)
        [2\tr\{(G_D\Sigma)^2\}+\{\tr(G_D\Sigma)\}^2]
        .
    \end{equation*}
    Since $\E_{\mathcal{P}}(X^\top G_DX)=\tr(G_D\Sigma)$, subtraction of the squared mean proves \eqref{eq:elliptical-sD}, and coefficient comparison over $G_D$ yields \eqref{eq:elliptical-Gamma}.
    
    If $D$ is homogeneous of degree $K$, Euler's identity gives $\tr(G_D\Sigma)=KD(\Sigma)$.
    Substituting \eqref{eq:SN} into \eqref{eq:elliptical-sD} gives $s_{D,\kappa}^2=\{(1+\kappa)c+\kappa K^2\}D^2$, which is the stated proportionality, and evaluating the two polynomial identities at $\Sighat$ gives \eqref{eq:elliptical-samplewise}.
    Conversely, if $s_{D,\kappa}^2=c_\kappa D^2$ identically, then $1+\kappa>0$ permits solving \eqref{eq:elliptical-sD} for $2\tr\{(G_D\Sigma)^2\}$, and Euler's identity recovers \eqref{eq:SN} with constant $c$.
\end{proof}

\begin{remark}[Elliptical sampling and unknown means]
\label{rem:elliptical-unknown-mean}
    The product-of-chi-squares pivot in Theorem~\ref{thm:flag} is generally unavailable under elliptical sampling, because the sample covariance matrix is Wishart only when the radial law is Gaussian.
    Under $\mathcal{P}_\Sigma^{(n)}$ with an unknown mean, let $\bar{X}=n^{-1}\sum_{i=1}^nX_i$ and
    \begin{equation*}
        \tilde{\Sigma}
        =
        (n-1)^{-1}\sum_{i=1}^n(X_i-\bar{X})(X_i-\bar{X})^\top
        .
    \end{equation*}
    Then $(n-1)\tilde{\Sigma}\sim\mathcal{W}_d(n-1,\Sigma)$, every exact Gaussian pivot holds with $n$ replaced by $n-1$, and the standardized denominator of a flag power computed from $\tilde{\Sigma}$ equals $(n-1)/c$.
    For non-Gaussian elliptical sampling with an estimated mean, the distinction in \eqref{eq:elliptical-samplewise} persists.
    Algebraic constancy survives, correct calibration requires the kurtosis-adjusted operator, and a Wishart factorization is unavailable in general.
    In the front-door regressions of the main text fitted with intercepts, the residual degrees of freedom become $n-2$ and $n-3$.
\end{remark}

Two structural reductions underlie every converse argument.
The first confines every question about exact self-normalization to a single homogeneous degree, and the second supplies the interior positivity on which the logarithmic and geodesic arguments rely.

\begin{lemma}[Homogeneous reduction]
\label{lem:homog-main}
    Every rational strategy for a scale-invariant coefficient in a linear structural equation model admits a representative whose numerator and denominator are homogeneous of the same degree.
    If a nonhomogeneous polynomial satisfies exact self-normalization, then its highest- and lowest-degree nonzero homogeneous parts satisfy the same identity with the same constant.
\end{lemma}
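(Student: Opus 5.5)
For the first assertion I will work with the rational function rather than with a particular polynomial pair. A scale-invariant target $\tau$ that is rational on $\mathcal{M}$ is represented by a coprime pair $(N,D)$ with $N=\tau D$ on $\mathcal{M}$. The natural route is to note that a linear structural equation model is a cone: if $\Sigma\in\mathcal{M}$ then $t\Sigma\in\mathcal{M}$ for $t>0$, since all error variances are rescaled. Together with scale invariance, this gives $N(t\Sigma)D(\Sigma)=N(\Sigma)D(t\Sigma)$ on $\mathcal{M}$.

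Expand $N=\sum_k N_k$ and $D=\sum_k D_k$ into homogeneous parts. Comparing coefficients of powers of $t$, as polynomial identities in $t$, gives
\[
\sum_{j+k=m}(t^j-t^k)N_jD_k=0 \quad\text{on }\mathcal{M}.
\]
The cleanest way to extract a homogeneous representative is to choose the top-degree part $D_K\neq 0$ on $\mathcal{M}$. I would then show that $N_K=\tau D_K$ on $\mathcal{M}$. Put $N(t\Sigma)=\tau(\Sigma)D(t\Sigma)$ for all $t>0$, which holds because $t\Sigma\in\mathcal{M}$ and $\tau(t\Sigma)=\tau(\Sigma)$. Both sides are polynomials in $t$, so their coefficients agree, giving $N_k=\tau D_k$ on $\mathcal{M}$ for every $k$.

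One then picks any $k$ with $D_k\not\equiv0$ on $\mathcal{M}$; such a $k$ exists because $D$ is not identically zero on $\mathcal{M}$. Dividing the pair $(N_k,D_k)$ by its gcd gives a coprime homogeneous pair of the same degree. Dividing by a common factor preserves $N=\tau D$ on the set where the common factor is nonzero, and extends to all of $\mathcal{M}$ by density and continuity. The step I expect to need care is this last one, together with the precise meaning of ``model'': I will assume, as in the paper's setup, that $\mathcal{M}$ is the image of a cone-invariant parametrization and is irreducible, so that density arguments are valid.

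For the second assertion, let $D=\sum_{k=\ell}^{K}D_k$ with $D_\ell\neq0$ and $D_K\neq0$. The left side of \eqref{eq:SN} is $2\tr\{(G_D\Sigma)^2\}$. Since $G_{D_k}\Sigma$ is homogeneous of degree $k$, this equals
\[
\sum_{j,k}2\tr(G_{D_j}\Sigma\,G_{D_k}\Sigma),
\]
whose degree-$(j+k)$ components are indexed by pairs $(j,k)$. The right side is $c\sum_{j,k}D_jD_k$. Comparing the top degree $2K$ gives $2\tr\{(G_{D_K}\Sigma)^2\}=cD_K^2$, because only the pair $j=k=K$ contributes. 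Likewise the bottom degree $2\ell$ gives the same identity for $D_\ell$. In each case the constant $c$ stays positive, and nonconstancy of these parts follows because a nonzero constant would give $0=c\gamma^2$. This part is routine.
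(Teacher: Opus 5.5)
Your proof is essentially the paper's: rescaling the error covariances keeps $t\Sigma$ in $\mathcal{M}$, so expanding $N(t\Sigma)=\tau(\Sigma)D(t\Sigma)$ in powers of $t$ gives $N_k=\tau D_k$ on $\mathcal{M}$ for every $k$. The second assertion is then the comparison of the extreme degrees $2K$ and $2\ell$ in \eqref{eq:SN}, exactly as in the paper.

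The paper stops at the homogeneous pair $(N_k,D_k)$ and does not carry out your gcd reduction. If you keep that step, extending $\tilde N=\tau\tilde D$ to the zeros of the common factor by ``density and continuity'' requires $\tau$ to be continuous on $\mathcal{M}$, and the setup does not provide this.
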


Lemma~\ref{lem:homog-main} permits all converse arguments to be conducted within one homogeneous degree, which is the standing convention of Sections~\ref{sec:flag} and~\ref{sec:converse} of the main text.

\begin{proof}[Proof of Lemma~\ref{lem:homog-main}]
    For a linear structural equation model, multiplying every error covariance by $t>0$ multiplies $\Sigma$ by $t$ but leaves the scale-invariant causal coefficient under study unchanged.
    Expanding $N(t\Sigma)=\tau D(t\Sigma)$ in powers of $t$ shows that each pair of homogeneous components of the same degree satisfies the identification identity, and at least one denominator component is nonzero on the model.
    For the second assertion, substitute $t\Sigma$ into \eqref{eq:SN} and compare the highest and lowest powers of $t$.
\end{proof}

\begin{lemma}[Interior nonvanishing]
\label{lem:nozero}
    A nonzero polynomial satisfying exact self-normalization has no zero in $\Smat^d_{++}$.
    It therefore has a constant sign on that cone.
\end{lemma}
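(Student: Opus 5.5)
The plan is a Grönwall-type argument along a segment in the cone. Exact self-normalization bounds the logarithmic derivative of $|D|$ in the affine-invariant (Fisher--Rao) geometry, so $\log|D|$ cannot reach $-\infty$ in finite Riemannian length. Let $D\not\equiv0$ satisfy \eqref{eq:SN} with constant $c$.

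\textbf{Step 1: a pointwise gradient bound.} Fix $\Sigma\succ0$ and $H\in\Sym_d$, and write $G=G_D(\Sigma)$. The matrix $\Sigma^{1/2}G\Sigma^{1/2}$ is symmetric, and its square has the same trace as $(G\Sigma)^2$, because $\Sigma^{1/2}G\Sigma G\Sigma^{1/2}$ is similar to $(G\Sigma)^2$. Hence
\begin{equation*}
    \|\Sigma^{1/2}G\Sigma^{1/2}\|_F^2
    =
    \tr\{(G\Sigma)^2\}
    =
    \tfrac{c}{2}D(\Sigma)^2 .
\end{equation*}
Writing $\mathrm{d}D_\Sigma(H)=\tr\{(\Sigma^{1/2}G\Sigma^{1/2})(\Sigma^{-1/2}H\Sigma^{-1/2})\}$ and applying Cauchy--Schwarz for the Frobenius inner product gives
\begin{equation*}
    |\mathrm{d}D_\Sigma(H)|
    \le
    (c/2)^{1/2}\,|D(\Sigma)|\,\|\Sigma^{-1/2}H\Sigma^{-1/2}\|_F .
\end{equation*}

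\textbf{Step 2: choose a segment.} A nonzero polynomial cannot vanish on the nonempty open set $\Smat^d_{++}$, so there is $\Sigma_0\succ0$ with $D(\Sigma_0)\ne0$. Suppose, for contradiction, that $D(\Sigma_1)=0$ for some $\Sigma_1\succ0$. Consider $\Sigma(t)=\Sigma_0+t(\Sigma_1-\Sigma_0)$ for $t\in[0,1]$. By convexity this segment lies in the cone, and it is compact. The map $t\mapsto\Sigma(t)^{-1}$ is continuous on $[0,1]$, so with $H=\Sigma_1-\Sigma_0$ we get $L=\sup_t\|\Sigma(t)^{-1/2}H\Sigma(t)^{-1/2}\|_F<\infty$.

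\textbf{Step 3: the Grönwall contradiction.} Let $t^\ast\in(0,1]$ be the first zero of $f(t)=D(\Sigma(t))$; it exists because the zero set of $f$ is closed and $f(0)\ne0$. On $[0,t^\ast)$ the function $\log|f|$ is differentiable, and Step 1 gives
\begin{equation*}
    \Bigl|\frac{\mathrm{d}}{\mathrm{d}t}\log|f(t)|\Bigr|
    \le
    (c/2)^{1/2}L .
\end{equation*}
Hence $|f(t)|\ge|f(0)|\exp\{-(c/2)^{1/2}Lt\}$, which is bounded away from zero on $[0,t^\ast)$. This contradicts $f(t)\to f(t^\ast)=0$ as $t\uparrow t^\ast$. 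Therefore $D$ has no zero in $\Smat^d_{++}$.

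\textbf{Step 4: constant sign.} $D$ is continuous and nonvanishing on the convex, hence connected, set $\Smat^d_{++}$. By the intermediate value theorem along segments, it has constant sign there.

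\textbf{Main obstacle.} Step 1 is the only step needing care. One must use the symmetric factorization through $\Sigma^{1/2}$ so that the Frobenius norm squared is exactly $\tr\{(G\Sigma)^2\}$. The matrix $G\Sigma$ is not symmetric in general, so a naive bound on $\|G\Sigma\|_F$ would not connect to the trace in \eqref{eq:SN}. Uniform control of $\Sigma(t)^{-1}$ on the segment is routine once the segment is known to stay inside the open cone.
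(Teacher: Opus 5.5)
Your proposal is correct and is essentially the paper's proof. Your Step~1 bound is exactly $\lvert\mathrm{d}(\log\lvert D\rvert)_\Sigma(H)\rvert\le(c/2)^{1/2}\lVert H\rVert_{g}$ for the affine-invariant metric $g_\Sigma(H,H)=\tr(\Sigma^{-1}H\Sigma^{-1}H)=\lVert\Sigma^{-1/2}H\Sigma^{-1/2}\rVert_F^2$, which the paper states as $\lVert\operatorname{grad}_g\log\lvert D\rvert\rVert_g^2=c/2$, and both arguments integrate this bound up to the first zero along a path of finite $g$-length; the only difference, which is immaterial, is that you use the Euclidean segment (finite $g$-length by compactness) where the paper uses the affine-invariant geodesic.
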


Lemma~\ref{lem:nozero} makes the logarithm of a self-normalizing polynomial globally available on the positive cone, which the geodesic arguments of Supplementary Sections~\ref{supp:flag-proofs} and~\ref{supp:converse-proofs} use throughout.
Its failure at the identity matrix is also what rejects the proximal denominator in the factor-and-rank example of the main text.

\begin{proof}[Proof of Lemma~\ref{lem:nozero}]
    Because $D$ is a nonzero polynomial and $\Smat^d_{++}$ is a nonempty Euclidean-open set, there is $\Sigma_0\succ0$ with $D(\Sigma_0)\neq0$.
    Let $\mathcal{U}$ be the connected component of $\{\Sigma\succ0:D(\Sigma)\neq0\}$ containing $\Sigma_0$, and define $f=\log|D|$ only on $\mathcal{U}$, so that no global nonvanishing conclusion is used at this stage.
    Equip $\Smat^d_{++}$ with the affine-invariant metric
    \begin{equation*}
        g_\Sigma(H_1,H_2)
        =
        \tr(\Sigma^{-1}H_1\Sigma^{-1}H_2)
        .
    \end{equation*}
    Its gradient on $\mathcal{U}$ is $\operatorname{grad}_g f=\Sigma(G_D/D)\Sigma$, and \eqref{eq:SN} gives $\lVert\operatorname{grad}_g f\rVert_g^2=\alpha$ with $\alpha=c/2$.
    
    Suppose that $D(\Sigma_1)=0$ for some $\Sigma_1\succ0$.
    Let $\gamma:[0,1]\to\Smat^d_{++}$ be the affine-invariant geodesic segment from $\Sigma_0$ to $\Sigma_1$, which has finite length.
    Let $t_\ast=\inf\{t\in(0,1]:D(\gamma(t))=0\}$.
    Then $\gamma([0,t_\ast))\subset\mathcal{U}$, and for $t<t_\ast$,
    \begin{equation*}
        |(f\circ\gamma)'(t)|
        \le\lVert\operatorname{grad}_g f\rVert_g
        \lVert\dot\gamma(t)\rVert_g
        =
        \alpha^{1/2}\lVert\dot\gamma(t)\rVert_g
        .
    \end{equation*}
    Integration over the finite-length segment shows that $f(\gamma(t))$ is bounded below as $t\uparrow t_\ast$.
    Continuity of the polynomial $D$ and $D(\gamma(t_\ast))=0$ instead imply $f(\gamma(t))=\log|D(\gamma(t))|\to-\infty$, a contradiction.
    Hence, $D$ has no zero in $\Smat^d_{++}$.
    Since the cone is connected, its sign is constant.
\end{proof}

The final lemma shows that exact self-normalization is a property of the smallest coordinate block on which the polynomial depends.

\begin{lemma}[Invariance under ambient extension]
\label{lem:ambient}
    Let $r\le d$, let $\bar{D}\in\R[\Sym_r]$, and define $D\in\R[\Sym_d]$ by $D(\Sigma)=\bar{D}(\Sigma_{1:r})$.
    Then $s_D^2(\Sigma)=s_{\bar D}^2(\Sigma_{1:r})$ for every $\Sigma\in\Sym_d$.
    Consequently, $D$ is exactly self-normalizing on $\Sym_d$ if and only if $\bar{D}$ is exactly self-normalizing on $\Sym_r$, with the same constant.
\end{lemma}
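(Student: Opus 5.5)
The plan is to compute the symmetric gradient of $D$ directly from that of $\bar D$ and then apply the trace form of Lemma~\ref{lem:trace-main}. Write $J=[I_r\ 0]\in\R^{r\times d}$, so that $\Sigma_{1:r}=J\Sigma J^\top$ and $D(\Sigma)=\bar D(J\Sigma J^\top)$. For $H\in\Sym_d$ the chain rule gives
\begin{equation*}
    \mathrm{d}D_\Sigma(H)
    =
    \tr\{G_{\bar D}(J\Sigma J^\top)JHJ^\top\}
    =
    \tr\{J^\top G_{\bar D}(\Sigma_{1:r})JH\}
    .
\end{equation*}
Since $J^\top G_{\bar D}J$ is symmetric, uniqueness of the symmetric gradient yields $G_D(\Sigma)=J^\top G_{\bar D}(\Sigma_{1:r})J$. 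This is the zero-padded block matrix with $G_{\bar D}$ in the upper-left $r\times r$ block.

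Next, multiply by $\Sigma$: $G_D(\Sigma)\Sigma=J^\top G_{\bar D}(\Sigma_{1:r})J\Sigma$. For any matrices, $\tr\{(J^\top BJ\Sigma)^2\}=\tr\{(BJ\Sigma J^\top)^2\}$ by cyclicity, since $J^\top BJ\Sigma J^\top BJ\Sigma$ has the same trace as $BJ\Sigma J^\top BJ\Sigma J^\top$. Hence
\begin{equation*}
    2\tr[\{G_D(\Sigma)\Sigma\}^2]
    =
    2\tr[\{G_{\bar D}(\Sigma_{1:r})\Sigma_{1:r}\}^2]
    .
\end{equation*}
By \eqref{eq:sD}, which holds on all of $\Sym_d$ as a polynomial identity, this is $s_D^2(\Sigma)=s_{\bar D}^2(\Sigma_{1:r})$ for every $\Sigma\in\Sym_d$. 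The argument is purely algebraic, so no positive-definiteness is needed.

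For the equivalence, suppose first that $\bar D$ satisfies \eqref{eq:SN} with constant $c$. Then $s_D^2(\Sigma)=c\bar D(\Sigma_{1:r})^2=cD(\Sigma)^2$ for all $\Sigma$. Homogeneity and nonconstancy of $D$ are inherited from $\bar D$.

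Conversely, suppose $s_D^2=cD^2$ on $\Sym_d$. Given $S\in\Sym_r$, evaluate at the block-diagonal matrix $\Sigma=\diag(S,0)$, whose compression is $\Sigma_{1:r}=S$. This gives $s_{\bar D}^2(S)=c\bar D(S)^2$. Homogeneity and nonconstancy of $\bar D$ follow because $\bar D(S)=D(\diag(S,0))$, i.e. $\bar D$ is the restriction of $D$ to that subspace.

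I expect no serious obstacle. The only points needing care are the off-diagonal factor-of-two convention in the symmetric gradient, which is sidestepped by working with $\mathrm{d}D_\Sigma$ and uniqueness rather than with $\nabla D$, and using the polynomial extension of \eqref{eq:sD} so that the identities hold at singular evaluation points such as $\diag(S,0)$.
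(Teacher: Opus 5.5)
Your proof is correct and follows the paper's argument. Like the paper, you identify $G_D(\Sigma)$ as the zero-padded block $G_{\bar D}(\Sigma_{1:r})$ and reduce $\tr[\{G_D(\Sigma)\Sigma\}^2]$ to $\tr[\{G_{\bar D}(\Sigma_{1:r})\Sigma_{1:r}\}^2]$; you do this by cyclicity with $J=[I_r\ 0]$, where the paper uses the zero second block row. Your evaluation at $\diag(S,0)$ makes explicit the paper's remark that $\Sigma_{1:r}$ ranges over all of $\Sym_r$, and your check that homogeneity and nonconstancy transfer in both directions fills a detail the paper leaves implicit.
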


Lemma~\ref{lem:ambient} justifies the dimension reductions invoked in Section~\ref{sec:prelim} of the main text and in the simulation design.
Combined with the congruence invariance of Lemma~\ref{lem:trace-main}, it applies to a polynomial depending on $\Sigma$ only through its compression to an arbitrary $r$-dimensional subspace, because one nonsingular linear change of the observation vector moves that subspace to the leading coordinate block.

\begin{proof}[Proof of Lemma~\ref{lem:ambient}]
    Since $\partial D/\partial\sigma_{ij}=0$ whenever $\max(i,j)>r$, the symmetric gradient has the block form $G_D(\Sigma)=\diag\{G_{\bar{D}}(\Sigma_{1:r}),0\}$.
    Writing $\Sigma$ in the corresponding blocks, $G_D(\Sigma)\Sigma$ has zero second block row, and the trace of its square equals $\tr[\{G_{\bar{D}}(\Sigma_{1:r})\Sigma_{1:r}\}^2]$, which proves the identity.
    The equivalence follows because $\Sigma_{1:r}$ ranges over all of $\Sym_r$ as $\Sigma$ ranges over $\Sym_d$.
\end{proof}

\section{Proofs of Section 3: Flag powers}
\label{supp:flag-proofs}

This section proves Theorem~\ref{thm:flag} and the two-subspace criterion stated after it in Section~\ref{sec:flag} of the main text.
The theorem combines an upper-triangular calculation of the relative gradient with the Bartlett decomposition of the Wishart matrix.
The final lemma shows that the nesting condition is also necessary for a product of two covariance-volume factors.

\begin{proof}[Proof of Theorem~\ref{thm:flag}]
    By congruence invariance of Lemma~\ref{lem:trace-main}, it suffices to treat the coordinate flag, and the constants arising from the chosen basis matrices cancel from the logarithmic gradient.
    Put $A_j=\{1,\ldots,a_j\}$.
    For $\Sigma\succ0$ every leading block is invertible and
    \begin{equation*}
        \frac{G_{\Delta_{A_j}}(\Sigma)\Sigma}{\Delta_{A_j}(\Sigma)}
        =
        \begin{pmatrix}
        I_{a_j}&\Sigma_{A_j}^{-1}\Sigma_{A_j,A_j^c}\\
        0&0
        \end{pmatrix}
        .
    \end{equation*}
    Weighting the $j$th term by the exponent $e_j$ and summing over $j$ shows that $R_D(\Sigma)$ is upper triangular with diagonal $(m_1,\ldots,m_{a_k},0,\ldots,0)$, which proves (i).

    Therefore, the trace of the square of $R_D(\Sigma)$ is $\sum_{i=1}^{a_k}m_i^2$, so \eqref{eq:SN} holds with $c=2\sum_{i=1}^{a_k}m_i^2$ on the cone.
    Both sides of \eqref{eq:SN} are polynomial, so the identity extends to all of $\Sym_d$, proving (ii).

    For part (iii), write $\Sigma=LL^\top$ with $L$ lower triangular and $n\Sighat=LWL^\top$, where $W\sim\mathcal{W}_{d}(n,I)$.
    Since $L$ is lower triangular, $(LWL^\top)_{1:r}=L_{1:r}W_{1:r}L_{1:r}^\top$, so leading determinants factor blockwise, and the Bartlett decomposition \citep[Ch.~3]{muirhead1982} gives $\det W_{1:r}=\prod_{i=1}^r B_{ii}^2$ with independent $B_{ii}^2\sim\chi^2_{n-i+1}$.
    Collecting exponents gives (iii).
   
    For part (iv), a flag power is positive at every positive-definite matrix, so $D(\Sighat)>0$ and $s_D^2(\Sighat)=cD(\Sighat)^2>0$ almost surely, and evaluating \eqref{eq:SN} at $\Sighat$ gives $\FD=n/c$.
    The same identity gives $s_D(\Sigma_n)=c^{1/2}|D(\Sigma_n)|$ for every $n$.
    If $n^{1/2}D(\Sigma_n)=O(1)$, then $D(\Sigma_n)\to0$, so $s_D(\Sigma_n)\to0$, which is incompatible with $s_D(\Sigma_n)\to s>0$.
\end{proof}

The nesting assumption in Theorem~\ref{thm:flag} cannot be dropped even for a product of two covariance-volume factors.

\begin{lemma}[Two-subspace criterion]
\label{lem:two-subspace}
    Let $V,W\subseteq\R^d$ and let $p,q$ be positive integers.
    Then $\Delta_V^p\Delta_W^q$ is exactly self-normalizing if and only if $V\subseteq W$ or $W\subseteq V$.
\end{lemma}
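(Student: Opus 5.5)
The sufficiency direction is immediate: if $V\subseteq W$ (say), then $\Delta_V^p\Delta_W^q$ is the flag power \eqref{eq:flag-power} built on the flag $V\subset W$ (or on the single subspace $V=W$ with exponent $p+q$). Theorem~\ref{thm:flag} (ii) then gives exact self-normalization. For necessity, I would work on the cone $\Smat^d_{++}$, where both factors are positive. Since both sides of \eqref{eq:SN} are polynomials and the cone is Zariski dense, \eqref{eq:SN} holds on $\Sym_d$ if and only if $\tr(R_D^2)$ is constant on $\Smat^d_{++}$.

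The key computation is the relative gradient of a single volume factor. By the chain rule, as in the proof of Lemma~\ref{lem:trace-main},
\begin{equation*}
    R_{\Delta_V}(\Sigma)=M_V^\top(M_V\Sigma M_V^\top)^{-1}M_V\Sigma=:P_V(\Sigma).
\end{equation*}
This matrix is idempotent of rank $r_V=\dim V$: it is the transpose of the $\Sigma$-orthogonal projection onto $V$. Logarithmic differentiation gives $R_D=pP_V+qP_W$, and hence
\begin{equation*}
    \tr(R_D^2)=p^2r_V+q^2r_W+2pq\,\tr\{P_V(\Sigma)P_W(\Sigma)\}.
\end{equation*}
Exact self-normalization is therefore equivalent to constancy of $\phi(\Sigma)=\tr(P_VP_W)$ on the cone. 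In the inner product $\langle x,y\rangle_\Sigma=x^\top\Sigma y$, the quantity $\phi(\Sigma)$ equals the sum of the squared cosines of the principal angles between $V$ and $W$. In particular $0\le\phi\le\min(r_V,r_W)$, and $\phi=\min(r_V,r_W)$ exactly when the smaller subspace lies inside the larger one. For nested subspaces this already explains why $\phi$ is constant.

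For the non-nested case, I would exhibit two values of $\Sigma$ at which $\phi$ differs. Write $U=V\cap W$, and choose complements $V=U\oplus V'$ and $W=U\oplus W'$; non-nestedness means $V'\ne0$ and $W'\ne0$. First, choose $\Sigma^{-1}$, which realises an arbitrary inner product after transposing conventions, so that $U$, $V'$ and $W'$ are mutually orthogonal. This is possible because $U+V'+W'=V+W$ is a direct sum. Then $\phi=\dim U$. Second, pick nonzero $v'\in V'$ and $w'\in W'$, and choose an inner product in which they form a small angle, keeping $U$ orthogonal to both. This yields an extra principal cosine close to $1$, so $\phi$ exceeds $\dim U$ by nearly one. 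These two values differ, so $\phi$ is not constant and \eqref{eq:SN} fails.

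The main obstacle is bookkeeping of the duality. The matrix $P_V$ projects with respect to $\Sigma$ in one convention and with respect to $\Sigma^{-1}$ in the other, so I must ensure that the two constructions are made in the inner product that actually governs the principal angles. Since both $\Sigma$ and $\Sigma^{-1}$ range over the whole cone, either convention leaves the construction free, and only a careful statement of the principal-angle interpretation is required. An alternative that avoids this issue is to compute $\phi$ directly in adapted coordinates with a one-parameter family $\Sigma_t$ coupling a vector of $V'$ to one of $W'$, and to check that $\phi(\Sigma_t)$ is nonconstant in $t$.
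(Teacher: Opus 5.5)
Your proposal is correct and follows the paper's proof essentially step for step: relative gradients as projections onto $V$ and $W$, $\tr(P_VP_W)$ as a sum of squared principal-angle cosines, and an inner product on $U\oplus V'\oplus W'$ that is first orthogonal and then couples $v'$ with $w'$. The only difference is that you obtain sufficiency from Theorem~\ref{thm:flag} rather than from the principal-angle count.

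Your duality worry dissolves once you note that $M_V^\top(M_V\Sigma M_V^\top)^{-1}M_V\Sigma$ is itself, not its transpose, the $\Sigma$-orthogonal projection onto $V$: it is idempotent with range $V$, and $\Sigma M_V^\top(M_V\Sigma M_V^\top)^{-1}M_V\Sigma$ is symmetric. So you should choose $\Sigma$ directly, not $\Sigma^{-1}$, to make $U$, $V'$ and $W'$ mutually orthogonal, and then change only the Gram entry between $v'$ and $w'$.
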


\begin{proof}[Proof of Lemma~\ref{lem:two-subspace}]
    The logarithmic relative gradients of $\Delta_V$ and $\Delta_W$ are the $\Sigma$-orthogonal projections $P_V^\Sigma$ and $P_W^\Sigma$ onto the two subspaces.
    Therefore,
    \begin{equation*}
        \tr\{(pP_V^\Sigma+qP_W^\Sigma)^2\}
        =
        p^2\dim V+q^2\dim W+2pq\tr(P_V^\Sigma P_W^\Sigma)
        .
    \end{equation*}
    The final trace is the sum of the squared cosines of the principal angles in the inner product $x^\top\Sigma y$.
    Under either containment, it equals the dimension of the smaller subspace and is constant.

    Suppose that neither containment holds.
    Write $S=V\cap W$, $V=S\oplus V'$, and $W=S\oplus W'$, where $V'$ and $W'$ are nonzero.
    The sum $S\oplus V'\oplus W'$ is direct.
    Choose a positive-definite inner product that makes these three subspaces mutually orthogonal.
    The final trace then equals $\dim S$.
    Perturb the inner product so that one nonzero vector in $V'$ has nonzero inner product with one in $W'$, while retaining positive definiteness.
    At least one squared principal-angle cosine becomes positive, so the trace changes.
    Exact self-normalization is therefore impossible.
\end{proof}

\section{Proofs of Section 4: Converse results and diagnosis}
\label{supp:converse-proofs}

\subsection{Overview and algebraic preliminaries}

This subsection proves the converse and diagnostic results in Section~\ref{sec:converse} of the main text.
The argument first reduces the problem to boundary faces and determinant-free residues.
It then proves the complete converse in dimension two and the three-variable reduction.
The remaining subsections study the mixed branch, the conditional higher-dimensional converse, and the low-degree-factor diagnostic.

The first two lemmas provide the divisibility tools used throughout the section.
\begin{lemma}[Irreducibility of the generic symmetric determinant]
\label{lem:symdet-irreducible}
    For every $r\ge1$, the determinant of the generic symmetric matrix of order $r$ is irreducible over $\R$ and over $\C$.
    Consequently, each leading principal minor $\Delta_r=\det(\Sigma_{1:r})$ is irreducible in $\R[\Sym_d]$ for $d\ge r$.
\end{lemma}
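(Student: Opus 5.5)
We prove irreducibility of the generic symmetric determinant $\det S$, $S=(s_{ij})$ with $s_{ij}=s_{ji}$ independent indeterminates, by induction on $r$. It suffices to work over $\C$: a factorization over $\R$ is in particular one over $\C$. The cases $r=1$ (a linear form) and $r=2$ ($s_{11}s_{22}-s_{12}^2$, a nondegenerate quadratic form in three variables, which has rank three and therefore does not split into linear factors) are immediate.

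For the inductive step, we exploit the degree structure in the corner variable. The determinant $\det S$ is affine-linear in $s_{rr}$:
\begin{equation*}
    \det S=s_{rr}\det S_{1:r-1}+Q,
\end{equation*}
where $Q$ does not involve $s_{rr}$. Suppose $\det S=fg$ with $f,g$ nonconstant. Degree in $s_{rr}$ is additive, so one factor, say $g$, is free of $s_{rr}$, and the other has the form $f=f_1s_{rr}+f_0$. Comparing coefficients gives $\det S_{1:r-1}=f_1g$ and $Q=f_0g$.

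By the induction hypothesis, $\det S_{1:r-1}$ is irreducible. Hence either $g$ is a constant, contradicting nonconstancy, or $g$ is a constant multiple of $\det S_{1:r-1}$. In the latter case $\det S_{1:r-1}$ divides $Q$, and therefore divides $\det S$ itself. To exclude this, specialize to a matrix where the conclusion visibly fails. Set $s_{ij}=0$ for $i,j\le r-1$, which makes $\det S_{1:r-1}$ vanish. Then choose $s_{1r}=s_{2r}=1$, $s_{12}=1$ and all other entries zero, for $r\ge3$. The resulting determinant is nonzero, since the permutation structure gives $\pm 1$ up to a sign check; a cleaner choice is a permutation-matrix-like symmetric pattern with nonzero determinant and vanishing leading block.

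Rather than relying on such a specialization, we compare total degrees, which gives a cleaner contradiction. With $g=\lambda\det S_{1:r-1}$, the factor $f$ has total degree $1$ and $s_{rr}$-coefficient $f_1=\lambda^{-1}$, a constant. So $f=\lambda^{-1}s_{rr}+\ell$ with $\ell$ a linear form, giving
\begin{equation*}
    \det S=\bigl(s_{rr}+\lambda\ell\bigr)\det S_{1:r-1}.
\end{equation*}
Now evaluate at a matrix with $s_{rr}=0$, $\ell$-relevant entries chosen freely, and $\det S_{1:r-1}=0$ but $\det S\ne0$. For example, take $S=E_{1r}+E_{r1}+\sum_{i=2}^{r-1}E_{ii}$: its leading block $\diag(0,1,\dots,1)$ is singular, yet $\det S=-1\ne0$. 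This is the contradiction.

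The consequence for $\Delta_r$ in $\R[\Sym_d]$ follows because a polynomial in a subset of the variables is irreducible in the larger polynomial ring iff it is irreducible in the smaller one; any factorization in the larger ring must have factors involving only those variables, by degree considerations in each extra variable. The main obstacle is the exclusion of the degenerate case $g\propto\det S_{1:r-1}$, which the explicit test matrix handles.
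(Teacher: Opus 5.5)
Your proof is correct and follows essentially the paper's route: induction on $r$, linearity of the determinant in the corner entry, irreducibility of the leading minor $\det S_{1:r-1}$, and the observation that a nontrivial factorization would force $\det S_{1:r-1}$ to divide $\det S$. The paper packages this step as primitivity plus Gauss's lemma and certifies non-divisibility through the coefficient of $y_{r-1}^2$, a lower-order principal minor; your transposition-matrix witness does the same job equally well. In a write-up, delete the abandoned specialization paragraph, which sets $s_{12}$ both to $0$ and to $1$, and drop the total-degree detour, since $\det S_{1:r-1}\mid\det S$ already contradicts your witness with $\det S_{1:r-1}=0$ and $\det S=-1$.
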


Irreducible elements are prime in the unique factorization domain $\R[\Sym_d]$.
Therefore, Lemma~\ref{lem:symdet-irreducible} justifies the determinant-divisibility and valuation arguments below.

\begin{proof}[Proof of Lemma~\ref{lem:symdet-irreducible}]
    We argue over a field $k$ of characteristic zero, which covers $k=\R$ and $k=\C$.
    The assertion is immediate for $r=1$.
    Suppose it holds for $r-1$, and write the generic symmetric matrix as
    \begin{equation*}
        X_r
        =
        \begin{pmatrix}A&y\\y^\top&z\end{pmatrix}
        ,
    \end{equation*}
    where $A$ is generic symmetric of order $r-1$.
    Expansion in $z$ gives
    \begin{equation*}
        \det X_r
        =
        (\det A)z-y^\top\adjm(A)y
        .
    \end{equation*}
    Let $R=k[A,y]$, which is a unique factorization domain.
    By induction, $\det A$ is irreducible in $R$.
    If $\det A$ divided $y^\top\adjm(A)y$, it would divide every coefficient of this polynomial in the coordinates of $y$.
    In particular, it would divide the coefficient of $y_{r-1}^2$, which is a nonzero principal minor of $A$ of order $r-2$ and has smaller degree.
    Thus, the two coefficients of the displayed polynomial in $z$ are coprime.
    The polynomial is primitive in $R[z]$ and irreducible over the fraction field of $R$.
    Gauss' lemma gives irreducibility in $R[z]$.
    Passing to a larger polynomial ring preserves irreducibility, which proves the final assertion.
\end{proof}

The next lemma converts vanishing on a real part of the determinant boundary into polynomial divisibility.
This step is needed when two candidate solutions agree only on rank-deficient covariance matrices.

\begin{lemma}[Real determinant divisibility]
\label{lem:realdiv-supp}
    If a polynomial $F$ on $\Sym_d$ vanishes on a nonempty Euclidean-open subset of the real symmetric matrices of rank $d-1$, then $\det\Sigma$ divides $F$.
\end{lemma}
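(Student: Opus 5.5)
The plan is to pass from Euclidean-open vanishing on the real rank-$(d-1)$ stratum to Zariski-density of that set in the complex determinant hypersurface. Then irreducibility of $\det\Sigma$ (Lemma~\ref{lem:symdet-irreducible}) and the Nullstellensatz give divisibility.

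Let $\mathcal{Z}=\{\Sigma\in\Sym_d:\det\Sigma=0\}$. Let $U$ be the given nonempty Euclidean-open subset of the rank-$(d-1)$ real stratum on which $F$ vanishes.

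\emph{Step 1: local parametrization.} The rank-$(d-1)$ real symmetric matrices form a smooth real manifold of dimension $p_d-1$. At such a point $\Sigma_0$ the gradient of $\det$ is $\adjm(\Sigma_0)=\lambda uu^\top$, where $u$ spans $\ker\Sigma_0$ and $\lambda\ne0$. This gradient is nonzero, so $\mathcal{Z}$ is a smooth hypersurface near $\Sigma_0$. Every nearby point of $\mathcal{Z}$ has rank exactly $d-1$, since rank is lower semicontinuous. Hence $U$ contains a nonempty open piece of the smooth real hypersurface $\mathcal{Z}_{\mathrm{reg}}$ near $\Sigma_0$. Explicitly, I would use the implicit function theorem in a coordinate $\sigma_{ij}$ with $(\adjm\Sigma_0)_{ij}\ne0$. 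This writes $\sigma_{ij}=\phi(\text{other coordinates})$, with $\phi$ real-analytic on an open set $\Omega\subseteq\R^{p_d-1}$.

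\emph{Step 2: division.} Treat $\det\Sigma$ as a polynomial in $\sigma_{ij}$. Its degree there is $1$ if $i=j$ and $2$ if $i\ne j$, and its leading coefficient is a nonzero polynomial in the other coordinates; in the off-diagonal case this coefficient is $\pm$ a minor of order $d-2$. Pseudo-division gives $\ell^mF=Q\det\Sigma+R$, where $\ell$ is the leading coefficient and $R$ has $\sigma_{ij}$-degree less than that of $\det$. On the graph of $\phi$ over $\Omega$ we have $F=0$ and $\det=0$, so $R(\phi(y),y)=0$ for $y\in\Omega$.

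\emph{Step 3: conclude $R=0$.} This is the main obstacle, since it needs an algebraic-independence argument. Only the off-diagonal case is hard, because there $R=r_1\sigma_{ij}+r_0$ with $r_0,r_1$ polynomials in $y$. If $r_1\not\equiv0$ on $\Omega$, then $\phi=-r_0/r_1$ is rational there. Substituting gives a polynomial identity $r_1^2\det|_{\sigma_{ij}=-r_0/r_1}=0$ on an open set, hence identically. It follows that $r_1\sigma_{ij}+r_0$ and $\det$ share a common factor, and since $\det$ is irreducible by Lemma~\ref{lem:symdet-irreducible}, $\det$ divides a polynomial of $\sigma_{ij}$-degree $1$ with $r_1\ne0$. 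That is impossible by degree. So $r_1\equiv0$, and then $r_0\equiv0$ on $\Omega$, hence identically.

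Therefore $\ell^mF=Q\det\Sigma$. Since $\det\Sigma$ is prime and does not divide $\ell$, which has lower degree and is nonzero, it divides $F$.

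A cleaner alternative to Steps 2 and 3 is the following. $\mathcal{Z}_\C$ is irreducible of complex dimension $p_d-1$. A real-analytic open piece of real dimension $p_d-1$ on its smooth locus is Zariski-dense in it, because a proper subvariety has smaller real dimension there. So $F$ vanishes on $\mathcal{Z}_\C$, and the Nullstellensatz with irreducibility gives $\det\mid F$. I would present whichever of the two routes is shorter, and I expect the density or independence step to require the most care.
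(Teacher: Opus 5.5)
Your Steps 1--3 are correct and follow essentially the paper's proof. In both, the determinant hypersurface is written locally as a graph over an open set in one coordinate, $F$ is pseudo-divided by $\det\Sigma$ in that coordinate, the remainder vanishes because it is zero on an open set, and divisibility follows from primality of $\det\Sigma$; your Zariski-density alternative is also valid. The only difference is that you allow an off-diagonal coordinate, whereas $\adjm(\Sigma_0)=\lambda uu^\top$ with $u\neq0$ always has a nonzero diagonal entry $\lambda u_i^2$. This is equivalent to a nonzero principal minor of order $d-1$, which is the paper's choice. Taking $i=j$ makes $\det\Sigma$ linear in $\sigma_{ii}$ and the remainder free of $\sigma_{ii}$, so the off-diagonal analysis in your Step~3 is unnecessary.
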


\begin{proof}[Proof of Lemma~\ref{lem:realdiv-supp}]
    After a simultaneous permutation of rows and columns, work in a chart in which the leading principal minor $\Delta$ of order $d-1$ is nonzero at one point of the given open set.
    Shrink the set so that $\Delta\ne0$ throughout it.
    The determinant is linear in $\sigma_{dd}$, and can be written as
    \begin{equation*}
        \delta
        \coloneqq
        \det\Sigma
        =
        \Delta\sigma_{dd}+B
        ,
    \end{equation*}
    where $B$ does not involve $\sigma_{dd}$.
    If $N=\deg_{\sigma_{dd}}F$, pseudo-division in $\sigma_{dd}$ gives
    \begin{equation*}
        \Delta^NF
        =
        Q\delta+R
        ,
        \quad
        \deg_{\sigma_{dd}}R=0
        .
    \end{equation*}
    On this chart, the equation $\delta=0$ is the graph $\sigma_{dd}=-B/\Delta$.
    The projection of the given Euclidean-open subset of this graph to the remaining coordinates is Euclidean open.
    Since $R$ does not involve $\sigma_{dd}$ and vanishes on that projection, $R\equiv0$.
    Thus, $\delta$ divides $\Delta^NF$.
    By Lemma~\ref{lem:symdet-irreducible}, $\delta$ is prime in $\R[\Sym_d]$.
    It does not divide $\Delta$, because $\Delta$ does not involve $\sigma_{dd}$ whereas $\delta$ does.
    Hence, $\delta$ divides $F$.
\end{proof}

The face-restriction principle transfers exact self-normalization to every nonzero covariance face.
Together with Theorem~\ref{thm:d2}, it determines the form of each nonzero plane restriction in dimension three.

\begin{lemma}[Face restriction]
\label{lem:face-supp}
    If $D$ satisfies \eqref{eq:SN} on $\Sym_d$ and its restriction $D_V$ to a face $\Smat_+(V)$ is not identically zero, then $D_V$ satisfies \eqref{eq:SN} with the same constant.
\end{lemma}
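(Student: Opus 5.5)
The plan is to pull the identity \eqref{eq:SN} back along the linear map $\iota_V:\Sym_r\to\Sym_d$, $S\mapsto P_VSP_V^\top$, where $P_V\in\R^{d\times r}$ has range $V$ and $r=\dim V$. The aim is to show that the restricted trace form agrees with the trace form of $D_V$ at every $S$ in an open subset of $\Smat^r_{++}$. Both sides are polynomial in $S$, so agreement on an open set extends to all of $\Sym_r$. Because $\Sym_r$ is spanned by positive-definite matrices, it is convenient to reduce first to $\Sigma=P_VSP_V^\top$ with $S\succ0$.

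The first step is to normalize coordinates. By the congruence invariance in Lemma~\ref{lem:trace-main}, I may take $V=\Span(e_1,\ldots,e_r)$ and $P_V=(I_r,0)^\top$. Then $D_V(S)=D(\diag(S,0))$ and $\Sigma=\diag(S,0)$.

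The second step is the chain rule. It gives $G_{D_V}(S)=P_V^\top G_D(\Sigma)P_V$, which is the leading $r\times r$ block $G_{11}$ of $G=G_D(\Sigma)$.

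The third step is the key computation. With $\Sigma=\diag(S,0)$, we have
\begin{equation*}
    G\Sigma=\begin{pmatrix}G_{11}S&0\\G_{21}S&0\end{pmatrix}.
\end{equation*}
This matrix is block lower triangular with a zero lower-right block. Hence $(G\Sigma)^2$ has upper-left block $(G_{11}S)^2$ and lower-right block $0$, and therefore
\begin{equation*}
    \tr\{(G\Sigma)^2\}=\tr\{(G_{11}S)^2\}=\tr\bigl[\{G_{D_V}(S)S\}^2\bigr].
\end{equation*}
The off-diagonal gradient block $G_{21}$ drops out entirely, and this is the crux of the argument.

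Combining the pieces, evaluating \eqref{eq:SN} at $\Sigma=\iota_V(S)$ gives
\begin{equation*}
    2\tr\bigl[\{G_{D_V}(S)S\}^2\bigr]=cD(\iota_V(S))^2=cD_V(S)^2
\end{equation*}
for every $S\in\Sym_r$. In fact the computation needs neither positivity of $S$ nor the open-set reduction, since \eqref{eq:SN} holds on all of $\Sym_d$ and the block calculation is purely algebraic.

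The remaining checks are minor. Homogeneity of $D_V$ is inherited from $D$, of the same degree. $D_V$ is nonconstant because it is nonzero by hypothesis and homogeneous of degree $K\ge1$. Finally, a different choice of $P_V$ changes $D_V$ only by a congruence on $\Sym_r$, so Lemma~\ref{lem:trace-main} keeps the conclusion and the constant unchanged.

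The only step I expect to need care is the third one: one must confirm that the mixed block $G_{21}$ makes no contribution to the trace of the square. This holds because the right-hand zero column block of $\Sigma$ annihilates every term of the form $G_{12}\cdot$ that would otherwise pair with $G_{21}$.
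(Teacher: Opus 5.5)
Your proposal is correct and follows the paper's proof essentially step for step. Both reduce by congruence (Lemma~\ref{lem:trace-main}) to the coordinate face, identify $G_{D_V}(S)$ with the leading block $G_{11}$ of $G_D(\diag(S,0))$, and use the block-lower-triangular form of $G_D\diag(S,0)$ to get $\tr[\{G_D\diag(S,0)\}^2]=\tr\{(G_{11}S)^2\}$; your extra checks (homogeneity, nonconstancy, and independence from the choice of $P_V$) are correct and simply make explicit what the paper leaves implicit.
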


\begin{proof}[Proof of Lemma~\ref{lem:face-supp}]
    By Lemma~\ref{lem:trace-main}, a congruence reduces the argument to $V=\Span(e_1,\ldots,e_r)$.
    Evaluate \eqref{eq:SN} at $\operatorname{diag}(S,0)$ for $S\in\Sym_r$.
    In the corresponding block partition of the gradient,
    \begin{equation*}
        G_D\operatorname{diag}(S,0)
        =
        \begin{pmatrix}G_{11}S&0\\G_{12}^\top S&0\end{pmatrix}
        ,
    \end{equation*}
    and the trace of its square is $\tr\{(G_{11}S)^2\}$.
    The block $G_{11}$, evaluated at $\operatorname{diag}(S,0)$, is the symmetric gradient of $D_V$ at $S$.
    This proves the identity for $D_V$.
\end{proof}

The stripping lemma removes one full determinant factor and records the resulting change in the self-normalization constant.
It is the degree-lowering step in the dimension-two proof and in the three-variable reduction.

\begin{lemma}[Determinant stripping]
\label{lem:strip-supp}
    If $D=(\det\Sigma)E$ and $E$ is homogeneous, then $D$ is exactly self-normalizing if and only if $E$ is.
    Their constants satisfy
    \begin{equation*}
        c_D
        =
        c_E+2d+4\deg E
        .
    \end{equation*}
\end{lemma}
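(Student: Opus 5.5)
The plan is to compute the symmetric gradient of $D=\delta E$ with $\delta=\det\Sigma$ and express the relative gradient $R_D$ through $R_E$. On $\Smat^d_{++}$ we have $G_\delta=\delta\Sigma^{-1}$, so the product rule gives
\begin{equation*}
    G_D(\Sigma)\Sigma
    =
    \delta E\,I_d+\delta\,G_E(\Sigma)\Sigma
    ,
\end{equation*}
and hence, wherever $D\ne0$,
\begin{equation*}
    R_D(\Sigma)=I_d+R_E(\Sigma)
    .
\end{equation*}
Squaring and taking traces yields
\begin{equation*}
    \tr(R_D^2)=d+2\tr(R_E)+\tr(R_E^2)
    .
\end{equation*}
Euler's identity for the homogeneous polynomial $E$ gives $\tr(G_E\Sigma)=(\deg E)E$, so $\tr(R_E)=\deg E$.

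Multiplying by $2D^2=2\delta^2E^2$ gives the polynomial identity
\begin{equation*}
    2\tr\{(G_D\Sigma)^2\}
    =
    \delta^2\bigl[(2d+4\deg E)E^2+2\tr\{(G_E\Sigma)^2\}\bigr]
    .
\end{equation*}
To avoid dividing by $\delta$ or $E$, I will derive this directly from $G_D\Sigma=\delta(EI+G_E\Sigma)$, which holds on the open cone and hence as a polynomial identity. Expanding $\tr\{(EI+G_E\Sigma)^2\}=dE^2+2E\tr(G_E\Sigma)+\tr\{(G_E\Sigma)^2\}$ and applying Euler's identity gives the display on all of $\Sym_d$.

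For the equivalence, suppose first that $E$ satisfies \eqref{eq:SN} with constant $c_E$. Then the bracket equals $(c_E+2d+4\deg E)E^2$, so $D$ satisfies \eqref{eq:SN} with $c_D=c_E+2d+4\deg E$. Conversely, suppose $D$ satisfies \eqref{eq:SN} with constant $c_D$. Then
\begin{equation*}
    \delta^2\bigl[2\tr\{(G_E\Sigma)^2\}-(c_D-2d-4\deg E)E^2\bigr]\equiv0
    .
\end{equation*}
Since $\R[\Sym_d]$ is an integral domain and $\delta\not\equiv0$, the bracket vanishes identically.

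Two points remain to check. First, the candidate constant $c_E=c_D-2d-4\deg E$ must be positive. This follows because $E$ is nonconstant or constant. If $E$ is nonconstant, $2\tr\{(G_E\Sigma)^2\}$ is a sum of squares of real eigenvalues at any $\Sigma\succ0$, since $G_E\Sigma$ is similar to the symmetric matrix $\Sigma^{1/2}G_E\Sigma^{1/2}$. This quantity is positive at a point where $E\ne0$, because $\tr(R_E)=\deg E>0$ forces $R_E\ne0$. Hence $c_E>0$. If $E$ is a nonzero constant, the definition requires nonconstant polynomials, so I would note that $E$ constant corresponds to $c_E=0$ and the formula still holds. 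Second, the non-polynomial step $G_\delta=\delta\Sigma^{-1}$ should be replaced by the adjugate, $G_\delta=\adjm(\Sigma)$, so that $G_\delta\Sigma=\delta I$ holds identically. I expect no real obstacle; the only care needed is the positivity of $c_E$ and the edge case where $E$ is constant.
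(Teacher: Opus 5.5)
Your proof is correct and follows the paper's route: the paper likewise writes $G_D/D=\Sigma^{-1}+G_E/E$ on the cone, so that $R_D=I+R_E$, expands $\tr(R_D^2)=d+2\deg E+\tr(R_E^2)$ using Euler's identity, reads off $c_D/2=d+2\deg E+c_E/2$, and then extends both identities polynomially. Your additions fill in details the paper leaves implicit rather than changing the argument: the adjugate form $G_D\Sigma=\delta(EI+G_E\Sigma)$ with cancellation of $\delta^2$ in the integral domain, the check that $c_E>0$, and the observation that a constant $E$ gives $c_E=0$ and so falls outside the definition.
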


\begin{proof}[Proof of Lemma~\ref{lem:strip-supp}]
    On the positive cone, write $L_D=G_D/D$ and $L_E=G_E/E$.
    The product rule gives $L_D=\Sigma^{-1}+L_E$.
    Therefore,
    \begin{equation*}
        \tr\{(L_D\Sigma)^2\}
        =
        d+2\tr(L_E\Sigma)+\tr\{(L_E\Sigma)^2\}
        =
        d+2\deg E+\tr\{(L_E\Sigma)^2\}
        .
    \end{equation*}
    Exact self-normalization is equivalent on the cone to constancy of the final trace.
    Hence, it holds for $D$ if and only if it holds for $E$.
    The displayed identity gives $c_D/2=d+2\deg E+c_E/2$.
    Both self-normalization identities then extend polynomially to all of $\Sym_d$.
\end{proof}

\subsection{Complete converse in dimension two}
\label{supp:d2-converse}

The proof separates solutions according to whether the determinant of the symmetric gradient vanishes identically.
A nonzero determinant coefficient produces a removable determinant factor.
The zero coefficient leads to a rank-one gradient and then to a power of one variance direction.

\begin{proof}[Proof of Theorem~\ref{thm:d2}]
    Let $K=\deg D$ and let $\mu_1,\mu_2$ be the eigenvalues of $G_D(\Sigma)\Sigma$.
    On the positive cone they are real because this matrix is similar to $\Sigma^{1/2}G_D(\Sigma)\Sigma^{1/2}$.
    Euler's identity and \eqref{eq:SN} give
    \begin{equation*}
        \mu_1+\mu_2
        =
        KD
        ,
        \quad
        \mu_1^2+\mu_2^2
        =
        (c/2)D^2
        .
    \end{equation*}
    Hence, as a polynomial identity,
    \begin{equation}
    \label{eq:d2det}
        \det G_D(\Sigma)\det\Sigma
        =
        qD(\Sigma)^2
        ,
        \quad
        q
        =
        \frac{1}{2}(K^2-c/2)
        .
    \end{equation}
    If $q\ne0$, Lemma~\ref{lem:symdet-irreducible} gives $\det\Sigma\mid D$.
    Lemma~\ref{lem:strip-supp} removes this factor, and induction on $K$ applies.

    Suppose that $q=0$.
    Then $\det G_D\equiv0$.
    The polynomial gradient map takes values in the rank-one quadric cone in $\Sym_2\cong\R^3$.
    At every nonzero smooth image point, the differential of the gradient, which is the Hessian of $D$, maps into the two-dimensional tangent plane.
    Thus, $\det\Hess D\equiv0$.
    The Gordan--Noether theorem for forms in at most four variables implies over $\C$ that $D$ is a cone.
    The reduction may be taken over $\R$.
    A complex constant direction annihilating $D$ brings its conjugate with it.
    If the two directions are proportional, rescaling gives a real direction.
    If they are independent, $D$ depends on one linear form, which reality makes real up to a scalar.
    Thus, after a real linear change, $D=P(\ell_1,\ell_2)$.

    Write $G_D=P_1A_1+P_2A_2$ for fixed independent matrices $A_1,A_2\in\Sym_2$.
    The binary quadratic $Q(x,y)=\det(xA_1+yA_2)$ is nonzero because the determinant cone contains no two-dimensional linear subspace.
    The identity $Q(P_1,P_2)=0$ either forces $P_1=P_2=0$ when $Q$ is definite, or forces one real linear factor of $Q$ to vanish identically.
    Thus, $P$ depends on one linear form and
    \begin{equation*}
        D
        =
        \gamma\tr(A\Sigma)^K
        ,
        \quad
        \det A=0
        .
    \end{equation*}
    A nonzero real symmetric rank-one matrix $A$ is a nonzero scalar multiple of $vv^\top$.
    This gives the form in \eqref{eq:d2-form}.
    The value of $c$ follows from Theorem~\ref{thm:flag}.
\end{proof}

The two branches of the proof correspond exactly to the determinant factor and the rank-one variance factor in Theorem~\ref{thm:d2}.
No other irreducible factor can occur in dimension two.

\subsection{Adjugate duality and three-variable face types}
\label{supp:d3-face-types}

We next turn to determinant-free solutions on $\Sym_3$.
Adjugate duality exchanges rank-one and rank-two boundary data.
The face-type lemma then shows that every nonzero plane restriction has one common pair of multiplicities.

\begin{lemma}[Adjugate duality]
\label{lem:adj-supp}
    If $D$ is homogeneous of degree $K$ and satisfies \eqref{eq:SN} on $\Sym_d$ with constant $c$, then $D^\dagger=D\circ\adjm$ is homogeneous of degree $(d-1)K$ and satisfies \eqref{eq:SN} with
    \begin{equation*}
        c^\dagger
        =
        c+2(d-2)K^2
        .
    \end{equation*}
    Moreover, $(D^\dagger)^\dagger=(\det\Sigma)^{(d-2)K}D$.
\end{lemma}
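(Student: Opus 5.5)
The plan is to work on the open cone $\Smat^d_{++}$, where the adjugate has the closed form $\adjm(\Sigma)=(\det\Sigma)\Sigma^{-1}$. There I compute the relative gradient of $D^\dagger$ explicitly in terms of the relative gradient of $D$ evaluated at $\Sigma^{-1}$, and then extend the resulting identity polynomially to all of $\Sym_d$, as in Lemmas~\ref{lem:trace-main} and~\ref{lem:strip-supp}. Throughout I take $d\ge2$, since for $d=1$ the adjugate is constant and the statement is degenerate. Homogeneity of degree $(d-1)K$ is immediate, because every entry of $\adjm(\Sigma)$ is a homogeneous polynomial of degree $d-1$ in the entries of $\Sigma$.

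\textbf{Rewriting $D^\dagger$ on the cone.} Homogeneity of $D$ gives, for $\Sigma\succ0$,
\begin{equation*}
    D^\dagger(\Sigma)=(\det\Sigma)^K D(\Sigma^{-1}).
\end{equation*}
By Lemma~\ref{lem:nozero}, $D$ has no zero on the cone, and $\Sigma^{-1}$ ranges over the cone. Hence $D^\dagger$ is nonzero throughout $\Smat^d_{++}$; in particular it is not the zero polynomial, and its logarithm is available there.

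\textbf{Relative gradient.} Differentiate $\log|D^\dagger|$ in a direction $H$. The determinant factor contributes $K\tr(\Sigma^{-1}H)$. The second factor contributes $-\tr\{L\,\Sigma^{-1}H\Sigma^{-1}\}$, where $L=G_D(\Sigma^{-1})/D(\Sigma^{-1})$. Hence
\begin{equation*}
    \frac{G_{D^\dagger}(\Sigma)}{D^\dagger(\Sigma)}=K\Sigma^{-1}-\Sigma^{-1}L\Sigma^{-1},
    \qquad
    R_{D^\dagger}(\Sigma)=KI-\Sigma^{-1}L .
\end{equation*}
Put $T=\Sigma^{-1}$. Then $TL$ has the same traces of powers as $LT=R_D(T)$. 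Euler's identity gives $\tr(TL)=K$, and \eqref{eq:SN} at $T$ gives $\tr\{(TL)^2\}=c/2$. Expanding the square,
\begin{equation*}
    \tr\{R_{D^\dagger}^2\}=dK^2-2K\cdot K+c/2=(d-2)K^2+c/2 .
\end{equation*}
Multiplying by $2(D^\dagger)^2$ yields $2\tr\{(G_{D^\dagger}\Sigma)^2\}=\{c+2(d-2)K^2\}(D^\dagger)^2$ on the cone. Both sides are polynomials, so the identity holds on all of $\Sym_d$, and this constant is positive.

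\textbf{Double adjugate.} I would use the classical identity $\adjm(\adjm\Sigma)=(\det\Sigma)^{d-2}\Sigma$. It is verified on invertible $\Sigma$ from $\adjm\Sigma=(\det\Sigma)\Sigma^{-1}$ together with $\det(\adjm\Sigma)=(\det\Sigma)^{d-1}$, and it extends by density. Homogeneity of degree $K$ then gives $(D^\dagger)^\dagger=(\det\Sigma)^{(d-2)K}D$.

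I do not expect a serious obstacle. The only points needing care are the chain-rule sign in the derivative of $\Sigma\mapsto\Sigma^{-1}$, the similarity argument that moves the trace identities from $LT$ to $TL$, and the nonvanishing of $D^\dagger$ supplied by Lemma~\ref{lem:nozero}.
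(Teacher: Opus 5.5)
Your proposal is correct and follows the paper's proof essentially step for step. Like the paper, you rewrite $D^\dagger$ on the cone via $\adjm\Sigma=(\det\Sigma)\Sigma^{-1}$, obtain the relative gradient $KI-\Sigma^{-1}L_D(\Sigma^{-1})$, and use its similarity to $R_D(\Sigma^{-1})$ together with Euler's identity and \eqref{eq:SN} to get the trace $(d-2)K^2+c/2$; you then extend polynomially and deduce the involution from $\adjm(\adjm\Sigma)=(\det\Sigma)^{d-2}\Sigma$. You also add two points the paper leaves implicit: the appeal to Lemma~\ref{lem:nozero} for nonvanishing of $D^\dagger$ on the cone, and the exclusion of $d=1$.
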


\begin{proof}[Proof of Lemma~\ref{lem:adj-supp}]
    On the positive cone, $\adjm\Sigma=(\det\Sigma)\Sigma^{-1}$, so
    \begin{equation*}
        \log|D^\dagger(\Sigma)|
        =
        K\log\det\Sigma+\log|D(\Sigma^{-1})|
        .
    \end{equation*}
    If $L_D=G_D/D$, differentiation gives
    \begin{equation*}
        L_{D^\dagger}(\Sigma)\Sigma
        =
        KI-\Sigma^{-1}L_D(\Sigma^{-1})
        .
    \end{equation*}
    The second term is similar to the relative gradient of $D$ at $\Sigma^{-1}$.
    If its eigenvalues are $\lambda_1,\ldots,\lambda_d$, Euler's identity and \eqref{eq:SN} give $\sum_i\lambda_i=K$ and $\sum_i\lambda_i^2=c/2$.
    Hence,
    \begin{equation*}
        \tr\{(L_{D^\dagger}\Sigma)^2\}
        =
        (d-2)K^2+c/2
        .
    \end{equation*}
    Clearing denominators proves the polynomial identity for $D^\dagger$.
    The involution formula follows from $\adjm(\adjm\Sigma)=(\det\Sigma)^{d-2}\Sigma$.
\end{proof}

For the next lemma, write $K=\deg D$.
The result makes the face type in Definition~\ref{def:face-type} well defined and gives the bounds used in the three-variable reduction.

\begin{lemma}[Face type in dimension three]
\label{lem:facetype-supp}
    Let $D$ be determinant-free, homogeneous and exactly self-normalizing on $\Sym_3$.
    At least one $D$-good plane exists, and every $D$-good restriction has the form \eqref{eq:facetype} with one common pair $(a,b)$.
    Moreover, $K^2\le c\le2K^2$, with $c=2K^2$ if and only if $b=0$.
\end{lemma}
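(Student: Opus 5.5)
Three claims need proof: a good plane exists; every good restriction has the form \eqref{eq:facetype} with a common $(a,b)$; and $K^2\le c\le 2K^2$ with equality on the right exactly when $b=0$. The plan uses only Lemma~\ref{lem:realdiv-supp}, Lemma~\ref{lem:face-supp} and Theorem~\ref{thm:d2}.

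\emph{Existence of a good plane.} Suppose every plane $V$ had $D_V\equiv0$. Then $D$ vanishes on $\Smat_+(V)$ for every plane $V$. The union of these faces is the set of positive semidefinite matrices of rank at most two. In particular it contains a nonempty Euclidean-open subset of the real rank-two symmetric matrices, for example a neighbourhood of $\diag(1,1,0)$ within the rank-two locus. Lemma~\ref{lem:realdiv-supp} then gives $\det\Sigma\mid D$, which contradicts determinant-freeness. So at least one good plane exists.

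\emph{Form of each restriction and a common type.} Let $V$ be any good plane. By Lemma~\ref{lem:face-supp}, $D_V$ is exactly self-normalizing on $\Sym_2$ with the same constant $c$. It is homogeneous of degree $K$, since $S\mapsto PSP^\top$ is linear. Theorem~\ref{thm:d2} therefore gives $D_V=\gamma_V(w_V^\top Sw_V)^{a_V}(\det S)^{b_V}$ with
\begin{equation*}
    a_V+2b_V=K
    ,\quad
    (a_V+b_V)^2+b_V^2=c/2
    .
\end{equation*}
The factor $(\det S)^{b_V}$ is well defined because the good plane $V$ is fixed; the common type does not need this.

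To see that the pair is common to all good planes, substitute $a=K-2b$. The second equation becomes $(K-b)^2+b^2=c/2$, a quadratic in $b$ symmetric about $b=K/2$. Its roots are $b$ and $K-b$. The constraint $a=K-2b\ge0$ forces $b\le K/2$, so only the smaller root is admissible. Hence $b$, and then $a$, is determined by $(K,c)$ alone, and every good plane has the same face type $(a,b)$.

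\emph{The bounds on $c$.} Write $f(b)=(K-b)^2+b^2$ for $0\le b\le K/2$. This function is decreasing on that interval, with $f(0)=K^2$ and $f(K/2)=K^2/2$. Since $c=2f(b)$, it follows that $K^2\le c\le 2K^2$, with $c=2K^2$ exactly when $b=0$.

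The only step that needs care is the existence of a good plane. There one must confirm that the union of the rank-two faces genuinely contains a Euclidean-open piece of the real rank-two locus, so that Lemma~\ref{lem:realdiv-supp} applies. Positive semidefinite rank-two matrices form an open subset of the real rank-two symmetric matrices, because signature is locally constant on a fixed-rank stratum, and this settles it.
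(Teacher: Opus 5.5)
Your proposal is correct and follows the paper's proof essentially step for step. A good plane exists by Lemma~\ref{lem:realdiv-supp} applied to the rank-two positive-semidefinite locus, and each good restriction is identified through Lemma~\ref{lem:face-supp} and Theorem~\ref{thm:d2}. The common type holds because the two roots of the quadratic in $b$ sum to $K$, so at most one of them satisfies $b\le K/2$. Your monotonicity argument for $(K-b)^2+b^2$ on $[0,K/2]$ just makes explicit the bounds on $c$ that the paper says follow from the same two equations.
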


\begin{proof}[Proof of Lemma~\ref{lem:facetype-supp}]
    If every plane restriction vanished, $D$ would vanish on every singular positive-semidefinite matrix.
    Lemma~\ref{lem:realdiv-supp} would then give $\delta\mid D$, contrary to determinant-freeness.
    A $D$-good face restriction satisfies \eqref{eq:SN} by Lemma~\ref{lem:face-supp}.
    Theorem~\ref{thm:d2} gives its form.
    Substituting $a=K-2b$ into the equation for the self-normalization constant gives
    \begin{equation*}
        2b^2-2Kb+(K^2-c/2)=0
        .
    \end{equation*}
    Its two roots $b_+$ and $b_-$ satisfy $b_++b_-=K$.
    Admissibility requires $0\le b_\pm\le K/2$.
    If both roots were admissible, their sum could equal $K$ only when $b_+=b_-=K/2$.
    Hence, there is at most one admissible root, and the face type is common to all good planes.
    The bounds on $c$ follow from the same two equations.
\end{proof}

The adjugate maps a rank-two covariance face to the rank-one cone.
The following formula makes that action explicit.

\begin{lemma}[Boundary strata under the adjugate]
\label{lem:strata-supp}
    If $P\in\R^{3\times2}$ and $m(P)$ is the cross product of its columns, then, for every $S\in\Sym_2$,
    \begin{equation*}
        \adjm(PSP^\top)
        =
        (\det S)m(P)m(P)^\top
        .
    \end{equation*}
\end{lemma}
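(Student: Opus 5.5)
The plan is to prove the identity first for rank-one $S$ and then extend it by polynomial interpolation. Both sides are quadratic forms in $S$ with values in $\Sym_3$. The left side is quadratic because every entry of the adjugate of a $3\times3$ matrix is a $2\times2$ minor, hence quadratic in the entries of $PSP^\top$, which are linear in $S$. The right side is quadratic because $\det S$ is a quadratic form on $\Sym_2$. Two quadratic forms agree as soon as their associated symmetric bilinear forms agree, and the bilinear form is determined by its values on a spanning set of pairs.

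A more direct route starts from the case $S\succ0$ and uses invertibility. If $P$ has rank less than two, then $m(P)=0$ and $PSP^\top$ has rank at most one, so its adjugate vanishes and both sides are zero. Otherwise, write $M=PSP^\top$, which has rank two with range $\Range(P)$. For such $M$ the adjugate has rank one, and $M\adjm(M)=0$ shows that its column space lies in $\ker M=\Range(P)^\perp=\Span\{m(P)\}$, since $m(P)$ is orthogonal to both columns of $P$. Thus $\adjm(M)=\lambda\,m(P)m(P)^\top$ for some scalar $\lambda$.

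The scalar is found by taking a trace. On one side,
\begin{equation*}
    \tr\adjm(M)
    =
    \sum\text{(principal $2\times2$ minors of }M)
    =
    e_2(\text{eigenvalues of }M).
\end{equation*}
On the other side, $\tr\{\lambda\,m(P)m(P)^\top\}=\lambda\|m(P)\|^2$. The nonzero eigenvalues of $PSP^\top$ coincide with those of $SP^\top P$, so the second elementary symmetric function equals $\det(SP^\top P)=\det S\,\det(P^\top P)$. Lagrange's identity gives $\det(P^\top P)=\|m(P)\|^2$. Together these yield $\lambda=\det S$ whenever $m(P)\ne0$. The resulting identity is polynomial in $S$ and holds on the open cone $\Smat^2_{++}$, so it extends to all of $\Sym_2$.

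A fully coordinate-free alternative is also available. It applies Cauchy--Binet to $\adjm(ABC)=\adjm(C)\adjm(B)\adjm(A)$ after padding $P$ to a square matrix, but that approach is less transparent. The only delicate step is the eigenvalue identification $e_2(PSP^\top)=\det(S)\det(P^\top P)$. This follows from the fact that $PSP^\top$ and $SP^\top P$ share nonzero spectra, together with Lagrange's identity for the cross product. Everything else is routine, so I do not expect a real obstacle.
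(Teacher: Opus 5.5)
Your direct argument is correct, but it takes a different route from the paper's. The paper argues entrywise with Cauchy--Binet. Each $2\times2$ minor of the product $P\,S\,P^\top$ (of sizes $3\times2$, $2\times2$, $2\times3$) factors as a $2\times2$ row minor of $P$, times $\det S$, times another row minor of $P$. The signed row minors of $P$ are exactly the coordinates of $m(P)$, so the $(i,j)$ cofactor equals $(\det S)\,m_i(P)m_j(P)$. That single observation handles every $P$ and every $S$ at once, with no case split on $\rank P$, no restriction to $S\succ0$ and no polynomial continuation. It is also a formal identity valid over any commutative ring.

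Your route instead obtains the shape $\lambda\,m(P)m(P)^\top$ from $M\adjm(M)=(\det M)I=0$, and then fixes the single scalar by a trace. To pass from ``columns in $\Span\{m(P)\}$'' to $\lambda\,m(P)m(P)^\top$ you need the symmetry of $\adjm(M)$, or $\adjm(M)M=0$; state this explicitly. Your version explains conceptually why the adjugate sends a rank-two face to the rank-one cone along the normal line, which is exactly the role the lemma plays later. The cost is reliance on real structure ($S\succ0$, orthogonal complements, $\lVert m(P)\rVert^2\ne0$ whenever $m(P)\ne0$) plus a density argument. Your ``delicate step'' $e_2(PSP^\top)=\det S\,\det(P^\top P)$ is just the trace of the paper's Cauchy--Binet identity. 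So the route you set aside as less transparent is in fact the shortest one, and it needs no padding of $P$.

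One caution about your opening paragraph. Verifying the identity on rank-one $S$ and then interpolating cannot work as stated, because both sides vanish identically there: $PSP^\top$ has rank at most one and $\det S=0$. Such a check cannot distinguish $\det S\,m(P)m(P)^\top$ from the zero form. A polarization argument would have to test pairs $(uu^\top,ww^\top)$, and this reduces to the case $S=I_2$, namely $\adjm(aa^\top+bb^\top)=(a\times b)(a\times b)^\top$. Since you abandon that plan, it does not affect your proof, but you should remove or repair it.
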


\begin{proof}[Proof of Lemma~\ref{lem:strata-supp}]
    The $(i,j)$ cofactor of $PSP^\top$ is the product of the corresponding $2\times2$ row minors of $P$ and $\det S$.
    Those signed minors are the coordinates of $m(P)$.
\end{proof}

The next dichotomy separates solutions that survive on the rank-one cone from those that vanish there.
The surviving branch must have the largest possible self-normalization constant and a pure rank-one face type.

\begin{lemma}[Rank-one dichotomy]
\label{lem:dichotomy-supp}
    Let $D$ be homogeneous of degree $K$ and exactly self-normalizing on $\Sym_3$.
    Either $p_D\equiv0$, or $D$ is determinant-free, $c=2K^2$, and its face type is $(K,0)$.
\end{lemma}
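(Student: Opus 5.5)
Suppose $p_D\not\equiv0$. The plan is to extract from the rank-one restriction an identity that forces $c=2K^2$, and then read off everything else from Lemma~\ref{lem:facetype-supp}.

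First, $D$ is determinant-free. The rank-one matrices $uu^\top$ have determinant zero, so if $\delta\mid D$ then $p_D\equiv0$.

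The main step is to evaluate \eqref{eq:SN} at $\Sigma=uu^\top$. There $G_D(uu^\top)uu^\top$ has rank at most one, so its only possibly nonzero eigenvalue is $u^\top G_D(uu^\top)u$, and its square-trace is $\{u^\top G_D(uu^\top)u\}^2$. On the other hand, Euler's identity gives $\tr\{G_D(\Sigma)\Sigma\}=KD(\Sigma)$ as a polynomial identity valid on all of $\Sym_3$. At $\Sigma=uu^\top$ this reads $u^\top G_D(uu^\top)u=Kp_D(u)$. Substituting into \eqref{eq:SN} gives
\[
2K^2p_D(u)^2=c\,p_D(u)^2 .
\]
Because $p_D\not\equiv0$, there is a $u$ with $p_D(u)\neq0$, and therefore $c=2K^2$. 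The one point to check is that the rank-one trace computation holds off the positive cone. It does, because both sides are polynomial identities in $\Sigma$ and the rank-one eigenvalue statement is purely algebraic.

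Lemma~\ref{lem:facetype-supp} then applies, since $D$ is determinant-free: $c=2K^2$ holds if and only if $b=0$. With $a+2b=K$ this gives $a=K$, so the face type is $(K,0)$.

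For a direct check, pick any plane $V$ containing a $u$ with $p_D(u)\neq0$. Then $D_V\not\equiv0$, because it is nonzero at the rank-one matrix corresponding to $u$. So $V$ is a good plane, and Theorem~\ref{thm:d2} shows that $D_V$ has the form $\gamma(w^\top Sw)^a(\det S)^b$. This form is nonzero at some rank-one $S$ only if $b=0$, which gives type $(K,0)$ consistently.

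The two cases are mutually exclusive as stated, because the second case is defined by $p_D\not\equiv0$.

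I expect no real obstacle here. The only delicate point is justifying that Euler's identity and the rank-one trace computation hold at singular $\Sigma$, and both follow from polynomial continuity.
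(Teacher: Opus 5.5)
Your proof is correct, but it reaches $c=2K^2$ by a different route from the paper.

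The paper passes to the adjugate. By the boundary-strata formula, $(D^\dagger)_V(S)=(\det S)^K p_D\{m(P)\}$. So on any plane whose normal $m(P)$ has $p_D\{m(P)\}\neq0$, the dual polynomial $D^\dagger$ has a pure-determinant face restriction. That restriction has constant $4K^2$, by the face-restriction lemma and Theorem~\ref{thm:d2}. Adjugate duality gives $c^\dagger=c+2K^2$, hence $c=2K^2$. Lemma~\ref{lem:facetype-supp} then forces $b=0$, exactly as in your last step.

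You instead evaluate \eqref{eq:SN} directly on the rank-one cone. There $G_D(uu^\top)uu^\top$ has rank at most one, so the trace of its square is the square of its trace, and Euler's identity identifies that trace with $Kp_D(u)$.

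What each route buys:
\begin{itemize}
\item Your route is shorter. It needs no adjugate duality, face restriction or two-dimensional converse to obtain the constant.
\item Your route is also dimension-free. It shows $c=2K^2$ for any self-normalizing $D$ on any $\Sym_d$ that does not vanish identically on rank-one matrices.
\item The paper's route stays inside the adjugate machinery it reuses for the pure plane-minor and mixed branches. It also exhibits the exchange between rank-one data of $D$ and plane data of $D^\dagger$. The price is that it relies on the three-dimensional cross-product formula.
\end{itemize}

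Your ``direct check'' is already a complete alternative proof. A good plane through some $u$ with $p_D(u)\neq0$ has, by Theorem~\ref{thm:d2}, a restriction of type $(K,0)$. The face type is common to all good planes, so this fixes $(a,b)=(K,0)$ and hence $c=2K^2$.

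One small correction: exclusivity of the two alternatives is not part of the statement. It does hold, but not "by definition". It holds because a type-$(K,0)$ restriction is nonzero at some rank-one matrix.
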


\begin{proof}[Proof of Lemma~\ref{lem:dichotomy-supp}]
    If $p_D\not\equiv0$, then $\delta\nmid D$.
    For a plane represented by $P$, Lemma~\ref{lem:strata-supp} gives
    \begin{equation*}
        (D^\dagger)_V(S)
        =
        (\det S)^Kp_D\{m(P)\}
        .
    \end{equation*}
    On every plane for which $p_D\{m(P)\}\ne0$, this is a pure determinant power of type $(0,K)$.
    Its constant satisfies $c^\dagger/2=2K^2$.
    Lemma~\ref{lem:adj-supp} gives $c^\dagger/2=K^2+c/2$, and hence $c=2K^2$.
    Lemma~\ref{lem:facetype-supp} then forces $b=0$.
\end{proof}

The dichotomy leaves a rigidity question on the rank-one cone.
The next proposition closes that branch and shows that it contains only powers of one variance direction.

\begin{proposition}[Rank-one rigidity]
\label{prop:rankone-supp}
    Let $D$ be homogeneous of degree $K$ and exactly self-normalizing on $\Sym_3$.
    If $p_D\not\equiv0$, then $D=\gamma(v^\top\Sigma v)^K$ for a nonzero constant $\gamma$ and a nonzero vector $v\in\R^3$.
\end{proposition}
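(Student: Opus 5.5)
The plan is to turn the rank-one boundary information supplied by Lemma~\ref{lem:dichotomy-supp} into an explicit candidate $F(\Sigma)=\gamma(v^\top\Sigma v)^K$. We then show that $D$ and $F$ agree on the whole boundary, and finally remove the possible interior discrepancy.

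\emph{Step 1: rank-one data.} Since $p_D\not\equiv0$, Lemma~\ref{lem:dichotomy-supp} gives that $D$ is determinant-free, that $c=2K^2$, and that the face type is $(K,0)$. So for every $D$-good plane $V$ with basis $P$ we have $D_V(S)=\gamma_V(w_V^\top Sw_V)^K$. Evaluating at $S=xx^\top$ gives $p_D(Px)=\gamma_V(w_V^\top x)^{2K}$. Hence the ternary form $p_D$ restricts on every generic plane through the origin to a constant times a $2K$-th power of a linear form. Projectively, the curve $\{p_D=0\}_{\mathrm{red}}\subseteq\mathbb{P}^2_\C$ meets a generic line in a single point. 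By Bézout, the reduced curve has degree one, so $p_D=\gamma\,\ell(u)^{2K}$ for a linear form $\ell$. Reality of $p_D$ and evenness of $2K$ let us take $\ell(u)=v^\top u$ with $v\in\R^3$ real and $\gamma$ real. Put $F(\Sigma)=\gamma(v^\top\Sigma v)^K$. This $F$ is self-normalizing with the same constant $2K^2$ by Theorem~\ref{thm:flag}.

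\emph{Step 2: agreement on the boundary.} Let $V$ be a plane. Since $v^\top Px$ does not vanish identically in $x$ unless $P^\top v=0$, the rank-one restriction of $F$ to $V$ equals $\gamma (v^\top Px)^{2K}$, which is nonzero; thus every plane is $D$-good. Comparing $\gamma_V(w_V^\top x)^{2K}=\gamma(v^\top Px)^{2K}$ forces $w_V\propto P^\top v$ and fixes $\gamma_V$. It follows that $D_V=F_V$ as polynomials on $\Sym_2$ for every plane. Therefore $D-F$ vanishes on all real symmetric matrices of rank two, and Lemma~\ref{lem:realdiv-supp} gives $D=F+\delta H$ with $H$ homogeneous of degree $K-3$. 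If $K<3$ we are done.

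\emph{Step 3: killing $H$ (main obstacle).} By congruence (Lemma~\ref{lem:trace-main}) we take $v=e_1$, and we suppose $H\ne0$. We substitute $D=F+\delta H$ into \eqref{eq:SN} and subtract the identity for $F$. On the cone, $G_F\Sigma=KF\,vv^\top\Sigma/(v^\top\Sigma v)$ and $G_{\delta H}\Sigma=\delta(HI+G_H\Sigma)$. Using Euler's identity, the terms linear in $H$ combine to
\[
4KF\delta\Bigl\{(1-K)H+\frac{v^\top\Sigma G_H\Sigma v}{v^\top\Sigma v}\Bigr\}.
\]
The quadratic terms are $2\delta^2[\tr\{(HI+G_H\Sigma)^2\}-K^2H^2]$. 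Dividing by $\delta$ and reducing modulo $\delta$ yields a first-order equation for $H$ on the determinant hypersurface, namely $(1-K)H\,v^\top\Sigma v+v^\top\Sigma G_H\Sigma v\equiv0 \pmod{\delta}$. We would then factor out the largest power of $\delta$ from $H$, which is legitimate because $\delta$ is prime by Lemma~\ref{lem:symdet-irreducible}. Applying this equation to the $\delta$-free part and restricting to rank-two faces, it becomes a homogeneous ODE along the flow $\Sigma\mapsto\Sigma+t\,\Sigma vv^\top\Sigma$. Our aim is to show that its only polynomial solutions must vanish on the rank-one cone or carry a degree incompatible with $K-3$. That would contradict determinant-freeness of the reduced $H$ after iterating.

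If this bookkeeping does not close, the fallback is to apply Steps 1--2 to $D^\dagger$ via Lemma~\ref{lem:adj-supp}, exploiting $(D^\dagger)_V=(\det S)^Kp_D\{m(P)\}$. Alternatively, one can use the geodesic argument of Lemma~\ref{lem:nozero}: both $\log|D|$ and $\log|F|$ have constant affine-invariant gradient norm $K$, so $\log|D/F|$ should be bounded near the boundary. Its vanishing to infinite order there would then force $D/F\equiv1$. We expect Step 3, controlling the interior correction $\delta H$, to be the genuinely hard part.
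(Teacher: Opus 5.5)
Your Steps 1 and 2 are the paper's argument. Bezout on generic lines gives $p_D=\gamma(v^\top u)^{2K}$. Unique factorization of binary powers then gives $D_V=F_V$ face by face, and Lemma~\ref{lem:realdiv-supp} gives $\delta\mid D-F$.

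There is one harmless slip: $v^\perp$ is not $D$-good. Since $p_D$ vanishes on it, face type $(K,0)$ forces $D_{v^\perp}\equiv0$. But $F_{v^\perp}\equiv0$ as well, so agreement on all rank-two matrices survives.

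The genuine gap is Step 3, which is the substance of the proposition and which you leave as an aim. Two things are missing.

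First, after writing $D=F+\delta^jR$ with $j$ exact, you must redo the expansion at that power. Because $G_{\delta^j}\Sigma=j\delta^jI$, the cross term acquires $jKFR$, and dividing by $\delta^j$ gives
$(\Sigma v)^\top G_R(\Sigma v)\equiv(K-j)(v^\top\Sigma v)R\pmod{\delta^j}$, which is \eqref{eq:rankone-heart}. Your $j=1$ congruence with coefficient $K-1$ becomes vacuous once $H=\delta^{j-1}R$ with $j\ge2$.

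Second, and decisively, you never show that this first-order equation forces anything. The target you name, vanishing on the rank-one cone, is the wrong one. What Lemma~\ref{lem:realdiv-supp} needs is vanishing of $R$ on an open set of rank-two matrices.

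The missing idea is a chart that turns the flow $\Sigma\mapsto\Sigma+t\,\Sigma vv^\top\Sigma$ into an Euler operator. With $v=e_1$, use
\begin{equation*}
    \Sigma(q,u,C)=\begin{pmatrix}q&qu^\top\\qu&C+quu^\top\end{pmatrix},
    \quad
    \delta=q\det C .
\end{equation*}
The curve $\Sigma+t(\Sigma e_1)(\Sigma e_1)^\top$ equals $\Sigma(q+tq^2,u,C)$. So the left-hand side is $q^2\partial_q\bar R$, where $\bar R(q,u,C)=R\{\Sigma(q,u,C)\}$. On $\{\det C=0\}$, which does not depend on $q$, the equation reads $q\partial_q\bar R=(K-j)\bar R$.

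The entries of $\Sigma(q,u,C)$ are affine in $q$, so $\bar R$ has $q$-degree at most $\deg R=K-3j<K-j$. Hence every $q$-coefficient vanishes on $\{\det C=0\}$. Thus $R$ vanishes on an open set of rank-two matrices, $\delta\mid R$, and exactness of $j$ is contradicted. Equivalently, you can iterate your $\delta H$ version, raising the power of $\delta$ by one each time until the degree turns negative.

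Neither fallback substitutes for this.
\begin{itemize}
    \item $p_{D^\dagger}\equiv0$ because $\adjm(uu^\top)=0$ in dimension three, so Steps 1--2 do not apply to $D^\dagger$. The face identity only shows that $D^\dagger$ and $F^\dagger$ agree on rank-two matrices, which reproduces the same $\delta$-divisible discrepancy.
    \item The geodesic bound only makes $\log|D/F|$ Lipschitz in the affine-invariant metric. ``Infinite-order vanishing'' of $D/F-1=\delta H/F$ along $\{\delta=0\}$ is just the statement $H=0$ restated, so that route is circular.
\end{itemize}
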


\begin{proof}[Proof of Proposition~\ref{prop:rankone-supp}]
    By Lemma~\ref{lem:dichotomy-supp}, every $D$-good plane restriction is a $K$th power of one linear covariance form.
    Thus, on a Zariski-open family of projective lines in $\mathbb{P}^2_\C$, the form $p_D$ restricts to a polynomial whose zero set has one support point.
    If the reduced projective curve $\{p_D=0\}_{\mathrm{red}}$ had degree at least two, a general line would meet it transversally in at least two points by Bezout's theorem \citep{harris1992}.
    Hence, the reduced curve is one line and
    \begin{equation*}
        p_D(u)
        =
        \gamma(v^\top u)^{2K}
    \end{equation*}
    with $v$ and $\gamma$ real after rescaling.
    
    Let $D_0=\gamma(v^\top\Sigma v)^K$.
    On every $D$-good plane, both face restrictions are $K$th powers of linear covariance forms and agree on all rank-one matrices in that face.
    Unique factorization of the resulting binary powers makes the linear forms proportional with the matching constant.
    Hence, the two face polynomials agree identically.
    Therefore, $D$ and $D_0$ agree on a nonempty open set of rank-two matrices, and Lemma~\ref{lem:realdiv-supp} gives $\delta\mid D-D_0$.

    If the difference is nonzero, write $D=D_0+\delta^jR$ with $j\ge1$, $\delta\nmid R$, and $\deg R=K-3j$.
    After a congruence, take $v=e_1$ and put $Q=\sigma_{11}$.
    Expanding \eqref{eq:SN} and dividing by $\delta^j$ gives
    \begin{equation}
    \label{eq:rankone-heart}
        (\Sigma e_1)^\top G_R(\Sigma)(\Sigma e_1)
        =
        (K-j)QR+\delta^jS
    \end{equation}
    for a polynomial $S$.

    Use the Schur coordinates
    \begin{equation*}
        \Sigma(q,u,C)
        =
        \begin{pmatrix}q&qu^\top\\qu&C+quu^\top\end{pmatrix}
        ,
        \quad
        \delta=q\det C
        .
    \end{equation*}
    The line $\Sigma+t(\Sigma e_1)(\Sigma e_1)^\top$ changes only $q$ to $q+tq^2$.
    Therefore, the left-hand side of \eqref{eq:rankone-heart} is $q^2\partial_q\bar R$.
    On $\det C=0$,
    \begin{equation*}
        q^2\partial_q\bar{R}
        =
        (K-j)q\bar{R}
        .
    \end{equation*}
    The polynomial $\bar{R}$ has $q$-degree at most $K-3j<K-j$.
    Coefficient comparison forces $\bar{R}=0$ on $\det C=0$.
    Thus, $R$ vanishes on a nonempty open set of rank-two matrices and is divisible by $\delta$, which contradicts its definition.
\end{proof}

Proposition~\ref{prop:rankone-supp} proves Theorem~\ref{thm:d3-reduction} (i) after determinant stripping.
The remaining determinant-free solutions vanish on the rank-one cone.

\subsection{Relative spectral constancy in dimension three}
\label{supp:d3-spectral-constancy}

Euler's identity fixes the trace of the relative gradient, while exact self-normalization fixes its squared Frobenius norm.
The next proposition uses the polynomial character of $D$ to obtain the additional discrete constraint that makes the spectrum constant in dimension three.

\begin{proposition}[Spectral constancy]
\label{prop:spectrum-supp}
    Let $D$ be homogeneous of degree $K$ and exactly self-normalizing on $\Sym_3$, and put $\alpha=c/2$.
    There is a fixed multiset $\{\lambda_1,\lambda_2,\lambda_3\}$ such that
    \begin{equation*}
        \spec R_D(\Sigma)
        =
        \{\lambda_1,\lambda_2,\lambda_3\}
        \quad
        (\Sigma\succ0)
        .
    \end{equation*}
    In every case,
    \begin{equation*}
        \det G_D(\Sigma)\det\Sigma
        =
        \lambda_1\lambda_2\lambda_3D(\Sigma)^3
        .
    \end{equation*}
    If $\alpha=K^2/3$, then $3\mid K$ and $D=\gamma(\det\Sigma)^{K/3}$.
\end{proposition}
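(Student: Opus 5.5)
The plan is to work with the eigenvalues $\lambda_1(\Sigma),\lambda_2(\Sigma),\lambda_3(\Sigma)$ of $R_D(\Sigma)$ on $\Smat^3_{++}$. By Lemma~\ref{lem:nozero}, $D$ has no zero on the cone, so $R_D$ is defined everywhere there. It is similar to the symmetric matrix $\Sigma^{1/2}G_D\Sigma^{1/2}/D$, so its eigenvalues are real and depend continuously on $\Sigma$. Euler's identity gives $e_1=\sum\lambda_i=K$, and \eqref{eq:SN} gives $\sum\lambda_i^2=\alpha$. Hence $e_2=(K^2-\alpha)/2$ is constant as well. The characteristic polynomial of $R_D(\Sigma)$ is therefore $t^3-Kt^2+e_2t-e_3(\Sigma)$, where
\[
e_3(\Sigma)=\det R_D(\Sigma)=\frac{\det G_D(\Sigma)\det\Sigma}{D(\Sigma)^3}.
\]
Everything reduces to showing that the single function $e_3$ is constant. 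Once that holds, the spectrum is a fixed multiset, and clearing $D^3$ yields the displayed polynomial identity on the cone, hence on all of $\Sym_3$.

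To show $e_3$ is constant, I will use the polynomial structure. Write $P=\det G_D\det\Sigma$, a homogeneous polynomial of degree $3K$, so that $PD^{-3}=e_3$ is a rational function of degree zero. The real-rootedness constraint confines $e_3$ to a bounded interval: the $\lambda_i$ lie on the circle $\sum\lambda_i=K$, $\sum\lambda_i^2=\alpha$, and $e_3$ is bounded on that circle. The key step is to show that a bounded rational function $P/D^3$ on the open cone, with $D$ nonvanishing there, must be constant. Boundedness alone does not force this in general, so I intend to exploit the geodesic structure. Along an affine-invariant geodesic $\Sigma(t)=\Sigma^{1/2}\exp(tH)\Sigma^{1/2}$, the function $\log|D|$ has gradient norm exactly $\alpha^{1/2}$, as in the proof of Lemma~\ref{lem:nozero}. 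The first thing I will try is the one-parameter family $\Sigma+tuu^\top$ and its rank-one and rank-two limits. Face restriction (Lemma~\ref{lem:face-supp} and Theorem~\ref{thm:d2}) pins down the eigenvalues that survive on boundary faces: for face type $(a,b)$ the two face eigenvalues are $a+b$ and $b$. I then need the third eigenvalue to be forced. Given $K$ and $\alpha$, the roots of the fixed quadratic relations admit only finitely many integer-compatible values of $e_3$. I expect the paper's intended argument to be of this kind, namely that polynomiality forces an integer linear relation among the eigenvalues (for example, one eigenvalue equals a face multiplicity, via the determinant valuation of $D$ along the boundary). If so, a line meets the circle in finitely many points, and continuity on the connected cone makes the spectrum constant. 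Establishing that linear relation is the main obstacle.

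My concrete approach to the relation is to treat $e_3(\Sigma)$ as the leading coefficient of $D$ along $\det\Sigma\to0$. I will factor $D$ over $\C$ and compute the relative gradient near generic points of each irreducible component of $\{D=0\}$ meeting the closed cone. Near such a component, $R_D$ picks up a rank-one contribution proportional to the multiplicity, which makes one eigenvalue an integer. Continuity then extends constancy inward. I have doubts about making this step rigorous, and it is the step I would scrutinize first.

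For the final claim, suppose $\alpha=K^2/3$. Then $\sum(\lambda_i-K/3)^2=\alpha-K^2/3=0$, so all three eigenvalues equal $K/3$, and $R_D=(K/3)I$ because $R_D$ is diagonalizable (similar to a symmetric matrix). This means $G_D=(K/3)D\Sigma^{-1}=(K/3)G_{\log\det}\,D$ on the cone. Hence $\log|D|-(K/3)\log\det\Sigma$ is constant, so $D=\gamma(\det\Sigma)^{K/3}$ on the cone. Polynomiality together with the irreducibility of $\det\Sigma$ (Lemma~\ref{lem:symdet-irreducible}) forces $K/3$ to be an integer, giving $3\mid K$.
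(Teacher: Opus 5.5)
Your reduction to the constancy of $e_3=\det R_D$ is correct, since $e_1=K$ and $e_2=(K^2-\alpha)/2$ are fixed. Your handling of the polynomial identity and of the case $\alpha=K^2/3$ also agrees with the paper. The gap is the step you flag yourself, and the mechanism you propose for it cannot work. By Lemma~\ref{lem:nozero}, the zero set of $D$ misses $\Smat^3_{++}$. So anything you extract near components of $\{D=0\}$, or along $\det\Sigma\to0$, is information about limits at the boundary of the cone. For a determinant-free $D$ such information is available: at a generic rank-two point of a good face, the block computation in Lemma~\ref{lem:face-supp} together with Theorem~\ref{thm:d2} gives limiting spectrum $\{a+b,b,0\}$, so $e_3\to0$ there. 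But boundary values do not force a continuous function on a connected open set to be constant, so ``continuity then extends constancy inward'' is a non sequitur. The line-meets-circle argument needs an integer linear relation among the eigenvalues at \emph{every} interior point, so that the interior spectra lie in a finite set. Likewise, your ``finitely many integer-compatible values of $e_3$'' presupposes the integrality to be proved. The face route is also empty when $\det\Sigma\mid D$, since every face restriction then vanishes.

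The missing idea is one you mention but do not use. Take $f=\log|D|$ and the affine-invariant metric. Because $\lVert\operatorname{grad}_g f\rVert_g^2=\alpha$ is constant, symmetry of the Riemannian Hessian gives $\nabla^g_{\operatorname{grad}_g f}\operatorname{grad}_g f=\tfrac12\operatorname{grad}_g\lVert\operatorname{grad}_g f\rVert_g^2=0$. Hence the gradient lines of $f$ are geodesics, along which $f$ increases at rate exactly $\alpha$. After the congruence $\tilde D(X)=D(\Sigma_0^{1/2}X\Sigma_0^{1/2})$, the gradient line through $I$ is the geodesic $t\mapsto e^{tB}$, where $B$ is the symmetric matrix similar to $R_D(\Sigma_0)$. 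By analyticity, $\tilde D(e^{tB})=\tilde D(I)e^{\alpha t}$ for all real $t$. Write $B=O\diag(\lambda_1,\lambda_2,\lambda_3)O^\top$ and expand the polynomial $x\mapsto\tilde D\{O\diag(x)O^\top\}$ in monomials. This gives $\sum_{|n|=K}a_ne^{(n\cdot\lambda)t}=\tilde D(I)e^{\alpha t}$. Linear independence of exponentials then yields $n\cdot\lambda=\alpha$ for some $n\in\mathbb{Z}^3_{\ge0}$ with $|n|=K$. This is the required relation at an arbitrary interior point. When $\alpha>K^2/3$, the choice $n\propto(1,1,1)$ is excluded because it gives $n\cdot\lambda=K^2/3$. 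Each other $n$ meets the circle in at most two points, and your continuity argument then completes the proof.
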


\begin{proof}[Proof of Proposition~\ref{prop:spectrum-supp}]
    By Lemma~\ref{lem:nozero}, $D$ has constant sign on $\Smat^3_{++}$.
    Multiply it by $-1$ if necessary so that $D>0$ there.
    Fix $\Sigma_0\succ0$ and define
    \begin{equation*}
        \tilde D(X)
        =
        D(\Sigma_0^{1/2}X\Sigma_0^{1/2})
        .
    \end{equation*}
    By Lemma~\ref{lem:trace-main}, $\tilde D$ satisfies \eqref{eq:SN} with the same constant.
    At $I$, put
    \begin{equation*}
        B
        =
        \frac{G_{\tilde D}(I)}{\tilde D(I)}
        =
        \frac{\Sigma_0^{1/2}G_D(\Sigma_0)\Sigma_0^{1/2}}{D(\Sigma_0)}
        .
    \end{equation*}
    The matrix $B$ is symmetric and is similar to $R_D(\Sigma_0)$.
    In particular, $\tr B=K$ and $\tr B^2=\alpha$.
    
    By Cauchy--Schwarz, $\alpha\ge K^2/3$.
    If $\alpha=K^2/3$, equality forces all three eigenvalues of $R_D(\Sigma_0)$ to equal $K/3$.
    Since $\Sigma_0$ was arbitrary and $R_D(\Sigma_0)$ is diagonalizable, $R_D=(K/3)I$ throughout the cone.
    Hence, $\mathrm{d}\log|D|=(K/3)\mathrm{d}\log\det\Sigma$.
    On a nonempty open set this gives $D^3=\gamma_0(\det\Sigma)^K$.
    Polynomial continuation and unique factorization using Lemma~\ref{lem:symdet-irreducible} yield $3\mid K$ and $D=\gamma(\det\Sigma)^{K/3}$.
    The determinant identity follows with $\lambda_1=\lambda_2=\lambda_3=K/3$.
    Henceforth, assume $\alpha>K^2/3$.
    
    Let $f=\log\tilde D$ and equip $\Smat^3_{++}$ with the affine-invariant metric.
    Equation~\eqref{eq:SN} gives $\lVert\operatorname{grad}_g f\rVert_g^2=\alpha$.
    For every vector field $Y$, symmetry of the Riemannian Hessian gives
    \begin{equation*}
        \begin{split}
        g(\nabla^g_{\operatorname{grad}_g f}\operatorname{grad}_g f,Y)
        &=
        \operatorname{Hess}_g f(\operatorname{grad}_g f,Y)
        \\
        &=
        \operatorname{Hess}_g f(Y,\operatorname{grad}_g f)
        =
        \frac{1}{2}Y\lVert\operatorname{grad}_g f\rVert_g^2
        =0
        .
        \end{split}
    \end{equation*}
    Thus, the integral curves of $\operatorname{grad}_g f$ are geodesics.
    The geodesic through $I$ with initial velocity $B$ is $t\mapsto e^{tB}$.
    Uniqueness of geodesics and integral curves makes the two curves agree near $t=0$.
    Along that interval,
    \begin{equation*}
        \frac{\mathrm{d}}{\mathrm{d}t}f(e^{tB})
        =
        \lVert\operatorname{grad}_g f\rVert_g^2
        =
        \alpha
        .
    \end{equation*}
    Therefore,
    \begin{equation}
    \label{eq:spectral-exponential}
        \tilde{D}(e^{tB})
        =
        \tilde{D}(I)e^{\alpha t}
        .
    \end{equation}
    Both sides are entire functions of $t$, so \eqref{eq:spectral-exponential} holds for every $t\in\R$.

    Choose an orthogonal matrix $O$ such that $B=O\operatorname{diag}(\lambda_1,\lambda_2,\lambda_3)O^\top$.
    Define
    \begin{equation*}
        Q(x_1,x_2,x_3)
        =
        \tilde{D}\{O\operatorname{diag}(x_1,x_2,x_3)O^\top\}
        =
        \sum_{\substack{n\in\mathbb{Z}_{\ge0}^3\\|n|=K}}
        a_nx_1^{n_1}x_2^{n_2}x_3^{n_3}
        .
    \end{equation*}
    Since $Q(1,1,1)=\tilde{D}(I)\ne0$, at least one coefficient group is nonzero.
    Substitution in \eqref{eq:spectral-exponential} gives
    \begin{equation*}
        \sum_{|n|=K}a_ne^{(n\cdot\lambda)t}
        =
        \tilde{D}(I)e^{\alpha t}
        .
    \end{equation*}
    Linear independence of real exponentials implies $n\cdot\lambda=\alpha$ for at least one $n\in\mathbb{Z}_{\ge0}^3$ with $|n|=K$.

    The equations $\sum_i\lambda_i=K$ and $\sum_i\lambda_i^2=\alpha$ define a circle in the trace plane.
    If $n$ is proportional to $(1,1,1)$, then $n=(K/3)(1,1,1)$ and $n\cdot\lambda=K^2/3\ne\alpha$.
    Thus, this exceptional resonance contains no admissible point.
    For every other $n$, the equation $n\cdot\lambda=\alpha$ cuts the trace plane in a proper affine line and meets the circle in at most two points.
    There are only finitely many such $n$.
    Hence, the ordered spectrum belongs to a finite set.
    The ordered eigenvalues vary continuously with $\Sigma$ because $R_D(\Sigma)$ is similar to the symmetric matrix
    \begin{equation*}
        \frac{\Sigma^{1/2}G_D(\Sigma)\Sigma^{1/2}}{D(\Sigma)}
        .
    \end{equation*}
    Since $\Smat^3_{++}$ is connected, the ordered spectrum is constant.
    Taking the product of the fixed eigenvalues and clearing denominators gives the determinant identity.
\end{proof}

For a determinant-free solution, the determinant identity forces one relative eigenvalue to vanish.
The face type determines the other two.

\begin{corollary}
\label{cor:detG-supp}
    If $\delta\nmid D$, then one relative eigenvalue is zero and $\det G_D\equiv0$.
    If $D$ has face type $(a,b)$, then
    \begin{equation*}
        \spec R_D(\Sigma)=\{a+b,b,0\}
        \quad
        (\Sigma\succ0)
        .
    \end{equation*}
\end{corollary}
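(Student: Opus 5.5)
The first claim follows from the determinant identity of Proposition~\ref{prop:spectrum-supp}, namely $\det G_D\det\Sigma=\lambda_1\lambda_2\lambda_3D^3$ with a fixed multiset $\{\lambda_1,\lambda_2,\lambda_3\}$. If the product $\lambda_1\lambda_2\lambda_3$ were nonzero, then $\det\Sigma=\delta$ would divide $D^3$. Since $\delta$ is prime by Lemma~\ref{lem:symdet-irreducible}, it would then divide $D$, which contradicts $\delta\nmid D$. Hence some $\lambda_i$ vanishes. The right-hand side of the identity is then the zero polynomial, so $\det G_D\det\Sigma\equiv0$. Because $\R[\Sym_3]$ is an integral domain and $\det\Sigma\not\equiv0$, this gives $\det G_D\equiv0$.

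For the spectrum, I will name the constant eigenvalues $\{\lambda,\mu,0\}$. Euler's identity and \eqref{eq:SN} give
\begin{equation*}
    \lambda+\mu=K,
    \quad
    \lambda^2+\mu^2=c/2.
\end{equation*}
By Definition~\ref{def:face-type} and Lemma~\ref{lem:facetype-supp}, the face type $(a,b)$ satisfies $a+2b=K$ and $(a+b)^2+b^2=c/2$. The pair $\{a+b,b\}$ therefore has the same sum and the same sum of squares as $\{\lambda,\mu\}$. These two power sums determine the elementary symmetric functions, so $\{\lambda,\mu\}$ and $\{a+b,b\}$ are the roots of the same monic quadratic. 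Hence $\{\lambda,\mu\}=\{a+b,b\}$, and $\spec R_D(\Sigma)=\{a+b,b,0\}$ for all $\Sigma\succ0$.

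I expect no real obstacle here. The only point needing care is that the face type exists for determinant-free $D$, and Lemma~\ref{lem:facetype-supp} guarantees this. It also guarantees that $c$ is tied to $(a,b)$ through the shared constant from Lemma~\ref{lem:face-supp}, since a good face restriction keeps the constant $c$ and the degree $K$.
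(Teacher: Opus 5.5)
Your proposal is correct and follows the paper's own argument. Both use the determinant identity of Proposition~\ref{prop:spectrum-supp} together with primality of $\delta$ to force a zero relative eigenvalue and $\det G_D\equiv0$, and both then identify the remaining pair from its sum $K=a+2b$ and its second power sum $c/2=(a+b)^2+b^2$. The paper states this last step through the equivalent product $b(a+b)$.
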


\begin{proof}[Proof of Corollary~\ref{cor:detG-supp}]
    The left-hand side of the determinant identity in Proposition~\ref{prop:spectrum-supp} is divisible by the prime $\delta$.
    If the eigenvalue product were nonzero, then $\delta\mid D^3$, contrary to the hypothesis.
    The remaining two eigenvalues have sum $K=a+2b$ and product $b(a+b)$.
    They are therefore $a+b$ and $b$.
\end{proof}

Spectral constancy supplies the singular-gradient structure of every determinant-free three-variable solution.
It is the step that reduces the unresolved case to a polynomial kernel line.

\subsection{Pure plane-minor and mixed branches}
\label{supp:d3-mixed-branches}

We first identify the branch whose nonzero plane restrictions are pure determinant powers.
This proves Theorem~\ref{thm:d3-reduction} (ii).

\begin{proof}[Proof of the pure plane-minor branch in Theorem~\ref{thm:d3-reduction}]
    Suppose that the face type is $(0,b)$, so $K=2b$ and the spectrum is $(b,b,0)$.
    The adjugate transform has spectrum $(b,b,2b)$.
    Since $p_D\equiv0$, the polynomial $D^\dagger$ vanishes on every singular positive-semidefinite matrix.
    Lemma~\ref{lem:realdiv-supp} gives $\delta\mid D^\dagger$.
    Write $D^\dagger=\delta^sE$ with exact $s\ge1$.
    The adjugate involution gives $2s\le K$, and determinant stripping shifts every relative eigenvalue down by $s$.
    Hence,
    \begin{equation*}
        \spec R_E=\{b-s,b-s,2b-s\}
        .
    \end{equation*}
    Because $\delta\nmid E$, Corollary~\ref{cor:detG-supp} forces a zero eigenvalue.
    The bound $s\le b$ leaves only $s=b$.
    Thus, $\deg E=b$ and $\spec R_E=(b,0,0)$.
    The pair $(b,0)$ satisfies the equations that determine the face type of $E$.
    Lemma~\ref{lem:facetype-supp} therefore gives face type $(b,0)$.
    A nonzero restriction of this type is nonzero at a suitable rank-one matrix, so $p_E\not\equiv0$.
    Proposition~\ref{prop:rankone-supp} gives $E=\gamma(u^\top\Sigma u)^b$.
    Applying the adjugate again yields $D=\gamma'\Delta_{u^\perp}^b$.
\end{proof}

Mixed face types are paired by adjugate duality.
This symmetry is used in both the kernel analysis and the boundary-rigidity argument.

\begin{lemma}[Mixed duality]
\label{lem:mixed-duality-supp}
    If $D$ is determinant-free of mixed type $(a,b)$, then $E=D^\dagger/\delta^b$ is determinant-free of type $(b,a)$ and $E^\dagger=\delta^aD$.
\end{lemma}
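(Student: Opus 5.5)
The plan mirrors the pure plane-minor branch: compute the relative spectrum of $D^\dagger$, strip the largest power of $\delta$, and read off the spectrum of the residue. By Corollary~\ref{cor:detG-supp}, $\spec R_D=\{a+b,b,0\}$ on the cone. By the formula in the proof of Lemma~\ref{lem:adj-supp}, $R_{D^\dagger}(\Sigma)$ is similar to $KI-R_D(\Sigma^{-1})$ with $K=a+2b$, so $\spec R_{D^\dagger}=\{b,a+b,a+2b\}$.

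First I check that $\delta^b$ divides $D^\dagger$. Since $D$ has mixed type with $p_D\equiv0$ (case (iii) of Theorem~\ref{thm:d3-reduction}), Lemma~\ref{lem:realdiv-supp} already gives $\delta\mid D^\dagger$. To get the full power $b$, write $D^\dagger=\delta^sE$ with $\delta\nmid E$. Determinant stripping (Lemma~\ref{lem:strip-supp}, through the relation $L_D=\Sigma^{-1}+L_E$) lowers every relative eigenvalue by $s$, so
\[
\spec R_E=\{b-s,\ a+b-s,\ a+2b-s\}.
\]
Since $\delta\nmid E$, Corollary~\ref{cor:detG-supp} forces a zero eigenvalue, so $s\in\{b,a+b,a+2b\}$. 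The spectrum of a determinant-free solution consists of nonnegative integers (the face-type values $\{a'+b',b',0\}$), so all three shifted eigenvalues must be nonnegative, and this forces $s=b$. Hence $\spec R_E=\{0,a,a+b\}$, $E$ is determinant-free and self-normalizing, and $\deg E=2(a+2b)-3b=2a+b$.

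Next I identify the face type of $E$. A determinant-free type $(a',b')$ has spectrum $\{a'+b',b',0\}$, which matches $\{a+b,a,0\}$ exactly when $b'=a$ and $a'=b$. The degree check is consistent, since $a'+2b'=b+2a=\deg E$. Lemma~\ref{lem:facetype-supp} says the type is determined by the degree and the constant, and the constant here is twice the sum of squared eigenvalues, which agrees. So $E$ has type $(b,a)$.

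Finally, for the involution I compute $E^\dagger$. Using $\adjm(\adjm\Sigma)=\delta\Sigma$ and $\det(\adjm\Sigma)=\delta^2$, we get $E^\dagger=D^{\dagger\dagger}/\delta^{2b}$. By Lemma~\ref{lem:adj-supp}, $D^{\dagger\dagger}=\delta^{K}D$, so $E^\dagger=\delta^{a+2b-2b}D=\delta^aD$.

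The main obstacle is the step that rules out $s=a+b$ and $s=a+2b$. If the nonnegativity of eigenvalues of determinant-free solutions is not accepted as given, I would instead use the bound $2s\le\deg D^\dagger$ coming from the involution, together with the fact that $D$ is not divisible by $\delta$. Pushing $\delta^s$ through a second adjugate gives $\delta^{2s}\mid\delta^{K}D$ up to the factor from $E^\dagger$, and this yields $s\le b$ as in the pure-branch argument.
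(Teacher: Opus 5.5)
Your proof is correct and follows the paper's skeleton. It computes the spectrum $\{b,a+b,a+2b\}$ of $R_{D^\dagger}$, strips the exact power $\delta^s$, and uses the forced zero eigenvalue to get $s\in\{b,a+b,a+2b\}$. It then reads off type $(b,a)$ from the stripped spectrum $\{a+b,a,0\}$ and degree $2a+b$, and ends with $E^\dagger=(D^\dagger)^\dagger/\delta^{2b}=\delta^{K-2b}D=\delta^aD$.

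The one real difference is how you exclude $s=a+b$ and $s=a+2b$. You apply Lemma~\ref{lem:facetype-supp} and the second half of Corollary~\ref{cor:detG-supp} to the residue $E$ itself. This is legitimate: $E$ is homogeneous, determinant-free and self-normalizing by Lemmas~\ref{lem:adj-supp} and~\ref{lem:strip-supp}, and neither result depends on mixed duality. So its relative spectrum is $\{a'+b',b',0\}$ with $a',b'\ge0$, and $b-s\ge0$ gives $s=b$ at once, without using $a\ge1$ at that step.

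The paper instead uses only divisibility. From $\delta^{2s}E^\dagger=(D^\dagger)^\dagger=\delta^KD$ and $\delta\nmid D$ it gets $2s\le K$. Then $a+b>K/2$, which needs $a\ge1$, rules out the two larger values. The face-type lemma is invoked only afterwards, to name the type. Both routes are sound; yours reaches $s\le b$ in one stroke, while the paper's settles $s$ by pure arithmetic before any face-type information about $E$ is used.

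Your fallback paragraph is misstated, though you do not need it. The involution bound is $2s\le K=\deg D$, not $2s\le\deg D^\dagger=2K$. On its own it gives only $s\le b+a/2$, not $s\le b$; it forces $s=b$ only together with $s\in\{b,a+b,a+2b\}$ and $a\ge1$, which is precisely the paper's argument. Finally, your appeal to Lemma~\ref{lem:realdiv-supp} for $\delta\mid D^\dagger$ is redundant, since the zero-eigenvalue constraint already gives $s\ge b\ge1$.
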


\begin{proof}[Proof of Lemma~\ref{lem:mixed-duality-supp}]
    The spectrum of $D$ is $(a+b,b,0)$, so that of $D^\dagger$ is $(a+2b,a+b,b)$.
    As in the pure branch, $\delta\mid D^\dagger$.
    Let $s$ be its exact multiplicity.
    After stripping, the spectrum is $(a+2b-s,a+b-s,b-s)$.
    The adjugate involution gives $2s\le K=a+2b$.
    Corollary~\ref{cor:detG-supp} requires a zero eigenvalue, so $s\in\{b,a+b,a+2b\}$.
    Since $a\ge1$, $a+b>(a+2b)/2=K/2$.
    The involution bound therefore excludes $s=a+b$ and $s=a+2b$.
    Hence, $s=b$.
    The stripped spectrum is $(a+b,a,0)$ and the degree is $2a+b$.
    The pair $(b,a)$ satisfies the two equations that determine the face type.
    Lemma~\ref{lem:facetype-supp} gives the stated type, and the involution identity gives the final formula.
\end{proof}

The next algebraic lemma turns a rank-one factorization over the fraction field into a polynomial factorization.
It is applied to the adjugate of the singular symmetric gradient.

\begin{lemma}[Primitive rank-one factorization over a UFD]
\label{lem:ufd-rankone}
    Let $R$ be a unique factorization domain and let $U\in R^{3\times3}$ be a nonzero matrix whose $2\times2$ minors all vanish.
    Then $U=vr^\top$ for vectors $v,r\in R^3$, where $v$ is primitive.
    If $U$ is symmetric, then $U=\psi vv^\top$ for some $\psi\in R$.
    If the entries of $U$ are homogeneous of one common degree, the factors may be chosen homogeneous.
\end{lemma}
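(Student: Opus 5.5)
The plan is to pass to the fraction field $F=\operatorname{Frac}(R)$ and then clear denominators using unique factorization. Since $U\ne0$ and all its $2\times2$ minors vanish, $U$ has rank one over $F$. Choose a nonzero column $u\in R^3$ of $U$. Every other column is an $F$-multiple of $u$: vanishing of the $2\times2$ minors formed from column $u$ and column $j$ says exactly that the two columns are proportional over $F$.

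Next make the column direction primitive. Let $g=\gcd(u_1,u_2,u_3)$ and set $v=u/g\in R^3$, which is primitive. Each column then has the form $U_{\cdot j}=t_jv$ with $t_j\in F$. Write $t_j=p_j/q_j$ in lowest terms. From $q_jU_{\cdot j}=p_jv$, every prime factor of $q_j$ divides $p_jv_i$ for all $i$, hence divides $v_i$ for all $i$, contradicting primitivity. So $q_j$ is a unit and $t_j\in R$. Setting $r=(t_1,t_2,t_3)^\top$ gives $U=vr^\top$ with $v$ primitive. This gcd argument is the only place where the UFD hypothesis is really used, and I expect it to be the main point to write carefully.

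For the symmetric case, $vr^\top=rv^\top$. Since $v\ne0$, pick $i$ with $v_i\ne0$. Comparing column $i$ of both sides gives $r_iv=v_ir$, so $r=(r_i/v_i)v$ over $F$. Then $v_i$ divides $r_iv_k$ for every $k$. Because $v$ is primitive, the same gcd argument shows $r_i/v_i=\psi\in R$, hence $r=\psi v$ and $U=\psi vv^\top$.

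For homogeneity, assume every entry of $U$ is homogeneous of degree $m$. Then $u$ is homogeneous, and the gcd of homogeneous polynomials in a graded polynomial ring can be chosen homogeneous, since factors of homogeneous polynomials are homogeneous. So $v$ is homogeneous. Each $t_j$ is then a quotient of homogeneous polynomials lying in $R$, and is therefore homogeneous; the same holds for $\psi$. Here the UFD $R$ should be taken to be the polynomial ring, as in the intended application to the adjugate of $G_E$. For a general graded UFD I would simply state the homogeneity clause under that grading assumption.
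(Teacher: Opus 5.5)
Your proposal is correct and follows the paper's proof step for step. You take a nonzero column, divide it by the gcd of its entries to get a primitive $v$, and use primitivity to show that the $\operatorname{Frac}(R)$-coefficients of the other columns, and then $\psi$ in the symmetric case, have unit denominators; homogeneity comes from choosing the gcd homogeneous and comparing degrees. Your explicit column comparison $r_iv=v_ir$ and your remark that the homogeneity clause presupposes a graded ring such as $\R[\Sym_3]$ are small clarifications of points the paper leaves implicit.
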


\begin{proof}[Proof of Lemma~\ref{lem:ufd-rankone}]
    Choose a nonzero column $U_{\cdot j_0}$.
    Let $g$ be a greatest common divisor of its entries and put $v=U_{\cdot j_0}/g$.
    Then $v$ is primitive.
    Since all $2\times2$ minors vanish, every other column is proportional to $v$ over $K=\operatorname{Frac}(R)$.
    For each $j$, there is $r_j\in K$ such that $U_{\cdot j}=r_jv$.
    Write $r_j=p/q$ in lowest terms.
    The identity $qU_{ij}=pv_i$ holds for every $i$.
    If an irreducible element divided $q$, it would divide every $v_i$, contrary to primitivity.
    Hence, $q$ is a unit and $r_j\in R$.
    Thus, $U=vr^\top$ with $r\in R^3$.

    If $U$ is symmetric, then $vr^\top=rv^\top$.
    Hence, $r=\psi v$ for some $\psi\in K$.
    The same denominator argument shows that $\psi\in R$.
    When the entries of $U$ are homogeneous, the greatest common divisor may be chosen homogeneous.
    Degree comparison then makes $v$, $r$, and $\psi$ homogeneous.
\end{proof}

For a mixed solution, the adjugate of the symmetric gradient has rank one.
The factorization lemma therefore produces the polynomial kernel vector in Theorem~\ref{thm:d3-reduction}.
It also shows why a vanishing-Hessian argument arises naturally.

\begin{proof}[Proof of the kernel-field assertion in Theorem~\ref{thm:d3-reduction}]
    For a mixed type, $G_D$ has rank two on the positive cone and $\det G_D\equiv0$.
    The identity $\adjm(\adjm G_D)=(\det G_D)G_D=0$ shows that every $2\times2$ minor of $\adjm G_D$ vanishes.
    The matrix $\adjm G_D$ is not identically zero because $G_D$ has rank two on a nonempty open set.
    Apply Lemma~\ref{lem:ufd-rankone} in $R=\R[\Sym_3]$.
    Since $\adjm G_D$ is symmetric and homogeneous, there are a primitive homogeneous vector $\knu$ and a nonzero homogeneous polynomial $\psi$ such that
    \begin{equation*}
        \adjm G_D
        =
        \psi\knu\knu^\top
        .
    \end{equation*}

    The identity $G_D\adjm G_D=0$ gives $\psi(G_D\knu)\knu^\top=0$.
    Choose an index $j$ for which $\knu_j\not\equiv0$.
    Since $R$ is an integral domain, every component of $G_D\knu$ vanishes.
    Hence, $G_D\knu\equiv0$.

    Differentiate this identity in a symmetric direction $H$ and left multiply by $\knu^\top$.
    This gives $\knu^\top(\partial_HG_D)\knu=0$.
    Thus, the rank-one direction $\knu\knu^\top$ lies in the radical of the coordinate Hessian wherever $\knu\ne0$.
    In particular, $\det\Hess D\equiv0$.
\end{proof}

We can now assemble the three-variable reduction.
The proof first strips full determinant factors and then applies the rank-one, pure plane-minor, or mixed analysis according to the boundary restriction.

\begin{proof}[Proof of Theorem~\ref{thm:d3-reduction}]
    Apply Lemma~\ref{lem:strip-supp} repeatedly to write the original polynomial as $D=\delta^eE$, where $\delta\nmid E$.
    If $p_E\not\equiv0$, Lemma~\ref{lem:dichotomy-supp} and Proposition~\ref{prop:rankone-supp} give case~(i).
    Suppose that $p_E\equiv0$.
    Lemma~\ref{lem:facetype-supp} supplies a common face type $(a,b)$.
    Evaluating a nonzero face restriction at a rank-one matrix shows that $b\ge1$.
    If $a=0$, the pure plane-minor argument above gives case~(ii).
    If $a\ge1$, the type is mixed.
    Proposition~\ref{prop:spectrum-supp}, Corollary~\ref{cor:detG-supp}, and the kernel-field argument give every assertion in case~(iii).
    Since a mixed type has degree $a+2b\ge3$, no mixed case occurs in degrees at most two.
\end{proof}

Theorem~\ref{thm:d3-reduction} leaves one geometric question.
The next lemma shows that constancy of the kernel line is exactly the missing condition for the mixed flag form.

\begin{lemma}[Constant kernel and the mixed flag form]
\label{lem:constant-kernel}
    Let $E$ be determinant-free and of mixed face type $(a,b)$.
    The following statements are equivalent.
    \begin{enumerate}
        \item[(i)] The projective kernel line is constant on a nonempty open subset of $\Smat^3_{++}$.
        \item[(ii)] A fixed nonzero vector $u_0$ satisfies $G_E(\Sigma)u_0\equiv0$.
        \item[(iii)] There are a nonzero constant $\gamma$, a vector $v\in u_0^\perp$, and a full-row-rank matrix $M$ with $\rowspace(M)=u_0^\perp$ such that
        \begin{equation*}
            E(\Sigma)
            =
            \gamma(v^\top\Sigma v)^a\det(M\Sigma M^\top)^b
            .
        \end{equation*}
    \end{enumerate}
    Every mixed flag power has this constant kernel line.
\end{lemma}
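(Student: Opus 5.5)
We will prove the cycle (iii)$\Rightarrow$(ii)$\Rightarrow$(i)$\Rightarrow$(ii)$\Rightarrow$(iii); the final assertion about mixed flag powers then follows from (iii)$\Rightarrow$(ii).

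\textbf{(iii)$\Rightarrow$(ii).} By the congruence invariance of Lemma~\ref{lem:trace-main}, we may take $u_0=e_3$ and $M=(e_1,e_2)^\top$. Then $E$ depends only on the block $\Sigma_{1:2}$. Hence $G_E(\Sigma)=\diag\{G(\Sigma_{1:2}),0\}$, exactly as in the proof of Lemma~\ref{lem:ambient}, and so $G_Eu_0\equiv0$. The same computation shows that every mixed flag power $\gamma\Delta_{V_1}^a\Delta_{V_2}^b$ with $V_1\subset V_2$ has the constant kernel line $V_2^\perp$.

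\textbf{(ii)$\Rightarrow$(i).} By Corollary~\ref{cor:detG-supp}, $G_E$ has rank exactly two on the positive cone. Therefore $u_0$ spans its kernel at every point, and the projective kernel line is constant everywhere.

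\textbf{(i)$\Rightarrow$(ii).} Suppose $[\knu(\Sigma)]=[u_0]$ on an open set $\mathcal{U}$. Then every $2\times2$ minor of the matrix $(\knu,u_0)$ vanishes on $\mathcal{U}$, so these minors vanish identically as polynomials. Since $\knu$ is primitive, the argument of Lemma~\ref{lem:ufd-rankone} shows that $u_0=h\knu$ with $h$ a polynomial, and hence $h$ is a constant. Thus $\knu$ is constant up to scale, and $G_E\knu\equiv0$ from Theorem~\ref{thm:d3-reduction}(iii) gives $G_Eu_0\equiv0$.

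\textbf{(ii)$\Rightarrow$(iii).} This is the substantive step. Again normalize $u_0=e_3$. The identity $G_Ee_3\equiv0$ says that $\partial E/\partial\sigma_{i3}=0$ for $i=1,2,3$. So $E$ is a polynomial $\bar E$ in $\Sigma_{1:2}$ alone, and $\bar E\not\equiv0$. By Lemma~\ref{lem:ambient}, $\bar E$ is exactly self-normalizing on $\Sym_2$ with the same constant $c$. Theorem~\ref{thm:d2} then gives
\[
\bar E(S)=\gamma(w^\top Sw)^{a'}(\det S)^{b'},
\]
with $(a'+b')^2+b'^2=c/2$ and $a'+2b'=K$. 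These are the same two equations that fix the face type, so Lemma~\ref{lem:facetype-supp} (uniqueness of the admissible root) gives $(a',b')=(a,b)$. Pulling back, $w$ becomes a vector $v\in u_0^\perp$ and $\det S$ becomes $\det(M\Sigma M^\top)$, which is the displayed form.

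The only delicate point is the primitivity argument in (i)$\Rightarrow$(ii). Otherwise (ii)$\Rightarrow$(iii) reduces cleanly to the complete two-dimensional converse, and the remaining implications are direct.
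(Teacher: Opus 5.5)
Your proposal is correct and follows essentially the same route as the paper. For (ii)$\Rightarrow$(iii) you normalize $u_0=e_3$ and reduce to Theorem~\ref{thm:d2} with face-type uniqueness from Lemma~\ref{lem:facetype-supp}, and for the converse and the final assertion you use that the gradient annihilates $u_0$ and has rank two on the cone. The only difference is in (i)$\Rightarrow$(ii): your detour through the primitive vector $\knu$ is valid but unnecessary, since the paper simply observes that $G_E(\Sigma)u_0$ vanishes on the given open set and hence identically, because its entries are polynomials.
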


\begin{proof}[Proof of Lemma~\ref{lem:constant-kernel}]
    If the projective kernel line is constant on a nonempty open set, choose a fixed representative $u_0\ne0$.
    Then $G_E(\Sigma)u_0=0$ on that open set and hence identically, because its entries are polynomials.
    This proves (i)$\Rightarrow$(ii), and the reverse implication is immediate.

    Assume (ii).
    After an orthogonal congruence, take $u_0=e_3$.
    Under the symmetric-gradient convention,
    \begin{equation*}
        (G_Ee_3)_1=\frac{1}{2}\partial_{\sigma_{13}}E
        ,
        \quad
        (G_Ee_3)_2=\frac{1}{2}\partial_{\sigma_{23}}E
        ,
        \quad
        (G_Ee_3)_3=\partial_{\sigma_{33}}E
        .
    \end{equation*}
    These derivatives vanish identically, so $E$ depends only on the leading $2\times2$ block $S$.
    Write $E(\Sigma)=\bar E(S)$.
    The block form of $G_E\Sigma$ gives
    \begin{equation*}
        \tr\{(G_E(\Sigma)\Sigma)^2\}=\tr\{(G_{\bar E}(S)S)^2\}
        .
    \end{equation*}
    Thus, $\bar E$ is exactly self-normalizing on $\Sym_2$ with the same constant.
    Theorem~\ref{thm:d2} gives $\bar E(S)=\gamma(w^\top Sw)^{a'}(\det S)^{b'}$.
    Face-type uniqueness in Lemma~\ref{lem:facetype-supp} gives $(a',b')=(a,b)$.
    Lifting $w$ to $v\in e_3^\perp$ proves (iii).

    Conversely, both gradient factors of a mixed flag power annihilate $u_0$.
    The product rule therefore gives $G_E(\Sigma)u_0=0$ identically.
    Since $G_E$ has rank two on $\Smat^3_{++}$, its kernel is exactly $\Span(u_0)$ there.
\end{proof}

The cofactor identity provides a second constraint on the moving kernel.
It couples the kernel vector to the determinant boundary without assuming that the kernel line is constant.

\begin{proposition}[Cofactor syzygy]
\label{prop:cofactor}
    Let $E$ be a determinant-free mixed solution of type $(a,b)$, and write $\adjm(G_E)=\psi\knu\knu^\top$.
    Then
    \begin{equation}
    \label{eq:cofactor}
        \psi(\Sigma)\knu(\Sigma)^\top\adjm(\Sigma)\knu(\Sigma)
        =
        b(a+b)E(\Sigma)^2
        .
    \end{equation}
\end{proposition}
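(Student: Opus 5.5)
The plan is to compute the second elementary symmetric function of the relative gradient in two ways and compare them. By Corollary~\ref{cor:detG-supp}, for $\Sigma\succ0$ the matrix $R_E(\Sigma)=G_E\Sigma/E$ has spectrum $\{a+b,b,0\}$. Hence its second elementary symmetric function is constant:
\begin{equation*}
    e_2\{R_E(\Sigma)\}=(a+b)b .
\end{equation*}
For any $3\times3$ matrix $A$, $e_2(A)=\tr\{\adjm(A)\}$. Applying this to $A=G_E\Sigma$ and using multiplicativity of the adjugate, $\adjm(G_E\Sigma)=\adjm(\Sigma)\adjm(G_E)$, gives
\begin{equation*}
    \tr\{\adjm(\Sigma)\adjm(G_E)\}=e_2(G_E\Sigma)=b(a+b)E^2
\end{equation*}
on the positive cone. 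The factor $E^2$ appears because $e_2$ is homogeneous of degree two in the matrix entries.

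Next I substitute the rank-one factorization $\adjm(G_E)=\psi\knu\knu^\top$ supplied by Lemma~\ref{lem:ufd-rankone} in the proof of the kernel-field assertion of Theorem~\ref{thm:d3-reduction}. The trace then becomes
\begin{equation*}
    \psi\,\tr\{\adjm(\Sigma)\knu\knu^\top\}=\psi\,\knu^\top\adjm(\Sigma)\knu .
\end{equation*}
This is exactly the left-hand side of \eqref{eq:cofactor}.

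Both sides are polynomials in $\Sigma$, and they agree on the nonempty open cone $\Smat^3_{++}$. The identity therefore holds in $\R[\Sym_3]$.

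The only step needing care is the identification $e_2(G_E\Sigma)=E^2e_2(R_E)$ together with the constancy of $e_2(R_E)$. That constancy is immediate from the fixed spectrum in Corollary~\ref{cor:detG-supp}, so no real obstacle is expected. The argument is essentially the three-dimensional analogue of \eqref{eq:d2det}, with the second symmetric function replacing the determinant.
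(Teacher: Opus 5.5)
Your proposal is correct and is essentially the paper's own proof: both read off $\tr\{\adjm(G_E\Sigma)\}=b(a+b)E^2$ from the fixed spectrum $\{(a+b)E,bE,0\}$ of $G_E\Sigma$, use $\adjm(G_E\Sigma)=\adjm(\Sigma)\adjm(G_E)$, substitute $\adjm(G_E)=\psi\knu\knu^\top$ and take the trace, and then extend the identity from the positive cone by polynomial continuation. One wording point: the adjugate is anti-multiplicative, $\adjm(BC)=\adjm(C)\adjm(B)$, not multiplicative, but you wrote the factors in the correct order, so the argument is unaffected.
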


\begin{proof}[Proof of Proposition~\ref{prop:cofactor}]
    Let $A=G_E\Sigma$.
    Its eigenvalues are $(a+b)E,bE,0$, so $\tr\{\adjm(A)\}=b(a+b)E^2$.
    For square matrices, $\adjm(BC)=\adjm(C)\adjm(B)$.
    Therefore,
    \begin{equation*}
        \adjm(G_E\Sigma)
        =
        \adjm(\Sigma)\adjm(G_E)
        =
        \psi\adjm(\Sigma)\knu\knu^\top
        .
    \end{equation*}
    Taking the trace proves \eqref{eq:cofactor} on the positive cone and hence as a polynomial identity.
\end{proof}

Lemma~\ref{lem:constant-kernel} identifies the exact geometric obstruction.
Proposition~\ref{prop:cofactor} records an algebraic relation that any nonconstant kernel map must satisfy.

\subsection{Boundary rigidity for the mixed branch}
\label{supp:d3-boundary-rigidity}

The constant-kernel criterion is geometric.
The results in this subsection give a separate algebraic closure when the rank-two boundary data already agree with one fixed flag.
The argument does not assume the desired interior factorization.

\begin{definition}[Flag-coherent boundary]
\label{def:boundary-coherent}
    A determinant-free mixed polynomial $D$ of type $(a,b)$ has a flag-coherent boundary if there are nested spaces $V_1\subset V_2$ and a nonzero constant $\gamma$ such that
    \begin{equation}
    \label{eq:boundary-coherent}
        \det(\Sigma)\mid D-\gamma\Delta_{V_1}^a\Delta_{V_2}^b
        .
    \end{equation}
\end{definition}

For $K=a+2b$ and $\alpha=(a+b)^2+b^2$, put
\begin{equation*}
    q_j=K-3j
    ,
    \quad
    \mathcal{W}_j(a,b)=\{bq_j+ap:p=0,\ldots,q_j\}
    .
\end{equation*}
The set $\mathcal{W}_j(a,b)$ is the collection of weights available to a homogeneous correction of degree $q_j$ on the determinant boundary.

\begin{definition}[Boundary nonresonance]
\label{def:nonresonance}
    The type $(a,b)$ is boundary-nonresonant if
    \begin{equation}
    \label{eq:NR}
        \alpha-jK\notin\mathcal{W}_j(a,b)
        \quad
        \{j=1,\ldots,\lfloor K/3\rfloor\}
        .
    \end{equation}
\end{definition}

The next lemma gives two arithmetic forms of this condition.
They are useful for locating the first possible resonance.

\begin{lemma}[Arithmetic characterization of boundary nonresonance]
\label{lem:nonresonance-arithmetic}
    A type $(a,b)$ is resonant if and only if there is an integer $j\in\{1,\ldots,\lfloor K/3\rfloor\}$ such that
    \begin{equation}
    \label{eq:NR-arith}
        a\mid jb
        ,
        \quad
        j(2a+b)\le ab
        .
    \end{equation}
    Equivalently, if $g=\gcd(a,b)$, boundary nonresonance is
    \begin{equation}
    \label{eq:NR-gcd}
        b(g-1)<2a
        .
    \end{equation}
\end{lemma}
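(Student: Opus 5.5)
The plan is to expand the resonance condition \eqref{eq:NR} into an explicit formula for the single candidate weight index $p$. The two arithmetic conditions should then drop out as integrality and range constraints on $p$.

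\textbf{Solving for $p$.} Fix $j\ge1$. Recall $K=a+2b$, $\alpha=(a+b)^2+b^2$ and $q_j=a+2b-3j$. Resonance at index $j$ means there is an integer $p$ with $0\le p\le q_j$ such that
\begin{equation*}
    \alpha-jK=bq_j+ap .
\end{equation*}
Subtracting $bq_j=ab+2b^2-3jb$ from $\alpha-jK=a^2+2ab+2b^2-ja-2jb$ gives
\begin{equation*}
    \alpha-jK-bq_j=a(a+b-j)+jb .
\end{equation*}
Since $a\ge1$, the equation has exactly one rational solution,
\begin{equation*}
    p=a+b-j+\frac{jb}{a}.
\end{equation*}

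\textbf{The three constraints on $p$.}
\begin{itemize}
    \item Integrality of $p$ is equivalent to $a\mid jb$.
    \item The upper bound $p\le q_j$ reads $a+b-j+jb/a\le a+2b-3j$. After multiplying by $a$ and rearranging, this is $j(2a+b)\le ab$.
    \item The lower bound $p\ge0$ is automatic. Indeed, $j\le K/3=(a+2b)/3\le a+b$, so $a+b-j\ge0$, and the term $jb/a$ is nonnegative.
\end{itemize}
This proves the first characterization \eqref{eq:NR-arith}. I will also note that the range restriction $j\le\lfloor K/3\rfloor$ is implied by $j(2a+b)\le ab$. The inequality
\begin{equation*}
    \frac{ab}{2a+b}\le\frac{a+2b}{3}
\end{equation*}
is equivalent to $3ab\le 2a^2+5ab+2b^2$, which always holds. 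So in \eqref{eq:NR-arith} the index can be taken to range over all positive integers. This is what makes the gcd reduction clean.

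\textbf{The gcd form.} Put $g=\gcd(a,b)$. Since $a/g$ and $b/g$ are coprime, $a\mid jb$ holds exactly when $(a/g)\mid j$. The inequality $j(2a+b)\le ab$ is monotone decreasing in $j$. Hence a resonant $j$ exists if and only if the smallest admissible value $j=a/g$ satisfies it, that is,
\begin{equation*}
    \frac{a}{g}(2a+b)\le ab .
\end{equation*}
This is equivalent to $2a+b\le gb$, i.e.\ $2a\le b(g-1)$. Negating it gives the nonresonance condition \eqref{eq:NR-gcd}.

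I expect no real obstacle here. The only points needing care are checking that $p\ge0$ and $j\le\lfloor K/3\rfloor$ impose no additional restriction, and that the existential quantifier over $j$ is correctly reduced to the minimal multiple of $a/g$.
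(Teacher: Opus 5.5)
Your proposal is correct and follows the paper's proof essentially step for step. You solve for the unique candidate $p=a+b-j+jb/a$, read off integrality as $a\mid jb$ and the upper bound as $j(2a+b)\le ab$, and reduce via $a/g\mid j$ to the minimal index using $3ab\le(2a+b)(a+2b)$. The only difference is that you check $j\le\lfloor K/3\rfloor$ for every $j$ satisfying the inequality, whereas the paper checks it only for $j_0=a/g$; this is immaterial.
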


\begin{proof}[Proof of Lemma~\ref{lem:nonresonance-arithmetic}]
    A resonance has the form $\alpha-jK=b(K-3j)+ap$ for an integer $0\le p\le K-3j$.
    Solving for $p$ gives
    \begin{equation*}
        p=a+b-j+\frac{jb}{a}
        .
    \end{equation*}
    For $1\le j\le\lfloor K/3\rfloor$, this quantity is positive.
    It is an integer if and only if $a\mid jb$.
    The upper bound $p\le K-3j$ is equivalent to $j(2a+b)\le ab$.
    This proves \eqref{eq:NR-arith}.

    Write $a=ga_0$ and $b=gb_0$, where $\gcd(a_0,b_0)=1$.
    The divisibility condition $a\mid jb$ is equivalent to $a_0\mid j$.
    Hence, the smallest positive admissible value is $j_0=a/g$.
    Since $j(2a+b)$ increases with $j$, a resonance exists if and only if
    \begin{equation*}
        \frac{a}{g}(2a+b)\le ab
        .
    \end{equation*}
    After division by $a$, this is $b(g-1)\ge2a$.
    If this inequality holds, then $j_0\le ab/(2a+b)$ and
    \begin{equation*}
        3j_0\le\frac{3ab}{2a+b}\le a+2b=K
        ,
    \end{equation*}
    because $3ab\le(a+2b)(2a+b)$.
    Thus, $j_0\le\lfloor K/3\rfloor$ automatically.
    Negating the resonance criterion gives \eqref{eq:NR-gcd}.
\end{proof}

Boundary nonresonance excludes every polynomial correction compatible with the weighted Euler equation on a rank-two face.
It therefore turns flag-coherent boundary data into a unique interior solution.

\begin{proposition}[Boundary-coherent mixed converse]
\label{prop:boundary-rigidity}
    If a determinant-free exactly self-normalizing polynomial of mixed type $(a,b)$ has a flag-coherent boundary and is boundary-nonresonant, then $D=\gamma\Delta_{V_1}^a\Delta_{V_2}^b$.
\end{proposition}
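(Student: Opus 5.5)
The plan mirrors the second half of the proof of Proposition~\ref{prop:rankone-supp}. Put $D_0=\gamma\Delta_{V_1}^a\Delta_{V_2}^b$. By Theorem~\ref{thm:flag} it is exactly self-normalizing with $c=2\alpha$, and its type $(a,b)$ matches that of $D$. Suppose $D\ne D_0$. Flag coherence gives $D=D_0+\delta^jR$ with $j\ge1$, $\delta\nmid R$, and $R$ homogeneous of degree $q_j=K-3j\ge0$, so $1\le j\le\lfloor K/3\rfloor$. After a congruence (Lemma~\ref{lem:trace-main}) we take the coordinate flag $V_1=\Span(e_1)$, $V_2=\Span(e_1,e_2)$, so that $D_0=\gamma\sigma_{11}^a(\sigma_{11}\sigma_{22}-\sigma_{12}^2)^b$.

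Next we expand \eqref{eq:SN} for $D$ and cancel the identity for $D_0$. Write $L_0=G_{D_0}/D_0$. On the cone the gradient of $\delta^jR$ is $\delta^j(jR\Sigma^{-1}+G_R)$, so its product with $\Sigma$ is $\delta^j(jRI+G_R\Sigma)$. The cross term is $4\tr\{G_{D_0}\Sigma\,\delta^j(jRI+G_R\Sigma)\}$. Using Euler's identity $\tr(G_{D_0}\Sigma)=KD_0$, this equals $4\delta^j\{jKD_0R+D_0\tr(L_0\Sigma G_R\Sigma)\}$. The quadratic term is $O(\delta^{2j})$. The right side contributes $2c\,D_0\delta^jR+c\,\delta^{2j}R^2$. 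Dividing by $\delta^j$ and reducing modulo $\delta$ gives, on $\det\Sigma=0$,
\begin{equation*}
    D_0\,\tr(L_0\Sigma G_R\Sigma)
    =
    (\alpha-jK)D_0R
    .
\end{equation*}
Here $\tr(L_0\Sigma G_R\Sigma)$ is the derivative of $R$ along the congruence flow $\Sigma\mapsto e^{tR_0^\top}\Sigma e^{tR_0}$ generated by the relative gradient $R_0=L_0\Sigma$. This is the analogue of the $q^2\partial_q$ computation in Proposition~\ref{prop:rankone-supp}. The product $D_0L_0$ is polynomial, so the relation holds polynomially modulo $\delta$. Dividing by $D_0$ is legitimate because $D_0$ is coprime to $\delta$ and nonzero on a dense part of the boundary.

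The main step is to make this weighted Euler equation explicit on a Zariski-dense chart of the rank-two boundary, in coordinates adapted to the flag. By Theorem~\ref{thm:flag}, $R_0$ is upper triangular with diagonal $(a+b,b,0)$, and its off-diagonal entries are regression coefficients. I would use recursive (Schur/Bartlett-type) coordinates: the conditional variances $v_1=\sigma_{11}$, $v_2$, $v_3$ together with the regression coefficients, on the face $v_3=0$. Along the flow, $v_1$ scales with weight $2(a+b)$ and $v_2$ with weight $2b$, since the generator is $\diag(a+b,b,0)$ plus a nilpotent part. The question is whether the nilpotent part moves the regression coordinates linearly, and I would check this first.

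If so, the boundary restriction $\bar R$ of $R$ decomposes into monomials, and each monomial is an eigenvector of the flow whose weight has the form $bq_j+ap$. Here $p$ counts the powers attached to the $V_1$ direction, after the normalization by factor $2$ is absorbed. The boundary equation then forces every monomial of $\bar R$ to have weight $\alpha-jK$. By boundary nonresonance \eqref{eq:NR}, no such weight lies in $\mathcal{W}_j(a,b)$, so $\bar R\equiv0$ on the boundary. Lemma~\ref{lem:realdiv-supp} then gives $\delta\mid R$, a contradiction, hence $D=D_0$.

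The obstacle I expect is exactly this diagonalization. The nilpotent part might mix the monomials triangularly instead of preserving them. In that case I would order the monomials by $p$ and argue by triangularity that the eigenvalues are still the diagonal weights $bq_j+ap$, so the operator minus $(\alpha-jK)$ stays invertible on degree-$q_j$ boundary polynomials. A further point to check is that the boundary polynomial ring in these coordinates really has $p$ ranging over $0,\dots,q_j$.
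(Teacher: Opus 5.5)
Your proposal is correct and is essentially the paper's proof: the same decomposition $D=D_0+\delta^jR$ with exact $j$, the same boundary equation $\mathrm{d}R_\Sigma(Z_0)=(\alpha-jK)R$ with $Z_0=\Sigma G_{D_0}\Sigma/D_0$, the same $LDL^\top$ (conditional-variance and regression) coordinates, the weights $bq_j+ap$, boundary nonresonance, and the final appeal to Lemma~\ref{lem:realdiv-supp}. The obstacle you flag does not arise: writing $\Sigma=T\diag(r_1,r_2,r_3)T^\top$ gives $Z_0=T\diag\{(a+b)r_1,br_2,0\}T^\top$ (equivalently $R_0^\top T=T\diag(a+b,b,0)$), so the flow fixes the regression coordinates $T$ and only rescales $r_1,r_2$, and homogeneity of $R$ in $(r_1,r_2)$ at fixed $T$ gives exactly $p\in\{0,\ldots,q_j\}$; note also that the derivative along your congruence flow is $2\,\mathrm{d}R_\Sigma(Z_0)$, which is the factor of $2$ you absorb.
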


\begin{proof}[Proof of Proposition~\ref{prop:boundary-rigidity}]
    By congruence, take
    \begin{equation*}
        D_0=\gamma\sigma_{11}^a(\sigma_{11}\sigma_{22}-\sigma_{12}^2)^b
        .
    \end{equation*}
    If $D\ne D_0$, boundary coherence gives $D=D_0+\delta^jR$ for an exact $j\ge1$, where $\delta\nmid R$ and $\deg R=q_j=K-3j$.
    Put $A_0=G_{D_0}\Sigma$ and $B=jRI+G_R\Sigma$.
    Then $G_D\Sigma=A_0+\delta^jB$.
    Since $D$ and $D_0$ have the same constant $\alpha=c/2$, expansion of \eqref{eq:SN}, cancellation, and restriction to $\delta=0$ give
    \begin{equation}
    \label{eq:boundary-PDE-abstract}
        \tr(A_0G_R\Sigma)=(\alpha-jK)D_0R
        .
    \end{equation}
    Write $J_0=G_{D_0}/D_0$ and $Z_0=\Sigma J_0\Sigma$.
    The left-hand side of \eqref{eq:boundary-PDE-abstract} is $D_0\,\mathrm{d}R_\Sigma(Z_0)$.
    Hence, on a dense boundary chart,
    \begin{equation}
    \label{eq:boundary-PDE}
        \mathrm{d}R_\Sigma(Z_0)=(\alpha-jK)R
        .
    \end{equation}

    Use the polynomial $LDL^\top$ parametrization
    \begin{equation*}
        \Sigma=T\operatorname{diag}(r_1,r_2,r_3)T^\top
        ,
        \quad
        T=
        \begin{pmatrix}
        1&0&0\\
        t_{21}&1&0\\
        t_{31}&t_{32}&1
        \end{pmatrix}
        .
    \end{equation*}
    Then $\Delta_1=r_1$, $\Delta_2=r_1r_2$, $\delta=r_1r_2r_3$, and $D_0=\gamma r_1^{a+b}r_2^b$.
    A direct logarithmic-gradient calculation gives
    \begin{equation*}
        Z_0=T\operatorname{diag}\{(a+b)r_1,br_2,0\}T^\top
        .
    \end{equation*}
    At fixed $T$, equation~\eqref{eq:boundary-PDE} on $r_3=0$ becomes
    \begin{equation*}
        \{(a+b)r_1\partial_{r_1}+br_2\partial_{r_2}\}\bar R
        =(\alpha-jK)\bar R
        .
    \end{equation*}
    Since $R$ is homogeneous of degree $q_j$ and $\Sigma$ is linear in $(r_1,r_2,r_3)$, its restriction has the expansion
    \begin{equation*}
        \bar R(r_1,r_2,0,T)
        =
        \sum_{p=0}^{q_j}c_p(T)r_1^pr_2^{q_j-p}
        .
    \end{equation*}
    The monomial indexed by $p$ has weight $bq_j+ap$.
    Boundary nonresonance forces every $c_p(T)$ to vanish.
    Thus, $R$ vanishes on a nonempty open subset of the rank-two stratum.
    Lemma~\ref{lem:realdiv-supp} gives $\delta\mid R$, which contradicts the exact choice of $j$.
    Therefore, $D=D_0$.
\end{proof}

The arithmetic condition is automatic in the lowest mixed degrees.
The first normalized resonance appears only in degree nine.

\begin{corollary}[Low mixed degrees and the resonance locus]
\label{cor:low-mixed}
    After mixed adjugate duality, normalize the type by $a\ge b$.
    A flag-coherent solution is a flag power when $b\le2$, and hence for every mixed degree at most eight.
    The first normalized resonance is $(3,3)$ in degree nine.
    More generally, resonance is equivalent to $b\{\gcd(a,b)-1\}\ge2a$.
\end{corollary}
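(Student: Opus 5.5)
The corollary is essentially arithmetic on top of Proposition~\ref{prop:boundary-rigidity} and Lemma~\ref{lem:nonresonance-arithmetic}. The plan is to establish a duality reduction first, then check the resonance criterion in the stated ranges, and finally locate the first resonance.

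\textbf{Step 1: reduction to $a\ge b$.} By Lemma~\ref{lem:mixed-duality-supp}, a mixed solution $D$ of type $(a,b)$ is paired with $E=D^\dagger/\delta^b$ of type $(b,a)$, and $E^\dagger=\delta^aD$. I will check that flag-coherence transfers along this duality. By Lemma~\ref{lem:strata-supp}, the adjugate sends $\Delta_{V_1}$ and $\Delta_{V_2}$ to $\delta$ times plane and line minors of the dual flag $V_2^\perp\subset V_1^\perp$. Explicitly, $\Delta_{V_2}(\adjm\Sigma)=\delta\,\Delta_{V_1^\perp}$ up to a constant, and $\Delta_{V_1}(\adjm\Sigma)=\Delta_{V_2^\perp}(\Sigma)$ up to a constant. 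Composing the divisibility \eqref{eq:boundary-coherent} with $\adjm$ and dividing by $\delta^b$ then keeps the difference divisible by $\delta$. This last point needs care, because $\delta\circ\adjm=\delta^2$. It follows that a flag-power conclusion for one member of the pair yields one for the other. The resonance criterion \eqref{eq:NR-gcd} is not symmetric, so the normalization matters: it suffices that at least one of $(a,b),(b,a)$ is nonresonant.

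\textbf{Step 2: nonresonance for $b\le2$.} With $a\ge b$ and $g=\gcd(a,b)\le b\le2$, we get $b(g-1)\le b\le 2\le 2a$. Equality would require $g=2$, $b=2$ and $a=1$, which contradicts $a\ge b$. Hence $b(g-1)<2a$ and \eqref{eq:NR-gcd} holds, so Proposition~\ref{prop:boundary-rigidity} applies. For the degree bound, a mixed type with $a\ge b\ge3$ has $K=a+2b\ge9$. Every type of degree at most eight therefore has $b\le2$ after normalization.

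\textbf{Step 3: the first resonance and the general criterion.} For $(3,3)$ we have $g=3$ and $b(g-1)=6=2a$, so the type is resonant. In degree nine, the normalized types with $b\ge3$ are only $(3,3)$, and every other normalized type has $b\le2$ and is nonresonant by Step~2. The general resonance statement is exactly the negation of \eqref{eq:NR-gcd} from Lemma~\ref{lem:nonresonance-arithmetic}.

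The main obstacle is the transfer of flag-coherence under duality in Step~1. The other steps are short arithmetic.
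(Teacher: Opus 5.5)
Your Steps~2 and~3 are correct and match the paper's proof in substance. The paper checks nonresonance for $b\le2$ through the $j$-form of Lemma~\ref{lem:nonresonance-arithmetic}: the inequality $j(2a+b)\le ab$ already fails at $j=1$, for every $a$. You use the equivalent gcd form \eqref{eq:NR-gcd} instead. The count $a\ge b\ge3\Rightarrow K\ge9$ and the check at $(3,3)$ are the same in both.

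The trouble is Step~1, which you call the main obstacle.

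\textbf{It is not needed.} The corollary, like Theorem~\ref{thm:conditional-d3}, places the flag-coherence hypothesis on the \emph{normalized} member of the dual pair. Proposition~\ref{prop:boundary-rigidity} is therefore applied to that member directly. The only duality fact required is the easy one used in the proof of Theorem~\ref{thm:conditional-d3}: if the normalized member $E$ is a flag power, then so is $D=E^\dagger/\delta^a$, by Lemma~\ref{lem:mixed-duality-supp}.

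\textbf{As written, Step~1 is wrong.} Your adjugate formulas are swapped and fail a degree count: $\Delta_{V_1}\circ\adjm$ has degree two, while $\Delta_{V_2^\perp}$ has degree one. The cofactor and Jacobi complementary-minor identities give $\Delta_{V_1}\circ\adjm\propto\Delta_{V_1^\perp}$ and $\Delta_{V_2}\circ\adjm\propto\delta\,\Delta_{V_2^\perp}$. This is not what Lemma~\ref{lem:strata-supp} states.

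\textbf{Coherence does not transfer.} Write $D-\gamma F=\delta^jR$ with $F=\Delta_{V_1}^a\Delta_{V_2}^b$. Then $E-\gamma'F'=\delta^{2j-b}R^\dagger$, where $F'=\Delta_{V_2^\perp}^b\Delta_{V_1^\perp}^a$ and $\gamma'$ absorbs basis constants. When $2j\le b$ (for instance $j=1$, $b\ge2$), you would need $\delta^{b-2j+1}\mid R^\dagger$, but polynomiality of $E-\gamma' F'$ only gives $\delta^{b-2j}\mid R^\dagger$. The underlying reason is Lemma~\ref{lem:strata-supp}: the rank-two boundary values of $E$ record how $D$ behaves near the rank-one cone. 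Flag-coherence of $D$ is a condition only on its rank-two values, so it does not control this.

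Consequently, your claim that it suffices for one of $(a,b)$, $(b,a)$ to be nonresonant is unproven when coherence is known only for a resonant unnormalized member. An example is $(2,4)$, whose dual $(4,2)$ is nonresonant.

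\textbf{The fix} is to drop Step~1 and read the hypothesis as the paper does. Note also that the degree-eight claim needs no duality at all. Resonance forces $\gcd(a,b)\ge2$, and the lowest-degree resonant types in either orientation are $(3,3)$ and $(2,4)$, of degrees nine and ten.
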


\begin{proof}[Proof of Corollary~\ref{cor:low-mixed}]
    A resonance requires $j(2a+b)\le ab$.
    If $b=1$ or $b=2$, this fails for $j=1$ and therefore for every larger $j$.
    If $a\ge b\ge3$, then $K=a+2b\ge9$.
    At $(3,3)$, the choice $j=1$ gives equality and satisfies $a\mid jb$.
    The greatest-common-divisor characterization follows from Lemma~\ref{lem:nonresonance-arithmetic}.
\end{proof}

The preceding proposition gives a conditional closure of the three-variable converse that is independent of kernel-line constancy.
It applies whenever the face factors glue to one fixed flag and the resulting type is nonresonant.

\begin{theorem}[Conditional characterization in dimension three]
\label{thm:conditional-d3}
    Suppose that every determinant-free mixed residue can, after mixed adjugate duality if necessary, be chosen with a flag-coherent boundary and a boundary-nonresonant type.
    On this class, the exactly self-normalizing polynomials are precisely the flag powers.
\end{theorem}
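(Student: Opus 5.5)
Both inclusions will be assembled from results already proved; no new analytic input is needed. For the easy direction, every flag power on $\Sym_3$ is exactly self-normalizing by Theorem~\ref{thm:flag}(ii). So it remains to show that every exactly self-normalizing $D$ in the stated class is a flag power.

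Let $D$ be homogeneous, nonconstant and exactly self-normalizing on $\Sym_3$. First I strip determinants with Lemma~\ref{lem:strip-supp} and write $D=\delta^eE$ with $\delta\nmid E$. I then apply Theorem~\ref{thm:d3-reduction} to $E$ and split into its three cases.

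\begin{itemize}
\item In case (i), $E=\gamma(v^\top\Sigma v)^{\deg E}=\gamma\Delta_{\Span(v)}^{\deg E}$.
\item In case (ii), $E=\gamma\Delta_W^b$ for a plane $W$.
\item In case (iii), $E$ is a determinant-free mixed residue of type $(a,b)$.
\end{itemize}

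In cases (i) and (ii), $D=\gamma\Delta_V^{m}(\det\Sigma)^e$, where $V$ is a line or a plane. Together with $\R^3$ this is a flag power for the flag $V\subset\R^3$. If $e=0$ it is a flag power for the single subspace $V$.

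For the mixed case I invoke the hypothesis. Possibly after replacing $E$ by its mixed adjugate dual $E'=E^\dagger/\delta^b$ from Lemma~\ref{lem:mixed-duality-supp}, the residue has a flag-coherent boundary and a boundary-nonresonant type. If no duality is needed, Proposition~\ref{prop:boundary-rigidity} gives $E=\gamma\Delta_{V_1}^a\Delta_{V_2}^b$ directly.

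If duality is needed, the proposition applies to $E'$, which has type $(b,a)$, and gives $E'=\gamma\Delta_{V_1}^b\Delta_{V_2}^a$. Lemma~\ref{lem:mixed-duality-supp} then yields $\delta^aE=(E')^\dagger$. To compute $(E')^\dagger$ I use $\Delta_{V}\circ\adjm$, evaluated on the positive cone via $\adjm\Sigma=\delta\Sigma^{-1}$:
\begin{itemize}
\item for a line $V_1=\Span(v)$, $v^\top\adjm(\Sigma)v$ equals, up to a constant, $\Delta_{v^\perp}(\Sigma)$;
\item for a plane $V_2=w^\perp$, the $2\times2$ minor of $\adjm\Sigma$ on $V_2$ equals, up to a constant, $\delta\cdot(w^\top\Sigma w)$, by Jacobi's complementary-minor identity.
\end{itemize}
It follows that $(E')^\dagger=\gamma'\,\Delta_{v^\perp}^b\,\delta^a(w^\top\Sigma w)^a$. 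Cancelling $\delta^a$ gives $E=\gamma'(w^\top\Sigma w)^a\Delta_{v^\perp}^b$. The nesting $V_1\subset V_2$ means $v\perp w$, which is exactly $\Span(w)\subset v^\perp$, so duality reverses the flag and preserves nesting. Hence $E$ is again a flag power with exponents $(a,b)$.

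Finally I reinsert $\delta^e$: multiplying by $\Delta_{\R^3}^e$ keeps $D$ a flag power, since $\R^3$ contains every subspace in the flag. The step that needs the most care is the adjugate computation of plane minors and the check that nesting is reversed but preserved. The rest is direct citation of Theorem~\ref{thm:d3-reduction}, Proposition~\ref{prop:boundary-rigidity} and Lemma~\ref{lem:mixed-duality-supp}.
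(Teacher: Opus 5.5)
Your proposal is correct and follows the paper's proof essentially step for step: strip powers of $\det\Sigma$, apply Theorem~\ref{thm:d3-reduction} to the residue, close the mixed branch with Proposition~\ref{prop:boundary-rigidity} (after mixed duality if needed), and restore the determinant power as the full space in the flag. Your explicit adjugate computation of the line and plane minors spells out the paper's one-line remark that mixed duality returns the flag form, and it correctly shows that the flag is reversed but remains nested.
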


\begin{proof}[Proof of Theorem~\ref{thm:conditional-d3}]
    Strip determinant powers and apply Theorem~\ref{thm:d3-reduction} to the residue.
    The rank-one and pure plane-minor cases are flag powers.
    In the mixed case, apply the assumed adjugate normalization, boundary coherence, and Proposition~\ref{prop:boundary-rigidity}.
    Mixed duality returns the flag form to the original residue.
    Restoring determinant powers adds the full space to the flag.
\end{proof}

The conditional theorem isolates two separate issues in the mixed branch.
The first is geometric gluing of the face directions.
The second is an explicit arithmetic resonance that begins only at higher degree.

\subsection{A conditional converse in arbitrary dimension}
\label{supp:recursive-flag-converse}

The three-variable spectral argument does not extend directly to higher dimensions.
A converse is nevertheless available once the relative gradients preserve one fixed recursive ordering.
The next proposition shows that this invariant flag determines the polynomial uniquely.

\begin{proposition}[Recursive-flag converse]
\label{prop:recursive-flag}
    Let $D$ be homogeneous, nonconstant and exactly self-normalizing on $\Sym_d$.
    Suppose that a complete flag $0=F_0\subset F_1\subset\cdots\subset F_d=\R^d$ is preserved by $R_D(\Sigma)$ for every $\Sigma\succ0$.
    Suppose also that the scalar induced on $F_i/F_{i-1}$ is a constant $m_i$.
    After a congruence,
    \begin{equation*}
        D(\Sigma)
        =
        \gamma\prod_{r=1}^d\det(\Sigma_{1:r})^{e_r}
        ,
        \quad
        e_r=m_r-m_{r+1}\in\mathbb{Z}_{\ge0}
        ,
        \quad
        m_{d+1}=0
        .
    \end{equation*}
    Conversely, every flag power has such a fixed recursive flag.
\end{proposition}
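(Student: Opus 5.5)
After a congruence (Lemma~\ref{lem:trace-main}) I will take the invariant flag to be the coordinate flag, $F_i=\Span(e_1,\ldots,e_i)$. Then $R_D(\Sigma)=G_D(\Sigma)\Sigma/D(\Sigma)$ is upper triangular with constant diagonal $(m_1,\ldots,m_d)$ for every $\Sigma\succ0$. Here I use that a matrix preserving $F_i$ acts on column vectors, so invariance means upper triangularity in this basis. If the convention makes the transpose act, the statement changes only by $\Sigma\mapsto$ reversal, and I would fix this first.

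The plan is to pass to Cholesky-type coordinates $\Sigma=T\diag(r_1,\ldots,r_d)T^\top$ with $T$ unit lower triangular, as in the proof of Proposition~\ref{prop:boundary-rigidity}, so that $\det(\Sigma_{1:r})=r_1\cdots r_r$. I will compute $\partial_{r_i}\log D$ and $\partial_{t_{kl}}\log D$ via $\mathrm{d}\log D_\Sigma(H)=\tr\{R_D(\Sigma)\Sigma^{-1}H\}$.

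For $H=\partial_{r_i}\Sigma=Te_ie_i^\top T^\top$, I get $\partial_{r_i}\log D=e_i^\top T^\top R_D\Sigma^{-1}Te_i=r_i^{-1}e_i^\top T^\top R_DT^{-\top}e_i$. Since $T^\top$ and $T^{-\top}$ are unit upper triangular, the conjugate $T^\top R_DT^{-\top}$ is upper triangular with the same diagonal. Hence $\partial_{r_i}\log D=m_i/r_i$.

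For $H=\partial_{t_{kl}}\Sigma$ with $k>l$, I have $H=E_{kl}\Lambda T^\top+T\Lambda E_{lk}$, where $\Lambda=\diag(r)$. This gives $\partial_{t_{kl}}\log D=2\,\tr\{T^\top R_DT^{-\top}\,T^{-1}E_{kl}T\}$ after cyclic manipulation. The matrix $T^{-1}E_{kl}T$ is strictly lower triangular: $E_{kl}$ is strictly lower triangular and conjugation by unit lower triangular matrices preserves this. The trace of an upper triangular matrix times a strictly lower triangular one vanishes, so $\partial_{t_{kl}}\log D=0$.

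Together these show that $\log|D|=\sum_i m_i\log r_i+\text{const}$ on the open cone. Exponentiating gives $D=\gamma\prod_i r_i^{m_i}=\gamma\prod_r\det(\Sigma_{1:r})^{m_r-m_{r+1}}$, at least formally with real exponents.

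The main obstacle is integrality and nonnegativity of $e_r=m_r-m_{r+1}$. I expect to settle it as follows.

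\textbf{Step 1: the $e_r$ are integers.} The function $\prod_r\Delta_r^{e_r}$ equals the polynomial $D/\gamma$ on an open set. The leading minors $\Delta_r$ are distinct primes (Lemma~\ref{lem:symdet-irreducible}). Group negative exponents to one side to get $D\cdot\prod_{e_r<0}\Delta_r^{-e_r}=\gamma\prod_{e_r>0}\Delta_r^{e_r}$ as real-power identities. Restricting to lines along which the $r_i$ vary monomially, for example diagonal $\Sigma$, shows that $\prod r_i^{m_i}$ is a polynomial in the $r_i$. This forces every $m_i\in\mathbb{Z}_{\ge0}$, and hence $e_r\in\mathbb{Z}$.

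\textbf{Step 2: the $e_r$ are nonnegative.} Unique factorization applied to the identity from Step 1 then gives $e_r\ge0$. Indeed, a negative $e_r$ would make $\Delta_r$ divide $D\cdot(\cdots)$ without dividing the right-hand side, or would make $D$ a nonpolynomial rational function.

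The converse direction is exactly the computation in the proof of Theorem~\ref{thm:flag}(i): the relative gradient of a flag power is upper triangular with diagonal $(m_1,\ldots,m_{a_k},0,\ldots,0)$, so it preserves the coordinate flag with constant quotient weights.
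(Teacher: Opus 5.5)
Your proposal is correct and follows essentially the paper's route. You reduce to the coordinate flag, conjugate $R_D$ by a triangular factor of $\Sigma$ to get an upper-triangular matrix with diagonal $(m_1,\ldots,m_d)$, integrate to $D=\gamma\prod_i r_i^{m_i}$, and then get integrality from the diagonal restriction and $e_r\ge0$ from unique factorization of the leading minors. The paper only packages the integration differently, through the Cholesky factor $L$ and the observation that $L^\top(G_D/D)L$ is symmetric and upper triangular, hence diagonal, whereas you differentiate separately in the $r_i$ and $t_{kl}$ coordinates.

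One small slip: with $\Lambda=\diag(r_1,\ldots,r_d)$, the $t_{kl}$-derivative is $2\tr\{T^\top R_DT^{-\top}\,\Lambda^{-1}T^{-1}E_{kl}\Lambda\}$ rather than $2\tr\{T^\top R_DT^{-\top}\,T^{-1}E_{kl}T\}$. Since $\Lambda^{-1}T^{-1}E_{kl}\Lambda$ is still strictly lower triangular, your conclusion $\partial_{t_{kl}}\log|D|=0$ stands.
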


\begin{proof}[Proof of Proposition~\ref{prop:recursive-flag}]
    By Lemma~\ref{lem:nozero}, $D$ has no zero on $\Smat^d_{++}$.
    Hence, $\log|D|$ is globally defined on the connected cone and on its connected Cholesky parametrization.
    Use a congruence to take $F_i=\Span(e_1,\ldots,e_i)$.
    Then $R_D(\Sigma)$ is upper triangular with diagonal $(m_1,\ldots,m_d)$.
    Write the unique Cholesky factorization $\Sigma=LL^\top$, where $L$ is lower triangular with positive diagonal, and put $J=G_D/D$.
    The matrix
    \begin{equation*}
        C=L^\top JL=L^\top R_D(\Sigma)L^{-\top}
    \end{equation*}
    is symmetric.
    It is also upper triangular because $R_D(\Sigma)$ preserves the coordinate flag and conjugation by $L^\top$ and $L^{-\top}$ preserves upper triangularity.
    Therefore, $C$ is diagonal.
    Triangular conjugation preserves diagonal entries, so $C=\operatorname{diag}(m_1,\ldots,m_d)$.

    For an infinitesimal lower-triangular perturbation $\mathrm{d}L=LA$,
    \begin{equation*}
        \mathrm{d}\Sigma=L(A+A^\top)L^\top
        .
    \end{equation*}
    Hence,
    \begin{equation*}
        \mathrm{d}\log|D|
        =
        \tr\{C(A+A^\top)\}
        =
        2\sum_{i=1}^dm_i\frac{\mathrm{d}L_{ii}}{L_{ii}}
        .
    \end{equation*}
    Integration on the connected Cholesky domain gives
    \begin{equation*}
        D(LL^\top)=\gamma\prod_{i=1}^dL_{ii}^{2m_i}
    \end{equation*}
    for a nonzero constant $\gamma$.

    Restrict to diagonal Cholesky factors and write $x_i=L_{ii}^2>0$.
    Then $D\{\operatorname{diag}(x_1,\ldots,x_d)\}$ is a polynomial in the $x_i$ and equals $\gamma\prod_i x_i^{m_i}$ on the positive orthant.
    Fixing all variables except $x_i$ shows that each $m_i$ is a nonnegative integer.
    Since $\Delta_r=\det\Sigma_{1:r}=\prod_{i\le r}L_{ii}^2$, we obtain in the fraction field
    \begin{equation*}
        D=\gamma\prod_{r=1}^d\Delta_r^{m_r-m_{r+1}}
        .
    \end{equation*}
    By Lemma~\ref{lem:symdet-irreducible}, the $\Delta_r$ are pairwise nonassociate irreducible polynomials.
    The valuation of the polynomial $D$ at $\Delta_r$ is $m_r-m_{r+1}$ and must be nonnegative.
    This proves the factorization.

    The converse is the upper-triangular calculation in the proof of Theorem~\ref{thm:flag}.
    If only a constant spectrum is assumed in addition to a common invariant flag, each quotient weight is a continuous map from the connected cone to a finite multiset and is therefore constant.
\end{proof}

Proposition~\ref{prop:recursive-flag} separates the higher-dimensional obstruction from the integration step.
Once one invariant recursive flag is present, no additional polynomial solutions occur.

\subsection{Proof of the low-degree-factor diagnostic}
\label{supp:low-degree-diagnostic}

The final part of Section~\ref{sec:converse} concerns denominators whose determinant-free irreducible factors have degree at most two.
The proof identifies global linear and quadratic factors from their generic plane restrictions.
It then uses the two-subspace criterion in Supplementary Section~\ref{supp:flag-proofs} to force nesting.

\begin{lemma}[Generic linear factors]
\label{lem:generic-linear-factor}
    Let $L_B(\Sigma)=\tr(B\Sigma)$ with $B\in\Sym_3$ and $B\ne0$.
    If the restriction of $L_B$ to a Zariski-dense set of planes is a rank-one linear form on $\Sym_2$, then $B$ has rank one.
    If two such forms are associates on a dense set of planes, their rank-one directions are proportional.
\end{lemma}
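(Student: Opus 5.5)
The restriction of $L_B$ to the plane $V$ with basis matrix $P\in\R^{3\times2}$ is $S\mapsto\tr(B PSP^\top)=\tr\{(P^\top BP)S\}$. So the face restriction is the linear form on $\Sym_2$ with coefficient matrix $P^\top BP$, the compression of the quadratic form $x^\top Bx$ to $V$. A linear form on $\Sym_2$ is ``rank-one'' in the sense of Theorem~\ref{thm:d2}, namely a nonzero multiple of $w^\top Sw$, exactly when this coefficient matrix is nonzero and singular. The hypothesis therefore says that $\det(P^\top BP)=0$ and $P^\top BP\ne0$ for planes $V$ in a Zariski-dense subset of $\Gr(2,3)$.

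\textbf{First assertion.} The map $V\mapsto\det(P^\top BP)$ (for a fixed choice of basis, a polynomial in the entries of $P$) vanishes on a dense set, hence identically. Parametrize planes by their normal $m=m(P)$, the cross product of the columns of $P$ as in Lemma~\ref{lem:strata-supp}. The Cauchy--Binet formula for $2\times2$ compressions gives
\begin{equation*}
    \det(P^\top BP)=m^\top\adjm(B)m .
\end{equation*}
So the quadratic form $m\mapsto m^\top\adjm(B)m$ vanishes for all $m\in\R^3$. Since $\adjm(B)$ is symmetric, this forces $\adjm(B)=0$, which means $\rank B\le1$. As $B\ne0$, $B$ has rank one. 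I would double-check the Cauchy--Binet identity by a quick coordinate computation; this is the only nontrivial step.

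\textbf{Second assertion.} Write $B_1=\epsilon_1 v_1v_1^\top$ and $B_2=\epsilon_2v_2v_2^\top$. Associates means $P^\top B_1P=\lambda_VP^\top B_2P$ with $\lambda_V\ne0$. Equivalently, the compressed vectors $P^\top v_1$ and $P^\top v_2$ are proportional, i.e.\ $P^\top(v_1\wedge v_2)$ vanishes in the $2\times2$ minor sense. For a generic plane the map $x\mapsto P^\top x$ has one-dimensional kernel $V^\perp$. Proportionality of $P^\top v_1$ and $P^\top v_2$ says that some combination $\alpha v_1+\beta v_2$ lies in $V^\perp$, i.e.\ $\Span(v_1,v_2)$ meets the normal line $V^\perp$.

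If $v_1$ and $v_2$ were independent, this would force every normal in a dense set of directions to lie in the fixed plane $\Span(v_1,v_2)$. That is impossible, since such normals form a proper Zariski-closed subset of $\mathbb{P}^2$. Hence $v_1$ and $v_2$ are proportional.

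The main obstacle is only bookkeeping: keeping the identification of ``rank-one linear form on $\Sym_2$'' as a singular nonzero coefficient matrix, and making sure density passes through the polynomial parametrization by $P$.
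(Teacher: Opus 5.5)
Your proof is correct and follows essentially the paper's route. The paper argues that a $B$ of rank at least two has a nondegenerate compression to some plane and that nondegeneracy persists on a Zariski-open set; your identity $\det(P^\top BP)=m^\top\adjm(B)m$ makes this openness explicit, since vanishing on a dense set forces $\adjm(B)=0$. For the second assertion, the paper's vanishing wedge of $P^\top v_1$ and $P^\top v_2$ is exactly your condition $m^\top(v_1\times v_2)=0$, namely that the normal lies in $\Span(v_1,v_2)$.
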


\begin{proof}[Proof of Lemma~\ref{lem:generic-linear-factor}]
    If $\rank B\ge2$, the symmetric bilinear form represented by $B$ has a nondegenerate two-dimensional compression.
    Rank at least two persists on a Zariski-open neighbourhood of that plane, contrary to the hypothesis.
    Thus, $B=\eta vv^\top$.
    For two rank-one forms, association on a dense family of planes makes the wedge of the two projected vectors vanish as a polynomial in the plane coordinates.
    It therefore vanishes on every plane.
    The plane spanned by two nonproportional directions would give a contradiction, so the directions are proportional.
\end{proof}

The next lemma globalizes a pure-power restriction on generic projective lines.
It is used to exclude the square alternative for an irreducible quadratic factor.

\begin{lemma}[Single-support restriction]
\label{lem:single-support}
    Let $p$ be a nonzero homogeneous form of degree $m\ge2$ on $\C^3$.
    If the restriction of $p$ to a Zariski-dense set of projective lines is an $m$th power of a linear form, then $p=\gamma\ell^m$ for a linear form $\ell$.
\end{lemma}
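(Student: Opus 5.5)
The plan is to argue through the zero set, exactly as in the first paragraph of the proof of Proposition~\ref{prop:rankone-supp}, working with the reduced curve $\{p=0\}_{\mathrm{red}}\subset\mathbb{P}^2_\C$. First, restricting $p$ to a line $\ell$ gives either the zero form or a binary form of degree $m$. The hypothesis says this binary form is $\lambda\,\ell_L^m$ for a dense set of lines $L$. Since $p\ne0$, the restriction is nonzero on a Zariski-open set of lines, so after intersecting the two sets we may assume $\lambda\ne0$. For such a line, the restriction vanishes at exactly one point of $L$. Hence a generic line meets the reduced curve $C=\{p=0\}_{\mathrm{red}}$ in exactly one point.

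Second, I would deduce that $C$ is a single line. Let $C$ have degree $e\ge1$; note that $e\ge1$ because $m\ge2$ and $p$ is nonconstant. A reduced plane curve has only finitely many singular points, and the lines tangent to it at some smooth point form a proper closed subset (the dual curve) of the dual plane. A general line therefore avoids the singular points and meets $C$ transversally. By Bezout's theorem \citep{harris1992} it then meets $C$ in exactly $e$ distinct points. Because a Zariski-dense set meets every nonempty Zariski-open set, there is a line in our dense family that is also general in this sense. Comparing counts gives $e=1$.

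Third, I would conclude the factorization. Since $C=\{\ell=0\}$ for a linear form $\ell$, the Nullstellensatz gives $\sqrt{(p)}=(\ell)$. Unique factorization in $\C[x_1,x_2,x_3]$ then shows that every irreducible factor of $p$ is associate to $\ell$. Thus $p=\gamma\ell^{k}$ with $k=\deg p=m$.

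The only delicate step is the second one, namely ensuring that a line from the given dense family is transverse to $C$. Because density is Zariski density in the space of lines, the intersection with the open set of transverse lines is nonempty, so I expect this step to go through. The one point to check is that when $C$ contains a component of higher degree, a component that is a double line is already excluded by working with the reduced curve.
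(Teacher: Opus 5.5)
Your proposal is correct and follows the paper's proof essentially step for step. Each line in the family carries a single zero, and Bezout's theorem applied to a general transverse line forces the reduced curve $\{p=0\}_{\mathrm{red}}$ to be a line, after which unique factorization gives $p=\gamma\ell^m$; you also make explicit two points the paper leaves implicit, namely discarding lines on which the restriction vanishes identically and using that a Zariski-dense family meets the nonempty open set of transverse lines.
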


\begin{proof}[Proof of Lemma~\ref{lem:single-support}]
    On each line in the stated family, the zero set has one support point.
    If the reduced plane curve $\{p=0\}_{\mathrm{red}}$ had degree at least two, a general line would meet it transversally in at least two points by Bezout's theorem.
    Hence, the reduced curve is a line, and unique factorization gives the claim.
\end{proof}

An admissible irreducible quadratic must vanish on the rank-one cone.
The next lemma identifies it as a linear form in the adjugate matrix.

\begin{lemma}[Irreducible quadratic factors]
\label{lem:quadratic-factor}
    Let $Q$ be an irreducible real quadratic on $\Sym_3$.
    Suppose that, on a dense family of planes, every irreducible factor of $Q_V$ is either the unique rank-one linear face factor or $\det S$.
    Then $Q(\Sigma)=\tr\{C\adjm(\Sigma)\}$ for a nonzero matrix $C\in\Sym_3$.
\end{lemma}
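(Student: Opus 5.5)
The strategy is to show that $Q$ vanishes on the rank-one cone, and then to use adjugate duality to convert the vanishing into the stated form.

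\emph{Step 1: $Q$ vanishes on rank-one matrices.} Take a plane $V$ in the dense family, represented by $P\in\R^{3\times2}$. Since $\deg Q_V\le 2$ and the only allowed irreducible factors are one rank-one linear form $\ell_V(S)=w_V^\top Sw_V$ and $\det S$, the restriction $Q_V$ has one of three forms: $\gamma\ell_V^2$, $\gamma\det S$, or $0$. Suppose the square alternative $\gamma\ell_V^2$ occurs on a dense subfamily. Then the rank-one restriction $p_Q(u)=Q(uu^\top)$ restricts on the corresponding projective lines to $\gamma(w^\top u)^4$, a fourth power of a linear form. Lemma~\ref{lem:single-support} then gives $p_Q=\gamma\lambda(u)^4$ for a linear form $\lambda$. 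I would next compare this with $Q(\lambda^\top\Sigma\lambda)$, viewing $\lambda$ as a vector. If $\gamma\ne0$, the quadratic $Q_1=\gamma(\lambda^\top\Sigma\lambda)^2$ agrees with $Q$ on each good face: both are squares of rank-one forms that coincide on rank-one matrices, so the argument of Proposition~\ref{prop:rankone-supp} applies. Hence $Q-Q_1$ vanishes on an open set of rank-two matrices. Lemma~\ref{lem:realdiv-supp} would then make $\det\Sigma$ divide $Q-Q_1$, and degree two is less than three, so $Q=Q_1$. This contradicts irreducibility. Therefore the generic restriction is $\gamma_V\det S$ or $0$, and in either case it vanishes on rank-one matrices of the face. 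Rank-one matrices $uu^\top$ with $u$ in generic planes exhaust a dense set, so $p_Q\equiv0$.

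\emph{Step 2: Writing $Q$ through the adjugate.} The natural route is a representation-theoretic or linear-algebraic count. Quadratics on $\Sym_3$ form the space $\operatorname{Sym}^2(\operatorname{Sym}^2\R^3)$ of dimension $21$. It decomposes under $\operatorname{GL}_3$ as $\operatorname{Sym}^4\oplus$ (the Schur module $S_{(2,2)}$). The restriction map $Q\mapsto p_Q$ onto quartics in $u$ has image $\operatorname{Sym}^4$, of dimension $15$, so its kernel has dimension $6$. The six quadratics $\tr\{C\adjm(\Sigma)\}$, indexed by $C\in\Sym_3$, are $2\times2$ minors. They vanish on rank-one matrices and are linearly independent, since the entries of $\adjm\Sigma$ are independent quadratics. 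Hence they span the kernel, and $Q=\tr\{C\adjm\Sigma\}$ with $C\ne0$. I would verify the dimension count concretely: the kernel is exactly the degree-two part of the ideal of the Veronese $\{uu^\top\}$, which is generated by the $2\times2$ minors.

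The main obstacle is Step 1, specifically excluding the square branch rigorously when only a dense family of faces is controlled. If the comparison argument there proves awkward, I would fall back on a simpler observation. The square branch would make the real zero set of $Q$ on the face contain a whole line, whereas irreducibility of $Q$ combined with $p_Q=\gamma\lambda^4$ forces $Q-\gamma(\lambda^\top\Sigma\lambda)^2$ into the six-dimensional minor span. One would then argue that a form $\gamma\ell^2+\tr\{C\adjm\Sigma\}$ restricting to a pure square on generic planes must have $C=0$, by evaluating on a generic face where $\tr\{C\adjm(PSP^\top)\}=\det S\cdot m(P)^\top Cm(P)$ (Lemma~\ref{lem:strata-supp}). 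Indeed, $\gamma\ell_V^2+\kappa\det S$ with $\kappa\ne0$ is not a square of a single rank-one form. This reduces $Q$ to $\gamma\ell^2$, which is reducible, a contradiction.
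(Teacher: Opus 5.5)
Your proposal is correct and follows the paper's skeleton. You use the same square-versus-determinant dichotomy on generic faces, Lemma~\ref{lem:single-support} to force $p_Q=\gamma(\lambda^\top u)^4$ in the square branch, vanishing of $Q$ on the rank-one cone in the determinant branch, and the count $21-15=6$ identifying the kernel of $Q\mapsto p_Q$ with $\{\tr(C\adjm\Sigma):C\in\Sym_3\}$.

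The one genuine difference is how you exclude the square branch.

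\begin{itemize}
\item The paper's route is your fallback. It first uses the kernel description to write $Q=\gamma(v^\top\Sigma v)^2+\tr(C\adjm\Sigma)$. It then restricts to a face via Lemma~\ref{lem:strata-supp}, which produces the extra term $(m^\top Cm)\det S$. Since $\det S$ has rank three as a quadratic form and a difference of two squares has rank at most two, $m^\top Cm$ vanishes on a Zariski-dense set of normals, so $C=0$.
\item Your primary argument matches the two face restrictions directly, as in Proposition~\ref{prop:rankone-supp}. It then concludes from Lemma~\ref{lem:realdiv-supp} and $\deg(Q-Q_1)\le2<3$.
\end{itemize}

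Your route makes Step~1 independent of the dimension count in Step~2. It does, however, need one sentence the paper's square-branch argument avoids. Lemma~\ref{lem:realdiv-supp} requires vanishing on a Euclidean-open set of rank-two matrices, and the faces of a merely Zariski-dense family of planes need not sweep out such a set. The fix is to note that $(Q-Q_1)_V\equiv0$ is a Zariski-closed condition in the plane coordinates, so it holds on every plane. This is exactly what the paper's closed sets $\mathcal{Z}_{\det}$ and $\mathcal{Z}_{\rm sq}$ encode.

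The same closedness, together with irreducibility of $\Gr(2,3)_\C$, is also what justifies your passage from ``the square case does not occur on a dense subfamily'' to ``the generic restriction is $\gamma_V\det S$ or $0$.'' In the paper's square-branch argument only Zariski density of the normals is needed, because the vanishing is of the single polynomial $m\mapsto m^\top Cm$.
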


\begin{proof}[Proof of Lemma~\ref{lem:quadratic-factor}]
    Work in an affine chart of $\Gr(2,3)$ in which a basis matrix $P(t)$ depends polynomially on the chart coordinates $t$.
    The coefficients of $Q_{V(t)}(S)=Q\{P(t)SP(t)^\top\}$ are polynomial in $t$.
    Proportionality to $\det S$ is cut out by the $2\times2$ minors of the corresponding coefficient vectors.
    Being a scalar multiple of a square of a linear form is cut out by the $2\times2$ minors of the symmetric coefficient matrix of the ternary quadratic $Q_{V(t)}$.
    These equations are compatible on overlaps and define intrinsic Zariski-closed subsets $\mathcal{Z}_{\det}$ and $\mathcal{Z}_{\rm sq}$ of the complexified Grassmannian.
    The real Grassmannian is Zariski dense in its complexification.
    On a dense open set of good real planes, a degree-two restriction is either proportional to $\det S$ or to the square of the unique linear face factor.
    Hence, $\Gr(2,3)_\C=\mathcal{Z}_{\det}\cup\mathcal{Z}_{\rm sq}$.
    Since the Grassmannian is irreducible, one of these closed subsets is the whole Grassmannian.

    Consider first the determinant alternative.
    Proportionality to $\det S$ then holds for every plane, including a zero restriction.
    Let $\mathcal{G}=\{V\in\Gr(2,3)_\C:Q_V\not\equiv0\}$.
    This is a nonempty Zariski-open set.
    It is nonempty because otherwise $Q$ would vanish on the rank-two determinant hypersurface and would be divisible by the cubic $\det\Sigma$, which is impossible for a nonzero quadratic.
    For $V\in\mathcal{G}$, $Q_V=\eta_V\det S$ with $\eta_V\ne0$.
    Put $\mathcal{B}=\Gr(2,3)_\C\setminus\mathcal{G}$.
    Identify $\Gr(2,3)_\C$ with the dual projective plane.
    The planes containing a fixed point $[u]\in\mathbb{P}^2_\C$ form a projective line $\Pi_u$.
    If $[u]$ lies on no plane in $\mathcal{G}$, then $\Pi_u\subseteq\mathcal{B}$.
    A proper closed subset of the projective plane has only finitely many one-dimensional irreducible components.
    Therefore, this can occur for at most finitely many pencils $\Pi_u$.
    For every other $[u]$, there is $V\in\mathcal{G}$ with $u\in V$, and $Q(uu^\top)=Q_V(\xi\xi^\top)=0$.
    Thus, $Q$ vanishes on a Zariski-dense subset of the rank-one cone and hence on the whole cone.

    The space of quadratics vanishing on $\{uu^\top:u\in\R^3\}$ is $\{\tr(C\adjm\Sigma):C\in\Sym_3\}$.
    Quadratics on $\Sym_3$ have dimension $21$, while quartics in $u$ have dimension $15$.
    The restriction map is surjective because every quartic monomial is a product of two quadratic monomials.
    The displayed family has dimension six and is injective.
    Indeed, if $\tr\{C\adjm(\Sigma)\}\equiv0$, then for every positive-definite $T$ choose $\Sigma=(\det T)^{1/2}T^{-1}$ so that $\adjm(\Sigma)=T$.
    Then $\tr(CT)=0$ on an open set, which gives $C=0$.
    The displayed family is therefore exactly the kernel of the restriction map.

    Suppose instead that $Q_V$ is generically a square.
    Then $p_Q(u)=Q(uu^\top)$ restricts to a fourth power on a dense set of lines.
    Lemma~\ref{lem:single-support} gives $p_Q(u)=\gamma(v^\top u)^4$.
    Consequently, $Q=\gamma(v^\top\Sigma v)^2+\tr(C\adjm\Sigma)$ for some $C$.
    On a generic plane, the first two quadratic expressions are squares of linear forms, while the last is $(n^\top Cn)\det S$, where $n$ is a normal to the plane.
    A difference of two squares has matrix rank at most two as a quadratic form on $\Sym_2$.
    By contrast, $\det S=s_{11}s_{22}-s_{12}^2$ has rank three as a quadratic form on $(s_{11},s_{12},s_{22})$.
    Hence, $n^\top Cn\ne0$ is impossible.
    Since $n^\top Cn=0$ for a Zariski-dense set of normals, $C=0$.
    Then $Q$ is a square, contrary to irreducibility.
    Only the determinant alternative remains.
\end{proof}

The preceding lemmas show that all global linear factors are powers of one variance direction and all irreducible quadratic factors are adjugate-linear.
Adjugate duality then forces the quadratic directions to coincide.

\begin{proof}[Proof of Theorem~\ref{thm:low-degree-factor}]
    Strip the largest determinant power and call the determinant-free residue $E$.
    Factor it over $\R$ as
    \begin{equation*}
        E
        =
        \gamma
        \prod_{i=1}^rL_i^{\alpha_i}
        \prod_{j=1}^sQ_j^{\beta_j}
        ,
    \end{equation*}
    where the $L_i$ are pairwise nonassociate linear forms and the $Q_j$ are pairwise nonassociate irreducible quadratics.
    Choose a generic good plane on which all indicated restrictions are nonzero.
    The common face-type representation is $E_V(S)=\gamma_V\ell_V(S)^a(\det S)^b$.
    If $a=0$, unique factorization on a generic face forces $r=0$.
    If $a>0$, every $L_i|_V$ is associated with the unique linear factor $\ell_V$.
    Lemma~\ref{lem:generic-linear-factor} shows that all $L_i$ are rank-one and associate.
    Their product is therefore absent or has the form $\gamma_1(v^\top\Sigma v)^p$.

    For each $Q_j$, its generic restriction is either a square of $\ell_V$, when $a>0$, or a multiple of $\det S$.
    Lemma~\ref{lem:quadratic-factor} gives $Q_j(\Sigma)=\tr\{C_j\adjm(\Sigma)\}$.
    Put $q=\sum_j\beta_j$.
    If $q>0$, adjugate duality yields
    \begin{equation*}
        E^\dagger
        =
        \gamma_2\delta^q
        \{v^\top\adjm(\Sigma)v\}^p
        \prod_j\{\tr(C_j\Sigma)\}^{\beta_j}
        ,
    \end{equation*}
    where the factor involving $v$ is omitted when $p=0$.
    The displayed power of the irreducible cubic $\delta$ is exact.
    After stripping it, the residue is again determinant-free and exactly self-normalizing.
    Its linear factors $\tr(C_j\Sigma)$ restrict on a generic face to the unique linear face factor.
    Lemma~\ref{lem:generic-linear-factor} shows that every $C_j$ has rank one and that all are proportional.
    Hence, $C_j=\eta_j uu^\top$ and
    \begin{equation*}
        E(\Sigma)
        =
        \gamma_3(v^\top\Sigma v)^p
        \{u^\top\adjm(\Sigma)u\}^q
        ,
    \end{equation*}
    with an absent factor interpreted as one.
    The second factor is the determinant of the compression to $u^\perp$.
    If $p,q>0$, Lemma~\ref{lem:two-subspace} forces the line and plane to be nested.
    A plane cannot be contained in a line, so $\Span(v)\subset u^\perp$.
    Restoring the stripped determinant power gives the flag form in \eqref{eq:low-degree-flag}.
\end{proof}

The corollary converts the flag form into coefficient-level rank and orthogonality tests.
It also identifies every reducible self-normalizing cubic.

\begin{proof}[Proof of Corollary~\ref{cor:factor-diagnostic}]
    Necessity is Theorem~\ref{thm:low-degree-factor}.
    Sufficiency and the value of $c$ follow from Theorem~\ref{thm:flag}, whose multiplicities are $(a+b+e,b+e,e)$.
    A cubic satisfying the low-degree-factor condition is either a scalar multiple of $\delta$, or its determinant-free part has only linear and quadratic factors.
    The theorem gives the three possibilities in the corollary.
    Every reducible cubic belongs to the latter class.
\end{proof}

The low-degree-factor converse is unconditional on its stated class.
An irreducible factor of degree at least three is the only factorization pattern not decided by this diagnostic.

\section{Proofs of Section 5: Weak-denominator inference}
\label{supp:weak-proofs}
This section proves Propositions~\ref{prop:transfer} and~\ref{prop:fieller} and verifies the example that delimits the scope of the classification in Section~\ref{sec:weak} of the main text.
Both proofs rest on one triangular-array central limit theorem for the sample covariance, which is established at the start of the first proof and reused in the second.

The proof of Proposition~\ref{prop:transfer} combines this central limit theorem with polynomial delta expansions along the drifting sequence.

\begin{proof}[Proof of Proposition~\ref{prop:transfer}]
    For row $n$, write
    \begin{equation*}
        Y_{ni}
        =
        \vech(X_{ni}X_{ni}^\top-\Sigma_n)
        ,\quad
        \Delta_n=n^{-1/2}\sum_{i=1}^nY_{ni}
        =
        n^{1/2}\,\vech(\Sighat-\Sigma_n)
        .
    \end{equation*}
    Within each row the $Y_{ni}$ are independent and identically distributed, with covariance $\Gamma(\Sigma_n)\to\Gamma(\Sigma_\ast)$.
    Since $\Sigma_n\to\Sigma_\ast$, Gaussian eighth-moment formulas for the quadratic vector $Y_{n1}$ give
    \begin{equation*}
        \sup_n \E_n\lVert Y_{n1}\rVert^4
        <
        \infty
        .
    \end{equation*}
    Consequently, the Lyapunov quantity $n^{-2}\sum_{i=1}^n\E_n\lVert Y_{ni}\rVert^4$ is $O(n^{-1})$.
    The multivariate triangular-array Lyapunov theorem gives
    \begin{equation*}
        \Delta_n
        \dto
        N_{p_d}\{0,\Gamma(\Sigma_\ast)\}
        .
    \end{equation*}
    
    Because $(N-\tau_nD)(\Sigma_n)=0$ and the polynomials are fixed, and their second derivatives are bounded on a fixed neighbourhood of $\Sigma_\ast$, Taylor's theorem together with $\Sighat-\Sigma_n=O_p(n^{-1/2})$ yields
    \begin{equation*}
        n^{1/2}\{N(\Sighat)-\tau_nD(\Sighat)\}
        =
        g_n^\top\Delta_n+o_p(1)
        ,\quad
        n^{1/2}D(\Sighat)
        =
        n^{1/2}D(\Sigma_n)+\nabla D(\Sigma_n)^\top\Delta_n+o_p(1)
        .
    \end{equation*}
    Since $g_n\to g_\ast$ and $\nabla D(\Sigma_n)\to\nabla D(\Sigma_\ast)$, the pair converges jointly to $(s_{g,\ast}\tilde{Z},s_{D,\ast}(\mu_\ast+Z_2))$ with the stated correlation.
    The limiting denominator has a continuous distribution, so division gives the ratio limit in \eqref{eq:ratio-limit}.
    The convergence $s_D(\Sighat)-s_{D,n}\pto0$ and $s_{D,n}\to s_{D,\ast}>0$ then give the limit of $\FD$.
    
    For the Wald statistic, write $W_n=\hat{\tau}-\tau_n$ and use the direction $\hat{g}$ defined in \eqref{eq:wald-studentizer}.
    The ratio limit gives $W_n=O_p(1)$, so the polynomial-gradient expansion gives
    \begin{equation*}
        \hat{g}
        =
        g_n-W_n\nabla D(\Sigma_n)+o_p(1)
        .
    \end{equation*}
    Substituting the ratio limit shows that $\hat{g}^\top\Gamma(\Sighat)\hat{g}/s_{g,\ast}^2$ converges in distribution to $1-2r_\ast q+q^2$.
    Combining this with
    \begin{equation*}
        \frac{n^{1/2}|D(\Sighat)|W_n}{s_{g,\ast}}
        \dto
        \tilde{Z}\,\sgn(\mu_\ast+Z_2)
    \end{equation*}
    gives \eqref{eq:wald-limit}.
    The limiting studentizer equals $\{(q-r_\ast)^2+1-r_\ast^2\}^{1/2}$, which is positive almost surely when $|r_\ast|<1$.
\end{proof}

The argument uses the drifting sequence only through the limits $\mu_\ast$, $r_\ast$ and $\omega_\ast$, which is why a single three-parameter experiment covers every covariance-polynomial strategy, as stated in the main text.

The proof of Proposition~\ref{prop:fieller} treats coverage and sample geometry separately.
Coverage follows from the same central limit theorem applied to the moment at the local target, and the trichotomy is the sign analysis of one quadratic polynomial in $t$.

\begin{proof}[Proof of Proposition~\ref{prop:fieller}]
    At $t=\tau_n$, the moment $N-\tau_nD$ vanishes at $\Sigma_n$ and has gradient $g_n$.
    The central limit theorem from the proof of Proposition~\ref{prop:transfer}, together with $h_{\tau_n}^\top\Gamma(\Sighat)h_{\tau_n}\pto s_{g,\ast}^2>0$, shows that the squared studentized moment converges in distribution to $\chi_1^2$.
    This proves the coverage statement without requiring a nonzero limit of $D$.
    
    For the sample geometry, put $q=q_{1-\alpha}$ and
    \begin{equation*}
        P_{\Sighat}(t)
        =
        n\{N(\Sighat)-tD(\Sighat)\}^2
        -q h_t^\top\Gamma(\Sighat)h_t
        ,
    \end{equation*}
    a quadratic polynomial in $t$ whose leading coefficient is
    \begin{equation*}
        A(\Sighat)
        =
        nD(\Sighat)^2-q s_D^2(\Sighat)
        =
        s_D^2(\Sighat)\{\FD-q\}
    \end{equation*}
    whenever $s_D^2(\Sighat)>0$.
    Under $n>d$ and $\Sigma_n\succ0$, the matrix $n\Sighat$ has a Wishart density on $\Smat^d_{++}$ and is therefore absolutely continuous with respect to Lebesgue measure on $\Sym_d$ \citep[Ch.~3]{muirhead1982}.
    The polynomial $s_D^2$ is not identically zero because $s_D^2(\Sigma_\ast)>0$, so $s_D^2(\Sighat)=0$ is a null event.
    Likewise $D(\Sighat)=0$ is a null event, so $\hat{\tau}=N(\Sighat)/D(\Sighat)$ is defined almost surely.
    
    The event $A(\Sighat)=0$ is also null.
    If the polynomial $A(\Sigma)=nD(\Sigma)^2-q s_D^2(\Sigma)$ is not identically zero, this follows from absolute continuity.
    If it were identically zero, then $s_D^2=(n/q)D^2$ and $D$ would be exactly self-normalizing, whereas Assumption~\ref{ass:drift} gives $D(\Sigma_\ast)=0$ and $s_D(\Sigma_\ast)>0$, and these two requirements are incompatible with \eqref{eq:SN}.

    Finally,
    \begin{equation*}
        P_{\Sighat}(\hat{\tau})
        =
        -qh_{\hat{\tau}}^\top
        \Gamma(\Sighat)h_{\hat{\tau}}\le0
        ,
    \end{equation*}
    so the sublevel set $\{t:P_{\Sighat}(t)\le0\}$ is nonempty.
    On the probability-one event $\{A(\Sighat)\neq0\}$, elementary quadratic geometry shows that this set is an interval, possibly a singleton, the complement of a bounded open interval, or all of $\R$, and that it is bounded exactly when $A(\Sighat)>0$, which is equivalent to $\FD>q$.
    This is the classical Fieller trichotomy \citep{fieller1954}, and the inversion is also an Anderson--Rubin construction \citep{andersonrubin1949}.
\end{proof}

The final item verifies the example that the main text uses to separate exact self-normalization from mere first-order degeneracy on a zero set.

\begin{example}[First-order degeneracy without exact self-normalization]
\label{ex:squared-covariance}
    On $\Sym_2$, let $D(\Sigma)=\sigma_{12}^2$.
    Then
    \begin{equation*}
        s_D^2
        =
        4\sigma_{12}^2(\sigma_{11}\sigma_{22}+\sigma_{12}^2)
        ,
    \end{equation*}
    so $s_D=0$ vanishes on the entire zero set of $D$ although $s_D^2/D^2$ is not constant.
    For $\sigma_{11,n}=\sigma_{22,n}=1$ and $\sigma_{12,n}=\zeta n^{-1/2}$,
    \begin{equation*}
         \frac{nD(\Sigma_n)^2}{s_D^2(\Sigma_n)}\to\frac{\zeta^2}{4}
         ,\quad
         \FD\dto\frac{(\zeta+Z)^2}{4}
         ,\quad
         Z\sim N_1(0,1)
         .
    \end{equation*}
\end{example}

The variance formula follows from \eqref{eq:sD}, because the only nonzero coordinate of $\nabla D$ is $\partial D/\partial\sigma_{12}=2\sigma_{12}$ and the corresponding diagonal entry of $\Gamma$ is $\sigma_{11}\sigma_{22}+\sigma_{12}^2$.
Cancelling one factor of $\sigma_{12}^2$ gives
\begin{equation*}
    \FD
    =
    \frac{n\hat{\sigma}_{12}^2}
    {4(\hat{\sigma}_{11}\hat{\sigma}_{22}+\hat{\sigma}_{12}^2)}
    ,
\end{equation*}
wherever $\hat{\sigma}_{12}\ne0$, and the same cancellation at $\Sigma_n$ gives the population limit $\zeta^2/4$.
Under the displayed drift, $n^{1/2}\hat{\sigma}_{12}\dto\zeta+Z$ by the central limit theorem in the proof of Proposition~\ref{prop:transfer}, while $\hat{\sigma}_{11}\hat{\sigma}_{22}+\hat{\sigma}_{12}^2\pto1$, which proves the limit of $\FD$.
Assumption~\ref{ass:drift} fails along this sequence because $s_{D,\ast}=0$, so the nondegenerate limit of $\FD$ arises at second order, exactly as claimed in the main text.

\section{Proofs of Section 6: Causal applications}
\label{supp:causal-proofs}

\subsection{Structural pullbacks used in Table~\ref{tab:strategies}}
\label{supp:structural-pullbacks}

This subsection verifies the third column of Table~\ref{tab:strategies} in the main text.
Each entry records a direct substitution into a linear reduced form for that row and is not an additional identification claim.
All variables are centred, and symbols are local to the reduced form in which they appear, as in the footnote to the table.

For the instrumental-variable row, let
\begin{equation*}
    X=\pi Z+\eps_X
    ,\quad
    \nu_Z=\var(Z)
    ,\quad
    \cov(Z,\eps_X)=0
    .
\end{equation*}
Then $\sigma_{ZX}=\pi\nu_Z$.

For the conditional-instrument row, let
\begin{equation*}
    Z=\lambda W+\eps_Z
    ,\quad
    X=\pi Z+\eta W+\eps_X
    ,\quad
    \nu_W=\var(W)
    ,\quad
    v_Z=\var(\eps_Z)
    ,
\end{equation*}
where $\eps_Z$ is uncorrelated with $W$ and $\eps_X$ is uncorrelated with $(Z,W)$.
Substituting $\sigma_{ZX}=\pi\sigma_{ZZ}+\eta\sigma_{ZW}$ and $\sigma_{WX}=\pi\sigma_{ZW}+\eta\sigma_{WW}$ cancels the terms in $\eta$ and gives
\begin{equation*}
    \sigma_{WW}\sigma_{ZX}-\sigma_{ZW}\sigma_{WX}
    =
    \pi(\sigma_{WW}\sigma_{ZZ}-\sigma_{ZW}^2)
    =
    \pi\nu_Wv_Z
    .
\end{equation*}

For the front-door row, let
\begin{equation*}
    M=aX+\eps_M
    ,\quad
    \nu_X=\var(X)
    ,\quad
    v_M=\var(\eps_M)
    ,\quad
    \cov(X,\eps_M)=0
    .
\end{equation*}
Then,
\begin{equation*}
    \sigma_{XX}(\sigma_{XX}\sigma_{MM}-\sigma_{XM}^2)
    =
    \nu_X^2v_M
    ,
\end{equation*}
so the front-door denominator equals $\nu_X^2v_M$.

For the proximal row, let
\begin{equation*}
    A=\lambda U+\eps_A
    ,\quad
    Z=aU+\eps_Z
    ,\quad
    W=cU+\eps_W
    ,\quad
    \nu_U=\var(U)
    ,\quad
    v_A=\var(\eps_A)
    ,
\end{equation*}
where $U,\eps_A,\eps_Z,\eps_W$ are mutually uncorrelated.
Then
\begin{equation*}
    \sigma_{ZW}\sigma_{AA}-\sigma_{ZA}\sigma_{AW}
    =
    ac\nu_U(\lambda^2\nu_U+v_A)
    -a\lambda\nu_U\,\lambda c\nu_U
    =
    ac\nu_Uv_A
    .
\end{equation*}

These four calculations define the symbols of the table and make every displayed pullback verifiable.

\subsection{Marginal and partial minors}
\label{supp:minor-calibration}

This subsection proves the marginal and partial calibrations displayed in Section~\ref{sec:causal} of the main text.
Let $a,b,e\in[d]$ with $e\ne a$ and $e\ne b$, allowing $a=b$ where indicated.
As in the main text, $\hat{\rho}_{ab}$ is the sample correlation of coordinates $a$ and $b$, the sample partial correlation $\hat{\rho}_{ab\cdot e}$ is the correlation of the residuals from the sample linear projections of coordinates $a$ and $b$ on coordinate $e$, and the partial first-stage index is $F_{\mathrm{par}}=n\hat{\rho}_{ab\cdot e}^{\,2}/(1-\hat{\rho}_{ab\cdot e}^{\,2})$.

The next proposition contains both variance identities and both closed forms for the standardized denominator.

\begin{proposition}[Marginal and partial minor calibration]
\label{prop:minors}
    For $D=\sigma_{ab}$ with $a\ne b$,
    \begin{equation*}
        s_D^2
        =
        \sigma_{aa}\sigma_{bb}+D^2
        ,\quad
        \FD
        =
        \frac{n\hat{\rho}_{ab}^{\,2}}{1+\hat{\rho}_{ab}^{\,2}}
        .
    \end{equation*}
    For $D=\sigma_{ee}\sigma_{ab}-\sigma_{ea}\sigma_{eb}$,
    \begin{equation*}
        s_D^2
        =
        \det\Sigma_{\{e,a\}}\det\Sigma_{\{e,b\}}+3D^2
        .
    \end{equation*}
    When $a=b$ the second identity reduces to $s_D^2=4D^2$.
    When $a\ne b$, $\FD=nF_{\mathrm{par}}/(n+4F_{\mathrm{par}})$.
\end{proposition}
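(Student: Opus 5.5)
We compute $s_D^2$ through the trace form of Lemma~\ref{lem:trace-main}, $s_D^2=2\tr\{(G_D\Sigma)^2\}$, or equivalently as $\var(X^\top G_DX)$ with $X\sim N_d(0,\Sigma)$. By Lemma~\ref{lem:ambient} together with congruence-free coordinate relabelling, it suffices to work in the $2\times2$ block $\{a,b\}$ for the marginal case and in the $3\times3$ block $\{e,a,b\}$ for the partial case.

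For $D=\sigma_{ab}$, the symmetric gradient is $G=(e_ae_b^\top+e_be_a^\top)/2$, so $X^\top GX=X_aX_b$. Its Gaussian variance is $\sigma_{aa}\sigma_{bb}+\sigma_{ab}^2$, which gives the identity. Dividing $n\hat\sigma_{ab}^2$ by $\hat\sigma_{aa}\hat\sigma_{bb}+\hat\sigma_{ab}^2$ and then dividing numerator and denominator by $\hat\sigma_{aa}\hat\sigma_{bb}$ yields $n\hat\rho_{ab}^2/(1+\hat\rho_{ab}^2)$.

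For the partial minor, the cleanest route is to choose a basis in which the minor becomes a marginal covariance. Write $D=\sigma_{ee}\,\tilde\sigma_{ab}$, where $\tilde\sigma_{ab}=\sigma_{ab}-\sigma_{ea}\sigma_{eb}/\sigma_{ee}$ is the partial covariance. The partial covariance is the covariance of the residuals $\tilde X_a=X_a-(\sigma_{ea}/\sigma_{ee})X_e$ and $\tilde X_b$, and these residuals are independent of $X_e$. Differentiating directly, $G_D$ has entries $G_{ee}=\sigma_{ab}$, $G_{ab}=\sigma_{ee}/2$, $G_{ea}=-\sigma_{eb}/2$ and $G_{eb}=-\sigma_{ea}/2$. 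This gives
\[
X^\top G_DX=\sigma_{ab}X_e^2+\sigma_{ee}X_aX_b-\sigma_{eb}X_eX_a-\sigma_{ea}X_eX_b .
\]
Substituting $X_a=\tilde X_a+\beta_aX_e$ and $X_b=\tilde X_b+\beta_bX_e$, with $\beta_\cdot=\sigma_{e\cdot}/\sigma_{ee}$, the cross terms in $X_e\tilde X$ cancel and the quadratic form reduces to
\[
\sigma_{ee}\tilde X_a\tilde X_b+\tilde\sigma_{ab}X_e^2 .
\]
This reduction is the step I expect to be the main obstacle: the bookkeeping must be checked carefully so that the cancellation of the $X_e\tilde X$ terms is exact. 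Once it holds, independence of $X_e$ and $(\tilde X_a,\tilde X_b)$ makes the two summands uncorrelated. Their variances are $\sigma_{ee}^2(\tilde\sigma_{aa}\tilde\sigma_{bb}+\tilde\sigma_{ab}^2)$ and $2\tilde\sigma_{ab}^2\sigma_{ee}^2$. Using $\sigma_{ee}\tilde\sigma_{aa}=\det\Sigma_{\{e,a\}}$ and $\sigma_{ee}\tilde\sigma_{ab}=D$, the total is $\det\Sigma_{\{e,a\}}\det\Sigma_{\{e,b\}}+3D^2$. When $a=b$ the determinant product equals $D^2$, giving $s_D^2=4D^2$.

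The same algebra applies verbatim at $\Sighat$, because the identity is polynomial. The sample partial correlation satisfies $\hat\rho_{ab\cdot e}^2=\hat D^2/(\det\Sighat_{\{e,a\}}\det\Sighat_{\{e,b\}})$. Writing $\rho^2=\hat\rho_{ab\cdot e}^2$, this gives $\FD=n\rho^2/(1+3\rho^2)$. Substituting $\rho^2=F_{\mathrm{par}}/(n+F_{\mathrm{par}})$ then yields $\FD=nF_{\mathrm{par}}/(n+4F_{\mathrm{par}})$, which is the stated calibration.
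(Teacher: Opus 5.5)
Your proof is correct. For the marginal case and for both $\FD$ calibrations it is essentially the paper's argument.

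For the partial-minor identity you take a different route. The paper rescales by a positive diagonal congruence to $\sigma_{ee}=\sigma_{aa}=\sigma_{bb}=1$. It then expands $\nabla D^\top\Gamma\nabla D$ entry by entry from \eqref{eq:Gamma-gaussian} and regroups the resulting polynomial as $(1-x^2)(1-y^2)+3(\rho-xy)^2$. You instead use $s_D^2=\var(X^\top G_DX)$ and residualize $X_a,X_b$ on $X_e$. This splits the quadratic form into the independent pieces $\sigma_{ee}\tilde X_a\tilde X_b$ and $\tilde\sigma_{ab}X_e^2$. Your cancellation of the $X_e\tilde X$ terms is exact, so the step you flagged as the main obstacle is fine.

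The two routes buy different things. The paper's version is a mechanical polynomial check, and the factorization only appears after the fact. Yours explains where the factorization comes from:
\begin{itemize}
\item $\det\Sigma_{\{e,a\}}\det\Sigma_{\{e,b\}}+D^2$ equals $\sigma_{ee}^2(\tilde\sigma_{aa}\tilde\sigma_{bb}+\tilde\sigma_{ab}^2)$. This is the marginal identity applied to the residual covariance.
\item The extra $2D^2$ is the contribution of the factor $\sigma_{ee}$, whose relative first-order variance is $2$. This is the same independence mechanism as the recursive-variance picture of Section~\ref{sec:flag}.
\end{itemize}

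One small tidy-up: the gradient entries you list are the ones for $a\ne b$. For $a=b$ they are $G_{aa}=\sigma_{ee}$ and $G_{ea}=-\sigma_{ea}$. However, in both cases $X^\top G_DX=\mathrm{d}D_\Sigma(XX^\top)=\sigma_{ab}X_e^2+\sigma_{ee}X_aX_b-\sigma_{eb}X_eX_a-\sigma_{ea}X_eX_b$. So your reduction covers $a=b$ directly, not just by specializing the final formula.

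Like the paper's rescaling, your argument needs $\sigma_{ee}>0$. It is therefore a computation on the positive-definite cone followed by polynomial extension, which is legitimate here.
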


The case $a=b$ recovers the constant $c=4$ of Theorem~\ref{thm:flag} for a principal minor of order two.
In the two slope cases the standardized denominator is an increasing transform of the familiar marginal or partial first-stage index, so the algebraic diagnostic automatically selects the screening direction relevant to the strategy.

\begin{proof}[Proof of Proposition~\ref{prop:minors}]
    For $D=\sigma_{ab}$ with $a\ne b$, the only nonzero coordinate of $\nabla D$ is $\partial D/\partial\sigma_{ab}=1$, and the corresponding diagonal entry of $\Gamma$ in \eqref{eq:Gamma-gaussian} is $\sigma_{aa}\sigma_{bb}+\sigma_{ab}^2$, which proves the first identity.
    Evaluating at $\Sighat$ and dividing the numerator and denominator of $\FD$ by $\hat{\sigma}_{aa}\hat{\sigma}_{bb}$ gives the first closed form.
    
    For $D=\sigma_{ee}\sigma_{ab}-\sigma_{ea}\sigma_{eb}$, both sides of the asserted identity scale by the same factor under $\Sigma\mapsto\Lambda\Sigma\Lambda$ for positive diagonal $\Lambda$, so it suffices to verify it as $\sigma_{ee}=\sigma_{aa}=\sigma_{bb}=1$.
    Put $x=\sigma_{ea}$, $y=\sigma_{eb}$ and $\rho=\sigma_{ab}$.
    The nonzero coordinates of $\nabla D$ are $\partial D/\partial\sigma_{ee}=\rho$, $\partial D/\partial\sigma_{ab}=1$, $\partial D/\partial\sigma_{ea}=-y$ and $\partial D/\partial\sigma_{eb}=-x$, and expanding the quadratic form with the entries of \eqref{eq:Gamma-gaussian} gives
    \begin{equation*}
        s_D^2
        =
        1+3\rho^2-x^2-y^2+4x^2y^2-6\rho xy
        =
        (1-x^2)(1-y^2)+3(\rho-xy)^2
        .
    \end{equation*}
    Undoing the rescaling gives the determinant identity.

    If $a=b$, then $\det\Sigma_{\{e,a\}}=\det\Sigma_{\{e,b\}}=D$, so $s_D^2=4D^2$.
    If $a\ne b$, the sample partial correlation satisfies
    \begin{equation*}
        \hat{\rho}_{ab\cdot e}^{\,2}
        =
        \frac{D(\Sighat)^2}{\det\Sighat_{\{e,a\}}\det\Sighat_{\{e,b\}}}
        ,
    \end{equation*}
    so
    \begin{equation*}
        \FD
        =
        \frac{n\hat{\rho}_{ab\cdot e}^{\,2}}{1+3\hat{\rho}_{ab\cdot e}^{\,2}}
        ,
    \end{equation*}
    and substituting $\hat{\rho}_{ab\cdot e}^{\,2}=F_{\mathrm{par}}/(n+F_{\mathrm{par}})$ gives the second closed form.
\end{proof}

\subsection{Front-door boundary robustness}
\label{supp:frontdoor-boundary}

This subsection proves Proposition~\ref{prop:frontdoor-boundary}.
The proof rests on three exact facts.
The plug-in estimator factors as a product of two regression coefficients, the Gaussian delta studentizer is an exact combination of the two regression standard errors, and the second coefficient has an exact Student statistic independent of the design.
The boundary limit then follows by comparing the two terms of the combination uniformly in the mediator residual variance, which makes quantitative the mechanism described after the proposition in the main text.

\begin{proof}[Proof of Proposition~\ref{prop:frontdoor-boundary}]
    Write $Z_i=(X_i,M_i)^\top$ and let $\mathcal{W}_n=\sigma(Z_1,\ldots,Z_n)$.
    The covariance formula factors exactly as $\hat{\tau}=\hat{a}\hat{b}$, where
    \begin{equation*}
        \hat{a}
        =
        \frac{\sum_iX_iM_i}{\sum_iX_i^2}
    \end{equation*}
    is the known-mean least-squares coefficient in the regression of $M$ on $X$, and the coefficient vector in the regression of $Y$ on $(X,M)$ is
    \begin{equation*}
        (\hat{\gamma},\hat{b})^\top
        =
        (\sum_iZ_iZ_i^\top)^{-1}\sum_iZ_iY_i
        .
    \end{equation*}
    Put
    \begin{equation*}
        \hat{v}_M
        =
        \frac{1}{n}\sum_{i=1}^n(M_i-\hat{a}X_i)^2
        ,\quad
        \hat{\sigma}_\eps^2
        =
        \frac{1}{n}\sum_{i=1}^n
        (Y_i-\hat{\gamma} X_i-\hat{b}M_i)^2
        .
    \end{equation*}
    The usual no-intercept regression standard errors are
    \begin{equation}
    \label{eq:frontdoor-regse}
        \widehat{\mathrm{se}}_a^2
        =
        \frac{\hat{v}_M}{(n-1)\hat{\sigma}_{XX}}
        ,\quad
        \widehat{\mathrm{se}}_b^2
        =
        \frac{\hat{\sigma}_\eps^2}{(n-2)\hat{v}_M}
        .
    \end{equation}
    
    We first identify the Gaussian delta variance exactly.
    For a positive-definite covariance matrix, let $a(\Sigma)$ be the regression coefficient of $M$ on $X$, let $(\gamma(\Sigma),b(\Sigma))$ be the coefficient vector in the regression of $Y$ on $(X,M)$, and put
    \begin{equation*}
        R_M=M-a(\Sigma)X
        ,\quad
        R_Y=Y-\gamma(\Sigma)X-b(\Sigma)M
        ,
    \end{equation*}
    and define $\sigma_{MM\cdot X}=\var_\Sigma(R_M)$ and $\sigma_{YY\cdot XM}=\var_\Sigma(R_Y)$.
    The influence functions of the two coefficient functionals under known-mean covariance sampling are
    \begin{equation*}
        \phi_a
        =
        \frac{XR_M}{\sigma_{XX}}
        ,
        \quad
        \phi_b
        =
        \frac{R_MR_Y}{\sigma_{MM\cdot X}}
        .
    \end{equation*}
    The first identity follows by differentiating $a=\sigma_{XM}/\sigma_{XX}$.
    The second follows by differentiating the normal equations $(\gamma,b)^\top=\Sigma_{(X,M),(X,M)}^{-1}\Sigma_{(X,M),Y}$ and applying the Frisch--Waugh residualization.
    Under a centred Gaussian law, $X$ and $R_M$ are independent, and $R_Y$ is independent of $(X,M)$.
    Hence,
    \begin{equation}
        \label{eq:frontdoor-cross}
        \nabla a^\top\Gamma\nabla a
        =\frac{\sigma_{MM\cdot X}}{\sigma_{XX}}
        ,\quad
        \nabla b^\top\Gamma\nabla b
        =\frac{\sigma_{YY\cdot XM}}{\sigma_{MM\cdot X}}
        ,\quad
        \nabla a^\top\Gamma\nabla b
        =
        0
        .
    \end{equation}
    These are identities of rational functions on the positive cone and may therefore be evaluated at $\Sighat$.
    Since $\nabla(ab)=b\nabla a+a\nabla b$, the Gaussian delta standard error defined in \eqref{eq:frontdoor-delta-se} satisfies the exact sample identity
    \begin{equation}
    \label{eq:frontdoor-se-decomp}
        \begin{split}
            \widehat{\mathrm{se}}^{\,2}
            &=
            \frac{1}{n}\Biggl\{
                \hat{b}^{\,2}\frac{\hat{v}_M}{\hat{\sigma}_{XX}}
                +\hat{a}^{\,2}\frac{\hat{\sigma}_\eps^2}{\hat{v}_M}
            \Biggr\}
            \\
            &=\frac{n-1}{n}\hat{b}^{\,2}\widehat{\mathrm{se}}_a^2
            +\frac{n-2}{n}\hat{a}^{\,2}\widehat{\mathrm{se}}_b^2
            ,
        \end{split}
    \end{equation}
    where the second equality is the algebraic substitution of \eqref{eq:frontdoor-regse}.
    
    For completeness, the zero cross term in \eqref{eq:frontdoor-cross} also has an exact finite-sample regression justification.
    Conditional on $\mathcal{W}_n$, the difference $\hat{b}-b$ is a linear function of the independent mean-zero errors $\eps_i$, so
    \begin{equation*}
        \E(\hat{b}\mid\mathcal{W}_n)=b
        ,
    \end{equation*}
    whereas $\hat{a}$ is $\mathcal{W}_n$-measurable.
    Thus $\cov(\hat{a},\hat{b})=0$ for every $n$ and every interior covariance.
    For a fixed interior covariance, Gaussian projection formulas give
    \begin{equation*}
        \E\{n^2(\hat{a}-a)^4\}
        =
        \frac{3v_M^2n^2}{\nu_X^2(n-2)(n-4)}
        ,
        \quad
        \E\{n^2(\hat{b}-b)^4\}
        =
        \frac{3\sigma_\eps^4n^2}{v_M^2(n-3)(n-5)}
    \end{equation*}
    for all sufficiently large $n$.
    Hence, $n(\hat{a}-a)(\hat{b}-b)$ is uniformly integrable.
    The joint delta limit therefore has zero covariance, in agreement with the direct influence-function calculation.
    
    Conditional on $\mathcal{W}_n$, the usual Gaussian regression decomposition and Cochran's theorem show that the statistic $T_b=(\hat{b}-b)/\widehat{\mathrm{se}}_b$ has the $t_{n-2}$ law.
    The conditional law does not depend on $\mathcal{W}_n$, so $T_b$ is independent
    of the design.
    Moreover,
    \begin{equation*}
        \frac{n\hat{v}_M}{v_{M,n}}\sim\chi^2_{n-1}
        ,\quad
        \frac{n\hat{\sigma}_\eps^2}{\sigma_\eps^2}\sim\chi^2_{n-2}
        ,\quad
        \sigma_\eps^2=\var(\eps)
        ,
    \end{equation*}
    with the second chi-squared variable independent of the design.
    These are the standard Gaussian projection identities \citep[Ch.~3]{muirhead1982}.
    It follows that, uniformly over $0<v_{M,n}\le\bar v$,
    \begin{equation*}
        \hat{a}-a
        =
        O_p\{(v_{M,n}/n)^{1/2}\}
        ,
        \quad
        nv_{M,n}\widehat{\mathrm{se}}_b^{\,2}
        \pto
        \sigma_\eps^2
        .
    \end{equation*}
    The first order also follows from the exact moment identity $\E(\hat{a}-a)^2=v_{M,n}/\{\nu_X(n-2)\}$.
    
    If a subsequence has $v_{M,n}$ bounded away from zero, the ordinary joint central limit theorem and Slutsky's theorem prove the result.
    Consider a subsequence with $v_{M,n}\to0$.
    Decompose
    \begin{equation*}
        \hat{a}\hat{b}-ab
        =
        a(\hat{b}-b)+b(\hat{a}-a)
        +
        (\hat{a}-a)(\hat{b}-b)
        .
    \end{equation*}
    After division by $|\hat{a}|\widehat{\mathrm{se}}_b$, the first term is $\{a/|\hat{a}|\}T_b$, which converges in distribution to $\sgn(a)Z$ with $Z\sim N_1(0,1)$, and this limit is standard normal by symmetry.
    Since $\widehat{\mathrm{se}}_b^{-1}=O_p\{(nv_{M,n})^{1/2}\}$, the second term is $O_p(v_{M,n})=o_p(1)$, and the third equals $\{(\hat{a}-a)/|\hat{a}|\}T_b=o_p(1)$.
    
    It remains to compare the delta studentizer with $|\hat{a}|\widehat{\mathrm{se}}_b$.
    From \eqref{eq:frontdoor-regse}, $\widehat{\mathrm{se}}_a^2=O_p(v_{M,n}/n)$, while $\hat{b}^{\,2}=O_p\{1+(nv_{M,n})^{-1}\}$.
    Therefore,
    \begin{equation*}
        \frac{(n-1)\hat{b}^{\,2}\widehat{\mathrm{se}}_a^2}
        {(n-2)\hat{a}^{\,2}\widehat{\mathrm{se}}_b^2}
        =
        O_p\{v_{M,n}^2+v_{M,n}/n\}
        =
        o_p(1)
        .
    \end{equation*}
    Equation \eqref{eq:frontdoor-se-decomp} now yields $\widehat{\mathrm{se}}/(|\hat{a}|\widehat{\mathrm{se}}_b)\pto1$.
    Every subsequence has the same limit, completing the proof.
\end{proof}

The final display shows that the delta studentizer is asymptotically equivalent to the dominant regression studentizer uniformly in $v_{M,n}$, which is the exact balance behind the boundary robustness.

\section{Detailed numerical experiments}
\label{supp:numerics}

\subsection{Data-generating mechanisms and target parameters}

We give the complete specifications used for Table~\ref{tab:simulation}.
The front-door covariance is generated by $X=\eps_X$, $M=0.8X+\eps_M$ and $Y=M+\eps_Y$, where $(\eps_X,\eps_Y)$ is centred Gaussian with unit marginal variances and covariance $0.5$, and $\eps_M$ is independent Gaussian with variance $v_M$.
The target is $0.8$.
The proximal covariance is generated by $Z=0.8U+\eps_Z$, $A=0.8U+\eps_A$, $W=0.8U+\eps_W$ and $Y=A+0.8U+\eps_Y$.
All shocks in the proximal design are mutually independent centred Gaussian variables and have unit variance except $\var(\eps_A)=v_A$;
the target is one.

\subsection{Computation and precision}

For each population covariance $\Sigma$, we draw the known-mean sample covariance directly from $n\Sighat\sim\mathcal W_d(n,\Sigma)$.
We compute the plug-in covariance ratio, its Gaussian delta standard error, the standardized denominator and the indicator that the true target belongs to the inverted set in \eqref{eq:fieller-set}.
The robust standard deviation is the interquartile range divided by $1.349$.
Each cell uses $n=1000$ and $100000$ replications.
The Wald and inversion calculations use the two-sided 95\% standard-normal and $\chi_1^2$ critical values, respectively.
The Monte Carlo standard error attached to a coverage estimate $\hat p$ is $\{\hat p(1-\hat p)/(100000)\}^{1/2}$;
the largest value in the experiment is $0.00081$.
No replication fails the validity checks.

\begin{table}[tb]
\centering
\caption{
Complete denominator summaries for the Gaussian simulations.
The population quantity is $nD(\Sigma)^2/s_D^2(\Sigma)$.
Dashes denote diagnostics not used for the self-normalizing front-door denominator.
}
\label{tab:simulation-diagnostics-supp}
\setlength{\tabcolsep}{6pt}
\scalebox{0.8}{
\begin{tabular}{llrrrr}
Model & Parameter & Population $F_D$ & Median $\FD$ & Median partial $F$ & Median naive $F$\\
\addlinespace[2pt]
Front-door & $v_M=1.000$ & $100.000$ & $100.000$ & -- & --\\
Front-door & $v_M=0.100$ & $100.000$ & $100.000$ & -- & --\\
Front-door & $v_M=0.010$ & $100.000$ & $100.000$ & -- & --\\
Front-door & $v_M=0.001$ & $100.000$ & $100.000$ & -- & --\\
Proximal & $v_A=1.00$ & $63.729$ & $63.792$ & $85.647$ & $179.670$\\
Proximal & $v_A=0.30$ & $26.482$ & $26.492$ & $29.632$ & $362.006$\\
Proximal & $v_A=0.10$ & $6.218$ & $6.234$ & $6.393$ & $510.227$\\
Proximal & $v_A=0.03$ & $0.774$ & $0.922$ & $0.926$ & $595.168$\\
Proximal & $v_A=0.01$ & $0.095$ & $0.504$ & $0.505$ & $624.555$\\
\end{tabular}
}
\end{table}

\begin{table}[tb]
\centering
\caption{
Dispersion, standard errors and bias in the Gaussian simulations.
Robust s.d., interquartile range divided by $1.349$.
}
\label{tab:simulation-dispersion-supp}
\setlength{\tabcolsep}{6pt}
\scalebox{0.8}{
\begin{tabular}{llrrr}
Model & Parameter & Robust s.d. & Median s.e. & Median bias\\
\addlinespace[2pt]
Front-door & $v_M=1.000$ & $0.03848$ & $0.03848$ & $-0.00044$\\
Front-door & $v_M=0.100$ & $0.06997$ & $0.06996$ & $-0.00001$\\
Front-door & $v_M=0.010$ & $0.21745$ & $0.21904$ & $0.00086$\\
Front-door & $v_M=0.001$ & $0.69552$ & $0.69252$ & $-0.00295$\\
Proximal & $v_A=1.00$ & $0.06415$ & $0.06327$ & $-0.00002$\\
Proximal & $v_A=0.30$ & $0.17279$ & $0.16975$ & $0.00064$\\
Proximal & $v_A=0.10$ & $0.49080$ & $0.46761$ & $0.00878$\\
Proximal & $v_A=0.03$ & $1.18672$ & $1.43329$ & $0.43527$\\
Proximal & $v_A=0.01$ & $1.46031$ & $2.03695$ & $0.88490$\\
\end{tabular}
}
\end{table}

\begin{table}[tb]
\centering
\caption{
Coverage and Monte Carlo standard errors in the Gaussian simulations.
Cov., empirical 95\% coverage;
MCSE, Monte Carlo standard error;
inversion, coverage of \eqref{eq:fieller-set} at the true target.
}
\label{tab:simulation-coverage-supp}
\setlength{\tabcolsep}{6pt}
\scalebox{0.8}{
\begin{tabular}{llrrrr}
Model & Parameter & Wald cov. & Wald MCSE & Inversion cov. & Inversion MCSE\\
\addlinespace[2pt]
Front-door & $v_M=1.000$ & $0.94943$ & $0.00069$ & $0.95370$ & $0.00066$\\
Front-door & $v_M=0.100$ & $0.94968$ & $0.00069$ & $0.95450$ & $0.00066$\\
Front-door & $v_M=0.010$ & $0.94920$ & $0.00069$ & $0.95353$ & $0.00067$\\
Front-door & $v_M=0.001$ & $0.94950$ & $0.00069$ & $0.95348$ & $0.00067$\\
Proximal & $v_A=1.00$ & $0.95247$ & $0.00067$ & $0.95254$ & $0.00067$\\
Proximal & $v_A=0.30$ & $0.95204$ & $0.00068$ & $0.95168$ & $0.00068$\\
Proximal & $v_A=0.10$ & $0.93531$ & $0.00078$ & $0.95262$ & $0.00067$\\
Proximal & $v_A=0.03$ & $0.93042$ & $0.00080$ & $0.95150$ & $0.00068$\\
Proximal & $v_A=0.01$ & $0.93091$ & $0.00080$ & $0.95141$ & $0.00068$\\
\end{tabular}
}
\end{table}

The front-door median standard errors closely track the empirical robust standard deviations over the entire 1000-fold range of $v_M$.
For the proximal design, agreement is good when $v_A$ is $1$ or $0.3$, but the Gaussian Wald approximation deteriorates once the population denominator diagnostic falls below about $10$.
At $v_A=0.03$ and $0.01$, the sample median $\FD$ exceeds its population value because sampling noise is no longer small relative to the denominator, while inversion retains near-nominal coverage.

\section{Detailed real data experiments}
\label{supp:realdata}

\subsection{Variables and preprocessing}
\label{supp:support-preprocessing}

We use the public SUPPORT right-heart-catheterization data \citep{connors1996}, distributed at \url{https://hbiostat.org/data/repo/rhc.csv} and containing $5735$ patients and $63$ variables.
Treatment is the indicator of right-heart catheterization on the first study day, recorded in the variable \texttt{swang1} and coded as one for the value \texttt{RHC} and zero for \texttt{No RHC}.
The descriptive outcome is survival time in days truncated at $30$, recorded in \texttt{t3d30}.
The treatment proxies are \texttt{pafi1} and \texttt{paco21}, and the outcome proxies are \texttt{ph1} and \texttt{hema1}.
These six analysis variables are required to be observed and are not imputed.
The adjustment set contains \texttt{age}, \texttt{sex}, \texttt{cat1}, \texttt{cat2}, \texttt{dnr1}, \texttt{surv2md1} and \texttt{aps1}, corresponding to the covariates described in Section~\ref{sec:realdata} of the main text.
Missing continuous adjustment covariates are median-imputed.
Missing categorical adjustment covariates are mode-imputed and coded by indicator variables with one level omitted.
Nonconstant design columns are standardized before an intercept is added, and the standardization does not alter the projection space.
The resulting design has rank $19$ and leaves $5716$ residual degrees of freedom.

\subsection{Diagnostic formulas and bootstrap}
\label{supp:support-diagnostics}

Let $(A,Z,W,Y)$ denote the residualized treatment, treatment proxy, outcome proxy and outcome.
For their empirical second moments, put
\begin{equation*}
    D
    =
    \sigma_{AA}\sigma_{ZW}-\sigma_{AZ}\sigma_{AW}
    ,\quad
    N
    =
    \sigma_{ZW}\sigma_{AY}-\sigma_{ZY}\sigma_{AW}
    ,
\end{equation*}
so the reported proximal plug-in estimate is $N/D$.
Put $\nu=n-r$, where $r$ is the realized adjustment-design rank.
Both denominator diagnostics have the form $\nu D^2/\hat{s}_D^2$.
Define $M_{AZ}=\sigma_{AA}\sigma_{ZZ}-\sigma_{AZ}^2$ and $M_{AW}=\sigma_{AA}\sigma_{WW}-\sigma_{AW}^2$.
The Gaussian version uses $\hat{s}_D^2=M_{AZ}M_{AW}+3D^2$, as in \eqref{eq:partial-minor}.
The sandwich version uses the centred empirical covariance, with divisor $n$, of the six second-moment scores $(A^2,Z^2,W^2,AZ,AW,ZW)$.
The naive and partial indices use $\nu\hat{\rho}^2/(1-\hat{\rho}^2)$, as stated in the main text.

We use $2000$ nonparametric row-bootstrap replications.
Each resample repeats the complete preprocessing and residualization and uses the realized design rank to determine $\nu$.
The resampled design rank is $19$ in $1704$ replications and $18$ in $296$ replications.
The intervals below are percentile intervals.

\begin{table}[tb]
\centering
\caption{
Detailed point diagnostics for the SUPPORT analysis.
Partial corr., sample partial correlation between the treatment and outcome proxies given residualized treatment.
}
\label{tab:support-point-supp}
\setlength{\tabcolsep}{6pt}
\scalebox{0.8}{
\begin{tabular}{llrrrr}
Treatment proxy & Outcome proxy & Partial corr. & Estimate & Gaussian $\FD$ & Sandwich $\FD$\\
\addlinespace[2pt]
\texttt{pafi1} & \texttt{ph1} & $0.0236$ & $-1.245$ & $3.179$ & $2.306$\\
\texttt{pafi1} & \texttt{hema1} & $-0.0750$ & $-1.412$ & $31.591$ & $25.710$\\
\texttt{paco21} & \texttt{ph1} & $-0.5072$ & $-1.328$ & $829.990$ & $315.190$\\
\texttt{paco21} & \texttt{hema1} & $0.1095$ & $-1.136$ & $66.131$ & $51.337$\\
\end{tabular}
}
\end{table}

\begin{table}[tb]
\centering
\caption{
Percentile bootstrap intervals for the SUPPORT analysis.
Intervals use $2000$ row-bootstrap replications.
They reflect sampling variation under the prespecified preprocessing and proxy allocation, but not uncertainty about proxy validity.
}
\label{tab:support-bootstrap-supp}
\setlength{\tabcolsep}{6pt}
\scalebox{0.8}{
\begin{tabular}{llrr}
Treatment proxy & Outcome proxy & Estimate 95\% interval & Sandwich-$\FD$ 95\% interval\\
\addlinespace[2pt]
\texttt{pafi1} & \texttt{ph1} & $(-2.249,\ 0.139)$ & $(0.011,\ 12.733)$\\
\texttt{pafi1} & \texttt{hema1} & $(-2.036,\ -0.806)$ & $(9.166,\ 51.302)$\\
\texttt{paco21} & \texttt{ph1} & $(-1.839,\ -0.811)$ & $(218.242,\ 461.934)$\\
\texttt{paco21} & \texttt{hema1} & $(-1.689,\ -0.563)$ & $(30.317,\ 78.353)$\\
\end{tabular}
}
\end{table}

The weak \texttt{pafi1}/\texttt{ph1} pair is the only allocation for which the bootstrap estimate interval contains zero and the lower endpoint of the sandwich-diagnostic interval is essentially zero.
The other three allocations have diagnostic intervals bounded away from zero, although the discrepancy between the Gaussian and sandwich versions remains substantial for \texttt{paco21}/\texttt{ph1}.
These comparisons are diagnostic and should not be interpreted as evidence for the untestable proximal bridge assumptions.

\section{Algebraic obstructions in the mixed branch}
\label{supp:status}

\subsection{Kernel map and the Hessian obstruction}
\label{supp:hessian-obstruction}

For a determinant-free mixed solution $E$ with self-normalization constant $c$, the kernel-field argument in Supplementary Section~\ref{supp:converse-proofs} produces a primitive homogeneous polynomial vector $\knu$.
It defines the complex-projective rational map
\begin{equation*}
    [\knu]
    :
    \mathbb{P}(\Sym_3\otimes_\R\C)
    \dashrightarrow
    \mathbb{P}^2_\C
    ,
    \quad
    [\Sigma]
    \longmapsto
    [\knu(\Sigma)]
    .
\end{equation*}
At every point where $G_E(\Sigma)$ has rank two and $\knu(\Sigma)\ne0$, the image is the projective kernel line $[\ker G_E(\Sigma)]$.
Theorem~\ref{thm:d3-reduction} and Lemma~\ref{lem:constant-kernel} show that the unrestricted three-variable converse is equivalent to proving that this map is constant.

The identity $\det\Hess E\equiv0$ does not prove this constancy.
The Gordan--Noether cone conclusion holds for forms in at most four variables, but it fails in six variables, which is the dimension of $\Sym_3$ \citep{lossen2004,ciliberto2008}.
For example, the cubic in variables $x_0,\ldots,x_5$ \citep{perazzo1900,gondimrusso2015}
\begin{equation*}
    P
    =
    x_0x_4^2+2x_1x_4x_5+x_2x_5^2+x_3^3
    ,
\end{equation*}
has the nonzero polynomial Hessian-kernel vector
\begin{equation*}
    (x_5^2,-x_4x_5,x_4^2,0,0,0)^\top
    .
\end{equation*}
Its first derivatives are linearly independent, so $P$ is not a cone.
This cubic is not claimed to satisfy \eqref{eq:SN}.
It shows only that a polynomial Hessian-kernel field need not have a constant direction.
Consequently, the missing implication must use more than Hessian degeneracy.

\subsection{Representation-theoretic lifting obstruction}
\label{supp:representation-obstruction}

A second possible route uses the derived congruence action $\rho$.
For the matrix units $E_{ij}$,
\begin{equation*}
    \rho(E_{ij})E
    =
    2(G_E\Sigma)_{ij}
    .
\end{equation*}
Equation~\eqref{eq:SN} therefore implies the polynomial identity
\begin{equation}
\label{eq:multiplied-casimir}
    \sum_{i,j=1}^3
    \{\rho(E_{ij})E\}
    \{\rho(E_{ji})E\}
    =
    2cE^2
    .
\end{equation}
This is the image, under polynomial multiplication, of the tensor relation that one would seek from a highest-weight orbit characterization.

The theorem of \citet{lichtenstein1982} is a tensor identity in
\begin{equation*}
    \operatorname{Sym}^2
    \{\operatorname{Sym}^K(\operatorname{Sym}^2\C^3)\}
    .
\end{equation*}
By contrast, \eqref{eq:multiplied-casimir} lies only in the space of degree-$2K$ polynomial functions.
The multiplication map from the tensor space to degree-$2K$ polynomials has a nontrivial kernel for $K\ge2$.
Hence, the scalar polynomial identity does not determine the required tensor identity.
A proof by this route would need an additional argument showing that the components lost under multiplication vanish separately.

These two obstructions explain the scope of Theorem~\ref{thm:d3-reduction}.
They do not provide evidence for a nonflag solution.
They identify the remaining task more precisely.
One must combine the integrability of the symmetric gradient with the cofactor syzygy or with determinant-boundary information to force the rational kernel map to be constant.

\end{document}